\title[On the boundedness of singularities via normalized volume]{On the boundedness of singularities via normalized volume}
\author[Y.~Liu]{Yuchen Liu}
\address{Department of Mathematics, Northwestern University, Evanston, IL 60208, USA
}
\email{yuchenl@northwestern.edu}
\author[J.~Moraga]{Joaqu\'in Moraga}
\address{UCLA Mathematics Department, Box 951555, Los Angeles, CA 90095-1555, USA
}
\email{jmoraga@math.ucla.edu}
\author[Hendrik S\"u{\ss}]{Hendrik S\"u{\ss}}
\address{Friedrich-Schiller-Universität Jena,
Institut für Mathematik,
Ernst-Abbe-Platz 2
07743 Jena,
Germany.}
\email{hendrik.suess@uni-jena.de}
\newtheorem{theorem}{Theorem}[section] 
\newtheorem{thm}[theorem]{Theorem}
\newtheorem{lemma}[theorem]{Lemma}
\newtheorem{lem}[theorem]{Lemma}
\newtheorem{proposition}[theorem]{Proposition}
\newtheorem{corollary}[theorem]{Corollary}
  \newtheorem{introthm}{Theorem}
  \newtheorem{introconj}{Conjecture}
\newtheoremstyle{boldremark}
    {}
    {}
    {}          
    {}          
    {\bfseries} 
    {.}         
    {.5em}      
    {}          
\theoremstyle{boldremark}
\newtheorem{remark}[theorem]{Remark}
\newtheorem{rem}[theorem]{Remark}
\newtheorem{example}[theorem]{Example}
\newtheorem{notation}[theorem]{Notation}
\newcommand{\X}{\mathcal{X}}
\newcommand{\Y}{\mathcal{Y}}
\newcommand{\dC}{\Sigma_\X}
\newcommand{\TX}{T\X}
\newcommand{\m}{\mathfrak{m}}
\newcommand{\pp}{\mathbb{P}}
\newcommand{\qq}{\mathbb{Q}}
\newcommand{\zz}{\mathbb{Z}}
\newcommand{\nn}{\mathbb{N}}
\newcommand{\rr}{\mathbb{R}}
\newcommand{\kk}{\mathbb{K}}
\newcommand{\D}{\mathcal{D}}
\newcommand{\TT}{\mathbb{T}}
\newcommand{\A}{\mathcal{A}}
\newcommand{\aX}{a_X}
\newcommand{\aXD}{a_{(X,\Delta)}}
\newcommand{\dfan}{\mathcal{S}}
\newcommand{\I}{\mathcal{I}}
\theoremstyle{definition}
\newtheorem{construction}[theorem]{Construction}
\newtheorem{definition}[theorem]{Definition}
\newcommand{\CC}{\mathbb{C}}
\newcommand{\CS}{\mathcal{S}}
\newcommand{\CO}{\mathcal{O}}
\newcommand{\RR}{\mathbb{R}}
\newcommand{\QQ}{\mathbb{Q}}
\newcommand{\ZZ}{\mathbb{Z}}
\newcommand{\NN}{\mathbb{N}}
\newcommand{\PP}{\mathbb{P}}
\newcommand{\facets}{\mathbf{F}}
\newcommand{\Hilb}{\mathcal{H}^{X_0,\TT}}
\newcommand{\Hilbp}{\mathcal{H}^{(X_0,B_0),\TT}}
\newcommand{\Hilbs}{\mathcal{H}}
\DeclareMathOperator{\Loc}{Loc}
\DeclareMathOperator{\lspan}{span}
\DeclareMathOperator{\supp}{supp}
\DeclareMathOperator{\ord}{ord}
\DeclareMathOperator{\lcm}{lcm}
\DeclareMathOperator{\rint}{int}
\DeclareMathOperator{\tail}{tail}
\DeclareMathOperator{\vol}{vol}
\newcommand{\pos}{\RR_{\geq 0}\cdot}
\DeclareMathOperator{\conv}{conv}
\DeclareMathOperator{\Hom}{Hom}
\DeclareMathOperator{\Spec}{Spec}
\DeclareMathOperator{\Val}{Val}
\DeclareMathOperator{\nvol}{\widehat{vol}}
\DeclareMathOperator{\dist}{dist}
\DeclareMathOperator{\mld}{mld}
\DeclareMathOperator{\mldK}{mldK}
\newcommand{\bR}{\mathbb{R}}
\newcommand{\bA}{\mathbb{A}}
\newcommand{\bQ}{\mathbb{Q}}
\newcommand{\bG}{\mathbb{G}}
\newcommand{\bT}{\mathbb{T}}
\newcommand{\hvol}{\widehat{\mathrm{vol}}}
\DeclareMathOperator{\lct}{lct}
\newlength\mylen
\newlist{mycases}{enumerate}{1}
\setlist[mycases,1]{label=\textit{Case~\arabic*.}, labelwidth=*,labelindent=0pt, wide,itemsep=1em}
\newcommand\litem[1]{\item{#1: \hspace{1ex}}}
\keywords{normalized volume, K-stability, T-varieties, Sasaki-Einstein metrics}
\subjclass[2010]{Primary 14B05, 14M25; Secondary 53C25}
\begin{document}
\begin{abstract}
In this article we study conjectures regarding normalized volume
and boundedness of singularities.
We focus on singularities with a torus action of complexity $1$,
threefold singularities, and
hypersurface singularities.
Given a real value $v>0$, 
we prove that the class of K-semistable threefold singularities
with normalized volume at least $v$
forms a bounded family. 
Analogous statements are proved
in the case of $n$-dimensional complexity-$1$ 
and $n$-dimensional hypersurface singularities 
for arbitary $n$.
In the general case of klt singularities, i.e. without the assumption on K-semistability,
we show that, up to special degenerations, 
the normalized volume bounds singularities with a complexity\nobreakdash-$1$ torus action.
We exhibit a $3$-dimensional example
which shows that this last statement is optimal.
\end{abstract}
\maketitle

\setcounter{tocdepth}{1} 
\tableofcontents

\section{Introduction}

The study of 
singularities 
has played a fundamental role 
in the development of birational geometry~\cite{Xu17}
and K-stability~\cite{Xu20}.
The singularities that arise
in both theories are known as 
log terminal singularities (or klt). 
The normalized volume is an 
important invariant for klt singularities
which plays a fundamental role 
in the context of K-stability~\cite{LLX20}.
K-semistable singularities are a special class
of Fano cone singularities among klt singularities
(see Definition~\ref{def:K-ss}).
It is conjectured that
the normalized volume, denoted by $\nvol$, 
bounds K-semistable singularities (see e.g. \cite{han2020acc, Zhu21, XZ22}). 

\begin{introconj}\label{conj:boundedness-sing}
Let $d$ be a positive integer. 
Let $\epsilon$ and $\eta$ be a positive real numbers. 
The class of $d$-dimensional K-semistable singularities $(X,\Delta;x)$ for which  
$\nvol(X,\Delta;x)>\epsilon$
and ${\rm coeff}(\Delta)>\eta$ forms a log bounded family.
\end{introconj}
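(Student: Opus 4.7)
The plan is to combine the Stable Degeneration Conjecture with the recently established boundedness theorems for K-semistable log Fanos. By the work of Li--Xu and Blum--Liu--Xu, every K-semistable klt singularity $(X,\Delta;x)$ is expected to admit a two-step degeneration --- first via the valuation minimizing $\nvol$, then $\TT$-equivariantly to a K-polystable limit --- ending at a K-polystable Fano cone $(X_0,\Delta_0;\xi_0)$ with the same normalized volume. The first move is therefore to reduce Conjecture~\ref{conj:boundedness-sing} to the analogous statement for K-polystable Fano cones with $\nvol\geq \epsilon$, and then propagate boundedness back to the general members using the $\TT$-equivariant Hilbert scheme parametrising such degenerations together with the constructibility of K-semistability in families.

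For the Fano-cone part, suppose first that the Reeb vector $\xi_0$ is rational. Then $X_0$ is the affine cone over a K-polystable log Fano pair $(Y,\Delta_Y+D)$, where $D$ encodes the orbifold polarisation. The bound $\nvol(X_0,\Delta_0;x_0)\geq \epsilon$ translates into a uniform lower bound on the degree $(-K_Y-\Delta_Y-D)^{\dim Y}$, and the lower bound on the coefficients of $\Delta$ survives the quotient. One then invokes boundedness of K-semistable log Fano pairs with bounded volume and bounded coefficients (Jiang; Xu; Liu--Xu--Zhuang) to bound the base, and recovers boundedness of the cone itself after controlling the polarisation denominator via effective ACC for log canonical thresholds.

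To handle irrational Reeb vectors one approximates $\xi_0$ by rational Reeb vectors inside the Reeb cone of $\TT$, exploits the convexity and continuity of $\nvol$ along this cone, and takes limits in the Hilbert scheme. The \emph{main obstacle} lies precisely here: when $\xi_0$ is irrational the geometric quotient is only a Kähler orbifold rather than a projective variety, so the Fano-boundedness results do not apply directly, and one must either bound the rank of the acting torus a priori or work in an analytic moduli of polystable Sasakian structures. This is why the present paper proceeds by restricting to complexity-$1$, threefold, and hypersurface settings, where the Reeb cone is low-dimensional or the singularity admits a sufficiently explicit combinatorial description to make the rational approximation effective and to keep the resulting families algebraically bounded.
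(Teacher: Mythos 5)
The statement you are addressing is Conjecture~\ref{conj:boundedness-sing}, which the paper does not prove: it is stated as an open conjecture, and the paper only establishes it in special cases (dimension three with finite rational coefficient sets, complexity-one $\mathbb{T}$-singularities, and hypersurface singularities). Your proposal is therefore not comparable to ``the paper's proof'' because no such proof exists, and more importantly your text is not a proof either: in the final paragraph you explicitly concede that the case of an irrational Reeb vector --- equivalently, the absence of an a priori bound on the rank of the acting torus, or of a uniform bound on $\mldK^{\TT}_x(X,\Delta)$ among Koll\'ar components --- is an unresolved ``main obstacle.'' That obstacle is exactly the open content of the conjecture, and is the reason the paper restricts to settings (complexity one, threefolds via optimal destabilization, hypersurfaces via the Lichnerowicz inequality) where it can be overcome by hand. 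A proof proposal whose key step is acknowledged to be missing is a strategy outline, not a proof.

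Beyond that central gap, two further steps are asserted without justification. First, in the rational-Reeb branch, recovering log boundedness of the cone from boundedness of the base $(Y,\Delta_Y+D)$ requires controlling the denominators of the orbifold polarization $D$; you invoke ``effective ACC for log canonical thresholds'' but give no argument, and the translation of $\nvol\geq\epsilon$ into a volume bound on the base already depends on the normalization of $\xi_0$. Second, and more seriously, propagating boundedness back from the degenerate (K-polystable) fiber to the original singularity is not a formal consequence of ``constructibility of K-semistability in families'': each choice of subtorus $\TT\subset\hat\TT$ and Hilbert function yields a separate multigraded Hilbert scheme, so one must prove that only finitely many subtori can occur for K-semistable degenerations with $\nvol\geq\epsilon$. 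This is the content of Proposition~\ref{prop:finite-number-of-tori-pairs} in the paper, whose proof is a delicate accumulation argument on the Reeb cone (Lemmas~\ref{lem:accumulation}, \ref{lem:sequence-of-boundaries1}--\ref{lem:sequence-of-boundaries2}) and is nowhere reflected in your sketch. As it stands, the proposal correctly identifies the shape of the problem but supplies neither of the two genuinely hard inputs.
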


The previous conjecture is known to hold
for exceptional singularities (see \cite{HLS19}) and toric singularities (see, e.g.,~\cite{MS21}).
In this article, we prove a slightly weaker version of this conjecture in dimension three.

\begin{introthm}\label{introthm:3-dim}
Let $\epsilon$  be a positive real number and $I \subset [0,1) \cap \QQ$ a finite set.
The class of $3$-dimensional K-semistable singularities $(X,\Delta;x)$ with $\Delta$ having coefficients in $I$ and $\nvol(X,\Delta;x)>\epsilon$ forms a log bounded family.
\end{introthm}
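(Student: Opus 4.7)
The plan is to reduce to K-polystable Fano cone singularities via the stable degeneration theorem, and then to carry out a case analysis on the rank of the acting torus. By the stable degeneration theorem of Blum, Li--Xu, and Xu--Zhuang, any K-semistable klt singularity $(X,\Delta;x)$ admits a K-polystable degeneration to a Fano cone singularity $(X_0,\Delta_0;x_0)$ of the same dimension and same normalized volume, whose boundary $\Delta_0$ has coefficients in the same finite set $I$. Using openness of K-semistability in $\QQ$-Gorenstein families together with a deformation-theoretic argument (bounding the deformation functor over each bounded family of central fibers), it suffices to show that the K-polystable degenerations $(X_0,\Delta_0;x_0)$ themselves form a log bounded family.

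Let $\TT$ denote a maximal torus acting on $(X_0,\Delta_0;x_0)$ and set $r = \dim \TT \in \{1,2,3\}$, so that the complexity of the action equals $3-r$. If $r=3$, the pair is toric and log boundedness follows from~\cite{MS21}. If $r=2$, the action has complexity $1$ in dimension $3$ and log boundedness follows directly from the complexity-$1$ theorem proved earlier in this paper. When $r=1$, the Fano cone is an orbifold affine cone over a $2$-dimensional K-semistable log Fano pair $(Y,\Delta_Y)$ polarized by an ample $\QQ$-line bundle $L \sim_{\QQ} -\lambda(K_Y+\Delta_Y)$, and this case requires separate treatment.

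For the case $r=1$, the normalized volume formula for Fano cones expresses $\nvol(X_0,\Delta_0;x_0)$ as a positive rational function of $(-K_Y-\Delta_Y)^2$, $\lambda$, and the Cartier index of $L$. The hypothesis $\nvol(X_0,\Delta_0;x_0) > \epsilon$ together with coefficients in $I$ then yields both a positive lower bound on the anticanonical volume of $(Y,\Delta_Y)$ and an upper bound on the Cartier index of $L$. Invoking the known boundedness of K-semistable log del Pezzo pairs with coefficients in a finite set and anticanonical volume bounded below, the bases $(Y,\Delta_Y)$ lie in a log bounded family; combined with the bound on the Cartier index of $L$, this yields log boundedness of the cones, and hence closes out the case $r=1$.

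The principal obstacle is the complexity-$1$ theorem itself, which handles the rank-$2$ case and is the central new contribution of the paper; once that is in hand, the three-case trichotomy settles the theorem. Secondary technical points are (a) promoting log boundedness of the K-polystable degenerations to log boundedness of the original K-semistable singularities via openness of K-semistability and deformation theory, and (b) the bookkeeping of combinatorial data (Reeb vector, polarization, Cartier index) in the $r=1$ case required to turn the lower bound on $\nvol$ into the simultaneous bounds needed for log del Pezzo boundedness.
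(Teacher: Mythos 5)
Your overall skeleton (pass to degenerations, then trichotomize on the rank of the acting torus, leaning on the complexity-one theorem for rank $2$ and toric boundedness for rank $3$) matches the paper's, but your treatment of the rank-$1$ case is a different route and it contains a genuine gap. Writing $\hvol(X_0,\Delta_0;x_0)=A_{(X_0,\Delta_0)}(\ord_Y)\cdot\vol(-K_Y-\Delta_Y)$, a lower bound on $\vol(-K_Y-\Delta_Y)$ requires an \emph{upper} bound on $A_{(X_0,\Delta_0)}(\ord_Y)$ (your $\lambda^{-1}$); this holds for K-semistable cones (it is the Lichnerowicz-type bound $A(\ord_Y)\le n$, in the spirit of Theorem~\ref{thm:lichnerowicz}), but it is not a formal consequence of ``the normalized volume formula'' and must be invoked explicitly. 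More seriously, the boundary $\Delta_Y$ is the different of the Koll\'ar component, whose coefficients lie in the infinite DCC set $\left\{\frac{m-1+a}{m} \mid a\in I,\ m\in\ZZ_{>0}\right\}$ rather than in $I$, and the orbifold multiplicities $m$ along $Y$ --- which you need, together with the Weil class of $L$, to reconstruct the cone from the base --- are not a priori bounded. This is exactly the obstruction the paper's proof is organized around: when the $\delta$-plt blow-up $S$ from Zhuang's theorem coincides with $E=Y$, the multiplicities are bounded by $1/\delta$ and your direct argument is essentially what the paper does (citing \cite{HLM20}); when $S\ne E$, the paper instead combines $\alpha(E,\Delta_E)+\alpha(S,\Delta_S)<1$, Lemma~\ref{lem:deg-alpha} and \cite[Proposition 7.5]{han2020acc} to produce a further special degeneration to a complexity-$\le 1$ cone with normalized volume at least $\epsilon\alpha_0^3/8$, precisely to avoid bounding the base pair directly. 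Your proposal has no mechanism for the $S\ne E$ situation.

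A secondary point: the step you describe as ``openness of K-semistability in $\QQ$-Gorenstein families together with a deformation-theoretic argument'' is Proposition~\ref{prop:log-bounded-deformation+k-semistable}, which occupies Sections~\ref{sec:multi} and~\ref{sec:torus-inv-fam}; its crux is not openness but the finiteness of the subtori $\TT\subset\hat\TT$ that can occur (Proposition~\ref{prop:finite-number-of-tori-pairs}), which itself uses the lower bound on $\hvol$ through the degeneration-cone analysis. Example~\ref{ex:not-bounded} shows that singularities degenerating to a fixed bounded family need not themselves be bounded, so this step cannot be treated as routine. You do flag it as a required input, so this is an underestimate rather than an error; the rank-$1$ gap above is the substantive one.
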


Together with the stable degeneration theorem  \cite{XZ22} (see Theorem \ref{thm:sdt}), Theorem \ref{introthm:3-dim} implies the discreteness of normalized volumes in dimension $3$, answering affirmatively \cite[Conjecture 1.1(1)]{han2020acc} in dimension $3$ for rational coefficents.

\begin{introthm}\label{introthm:vol-discrete}
    Let $I\subset [0,1)\cap \QQ$ be a finite set. Then the following set
    \[
    \left\{\hvol(X,\Delta;x)\left| \begin{array}{l} (X,\Delta;x) \textrm{ is a $3$-dimensional klt singularity,} \\ \textrm{and $\Delta$ has coefficients in $I$.}\end{array}\right.\right\}
    \]
    has $0$ as its only accumulation point.
\end{introthm}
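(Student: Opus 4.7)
The plan is to argue by contradiction and combine Theorem~\ref{introthm:3-dim} with the stable degeneration theorem (Theorem~\ref{thm:sdt}). Suppose some $v>0$ were an accumulation point of the set in question. Then one can extract a sequence of $3$-dimensional klt singularities $(X_i,\Delta_i;x_i)$ with coefficients of $\Delta_i$ in $I$ whose normalized volumes $v_i := \hvol(X_i,\Delta_i;x_i)$ are pairwise distinct and converge to $v$.

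First I would apply Theorem~\ref{thm:sdt} to replace each $(X_i,\Delta_i;x_i)$ by its canonical K-semistable degeneration $(X_{i,0},\Delta_{i,0};x_{i,0})$. This preserves the dimension, keeps the boundary coefficients inside $I$, and has the same normalized volume $v_i$. Since $v_i \to v > 0$, we have $v_i > v/2$ for all $i \gg 0$, so Theorem~\ref{introthm:3-dim} places the collection $\{(X_{i,0},\Delta_{i,0};x_{i,0})\}_{i \gg 0}$ inside a single log bounded family $\mathcal{F}$.

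The remaining step, and the main difficulty, is to conclude that $\hvol$ takes only finitely many values on the K-semistable members of $\mathcal{F}$, which would contradict the pairwise distinctness of the $v_i$. The natural route is to parametrise $\mathcal{F}$ by a finite-type base $B$, argue that the locus $B^{\mathrm{ss}}\subset B$ of K-semistable fibres is constructible, and that $\hvol$ restricts to a constructible function on $B^{\mathrm{ss}}$; any constructible function on a noetherian scheme is finitely-valued.

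The hard part will be establishing the constructibility of $\hvol$ on $B^{\mathrm{ss}}$, together with the technical point that the stable degeneration preserves the coefficient set $I$. The latter is a local question about the flat degeneration coming from a minimiser of $\hvol$. The former should follow from the uniqueness statement in Theorem~\ref{thm:sdt} combined with semicontinuity of normalized volume in families and K-moduli theory for K-semistable Fano cone singularities; once these pieces are in place, the rest of the argument is formal.
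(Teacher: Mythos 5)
Your overall strategy is exactly the paper's: degenerate each singularity to its K-semistable stable degeneration via Theorem~\ref{thm:sdt}, note that the normalized volume is preserved (Proposition~\ref{prop:nvol-special-fibre}), place the degenerations in a single log bounded family using Theorem~\ref{introthm:3-dim}, and conclude that only finitely many volumes above a fixed $\epsilon>0$ can occur. The step you leave open --- that $\hvol$ takes only finitely many values on a log bounded family --- is precisely where the paper cites \cite[Theorem~1.3]{Xu19}, which says that the normalized volume is a constructible function on the base of any $\QQ$-Gorenstein flat family of klt singularities. In particular no K-moduli theory and no restriction to the K-semistable locus is needed, and the route you sketch would not close the gap as stated: lower semicontinuity of $\hvol$ in families does not by itself yield constructibility, and the uniqueness in Theorem~\ref{thm:sdt} is not the relevant input. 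So the skeleton is right, but the crux is an existing theorem rather than something to be rebuilt from K-moduli considerations. Two smaller points: the statement also asserts that $0$ \emph{is} an accumulation point, which a pure contradiction argument never establishes (the paper exhibits the quotient singularities $\bA^3/\mu_m$ of type $\frac{1}{m}(1,1,1)$, whose normalized volume $27/m$ tends to $0$); and your concern about the boundary coefficients after degeneration is legitimate but harmless, since the coefficients of $\Delta_0$ lie in a finite set of rationals determined by $I$, which is all that Theorem~\ref{introthm:3-dim} requires.
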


Note, that the above results have been obtained recently and independently in \cite{zhuan23}. 
In order to prove Theorem~\ref{introthm:3-dim},
we use the language of optimal destabilizations~\cite{BLZ19, LXZ21}.
These are a special class of degenerations for K-unstable Fano type varieties.
In our setting, the special fibers of these degenerations are K-semistable $3$-dimensional singularities with normalized volume away from zero and complexity at most one, i.e., 
they admit the action of either a $2$-dimensional torus or a $3$-dimensional torus.
Thus, we reduced the proof of Theorem~\ref{introthm:3-dim} to two pieces: proving Conjecture~\ref{conj:boundedness-sing} for complexity one $3$-dimensional singularities and 
showing a boundedness statement for K-semistable deformations of K-semistable singularities.
In Proposition~\ref{prop:log-bounded-deformation+k-semistable}, we prove the latter statement. 
This proof uses techniques from torus invariant families, special degenerations, and multigraded Hilbert schemes (see Section~\ref{sec:multi} and Section~\ref{sec:torus-inv-fam}).

In the context of complexity one singularities, we show that the conjecture holds in arbitrary dimension.

\begin{introthm}\label{thm:n-dim-K-ss-comp-1}
Let $\epsilon$ be a positive real number, $d \in \mathbb N$ and $I \subset [0,1) \cap \QQ$ a finite set.
The class of $d$-dimensional K-semistable singularities $(X,\Delta;x)$ of complexity one 
for which the normalized volume $\nvol(X,\Delta;x)>\epsilon$ and ${\rm coeff}(\Delta) \in I$ forms a log bounded family.
\end{introthm}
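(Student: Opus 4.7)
The plan is to combine the stable degeneration theorem with the Altmann--Hausen--Süss description of complexity-one $\mathbb{T}$-varieties, turning the boundedness question into a combinatorial finiteness statement for polyhedral divisors. By Theorem~\ref{thm:sdt}, every $(X,\Delta;x)$ in the class under consideration admits a torus-equivariant degeneration to a K-semistable Fano cone $(X_0,\Delta_0;\xi_0)$ carrying a $(d-1)$-dimensional torus action, where after a mild perturbation the Reeb vector $\xi_0$ may be assumed rational. Proposition~\ref{prop:log-bounded-deformation+k-semistable} ensures that this degeneration step is itself log bounded once $\nvol$ is bounded from below and coefficients lie in the finite set $I$, so it suffices to establish boundedness for the special fibers $(X_0,\Delta_0;x)$. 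I would then invoke Altmann--Hausen--Süss to describe $X_0$ by a proper polyhedral divisor $\mathcal{D} = \sum_{P} \mathcal{D}_P \otimes P$ on a smooth rational curve $Y$, with tail cone $\sigma \subseteq N_\mathbb{R}$, and with the boundary $\Delta_0$ decomposed into a horizontal part with coefficients in $I$ and a vertical part supported at the special points of $Y$.

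The heart of the argument is to show that the combinatorial data $(Y,\sigma,\{\mathcal{D}_P\},\Delta_0)$ lies in finitely many equivalence classes whenever $\nvol(X_0,\Delta_0;x)>\epsilon$ and coefficients lie in $I$. For this I would use the explicit formula for the normalized volume of a complexity-one Fano cone, expressing $\nvol$ in terms of mixed volumes of the polytopes $\mathcal{D}_P$ integrated against the Reeb polytope $\{m\in \sigma^\vee : \langle m,\xi_0\rangle \leq 1\}$. A lower bound on $\nvol$ forces the tail cone $\sigma$ to lie in finitely many $\mathrm{GL}(N)$-orbits of rational cones, bounds the number of special points carrying a nontrivial $\mathcal{D}_P$, and constrains each polyhedron $\mathcal{D}_P$ to have a bounded number of vertices and bounded denominators once the finite rational coefficient set $I$ is fixed. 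K-semistability contributes an additional balance condition on the horizontal and vertical boundaries, via the Donaldson--Futaki invariant in complexity one, which rules out unbounded families of combinatorial data that would otherwise be compatible with the volume bound alone.

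With the combinatorial data pinned down to finitely many types, a uniform Altmann--Hausen construction in families produces a bounded family of Fano cones $(X_0,\Delta_0;x)$, which together with the first reduction step gives the desired log boundedness of $(X,\Delta;x)$. The main obstacle I anticipate is the middle step: translating a lower bound on normalized volume into lattice finiteness of the underlying polyhedral divisor in arbitrary dimension. This will require a careful combinatorial analysis of the complexity-one volume formula, coupled with the K-semistability balance, and is where most of the new technical work will be concentrated.
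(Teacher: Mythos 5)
Your overall architecture differs from the paper's: the paper proves Theorem~\ref{thm:n-dim-K-ss-comp-1} by first bounding the minimal log discrepancy among torus-invariant Koll\'ar components (Theorem~\ref{theorem:mld-bound}, via explicit vertical and horizontal extractions on the polyhedral divisor and the platonic-triple case analysis), then feeding that bound into Zhuang's result (Theorem~\ref{thm:zhuan-main}) to get log boundedness \emph{up to special degeneration}, and finally upgrading to honest log boundedness via the multigraded Hilbert scheme machinery (Corollary~\ref{cor:semi-stable-bounded}, resting on Propositions~\ref{prop:log-bounded-deformation+k-semistable} and \ref{prop:finite-number-of-tori-pairs}). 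You instead propose to read off finiteness of the combinatorial data directly from a volume formula plus a K-semistability ``balance condition.'' That is a genuinely different route, but as sketched it has a real gap.

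The gap is in your middle step. You assert that a lower bound on $\nvol$ ``forces the tail cone $\sigma$ to lie in finitely many $\mathrm{GL}(N)$-orbits,'' bounds the number of special points, and ``constrains each polyhedron $\mathcal{D}_P$ to have a bounded number of vertices and bounded denominators.'' This is false without K-semistability: the paper's Example~\ref{ex:not-bounded} exhibits complexity-one singularities $X_{n,1}$ of types $(1,2,n)$ and $(1,1,n/2)$, all with $\nvol \geq v > 0$, whose p-divisors have unbounded multiplicities and tail cones in infinitely many $\mathrm{GL}_2(\ZZ)$-orbits. So the entire burden of the finiteness claim falls on the K-semistability input, which you leave at the level of an unspecified ``Donaldson--Futaki balance condition in complexity one''; that is precisely where the work is, and no known closed-form Futaki-type criterion pins down the lattice data in arbitrary dimension the way you need. (In the paper, K-semistability enters through Proposition~\ref{prop:ksstable-deg} and the accumulation analysis of Lemma~\ref{lem:accumulation}, which is a rather delicate limiting argument, not a direct combinatorial inequality.) Two smaller issues: for a K-semistable singularity the stable degeneration of Theorem~\ref{thm:sdt} is the trivial one, so your first reduction is vacuous rather than a genuine simplification; and Proposition~\ref{prop:log-bounded-deformation+k-semistable} takes log boundedness of the degenerations as a \emph{hypothesis} (together with the $\QQ$-polarized-family condition), so it cannot be used to ``ensure that the degeneration step is log bounded'' --- it only converts boundedness of special fibers into boundedness of the original class.
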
 

When working with general klt singularities, 
it is also expected that the normalized volume 
can be used to bound these singularities~\cite{han2020acc} in a weaker sense.
It is conjectured that klt singularities whose
normalized volume is bounded away from zero, 
are bounded up to special degeneration.
This conjecture is closely related
to the boundedness
up to deformation
of exceptional singularities~\cite{Mor18c,HLM20}.
In~\cite{han2020acc}, the authors prove the conjecture for terminal $3$-fold singularities.
In~\cite{MS21}, the authors prove the conjecture for toric singularities using results from convex geometry such as the Blaschke-Santal\'o inequality.
However, in such a case, we only get finitely many singularities. 
In~\cite{Zhu21}, the author draws a connection between the boundedness using normalized volume
and minimal log discrepancies of Koll\'ar components.
Koll\'ar components are a special kind of exceptional divisors over klt singularities~\cite{Mor18b}.
In~\cite{Zhu21}, Zhuang proves the boundedness up to special degeneration for $3$-fold singularities normalized volume bounded away from zero. This observation is crucially important for our work. In fact, as
an important step in our proof of Theorem~\ref{thm:n-dim-K-ss-comp-1}, we show this weaker form of log boundedness for the case of complexity one $\mathbb{T}$-singularities.

\begin{introthm}\label{introthm:norm-vol-bound}
Let $d$ be a positive integer, $\epsilon$ a positive real numbers and $I \subset [0,1) \cap \QQ$ a finite set.
The class of $d$-dimensional log terminal
complexity one $\mathbb{T}$-singularities $(X,\Delta;x)$
for which $\nvol(X,\Delta;x)>\epsilon$
and ${\rm coeff}(\Delta) \in I$
is log bounded up to special degeneration.
\end{introthm}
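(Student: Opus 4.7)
The overall strategy is to invoke the stable degeneration theorem of \cite{XZ22} (Theorem~\ref{thm:sdt}) to reduce the statement to the K-semistable case, and then apply Theorem~\ref{thm:n-dim-K-ss-comp-1}. Concretely, given any $(X,\Delta;x)$ in the class, I would first apply Theorem~\ref{thm:sdt} to produce a special degeneration to a $d$-dimensional K-semistable klt singularity $(X_0,\Delta_0;x_0)$ whose coefficient set is contained in $I$ and which satisfies
\[
\hvol(X_0,\Delta_0;x_0)=\hvol(X,\Delta;x)>\epsilon.
\]

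The crucial point is that, under the complexity-one hypothesis, this degeneration is $\mathbb{T}$-equivariant. Using the equivariant minimization principle for normalized volume on $\mathbb{T}$-singularities (in the spirit of Blum--Liu--Xu and Li--Xu), the minimizing valuation on $(X,\Delta;x)$ can be chosen to be $\mathbb{T}$-invariant. Consequently, the associated Rees-type test configuration is $\mathbb{T}$-equivariant and the special fiber $(X_0,\Delta_0;x_0)$ inherits the action of $\mathbb{T}$. Moreover, the Reeb vector arising from the K-semistable degeneration provides an additional $\mathbb{G}_m$-action on $X_0$, a priori independent of $\mathbb{T}$. Hence $(X_0,\Delta_0;x_0)$ carries a torus action of rank at least $d-1$, so its complexity is at most one.

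Having set up this reduction, I would conclude as follows: if the complexity of $(X_0,\Delta_0;x_0)$ equals one, Theorem~\ref{thm:n-dim-K-ss-comp-1} places $(X_0,\Delta_0;x_0)$ in a single log bounded family; if the complexity drops to zero (the toric case), the same conclusion follows from the toric boundedness result of \cite{MS21}. In either case the special fibers form a log bounded family, which is precisely the log boundedness up to special degeneration claimed in the statement. The main obstacle is the second step: justifying $\mathbb{T}$-equivariance of the stable degeneration in the complexity-one setting and carefully verifying that the three invariants (normalized volume, coefficient set, and dimension) are preserved under it. Once this is in place, the argument reduces to a direct appeal to Theorem~\ref{thm:n-dim-K-ss-comp-1}.
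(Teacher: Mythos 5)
Your argument is essentially correct in outline, but it follows a genuinely different --- in fact reversed --- route compared to the paper. The paper proves Theorem~\ref{introthm:norm-vol-bound} in one line by combining Zhuang's criterion (Theorem~\ref{thm:zhuan-main}) with the upper bound on log discrepancies of torus-invariant Koll\'ar components established in Theorem~\ref{theorem:mld-bound}; neither K-semistability nor the stable degeneration theorem enters at this stage. You instead pass through Theorem~\ref{thm:sdt} to reach a K-semistable central fibre of complexity at most one and then quote Theorem~\ref{thm:n-dim-K-ss-comp-1} (plus \cite{MS21} in the toric case). This is not circular, since the paper's proof of Theorem~\ref{thm:n-dim-K-ss-comp-1} does not invoke Theorem~\ref{introthm:norm-vol-bound}; but it is logically heavier: in the paper, the boundedness-up-to-degeneration statement is the \emph{first} half of the proof of Theorem~\ref{thm:n-dim-K-ss-comp-1} (via Corollary~\ref{cor:semi-stable-bounded}), so your route additionally drags in the multigraded Hilbert scheme machinery of Sections~\ref{sec:multi} and~\ref{sec:torus-inv-fam} together with the full stable degeneration theorem, all resting in the end on the same Theorem~\ref{theorem:mld-bound}. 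The points you flag as needing care are real but manageable: $\TT$-invariance of the minimizer follows from its uniqueness up to rescaling; the coefficients of $\Delta_0$ remain in a finite set determined by $I$; and when the minimizer is quasi-monomial of rational rank greater than one, the resulting polarized degeneration must be realized as a composition of special degenerations (as in \cite[Proof of Proposition 3.1]{LWX21}) to match the paper's notion of boundedness up to special degeneration. What your approach buys is a uniform template rather than a shortcut: it mirrors exactly the strategy the paper itself uses to prove Theorem~\ref{introthm:vol-discrete}, where one degenerates to the K-semistable fibre and then applies the corresponding boundedness theorem.
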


Examples show that we only get boundedness \emph{up to special degenerations}, but not boundedness in the usual sense (see Example~\ref{ex:not-bounded}). This shows that without further assumption the outcome of Theorem~\ref{introthm:norm-vol-bound} is optimal. We recall that bounded up to special degeneration
means that all the singularities as in the statement are deformations of a bounded set of algebraic singularities.

In order to prove Theorem~\ref{introthm:norm-vol-bound} by applying the results of \cite{Zhu21} we need to bound the {\em minimal log discrepancies}  of $\mathbb{T}$-singularities.
This invariant known as mld
is tightly related to the termination of flips~\cite{HM20}.
Shokurov conjectured that this invariant 
satisfies the ascending chain condition in a fixed dimension~\cite{Sho04}.
On the other hand, Ambro conjectured that this invariant is lower semicontinuous~\cite{Amb06}.
Both conjectures, independently, imply that in a fixed dimension there is an upper bound for the minimal log discrepancy.
We prove the existence of an upper bound for the minimal log discrepancy of
log terminal complexity one $\mathbb{T}$-singularities among Koll\'ar components.

\begin{introthm}\label{introthm:upper-bound-mld}
Let $d$ be a positive integer
and $\eta$ be a positive real number.
There exists a positive integer $A(d,\eta)$, 
only depending on $d$ and $\eta$, satisfying the following.
Let $(X,\Delta;x)$ be a $d$-dimensional log terminal 
complexity one $\mathbb{T}$-singularity for which
${\rm coeff}(\Delta)>\eta$.
Then, we have that 
$a_{(X,\Delta)}(E) \leq A(d,\eta)$ for some Koll\'ar component $E$ of $(X,\Delta;x)$.
\end{introthm}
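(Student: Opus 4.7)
The plan is to combine the Altmann--Hausen--S\"u\ss{} combinatorial description of $\mathbb{T}$-varieties of complexity one with an equivariant version of the Koll\'ar component construction, and then to bound the log discrepancy of the resulting torus-invariant divisor by an elementary convex-geometric estimate using the coefficient lower bound $\eta$.

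First, I would reduce to the case in which $\Delta$ is $\mathbb{T}$-invariant. Given any $\mathbb{T}$-invariant divisorial valuation $E$ over $(X;x)$, averaging $\Delta$ over the compact subtorus produces a $\mathbb{T}$-invariant boundary $\Delta^{\mathbb{T}}$ with $a_{(X,\Delta)}(E)=a_{(X,\Delta^{\mathbb{T}})}(E)$, and the components of $\Delta^{\mathbb{T}}$ still have coefficients bounded below by $\eta$. Hence it suffices to construct, for the $\mathbb{T}$-invariant pair, a $\mathbb{T}$-invariant Koll\'ar component of log discrepancy at most $A(d,\eta)$. By the Altmann--Hausen--S\"u\ss{} correspondence, $X$ is encoded by a proper polyhedral divisor $\D=\sum_{P\in Y}\Delta_P\otimes P$ on a smooth projective curve $Y$, with common tail cone $\sigma\subset N_{\QQ}$ and $\mathrm{rk}\,N=d-1$. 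Every $\mathbb{T}$-invariant prime divisor extracted over $x$ arises either from a vertex $v$ of some $\Delta_P$ or from a ray of $\sigma$, and the explicit formulas of Ilten--S\"u\ss{} express its log discrepancy in terms of the lattice data of $v$ and the coefficients of $\Delta^{\mathbb{T}}$.

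Next, I would run the equivariant Koll\'ar component construction to extract a unique $\mathbb{T}$-invariant plt divisor $E$ centered at $x$; by torus-invariance $E$ is one of the divisors $E_{P,v}$ or $E_\rho$ identified above. The Ilten--S\"u\ss{} formula together with the coefficient lower bound $\eta$ then bounds $a_{(X,\Delta)}(E)$ in terms of the lattice coordinates of the distinguished vertex and of $\eta$, and the remaining content is a convex-geometric estimate in $N_{\QQ}\cong\QQ^{d-1}$: the klt condition on $(X,\Delta)$ combined with the coefficient bound forces the relevant vertex to lie within bounded lattice distance from the origin, yielding the desired constant $A(d,\eta)$.

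The main obstacle will be effectively controlling the shape of the polyhedra $\Delta_P$ near the combinatorial datum defining $x$: even under klt-ness these polyhedra may be long and thin in directions whose boundary coefficients approach $1$, and the coefficient bound $\eta$ is precisely what rules this out. Concretely, the argument should reduce to a finite combinatorial problem in the polyhedral complex attached to $(X,\Delta;x)$, and tracking the dependence of the answer on $d$ and $\eta$ (while guaranteeing the plt property of the extracted divisor) is the step that will require the most care.
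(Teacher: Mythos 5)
Your general framework (the Altmann--Hausen combinatorial description, restricting attention to $\mathbb{T}$-invariant divisors, and the explicit discrepancy formulas for horizontal and vertical divisors) matches the setup of the paper's Section~\ref{sec:t-varieties}. But the core of your argument has a genuine gap in two places. First, you propose to ``run the equivariant Koll\'ar component construction to extract a unique $\mathbb{T}$-invariant plt divisor $E$'' and then bound its discrepancy. Koll\'ar components are far from unique, and a generic torus-invariant one has unbounded log discrepancy; the entire difficulty of the theorem is to \emph{choose} a specific Koll\'ar component whose discrepancy can be controlled. The paper does this by a case analysis on the platonic type $(p,q,r)$ of the p-divisor: for type $(1,p,q)$ it either uses a vertical component with discrepancy at most $2d-1$ (Proposition~\ref{prop:1-p-q-vertical}, built by passing to the toric degeneration and taking a positive combination of $d$ carefully chosen rays) or, when that fails, a horizontal component with discrepancy less than $2/\eta$ (Proposition~\ref{prop:1-p-q-horizontal}, which is the only place the coefficient bound $\eta$ enters); for the remaining types it assembles a horizontal ray $\eta=\operatorname{pos}(\sum_{\rho\in M}n_\rho)$ from a bounded set $M$ of rays each of bounded discrepancy, using the linearity of $a_{(X,\Delta)}$ on horizontal divisors (Lemma~\ref{lem:sum-of-horizontal-discrepancies}) and the arithmetic Lemma~\ref{lem:technical-lemma}. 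None of this selection mechanism appears in your sketch.

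Second, your proposed convex-geometric estimate --- that klt-ness plus the coefficient bound forces the relevant vertex to lie within bounded lattice distance of the origin --- is false. Example~\ref{ex:not-bounded} exhibits $3$-dimensional klt complexity-one singularities of types $(1,2,n)$ with $n\to\infty$: the multiplicities $\mu(v)$ of the vertices and the tail cones are unbounded, yet each member of the family still admits a Koll\'ar component of uniformly bounded discrepancy (as the theorem asserts). So the combinatorial data is genuinely unbounded, and the bound on $a_{(X,\Delta)}(E)$ cannot come from bounding the lattice data; it has to come from cancellation in the discrepancy formula, e.g.\ $a_{(X,\Delta)}(D_\rho)=\lambda_\rho\bigl(2-\sum_y b_{\Delta,y}\bigr)$ of Lemma~\ref{lem:horizontal-log-discrepancies}, where the quantity $2-\sum_y b_{\Delta,y}$ is small of order $1/\operatorname{lcm}(p,q,r)$ precisely when $\lambda_\rho$ is large. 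The coefficient bound $\eta$ is used only to bound $p<2/\eta$ via the degree condition $\sum_y b_{\Delta,y}<2$ on $\mathbb{P}^1$, not to control the shape of the polyhedra. As written, your proof would not close.
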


In the proof of the previous theorem, 
we will use the language of polyhedral divisors
and toric degenerations to control
the minimal log discrepancy.

Finally, in Section~\ref{sec:hyper}, we complement our previous results
by proving Conjecture~\ref{conj:boundedness-sing} in the case of hypersurface singularities.

\begin{introthm}\label{introthm:hypersurfaces}
Let $d$ be a positive integer and $v$ be a positive real number. 
The class of $d$-dimensional K-semistable hypersurface singularities $(X;x)$ with $\nvol(X;x)>v$ forms a bounded family.
\end{introthm}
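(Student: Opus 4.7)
The plan is to bound the multiplicity at $x$ by a single valuative inequality, reduce to Fano cone hypersurfaces via the stable degeneration theorem, and lift by deformation boundedness.

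\textbf{Step 1: Bounding the multiplicity.} For any hypersurface singularity $(X;x)\subset(\mathbb{A}^{d+1};0)$ with $m=\mathrm{mult}_x(X)$, the standard $\mathbb{G}_m$-weight valuation $v_0$ on $\mathbb{A}^{d+1}$ restricts to a valuation on $X$ with log discrepancy $A_X(v_0)=d+1-m$ (by inversion of adjunction for $X\subset\mathbb{A}^{d+1}$) and volume $\vol_X(v_0)=m$. Therefore
\[
\hvol(X;x)\ \le\ A_X(v_0)^d\cdot \vol_X(v_0)\ =\ m\,(d+1-m)^d.
\]
Together with $\hvol(X;x)>v$, this forces $m\le d$ and bounds $m$ explicitly in terms of $d$ and $v$.

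\textbf{Step 2: Reduction to a Fano cone hypersurface.} I would then invoke the stable degeneration theorem (Theorem~\ref{thm:sdt}) to degenerate $(X;x)$ equivariantly to a K-semistable Fano cone $(X_0;x_0)$ with $\hvol(X_0;x_0)=\hvol(X;x)>v$. Since the degeneration is induced by a $\mathbb{Q}$-grading on the coordinate ring of $\mathbb{A}^{d+1}$, the special fibre $X_0=(f_0=0)$ is again a hypersurface, with $f_0$ quasi-homogeneous of weighted degree $m_0$ under positive rational weights $w=(w_0,\dots,w_d)$, and $\mathrm{mult}_0(f_0)\le m\le d$ by upper semicontinuity of multiplicity. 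The minimising Reeb-vector valuation on $X_0$ yields, by K-semistability,
\[
\hvol(X_0;x_0)\ =\ \frac{m_0\,(|w|-m_0)^d}{w_0\cdots w_d}\ >\ v,
\]
which, combined with the bound $\mathrm{mult}_0(f_0)\le d$, restricts the normalised weights $w_i$ and weighted degree $m_0$ to finitely many possibilities. Hence $f_0$ lies in a finite-dimensional space of quasi-homogeneous polynomials, and the K-semistable Fano cone hypersurfaces with $\hvol>v$ form a bounded family.

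\textbf{Step 3: Lifting.} Finally, $(X;x)$ is a K-semistable deformation of $(X_0;x_0)$ preserving $\hvol$, so the boundedness of Fano cone hypersurfaces obtained above, combined with a deformation-boundedness statement in the spirit of Proposition~\ref{prop:log-bounded-deformation+k-semistable}, yields boundedness of the original family.

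\textbf{Main obstacle.} The delicate point will be Step~2: extracting a finite-dimensional parameter space from the single inequality $\hvol>v$ requires care, as the Reeb vector may be rational of arbitrary denominator and the acting torus $\mathbb{T}\subset\mathrm{Aut}(X_0)$ may have rank greater than one. Intersecting the Reeb cone with the integral lattice of torus weights, and combining with the klt condition $|w|>m_0$, should confine the weight data to finitely many orbits under the relevant scaling.
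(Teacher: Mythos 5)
Your Step~1 is correct ($\hvol(X;x)\le m\,(d+1-m)^d$ does force $m\le d$), and Step~2's reduction to a quasi-homogeneous hypersurface is actually automatic here, since by Definition~\ref{def:K-ss} a K-semistable singularity is already a polarized Fano cone. But there is a genuine gap at the heart of Step~2: the claim that the inequality $\frac{m_0(|w|-m_0)^d}{w_0\cdots w_d}>v$, together with the multiplicity bound and the klt condition $|w|>m_0$, confines the weight data to finitely many possibilities is false. Consider the hypersurfaces $x_0x_n+x_1^{w+1}+\dots+x_{n-1}^{w+1}=0$ in $\mathbb{A}^{n+1}$ with weights $(1,\dots,1,w)$ and degree $w+1$: they are klt ($\sum_i w_i-\deg=n-1>0$), have multiplicity $2$, and normalized volume $\frac{(w+1)(n-1)^n}{w}\ge (n-1)^n$, uniformly bounded below, yet they form an unbounded family as $w\to\infty$. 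Nothing in your argument uses K-semistability beyond the fact that the Reeb valuation computes the volume, and that is not enough to exclude such families.

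What the paper does instead is extract two further consequences of K-semistability. First, it derives the Lichnerowicz inequality $\sum_i w_i - \deg \le n w_0$ (Theorem~\ref{thm:lichnerowicz}) by perturbing the Reeb field in the direction of a coordinate valuation and using that the normalized volume is minimized at $\xi$; combined with the normalization $w_0=1$ this bounds $\sum_i w_i-\deg$ and disposes of the case where two weights diverge. Second --- and this is the step your sketch cannot reach, since the example above satisfies Lichnerowicz as well --- in the remaining regime where only $w_n\to\infty$ and $\deg/w_n\to 1$, the paper writes the equation as $x_ng_{\deg-w_n}+g_{\deg}$, re-embeds $X$ as a complete intersection in $\mathbb{A}^{n+2}$, and constructs an explicit special degeneration whose associated valuation has strictly smaller normalized volume than $\xi$, contradicting K-semistability. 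Without an argument of this kind your ``main obstacle'' is not merely delicate; the finiteness you assert in Step~2 simply does not follow from the inputs you allow yourself.
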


In order to prove the previous theorem, 
we will use the Lichnerowicz inequality
as well as some degeneration arguments.

\subsection*{Acknowledgements}
We would like to thank Ziquan Zhuang for informing us that he obtained Theorems~\ref{introthm:3-dim} and \ref{introthm:vol-discrete} independently by using different techniques and making his manuscript available to us, see \cite{zhuan23}.
The authors would also like to thank Nathan Owen Ilten for many useful comments, and Chenyang Xu for helpful discussions. The first author was partially supported by NSF DMS-2148266 (formerly DMS-2001317) and an Alfred P. Sloan
research fellowship.  The third author was supported by the EPSRC grants EP/V013270/1, EP/V055445/1 and by funding from the Carl Zeiss Foundation.

\section{Preliminaries}
\label{sec:prel}

In this subsection, we introduce some preliminaries regarding 
the singularities of the minimal model program, normalized volume, Fano cone singularities, varieties with torus actions, and special degenerations of singularities.

\subsection{Singularities of the MMP}
In this subsection, we recall the singularities of the minimal model program
and purely log terminal blow-ups.

\begin{definition}
A \emph{pair} is a couple
$(X,\Delta)$ such that
$X$ is a normal quasi-projective variety
and $\Delta$ is an effective divisor for which
$K_X+\Delta$ is $\rr$-Cartier.
Let $x\in X$ be a closed point.
We write $(X,\Delta;x)$ for the corresponding singularity.
We say that $(X,\Delta;x)$ is a singularity of the pair.

Let $(X,\Delta;x)$ be a singularity of a pair.
Let $\pi\colon Y\rightarrow X$ be a projective birational morphism and 
$E\subset Y$ be a prime divisor. 
The {\em log discrepancy} of $(X,\Delta)$ at $E$ is the rational number:
\[
a_{(X,\Delta)}(E):=1+{\rm coeff}_E(K_Y-\pi^*(K_X+\Delta)).
\]
We say that $(X,\Delta;x)$ is {\em Kawamata log terminal} 
(or {\em log terminal}) 
if all the log discrepancies of $(X,\Delta)$ are positive on a neighborhood of $x\in X$.
We may write {\em klt} or {\em log terminal}
for short.
We write $a_E(X)$ if $\Delta=0$.
The {\em minimal log discrepancy} of $(X,\Delta)$ at $x$ is defined to be
\[
\mld_x(X,\Delta):=\min\{a_{(X,\Delta)}(E) \mid c_X(E)=x\}. 
\]
If $(X,\Delta;x)$ is klt, then the minimal log discrepancy of $(X,\Delta)$ at $x$ is positive.

In our paper the $\TT$-equivariant minimal log discrepancies \emph{among Koll\'ar components} will play a major role. It is defined as follows.
\[
\mldK_x^\TT(X,\Delta)=\min\{a_{(X,\Delta)}(E) \mid c_X(E)=x, \text{ $E$ is a $\TT$-invariant Koll\'ar component}\}. 
\]
\end{definition}

\begin{definition}
Let $(X,\Delta)$ be a log pair.
We say that $(X,\Delta)$ has {\em purely log terminal singularities}
if the following conditions hold:
\begin{itemize}
    \item all the exceptional log discrepancies of $(X,\Delta)$ are positive, and 
    \item the coefficients of $\Delta$ are in the set $[0,1]$.
\end{itemize}
To abbreviate, we write {\em plt} instead of 
purely log terminal.
\end{definition}

  \begin{definition}
    Let $(X;x)$ be  a klt singularity. We say that a proper birational morphism $f \colon  \widetilde{X} \to  X$ induces
    a {\em weak component} $E$ if the following conditions hold:
    \begin{enumerate}
        \item the morphism $f$ is an isomorphism
        over $X\setminus \{x\}$, 
        \item the fibre $f^{-1}(x)$ is an irreducible divisor $E$, and 
        \item we have $-E$ is $f$-ample. 
    \end{enumerate}
    We say that $f\colon \widetilde{X}\rightarrow X$ induces a {\em Koll\'ar component} $E$ if furthermore
    the pair $(\widetilde{X},E)$ is plt.
  \end{definition}

\subsection{Normalized volume}

In this subsection, we recall the definition of normalized volume.
First, we start with the definition of the volume of a valuation over a singularity (see, e.g.,~\cite{ELS03}).

\begin{definition}
Let $(X;x)$ be a normal algebraic singularity of dimension $d$.
Let $v\in \Val(X;x)$ be a valuation centered at $x\in X$.
We write $a_m(v):=\{ f\in \mathcal{O}_{X,x} \mid 
v(f)\geq m\}$.
The {\em volume}
of $v$ is defined to be 
\[
\vol_X(v):=\limsup_{m\rightarrow\infty} 
\frac{ 
\ell(\mathcal{O}_{X,x}/a_m(v))
}{m^d/d!}
\]
where $\ell$ is the length of the module.
\end{definition}

The following is a special case of a definition
due to Li~\cite{Li18}.

\begin{definition}\label{def:norm-volume}
Let $(X,\Delta;x)$ be a $d$-dimensional
log terminal singularity of a pair.
The {\em normalized volume function}
is the function
\[
\nvol_{(X,\Delta)}
\colon \Val(X;x)\rightarrow \rr_{>0}\cup \{\infty\} 
\]
defined by the formula
\[
\nvol(v):=
a_{(X,\Delta)}(v)^d \vol_X(v) 
\] 
whenever $a_{(X,\Delta)}(v)\in \rr_{>0}$
and as $\infty$ otherwise.
The {\em normalized volume} (also called the \emph{local volume} in some literature) of 
a  log terminal singularity $(X,\Delta;x)$ of a pair
is defined to be 
\[
\widehat{\rm vol}(X,\Delta;x):=
\inf\left\{\nvol(v)
\mid v\in \Val(X;x)\right\}.
\] 
\end{definition}

\begin{example}
  \label{exp:hypersurfaces1}
   Let $X_d\subset \mathbb A^{n+1}$  be a quasi-homogeneous hypersurface of degree $d$ with respect to the weights
  $w_0, \ldots, w_n > 0$. Then the order with respect to the grading defines a valuation $v$ on $(X_d;0)$ via \(v\left(\sum_u f_u\right) = \min \{u \mid f_u \neq 0\}.\)
  In this situation we have
  \[\nvol(v)= \frac{d\left(\sum_i w_i-d\right)^n}{w_0 \cdots w_n}.\]
  More generally for complete intersections in $\mathbb A^{n+c}$ one has 
  \[\nvol(v)= \frac{\left(\prod_j d_j\right) \cdot \left(\sum_i w_i-\sum_j d_j\right)^n}{\prod_i w_i}.\]
  This follows from the more general considerations in \cite{Li18}[§2.2].
\end{example}

\subsection{Normalized volume and K-semistability of Fano cone singularities}
In this subsection, we recall the connection between normalized volume and the notion of K-semistability.

\begin{definition}
  Let $X$ be an affine variety with an effective action of an algebraic torus $\mathbb{T}$. If $M$ is the character lattice of the torus and $\omega \subset M_\RR:=M\otimes \RR$ is the weight cone of that action. If $\omega$ is a full-dimensional pointed cone and the only invariant regular functions are the constant functions, then there is a unique fixed point $x \in X$ and the pair $(X;x)$ is called a \emph{cone singularity}. We will often surpress $x$ in the notation and also call $X$ itself a cone singularity.

  If additionally $(X;x)$ is log terminal, then it is called a \emph{Fano cone singularity} or \emph{Fano cone $\TT$-singularity} if we want to emphasise the acting torus. More generally, if $\Delta$ on $X$ is a $\TT$-invariant $\QQ$-divisor and $(X,\Delta;x)$ is klt, then it is called a  \emph{log Fano cone singularity}.


  The polyhedral cone $\sigma=\omega^\vee \subset N_\RR =M_\RR^*$ is called the Reeb cone of  $(X;x)$. Since, the fixed point is unique in our setting we will sometimes refer to $X$ itself as a cone singularity, as well.

\smallskip
An element $\xi$ from the interior of \[\sigma = \omega^\vee
= \{v \in M^*_\RR \mid \forall_{u \in \omega} \colon \langle v,u  \rangle \geq 0\}
  \subset N_\RR := M_\RR^*\] is called a \emph{polarization} of $(X;x)$.

\smallskip

Such a polarization $\xi$ can be interpreted as a $\mathbb{T}$-invariant valuation $\xi \in \Val(X;x)$ via $\xi(f_u):=\langle u, \xi \rangle$ for a homogeneous function $f_u$ of weight $u \in M$. 
\end{definition}

We now turn to a relative version of cone singularities.
\begin{definition}
Consider a variety $\mathcal{X}$ with a $\TT$-action, together with a surjective morphisms $f \colon \mathcal{X} \to S$ such that $\mathcal{X}$ is affine over some base $S$ and $f_*\CO_{\mathcal{X}}^{\TT} = \CO_S$. Then the $\TT$-action induces a grading on $f_*\CO_{\mathcal{X}}$. If the weights of this grading span a full-dimensional pointed cone, then we call $\mathcal{X}\to S$ a $\TT$-invariant family of cone singularities and the dual cone of the weight cone is called the \emph{common Reeb cone} of that family.
\end{definition}

\begin{definition}
  A \emph{special degeneration} of a singularity $(X,\Delta;x)$ is a flat family $(\mathcal{X},\mathbf{\Delta};s) \to \mathbb{A}^1$ together with a $\CC^*$-action on $\mathcal{X}$ fixing $\mathbf \Delta$ and with $s \colon \mathbb A^1 \to \X$ being a $\CC^*$-equivariant section,  such that
\begin{enumerate}
\item the structure map of the family is  equivariant with respect to the standard $\CC^*$-action on $\mathbb{A}^1$, and
\item the generic fibre $(\X_t,\Delta_t;s(t))$ is isomorphic to $(X,\Delta;x)$ for $t\neq 0$ and the fibre $(\mathcal X_0,\Delta_0;s(0))$ over $0 \in \mathbb{A}^1$ is klt. 
\end{enumerate}

If $(\mathcal X,\mathbf{\Delta})  \cong (X,\Delta) \times \mathbb{A}^1$, then we call $\mathcal X$ a \emph{product degeneration}. 

Given a $\mathbb{T}$-action on $X$ we say that the special degeneration is $\TT$-invariant, if there is also a $\mathbb{T}$-action on $\mathcal{X}$, fixing $\mathbf{\Delta}$ and restricting to the original $\mathbb{T}$-action on the generic fibre and commuting with the $\CC^*$-action.
\end{definition}

\begin{remark}
  In the case of a cone $\TT$-singularity with $\TT$-invariant special degeneration both, the fixed point and the  section are unique. We will therefore usually surpress them in our notation.
\end{remark}

Given a klt singularity $(X,\Delta;x)$ with a (possibly trivial) torus action, every equivariant special degeneration is induced by a torus invariant divisorial valuation $v \in \Val(X;x)$ via the Rees construction for the associated filtration of the coordinate ring. The central fibre of this special degeneration is then given as the spectrum  of the associated graded ring. 
Now,  there exists a generalized version of the Rees construction for so-called quasi-monomial valuations $v$, see \cite[Section 2.1]{zbMATH02020243}. When the associated graded ring is finitely generated, then its spectrum $X_0$ is the central fibre of a degeneration and comes naturally with a polarization $\xi_v$ instead of a $\CC^*$-action. We say that $v$ induces a degeneration with polarized central fibre $(X_0, \Delta_0;\xi_v)$.

\begin{proposition}[{\cite[Lemma~2.58]{li2017stability}}]
  \label{prop:nvol-special-fibre}
  Consider a degeneration of a klt singularity $(X,\Delta;x)$ with klt polarized special fibre $(\mathcal X_0,\Delta_0; \xi_v)$  induced by a quasi-monomial valuation $v$ with finitely generated associated graded ring. Then $\nvol_{(X,\Delta)}(v)=\nvol_{(X_0,\Delta_0)}(\xi_v)$ holds.
\end{proposition}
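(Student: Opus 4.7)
The plan is to prove $\nvol_{(X,\Delta)}(v) = \nvol_{(X_0,\Delta_0)}(\xi_v)$ by showing that both factors of the normalized volume, the volume $\vol_X(v)$ and the log discrepancy $a_{(X,\Delta)}(v)$, are preserved by the degeneration induced by $v$.

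\textbf{Volume equality.} First I would unpack the Rees construction: the valuation $v$ defines a decreasing filtration $\{\mathfrak{a}_m(v)\}$ on $R := \CO_{X,x}$, whose associated graded, by hypothesis finitely generated, is precisely $R_0 := \CO_{X_0,0}$. The corresponding Rees algebra is flat over $\bA^1$ with general fibre $R$ and special fibre $R_0$, and by construction the polarization $\xi_v$ is the $\TT$-invariant valuation on $R_0$ reading off the induced grading. Flatness of the Rees family gives $\ell(R/\mathfrak{a}_m(v)) = \ell(R_0/\mathfrak{a}_m(\xi_v))$ for every $m$, so the two $\limsup$ expressions defining the volume agree and $\vol_X(v) = \vol_{X_0}(\xi_v)$.

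\textbf{Log discrepancy equality.} The subtler point is $a_{(X,\Delta)}(v) = a_{(X_0,\Delta_0)}(\xi_v)$. When $v = c \cdot \ord_E$ is divisorial with $E$ a $\TT$-invariant Koll\'ar component, the corresponding birational model extends equivariantly across the Rees family and the coefficient of $E$ in $K_Y - \pi^*(K_X+\Delta)$ is constant along the $\CC^*$-flow. For a general quasi-monomial $v$, I would use the Jonsson--Mustata-type characterization
\[
a_{(X,\Delta)}(v) \;=\; \lct\bigl(X,\Delta;\mathfrak{a}_\bullet(v)\bigr) \;=\; \lim_{m\to\infty} m\cdot \lct\bigl(X,\Delta;\mathfrak{a}_m(v)\bigr),
\]
together with the fact that the equivariant Rees degeneration carries $(X,\Delta,\mathfrak{a}_m(v))$ to $(X_0,\Delta_0,\mathfrak{a}_m(\xi_v))$: lower semicontinuity of lct in flat families combined with equivariance of the test configuration pins down the value on the special fibre, and passage to the limit in $m$ yields the desired equality.

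\textbf{Main obstacle.} The principal difficulty is the log discrepancy identity for quasi-monomial non-divisorial $v$: one must verify that the lct of each $\TT$-invariant valuation ideal is genuinely preserved along the Rees degeneration and that the limit in $m$ may be interchanged with the passage to the special fibre. The finite generation of $\gr_v R$ is essential here, since it guarantees that the degenerated filtration on $R_0$ is the one coming from $\xi_v$ and that each $\mathfrak{a}_m(\xi_v)$ is a finitely generated torus-invariant ideal to which the usual lct machinery applies. Once this is in place, multiplying the preserved factors $a_{(X,\Delta)}(v)^d$ and $\vol_X(v)$ yields the claimed identity of normalized volumes.
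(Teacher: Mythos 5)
The paper offers no proof of this statement: it is quoted directly from \cite[Lemma~2.58]{li2017stability}, so your attempt has to be judged against the standard argument there. Your treatment of the volume factor is essentially that argument and is fine: since $\gr_v\CO_{X,x}\cong\CO_{X_0,x_0}$ as graded rings and $\xi_v$ is the grading valuation, the colengths $\ell(\CO_{X,x}/\mathfrak{a}_\lambda(v))$ and $\ell(\CO_{X_0,x_0}/\mathfrak{a}_\lambda(\xi_v))$ are both equal to the sum of the dimensions of the graded pieces in degrees $<\lambda$, so the two volumes agree. (For rational rank $>1$ there is no Rees algebra over $\mathbb{A}^1$, but this dimension count does not need one.)

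The log discrepancy half has a genuine gap. Your argument hinges on the identity $a_{(X,\Delta)}(v)=\lct(X,\Delta;\mathfrak{a}_\bullet(v))$, but in general only the inequality $\lct(X,\Delta;\mathfrak{a}_\bullet(v))\le a_{(X,\Delta)}(v)$ holds (test the log canonicity condition against $w=v$ itself, using $v(\mathfrak{a}_\bullet(v))=1$). Equality says precisely that $v$ computes the log canonical threshold of its own graded sequence of valuation ideals, and this fails already for divisorial valuations on $\mathbb{A}^2$: for the divisorial valuation $v$ with $v(x)=2$, $v(y)=3$, $v(y^2-x^3)=7$ one has $a_{\mathbb{A}^2}(v)=6$, while the monomial valuation $w$ with weights $(2,3)$ satisfies $w(\mathfrak{a}_\bullet(v))=6/7$ and $a_{\mathbb{A}^2}(w)/w(\mathfrak{a}_\bullet(v))=35/6<6$, so $\lct(\mathfrak{a}_\bullet(v))<a_{\mathbb{A}^2}(v)$. (In that example the associated graded ring is non-normal, so it does not satisfy the hypotheses here; but that only shows that if the identity is to be used, its validity under the klt-central-fibre hypothesis is itself a nontrivial claim requiring proof, which you do not give and which risks circularity with the statement being proved.) Moreover, even granting the identity, semicontinuity of $\lct$ along the Rees degeneration only produces an inequality between the general and special fibres, not the equality you need; all the inequalities in your chain point the same way and nothing closes the loop.

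The argument of Li--Xu avoids log canonical thresholds entirely. For $v$ divisorial one identifies $v$ with a $\mathbb{G}_m$-invariant divisorial valuation on the total space $\mathcal{X}$ of the Rees family, and computes $a_{(X,\Delta)}(v)=a_{(X_0,\Delta_0)}(\xi_v)$ by adjunction along the central fibre $\mathcal{X}_0$, using the equivariant isomorphism $\mathcal{X}\setminus\mathcal{X}_0\cong X\times\mathbb{G}_m$. For a general quasi-monomial $v$ one places $v$ in a simplex of quasi-monomial valuations on a common log smooth model, notes that both $\alpha\mapsto a_{(X,\Delta)}(v_\alpha)$ and $\alpha\mapsto a_{(X_0,\Delta_0)}(\xi_{v_\alpha})$ are affine in the weights $\alpha$, and deduces the identity from its validity at the dense set of rational points, where the divisorial case applies; the finite generation hypothesis guarantees that nearby rational points of the simplex induce the same central fibre. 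You would need to replace your $\lct$ step by an argument of this type.
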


K-(semi-)stability for Fano cone singularities was first defined in \cite{zbMATH06868031} as a criterion for the existence of a Sasaki-Einstein metric on the link of the singularity. The authors already observed that this notion has an interpretation in terms of volume minimization of the central fibre of a special degeneration. Now, the correspondence between valuations and degenerations lets us directly express K-semistability in terms of the normalized volume of valuations. More precisely, we have the following result due to Li and Xu.

\begin{theorem}[{\cite[Theorem 2.34]{li2017stability}}]
  \label{thm:ksstable-minimizer}
  A polarised Fano cone singularity $(X,\Delta;\xi)$ is K-semistable if and only if $\xi$ is a minimizer of $\nvol_{(X,\Delta)}$. 
\end{theorem}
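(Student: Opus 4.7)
The plan is to combine two ingredients: Collins--Sz\'ekelyhidi's Reeb-cone characterization, which for a fixed acting torus $\TT$ identifies K-semistability of $(X,\Delta;\xi)$ with $\xi$ being a minimizer of $\nvol_{(X,\Delta)}$ restricted to the Reeb cone of $\TT$; and the Rees-type correspondence between $\TT$-invariant quasi-monomial valuations on $(X,\Delta;x)$ and $\TT$-equivariant special degenerations to polarized Fano cones with an enlarged acting torus $\TT \times \CC^*$. The ``if'' direction is immediate from the first ingredient, since minimizing $\nvol_{(X,\Delta)}$ over all of $\Val(X;x)$ is a fortiori stronger than minimizing it on the Reeb cone of $\TT$. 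The entire content of the theorem therefore lies in the ``only if'' direction.

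For that direction, I would fix an arbitrary $v \in \Val(X;x)$ and proceed in two reduction steps. First, by the equivariance principle for $\nvol$ (averaging a valuation-defining filtration over the compact form of $\TT$ does not increase normalized volume), one reduces to $\TT$-invariant $v$. Second, by Jonsson--Mustata type approximation of a general valuation by quasi-monomial ones combined with the lower semicontinuity of $\nvol$, one reduces further to $\TT$-invariant quasi-monomial $v$ whose associated graded algebra is finitely generated. For such $v$ the Rees construction yields a $\TT$-equivariant special degeneration $\X \to \mathbb{A}^1$ of $(X,\Delta;x)$ with polarized klt central fibre $(X_0,\Delta_0;\xi_v)$ carrying an enlarged action of $\TT' = \TT \times \CC^*$, and both $\xi_v$ and the image of $\xi$ lie in the Reeb cone of $\TT'$. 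Proposition~\ref{prop:nvol-special-fibre} gives $\nvol_{(X,\Delta)}(v) = \nvol_{(X_0,\Delta_0)}(\xi_v)$, and applied to the trivial degeneration along $\xi$, $\nvol_{(X,\Delta)}(\xi) = \nvol_{(X_0,\Delta_0)}(\xi)$. Since K-semistability is preserved under equivariant special degeneration of Fano cones, $(X_0,\Delta_0;\xi)$ is K-semistable with respect to $\TT'$; Collins--Sz\'ekelyhidi applied on $X_0$ then yields $\nvol_{(X_0,\Delta_0)}(\xi) \leq \nvol_{(X_0,\Delta_0)}(\xi_v)$, which is the desired inequality $\nvol_{(X,\Delta)}(\xi) \leq \nvol_{(X,\Delta)}(v)$.

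The main obstacle is the finite generation of the associated graded algebra of $v$, which is not automatic for quasi-monomial valuations of rational rank greater than one and is essential for the Rees construction to produce an honest special degeneration. One handles this by approximating an arbitrary $\TT$-invariant quasi-monomial $v$ by a sequence of $\TT$-invariant divisorial valuations (for which finite generation is automatic), verifying continuity of $\nvol$ along this approximation, and passing to the limit in the inequality obtained above. A conceptually cleaner route, available through the Stable Degeneration Theorem of Xu--Zhuang, produces a well-behaved minimizing degeneration directly from the minimizer, but the original Li--Xu argument requires only this weaker approximation statement together with the two ingredients above.
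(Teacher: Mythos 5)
The paper does not prove this statement; it is imported verbatim from \cite[Theorem 2.34]{li2017stability} and used as the working definition of K-semistability, so your sketch can only be measured against the Li--Xu argument it reconstructs. Measured that way, it has two genuine gaps, both stemming from a misstatement of your ``first ingredient''. For a fixed torus $\TT$, being a minimizer of $\nvol_{(X,\Delta)}$ on the Reeb cone of $\TT$ is only a \emph{necessary} condition for K-semistability (this is the Martelli--Sparks--Yau/Collins--Sz\'ekelyhidi volume-minimization statement); it is not a characterization, because K-semistability quantifies over all special test configurations, including those whose central fibre acquires a strictly larger torus. If the biconditional you state were true, every Fano cone singularity would be K-semistable for a suitable polarization, since the scale-invariant, convex volume functional always attains a minimum on the Reeb cone; this is false --- the cone over a K-unstable Fano with finite automorphism group has a one-dimensional Reeb cone on which $\xi$ is trivially minimizing, yet is not K-semistable. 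So your ``if'' direction is not immediate. The actual content there is the identification of the Futaki invariant of an \emph{arbitrary} special test configuration with the derivative at $t=0$ of $t\mapsto \nvol_{(X_0,\Delta_0)}(\xi+t\eta)$, which Proposition~\ref{prop:nvol-special-fibre} converts into a derivative of $\nvol_{(X,\Delta)}$ along a path of valuations on $X$ starting at $\mathrm{wt}_\xi$; global minimality of $\xi$ then forces that derivative to be nonnegative.

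Second, in the ``only if'' direction the step ``K-semistability is preserved under equivariant special degeneration, hence $(X_0,\Delta_0;\xi)$ is K-semistable'' is unjustified and false in general. K-semistability does not specialize along special degenerations with positive Futaki invariant (it does along Futaki-zero ones, which you cannot assume here since $v$ is an arbitrary competitor): already $\mathbb{P}^2$ admits a special degeneration to the K-unstable $\mathbb{P}(1,1,4)$, and taking affine cones gives the corresponding failure for Fano cone singularities. Li--Xu avoid this entirely: from K-semistability of $(X,\Delta;\xi)$ itself they get that the Futaki invariant of the test configuration induced by $v$ is nonnegative, i.e.\ the directional derivative of $\nvol_{(X_0,\Delta_0)}$ at $\xi$ toward $\xi_v$ is nonnegative, and then the \emph{convexity} of $\nvol_{(X_0,\Delta_0)}$ on the Reeb cone of $\TT\times\CC^*$ (not any stability property of $X_0$) yields $\nvol_{(X_0,\Delta_0)}(\xi)\leq \nvol_{(X_0,\Delta_0)}(\xi_v)$, which transfers back to $X$ by Proposition~\ref{prop:nvol-special-fibre}. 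Your reductions to $\TT$-invariant quasi-monomial valuations, the approximation by divisorial ones, and the semicontinuity/finite-generation discussion are sound and do match the structure of the Li--Xu proof; it is the two stability inputs above that must be replaced by the Futaki-derivative identity plus convexity.
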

In our paper this characterization will effectively serve as the definition of K-semistability.

More generally, the stable degeneration theorem, proved as a combination of a series of works \cite{Li18, Li17, LL19, Blu18, LWX21, Xu19, XZ21, XZ22}, shows that the normalized volume minimizer of an arbitrary klt singularity induces a K-semistable log Fano cone degeneration.

\begin{thm}[Stable Degeneration Theorem]\label{thm:sdt}
Let $(X,\Delta;x)$ be a klt singularity. Then there exists a valuation $v\in \Val(X;x)$ that minimizes the normalized volume  function $\hvol_{(X,\Delta)}$. Moreover, $v$ is quasi-monomial, unique up to rescaling, has a finitely generated associated ring, and induces a K-semistable log Fano cone degeneration $(X_0,\Delta_0;\xi_v)$.
\end{thm}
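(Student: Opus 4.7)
The plan is to assemble the statement from four essentially separate ingredients: \emph{existence} of a minimizer, \emph{quasi-monomiality} of this minimizer, \emph{finite generation} of its associated graded ring, and \emph{uniqueness up to rescaling} together with the \emph{K-semistability} of the resulting polarised central fibre. These are the four contributions packaged in \cite{Li18,Li17,LL19,Blu18,LWX21,Xu19,XZ21,XZ22}, and the proof proposal is to run them in this order, using Theorem~\ref{thm:ksstable-minimizer} and Proposition~\ref{prop:nvol-special-fibre} at the very end.

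For existence, I would first exploit the scale-invariance $\nvol(\lambda v)=\nvol(v)$ to normalize a minimizing sequence $v_i$ so that $a_{(X,\Delta)}(v_i)=1$. Izumi's inequality, applied to a fixed log resolution, then forces the $v_i$ to lie in a compact piece of $\Val(X;x)$, so after passing to a subsequence one obtains a limit valuation $v_\ast$. Lower semicontinuity of $\vol_X$ and of $a_{(X,\Delta)}$ along this limit (the content of Blum's argument in \cite{Blu18}) gives $\nvol(v_\ast)=\inf\nvol$.

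The next two steps form the core. Following Xu \cite{Xu19}, I would approximate $v_\ast$ by Koll\'ar components $E_i$ with $a_{(X,\Delta)}(E_i)\to a_{(X,\Delta)}(v_\ast)$; boundedness of Koll\'ar components with bounded log discrepancy (coming from ACC and adjunction on the exceptional divisor) makes all the induced models $Y_i\to X$ dominate one fixed log smooth birational model $Y\to X$, so $v_\ast$ is quasi-monomial along an snc divisor on $Y$. For finite generation of $\gr_{v_\ast}\CO_{X,x}$ I would invoke the Minkowski-type inequality for normalized volume from \cite{LL19} together with the arguments of \cite{LWX21,XZ21}: the key observation is that any component of the minimizing valuation that is not finitely generated can be perturbed so as to strictly decrease $\nvol$, contradicting minimality. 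This gives a $\TT$-equivariant degeneration $(X,\Delta)\rightsquigarrow(X_0,\Delta_0)$ with polarisation $\xi_{v_\ast}$ in the interior of the Reeb cone.

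Finally, uniqueness up to rescaling and K-semistability of the central fibre go hand in hand. By Proposition~\ref{prop:nvol-special-fibre} the minimizer $v_\ast$ descends to a minimizer $\xi_{v_\ast}$ of $\nvol_{(X_0,\Delta_0)}$ within the Reeb cone, so Theorem~\ref{thm:ksstable-minimizer} yields K-semistability of $(X_0,\Delta_0;\xi_{v_\ast})$. Strict convexity of $\nvol$ along the Reeb cone modulo rescaling, proved in \cite{LWX21,XZ22}, upgrades this to uniqueness on $X_0$; lifting uniqueness back to $X$ uses the fact that any other minimizer $v'$ would degenerate to a second minimizer on $X_0$ and coincide with $\xi_{v_\ast}$, whence $v'$ and $v_\ast$ agree up to scale. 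The principal obstacle in this plan is the finite generation step: existence is essentially compactness, quasi-monomiality is a fairly direct consequence of boundedness of Koll\'ar components, and the concluding uniqueness/K-semistability step is a formal consequence of convexity once finite generation is in hand, whereas finite generation of $\gr_{v_\ast}\CO_{X,x}$ for the minimizer was for a long time the deepest missing piece and is what \cite{LWX21,XZ21} genuinely resolve.
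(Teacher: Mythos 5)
The paper contains no proof of Theorem~\ref{thm:sdt}: it is imported as a black box, ``proved as a combination of a series of works \cite{Li18, Li17, LL19, Blu18, LWX21, Xu19, XZ21, XZ22}'', and nothing in the present article depends on the internals of that proof. So there is no in-paper argument to compare against; what can be assessed is whether your roadmap of the literature is accurate. Your four-part decomposition (existence, quasi-monomiality, finite generation, uniqueness plus K-semistability of the polarised central fibre) is the right one, and your closing judgement that finite generation of $\gr_{v_\ast}\mathcal{O}_{X,x}$ is the deepest ingredient is correct.

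Two concrete caveats. First, the existence step as you describe it --- normalize $a_{(X,\Delta)}(v_i)=1$, extract a limit valuation $v_\ast$ by compactness, and conclude by lower semicontinuity of $\vol_X$ --- does not literally run: the volume function is not known to be semicontinuous on $\Val(X;x)$ in the weak topology, and this is exactly the obstacle that \cite{Blu18} circumvents by taking generic limits of the graded sequences of ideals $\mathfrak{a}_\bullet(v_i)$ rather than a limit of the valuations themselves. As written, your existence argument has a genuine gap, even though the cited reference supplies a correct replacement. Second, your attributions and logical ordering in the last two steps are shuffled: uniqueness of the minimizer up to rescaling is the content of \cite{XZ21} and was established \emph{before} and independently of finite generation, via convexity of $\nvol$ along geodesics of filtrations, whereas finite generation of the associated graded ring is the main theorem of \cite{XZ22}; \cite{LWX21} contributes the uniqueness of the resulting K-semistable degeneration rather than finite generation. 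With \cite{XZ22} in hand your proposed route (deduce uniqueness from the degeneration) is also viable, but it inverts the actual architecture of the literature. Neither caveat affects the statement itself, which this paper only ever uses as a cited result (in the proofs of Theorems~\ref{introthm:3-dim} and~\ref{introthm:vol-discrete}).
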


\begin{example}
  \label{exp:hypersurface2}
  We consider a hypersurface given by a polynomial $f \in \CC[x_0, \ldots x_n]$ which is homogeneous of weight $\mathbf u$ with respect to an $M$-grading given by the weights $u_0,\ldots,u_n \in M$. Then the weight cone $\omega$ is generated by $u_0,\ldots,u_n$. Take a polarization $\xi \in \sigma = \omega^\vee$ and set $w_i=\langle u_i, \xi \rangle$ and $d=\langle \mathbf u , \xi \rangle$. Then the formula of Example~\ref{exp:hypersurfaces1} extends by continuity of $\nvol$ on the interior of $\sigma$ and we once again obtain
  \[\nvol(\xi)= \frac{d(\sum_i w_i-d)^n}{w_0 \cdots w_n}.\]
  \end{example}

  In the situation of an isolated singularity there is a well-known obstruction to K-semistability, known as Lichnerowicz obstruction, see \cite{zbMATH05346216}. The corresponding inequality for the hypersurface case can be also obtained (even for non-isolated ones) as an easy consequence of Li's theorem.
  \begin{theorem}[{\cite[3.23]{zbMATH05346216}}]
    \label{thm:lichnerowicz}
    Consider a Fano cone hypersurface singularity $(X;x)$ with polarization $\xi$ corresponding to the weight vector $(w_0,\ldots,w_n)$. If $(X,\xi)$ is K-semistable, then the following inequality holds
    \begin{equation*}
      \sum_{i=0}^n w_i -d \leq nw_0.\label{eq:lich-obstr}      
    \end{equation*}
  \end{theorem}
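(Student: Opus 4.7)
The plan is to invoke Theorem~\ref{thm:ksstable-minimizer}, which characterises K-semistability of $(X,\xi)$ as minimality of $\nvol$ at $\xi$ on $\Val(X;x)$, and to compare $\nvol(\xi)$ with that of a suitable one-parameter perturbation. Concretely, for $s \geq 0$ small, let $w_s$ be the quasi-monomial valuation on $\bA^{n+1}$ with coordinate weights $(w_0+s, w_1, \ldots, w_n)$, and let $w_s|_X \in \Val(X;x)$ be its restriction; in particular $w_0|_X = \xi$.

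Normality of the hypersurface $X = \{f=0\}$ forces $f$ to be irreducible and not divisible by $x_0$, so $f$ contains at least one monomial of $x_0$-degree zero. Since every monomial of $f$ has $\xi$-weight $d$, such a zero-$x_0$-degree monomial also has $w_s$-weight $d$, while every other monomial of $f$ has $w_s$-weight $\geq d$; hence $w_s(f) = d$ for every $s \geq 0$. The adjunction and volume computations underlying Example~\ref{exp:hypersurfaces1}, which apply to any quasi-monomial valuation on a hypersurface whose weight on the defining polynomial is realised by a monomial, then give
\[
\nvol(w_s|_X) \;=\; \frac{d\,(W+s-d)^n}{(w_0+s)\,w_1\cdots w_n}, \qquad W := \sum_{i=0}^n w_i.
\]

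By Theorem~\ref{thm:ksstable-minimizer}, $\xi = w_0|_X$ minimises $\nvol$ on $\Val(X;x)$, so $s \mapsto \nvol(w_s|_X)$ has a right-minimum at $s=0$; differentiating the logarithm there yields
\[
0 \;\leq\; \frac{d}{ds}\bigg|_{s=0}\log \nvol(w_s|_X) \;=\; \frac{n}{W-d}-\frac{1}{w_0},
\]
which rearranges into $W - d \leq n w_0$. The one delicate point is the closed-form expression for $\nvol(w_s|_X)$ when $s > 0$: in that range $w_s$ no longer lies in the Reeb cone of the ambient torus action on $X$, so the derivation in Example~\ref{exp:hypersurface2} (which uses continuity on $\sigma^\circ$) does not apply verbatim. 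One must instead verify directly that $w_s|_X$ has log discrepancy $W+s-d$ and volume $d/[(w_0+s)w_1\cdots w_n]$; this follows from the general complete intersection formula of Li in \cite{Li18}, using the constancy $w_s(f) = d$ established above.
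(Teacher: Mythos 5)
Your proposal is correct and follows essentially the same route as the paper: both perturb the polarization to the valuation with coordinate weights $(w_0+s,w_1,\ldots,w_n)$, evaluate $\nvol$ by the closed-form hypersurface formula, invoke the minimizer characterization of K-semistability from Theorem~\ref{thm:ksstable-minimizer}, and differentiate at $s=0$. The only difference is that you explicitly justify the formula for $s>0$ by checking $w_s(f)=d$ via the presence of an $x_0$-free monomial, a point the paper's proof leaves implicit when it appeals to Proposition~\ref{prop:nvol-special-fibre}.
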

  \begin{proof}
    We consider the special degeneration, which is induced by the valuation $v_s=s \cdot \ord_{x_0}$ with $s>0$, i.e. a positive multiple of the order of vanishing in $x_0$. If $s$ is integral, this is simply the degeneration induced by acting with $\CC^*$ via  the weights $w_0'=w_0+s$, $w_1'=w_1, \ldots, w_n'=w_n$.  Now, by Proposition~\ref{prop:nvol-special-fibre} and Theorem~\ref{thm:ksstable-minimizer} we have
    \[
      \nvol(v_s)=\frac{d(\sum_i w'_i-d)^n}{w'_0 \cdots w'_n}= \frac{d(\sum_i w_i-d+s)^n}{(w_0+s)w_1 \cdots w_n} \geq \frac{d(\sum_i w_i-d)^n}{w_0 \cdots w_n} = \nvol(v_0)
    \]
    for every $s \geq 0$. In particular, we obtain
    \[0 \leq \left.\frac{\partial \nvol(v_s)}{\partial s}\right|_{s=0} \!\!=
      \frac{\left(w_0\cdots w_n\right) \cdot nd(\sum_iw_i-d)^{n-1} - \left(w_1\cdots w_n\right) \cdot d(\sum_iw_i-d)^n}{(w_0\cdots w_n)^2},
    \]
    which gives
    \[
      nw_0 \geq \sum_iw_i -d,
    \]
    as desired.
  \end{proof}

\begin{definition}\label{def:K-ss}
By a {\em K-semistable singularity} we mean a log Fano cone singularity which is K-semistable with respect to a suitable polarization.
\end{definition}

In particular, every K-semistable 
singularity is a klt cone singularity.

\medskip
Crucial for our work is the following result by Zhuang, which bounds Fano cone singularities in terms of normalized volume up to special degenerations.

\begin{theorem}[{\cite[Cor.~4.4]{Zhu21}}]
  \label{thm:zhuan-main}
  Let $\epsilon, A > 0$, $n \in \mathbb{N}$ and let $I \subset [0, 1] \cap \QQ$ be a finite set. Then the set
  of $n$-dimensional klt $\TT$-singularities $x \in (X, \Delta)$ with $\nvol(x, X, \Delta) \geq \epsilon$, $\mldK^{\TT}_x(X, \Delta) \leq A$ and coefficients of $\Delta$ being from $I$ is log bounded up to $\TT$-invariant special degeneration.

  Moreover, we may assume that the bounding family of log Fano cone singularities arises from a $\QQ$-polarized family of projective varieties.
\end{theorem}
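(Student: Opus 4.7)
The plan is to use the hypothesis on $\mldK^\TT$ to produce a $\TT$-equivariant Rees degeneration whose central fibre is an orbifold cone over a log Fano pair, and then to apply Birkar-style boundedness to that log Fano pair. Concretely, the bound on $\mldK_x^\TT(X,\Delta)$ supplies a $\TT$-invariant Kollár component $E$ over $x$ with $a_{(X,\Delta)}(E) \leq A$, extracted by a plt blow-up $f\colon \widetilde{X}\to X$. Since divisorial valuations over klt singularities have finitely generated associated graded rings, the Rees construction for $v=\ord_E$ produces a $\TT$-equivariant flat degeneration $(\X,\mathbf{\Delta}) \to \mathbb{A}^1$ whose central fibre $(X_0,\Delta_0)$ is the orbifold cone over the klt log Fano pair $(E,\mathrm{Diff}_E)$, together with a canonical $\CC^*$-action commuting with $\TT$. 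This is a $\TT$-invariant special degeneration in the sense of the theorem.

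To bound $(E,\mathrm{Diff}_E)$, note first that the coefficients of $\mathrm{Diff}_E$ lie in a DCC set controlled by $I$, $n$, and $A$ via the plt adjunction formula. For the anticanonical volume, combine the inequality $\epsilon\leq \hvol(X,\Delta;x)\leq \nvol(v) = a_{(X,\Delta)}(E)^n\cdot\vol_X(v)$ with the Kollár-component computation $\vol_X(\ord_E) = (-E|_E)^{n-1}$ and the adjunction identity $-K_E-\mathrm{Diff}_E = a_{(X,\Delta)}(E)(-E|_E)$ to deduce
\[
(-K_E-\mathrm{Diff}_E)^{n-1} = a_{(X,\Delta)}(E)^{n-1}\vol_X(v) \;\geq\; \frac{\epsilon}{a_{(X,\Delta)}(E)} \;\geq\; \frac{\epsilon}{A}.
\]

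With this lower volume bound and the DCC coefficient control in hand, Birkar's boundedness theorem for $\epsilon'$-lc log Fano pairs yields a log bounded family parametrising all possible $(E,\mathrm{Diff}_E)$. The orbifold cone $(X_0,\Delta_0)$ is determined by $(E,\mathrm{Diff}_E)$ together with finitely many discrete parameters encoding the $\CC^*$-weight of the polarisation, so this bounding family is exactly a $\QQ$-polarised family of projective varieties whose associated cones bound $(X_0,\Delta_0)$; the $\TT$-equivariant Rees degeneration then exhibits $(X,\Delta;x)$ as a deformation of a member of this family, completing the proof. The main obstacle is that the plt blow-up only guarantees $(E,\mathrm{Diff}_E)$ is klt, whereas Birkar's theorem requires a uniform $\epsilon'$-lc bound; extracting such an $\epsilon'$ from the normalised-volume hypothesis $\hvol(X,\Delta;x)\geq \epsilon$ requires a careful comparison between local volumes of cone vertices and singularities of their bases, and constitutes the technical heart of the argument.
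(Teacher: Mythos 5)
First, a point of comparison: the paper does not actually prove this statement. It is imported verbatim from \cite[Cor.~4.4]{Zhu21}, and the only argument supplied in the paper is the remark that the $\TT$-invariance of the degeneration and the $\QQ$-polarized structure of the bounding family can be read off from Zhuang's proof of \cite[Lem.~2.19]{Zhu21}. Measured against Zhuang's argument, your proposal reconstructs the overall architecture correctly: take a $\TT$-invariant Koll\'ar component $E$ with $a_{(X,\Delta)}(E)\leq A$, degenerate via the Rees construction for $\ord_E$ to the orbifold cone over $(E,\mathrm{Diff}_E)$ (the $\TT$-equivariance of this degeneration is automatic since $\ord_E$ is a $\TT$-invariant valuation), and bound the base. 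Your volume computation $(-K_E-\mathrm{Diff}_E)^{n-1}=\nvol(\ord_E)/a_{(X,\Delta)}(E)\geq \epsilon/A$ is also correct.

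However, the step you defer to your final sentence is not a technicality but the actual content of Zhuang's theorem, so as written the proof has a genuine gap. Birkar's boundedness theorem needs $(E,\mathrm{Diff}_E)$ to be uniformly $\delta$-lc; klt together with DCC coefficients and a lower volume bound is not sufficient for log boundedness (already the klt del Pezzo surfaces $\PP(1,1,m)$ form an unbounded family). Producing a uniform $\delta=\delta(n,\epsilon,I)>0$ from the hypothesis $\hvol(X,\Delta;x)\geq\epsilon$ is precisely the $\delta$-plt blow-up theorem \cite[Thm.~1.2]{Zhu21}, whose proof runs through the theory of complements and a comparison of local volumes with log canonical thresholds; moreover, that theorem produces \emph{some} $\delta$-plt Koll\'ar component, and one must still reconcile it with the component realizing $\mldK^{\TT}_x(X,\Delta)\leq A$ that your degeneration uses. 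A second, smaller omission: to bound the orbifold cone as a $\QQ$-polarized cone one also needs a \emph{lower} bound on $a_{(X,\Delta)}(E)$, in order to control the polarization $-E|_E=a_{(X,\Delta)}(E)^{-1}(-K_E-\mathrm{Diff}_E)$ and its Cartier index. This does follow, but only after boundedness of the base gives an upper bound on $(-K_E-\mathrm{Diff}_E)^{n-1}$, via $a_{(X,\Delta)}(E)=\nvol(\ord_E)/(-K_E-\mathrm{Diff}_E)^{n-1}\geq \epsilon/(-K_E-\mathrm{Diff}_E)^{n-1}$; the phrase ``finitely many discrete parameters encoding the $\CC^*$-weight'' hides exactly this point and should be made explicit.
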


\begin{remark}
  Note, that Zhuang does not mention the  $\TT$-invariance of the degeneration and the fact that the bounding family arises from a family of $\QQ$-polarized projective varieties in \cite[Cor.~4.4]{Zhu21}. However, this follows directly from his proof of \cite[Lem.~2.19]{Zhu21}.
\end{remark}

\subsection{\texorpdfstring{$\mathbb{T}$-varieties}{T-varieties}}\label{subsec:pdiv}

\label{def:tvars}
Here, we want to recall some basic facts about $\TT$-varieties and their combinatorial description from \cite{AH06}.

\begin{definition}
A {\em $\mathbb{T}$-variety} is a normal quasi-projective variety $X$ which admits the effective action of an algebraic torus $\mathbb{T}$. 
The {\em complexity} of a $\mathbb{T}$-variety $X$ is
given by the difference $\dim X -\dim T$.
\end{definition}

Note, that the Fano cone singularities introduced in the previous section are a particular case of $\TT$-varieties.

\medskip
In \cite{AH06} a combinatorial description of affine $\TT$-varieties in terms of so-called \emph{polyhedral divisors} was introduced. These divisors live on a residual quotient variety and have coefficient in the semi-group of $\sigma$-polyhedra. Here, we recall the basics of this description.
\begin{definition}
Let $N$ be a free finitely generated abelian group.
We write $M:={\rm Hom}_\zz(N,\zz)$ for its dual.
We denote by $N_\qq$ and $N_\rr$ (resp. $M_\qq$ and $M_\rr$)  the corresponding $\qq$-vector
and $\rr$-vector spaces.
Let $\sigma\subset N_\qq$ be a polyhedral cone.
A {\em $\sigma$-polyhedron} $P$ is a polyhedron in $N_\qq$ for which the tail cone
\[
\tail(P):= \{ v\in N_\qq \mid 
P+v \subset P
\}
\]
equals $\sigma$. The set of $\sigma$-polyhedra together with Minkowski addition forms a monoid with neutral element being $\sigma$ itself.

Let $Y$ be a semi-projective variety, i.e. the natural morphism $p\colon Y \to \Spec(H^0(\CO_Y))$ is projective.
A {\em polyhedral divisor} on $(Y,N_\qq)$ 
is a finite formal sum
\[
\mathcal{D} = \sum_{Z\subset Y} \mathcal{D}_Z \cdot Z,
\]
running over the prime divisors of $Z \subset Y$ where each $\mathcal{D}_Z$ is a $\sigma$-polyhedron or the empty set. By \emph{finite} we mean that only finitely many of the polyhedral coefficients differ from $\sigma$ (the neutral element of the monoid). We call $\supp(\D):=\bigcup_{\D_Z \neq \sigma}Z$ the \emph{support} of $\D$.
The cone $\sigma$ is called the {\em tail} of $\mathcal{D}$.
We write $\tail \D=\sigma$.
The {\em locus} of the polyhedral divisor
$\mathcal{D}$ is the open subset
$Y\setminus \bigcup_{\mathcal{D}_Z=\emptyset} Z$.
We denote this set by
${\rm Loc}(\D)\subset Y$.

Given a polyhedral divisor $\D$ on $(Y,N_\qq)$, we have an associated map
\begin{align*}
\D & \colon  \sigma^\vee \rightarrow {\rm CaDiv}_\qq({\rm Loc}(\D)), \\
u & \mapsto \D(u):=\sum_{Z} \min_{v\in \D_Z}\langle u,v\rangle Z.
\end{align*}
By abuse of notation, we denote by $\D$ both: the polyhedral divisor
and the associated map.

We say that $\mathcal{D}$ is a 
{\em proper polyhedral divisor}
on $(Y,N_\qq)$ if the two following conditions are satisfied:
\begin{itemize}
    \item for each $u\in \sigma^\vee$, we have that $\mathcal{D}(u)$ is semiample on ${\rm Loc}(\D)$, and 
    \item for each $u\in {\rm relint}(\sigma^\vee)$, we have that 
    $\mathcal{D}(u)$ is big on ${\rm Loc}(\D)$.
\end{itemize}
We may write {\em p-divisor} instead of proper polyhedral divisor.
When the $\qq$-vector space $N_\qq$ is clear from the context, we may say that $\D$ is a p-divisor on $Y$.
\end{definition}

To each proper polyhedral divisor on $Y$, we can associate an affine $\mathbb{T}$-variety of complexity $\dim Y$.

\begin{construction}
Let $\mathcal{D}$ be a proper polyhedral divisor on $(Y,N_\qq)$. 
We consider the divisorial sheaf on $Y$ given by
\begin{equation}
\mathcal{A}(\D):= \bigoplus_{u\in \sigma^\vee \cap M} \mathcal{O}_{\Loc(D)}(\D(u)).\label{eq:AH-sheaf}
\end{equation}
The affine variety $X(\D):={\rm Spec}(H^0(\mathcal{A}(\D)))$ is called the \emph{$\mathbb{T}$-variety} associated to $\D$.
Let $\widetilde{X}(\D):= {\rm Spec}_{Y}\mathcal{A}(\D)$
be the associated relative spectrum  with structure map $\pi \colon \widetilde{X}(\D) \to Y$.
\end{construction}

Note that by our construction the weight cone of $H^0(\mathcal{A}(\D))$ is precisely the dual of $\sigma=\tail \D$. Hence, if the resulting variety $X(\D)$ is a cone singularity, as defined in the previous section, then the tail cone $\sigma$ of the polyhedral divisor equals the Reeb cone of the cone singularity or more precisely minus the Reeb cone if we follow the convention of $(t.f)(x)=f(t^{-1}.x)$ for the action on the functions.

\begin{remark}
  \label{rem:birational}
  Consider the open subset $Y^\circ:=Y \setminus \supp \D \subset Y$. Then by construction $\pi^{-1}(Y^\circ)$ is $\TT$-equivariantly isomorphic to
  $X_\sigma \times Y^\circ$, where $\sigma = \tail \D$ and $X_\sigma$ is the toric variety associated to the polyhedral cone $\sigma$. Indeed, for the restriction
  $\mathcal{A}(\D)|_{Y^\circ}$ of the sheaf of algebras in  (\ref{eq:AH-sheaf}) we obtain
  \[\mathcal{A}(\D)|_{Y^\circ} = \bigoplus_{u \in \sigma^\vee \cap M} \CO_{Y^\circ}(\D(u)|_{Y^\circ}) = \bigoplus_{u \in \sigma^\vee \cap M} \CO_{Y^\circ},\]
  because $\supp \D(u) \subset \supp \D$ for every $u \in \sigma^\vee$ by definition and therefore $\D(u)|_{Y^\circ}=0$.
\end{remark}

The following theorem is the main result of~\cite{AH06}.
\begin{theorem}
  \label{thm:AH-main-result}
Let $X$ be a an affine $\mathbb{T}$-variety of complexity $c$. Then $X$ 
is $\mathbb{T}$-equivariantly isomorphic to $X(\D)$ for some proper polyhedral divisor $\D$ living on a semi-projective variety $Y$ of dimension $c$.

Moreover,  we have a commutative diagram
\[  \begin{tikzcd}
    \widetilde{X} \arrow[r,"r"] \arrow[d,"\pi"]& X \arrow[dl,"q", dashed]\\
      Y &
      \end{tikzcd}\]
with a good quotient $\pi \colon \widetilde{X}(\D)\rightarrow Y$, an equivariant proper birational morphism $r\colon \widetilde{X}(\D)\rightarrow X(\D)$ and $q\colon X \dashrightarrow Y$ being the rational quotient map induced by $\CC(Y) \cong \CC(X)^\TT$.
\end{theorem}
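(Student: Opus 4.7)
The plan is to carry out the Altmann--Hausen construction. Since $\dim X - \dim\TT = c$, the invariant field $\CC(X)^\TT$ has transcendence degree $c$, so I would first choose any normal semi-projective variety $Y_0$ with $\CC(Y_0)\cong \CC(X)^\TT$; the inclusion $\CC(X)^\TT\hookrightarrow \CC(X)$ induces the rational quotient $q\colon X\dashrightarrow Y_0$. Using the $\TT$-weight decomposition $\CC[X]=\bigoplus_{u\in \omega\cap M}\CC[X]_u$, each nonzero piece $\CC[X]_u$ is a rank-one torsion-free module over the invariant ring $\CC[X]_0$; after fixing a rational $\TT$-eigenfunction $\chi_u\in\CC(X)_u$, it becomes a fractional ideal $I_u$ of $\CC(Y_0)$.

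The second step is to pass to a model on which all these ideals are simultaneously invertible. Taking a finite generating set $u_1,\dots,u_r$ of the weight monoid $\omega\cap M$, I would choose a proper birational modification $Y\to Y_0$ on which $I_{u_1},\dots,I_{u_r}$ become invertible. This produces $\QQ$-Cartier divisors $D_u$ on $Y$ with $\CC[X]_u=H^0(Y,\CO_Y(D_u))$, and the inclusion $\CC[X]_u\cdot \CC[X]_{u'}\subset \CC[X]_{u+u'}$ forces the superadditivity $D_u+D_{u'}\leq D_{u+u'}$. Combined with $\QQ$-homogeneity, this makes each coefficient function $u\mapsto {\rm coeff}_Z D_u$ concave, positively homogeneous, and piecewise linear on $\sigma^\vee=\omega$, where $\sigma=\omega^\vee$.

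Third, I would repackage this data as a polyhedral divisor. Any concave, positively homogeneous, piecewise-linear function on $\sigma^\vee$ is the support function of a unique $\sigma$-polyhedron, so for each prime divisor $Z\subset Y$ I obtain a canonical $\sigma$-polyhedron $\D_Z$ satisfying ${\rm coeff}_Z D_u = \min_{v\in \D_Z}\langle u,v\rangle$. Finite generation of $\CC[X]$ forces $\D_Z=\sigma$ for all but finitely many $Z$, so $\D:=\sum_Z \D_Z\cdot Z$ is a finite formal sum with $\D(u)=D_u$ by construction. Semiampleness of $\D(u)$ on $\Loc(\D)$ for $u\in\sigma^\vee$ and bigness for $u$ in the relative interior then follow from finite generation of $\CC[X]$ and the fact that $\omega$ is full-dimensional.

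Finally, the sheaf $\mathcal{A}(\D)=\bigoplus_{u\in\sigma^\vee\cap M}\CO_{\Loc(\D)}(\D(u))$ has global sections equal to $\CC[X]$, yielding a $\TT$-equivariant isomorphism $X\cong X(\D)$. The relative spectrum $\widetilde{X}(\D)=\Spec_Y \mathcal{A}(\D)$ comes with the good quotient $\pi\colon \widetilde{X}(\D)\to Y$, and the affinization morphism $r\colon \widetilde{X}(\D)\to X$ is $\TT$-equivariant, proper, and birational by Remark~\ref{rem:birational}. The induced rational quotient matches the original $q$ by the construction of $Y$. The main obstacle I anticipate is the third step: extracting a genuine polyhedral divisor with finite support from the a priori infinite system $\{D_u\}$, which requires uniformly controlling the vertices of the polyhedra $\D_Z$ using only the finitely many generators of the weight monoid and choosing $Y$ finely enough that the coefficient functions share a common piecewise-linear refinement.
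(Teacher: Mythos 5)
The paper does not actually prove this statement: it is quoted verbatim as the main theorem of Altmann--Hausen \cite{AH06}, so there is no in-paper argument to compare against. Your sketch is, in outline, the genuine Altmann--Hausen construction: weight decomposition, identification of the graded pieces $\CC[X]_u$ with fractional ideals of $\CC(X)^\TT$ via a choice of eigenfunctions, superadditivity $D_u+D_{u'}\leq D_{u+u'}$ giving concave positively homogeneous coefficient functions, and repackaging these as $\sigma$-polyhedra $\D_Z$. The one place where your sketch is substantially thinner than the real proof is the point you yourself flag at the end, and it is worth being precise about why it is the crux: semiampleness of $\D(u)$ for \emph{all} $u\in\sigma^\vee$ on a \emph{single} semi-projective model $Y$ does not follow merely from finite generation of $\CC[X]$ and fullness of $\omega$. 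Finite generation of each Veronese $\bigoplus_k \CC[X]_{ku}$ only makes $D_u$ semiample after passing to the Proj of its own section ring, and these models vary with $u$. Altmann and Hausen resolve this by taking $Y$ to be (the normalization of) a common refinement --- an inverse limit over the finitely many GIT quotients $Y_u=X/\!\!/_u\,\TT$ as $u$ ranges over the chambers of the weight cone --- so that every $\D(u)$ is pulled back from an ample divisor on the corresponding $Y_u$, which gives semiampleness everywhere and bigness on the relative interior (where the induced map $Y\to Y_u$ is birational). Your proposed fix of "making the finitely many $I_{u_i}$ invertible" is not quite enough by itself, since invertibility of the generators' ideals does not automatically give semiampleness of the associated divisors; the GIT-quotient description is what supplies that. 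With that step replaced by the chamber/GIT-limit argument, your outline matches the cited proof.
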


\section{Special degenerations and the multigraded Hilbert scheme}\label{sec:multi}
Our main goal here is to use Theorem~\ref{thm:zhuan-main} to obtain the boundedness result for K-semistable log Fano cone singularities. In order to get from the \emph{boundedness up to special degenerations} from Theorem~\ref{thm:zhuan-main} to the desired proper boundedness we need to first bound all cone $\TT$-singularities having a $\TT$-invariant degeneration to a given $\hat \TT$-singularity $X_0$. As it turns out for a fixed subtorus $\TT \subset \hat \TT$ a bounding family is given by the universal family of the so-called \emph{multigraded Hilbert scheme $\Hilb$}. We will see that every special $\TT$-invariant degeneration to $X_0$ induces a morphism $\mathbb A^1 \to \Hilb$. In this section we define some combinatorial gadgets associated to such a morphism.

In a second step we will then need show later that number possible subtori $\TT \subset \hat \TT$ for $\TT$-invariant special degeneration of K-semistable log Fano cone $\TT$-singularities (with normalized volume $> \epsilon$) is in fact finite. This will achieved in Section~\ref{sec:torus-inv-fam}.

\medskip
Consider some  cone $\TT$-singularity $X$ and let $M$ be the character lattice of $\TT$. Assume $X_0=\Spec A$ with $A \cong R/I$, $R=\CC[x_1 \ldots x_N]$ and $I \subset R$ being a homoegenous ideal with respect to an $M$-grading on $S$.

Then by the condition of being a cone singularity we have $S_0=\CC$ and the homogeneous components of $A$ are finite-dimensional as $\CC$-vector spaces. Hence, by \cite[Thm~1.1,1.2]{zbMATH02148302} there is a projective scheme $\Hilb:=\Hilb_{\mathbb A^m}$ parametrizing all $X \subset \mathbb A^m$ given by homogeneous ideals $J \subset R$ such that $\dim_\CC (R/J)_u = h(u):=\dim_\CC (R/I)_u$ for all $u \in M$. More precisely, $\Hilb$ represents the \emph{Hilbert functor}, which assign to every $\CC$-algebra $B$ the set of $\TT$-invariant subvarities of $\mathbb A^m$ corresponding to ideals $J \subset B \otimes R$ with $(B \otimes R_u)/I_u$ being locally free of rank $h(u)$. Hence, there exists a \emph{universal family}  $\mathcal F \subset \mathbb A^m \times \Hilb$, such that every $\TT$-invariant flat family $\mathcal F' \subset \mathbb A^N \times  S$ of such subvarieties can be obtained  via some morphism $\varphi \colon S \to \Hilb$ as $\mathcal F = \mathcal F \times_{\varphi} S$.  Moreover,  every $\hat \TT$-action on $\mathbb A^m$ induces a $\hat \TT$-action on $\Hilb$ and families $\mathcal F' \subset \mathbb A^m \times  S$, which are equivariant with respect to $\hat \TT$ will be induced by an equivariant morphism $S \to \Hilb$. The latter is a consequence of the uniqueness of the morphism $S \to \Hilb$ corresponding to $\mathcal F'$.

Also $\Hilb$ admits a $\hat \TT$-equivariant embedding into projective space and, therefore, a covering by $\hat \TT$-invariant affine charts.

We extend the above construction to cover the case of pairs $(X_0,B_0)$ consisting of a cone $\TT$-singularity and an integral $\TT$-invariant divisor as follows. Let $\mathcal F^{X_0}$ the universal family over $\Hilb$ and $\mathcal F^{B_0}$ the universal family of $\Hilbs^{B_0,\TT}$ (here we treat $B_0$ as a subscheme of $\mathbb A^m$). Then let $\Hilbp$ be the closed subscheme of $\Hilb \times \Hilbs^{B_0,\TT}$ cut out by the condition $B \subset X$ for $(X,B) \in \Hilb \times \Hilbs^{B_0,\TT}$. Then  $\Hilbp$ represents the corresponding functor for the case of pairs, where the universal family is given by 
\[\big((\mathcal F^{X_0}_X \times \Hilbs^{B_0,\TT})|_{\Hilbp},\quad  (\mathcal F^{B_0}_X \times \Hilbs^{X_0,\TT})|_{\Hilbp}\big).\]

Consider a cone singularity $X_0$ and an equivariant (non-product) special degeneration of $X$ with central fibre $X_0$. Such an special degeneral induces an action of $\TT \times \CC^*$ on $X_0$. Then we have $\TT \times \CC^* \subset \hat \TT$ for some maximal torus $\hat \TT$. Accordingly we have a surjection $\hat M \twoheadrightarrow M$ of the corresponding character lattices and an inclusion $N = \ker(\hat M \twoheadrightarrow M)^* \hookrightarrow \hat N$ of the co-character lattices. Moreover, the $\CC^*$-action of the special degeneration will induce an element $\hat v \in \hat N \setminus N$. Let $\hat \sigma \subset \hat N_\RR$ be the Reeb cone of $X_0$. The condition that $X$ is a cone singularity with respect to the $\TT$-action implies that $N_\RR$ intersects the interior of $\hat \sigma$ or equivalently $u \not\in (\hat \sigma^\vee \cup - \hat\sigma^\vee)$ for any non-zero element $u \in \ker (\hat M_\RR \twoheadrightarrow M_\RR)$.

\begin{lemma}
  \label{lem:graded-embedding-dim}
  Let $A$ be a positively (or negatively) graded $A_0$-algebra. We consider the ideal $I_+=\bigoplus_{\ell>0} A_\ell$ (or $I_+=\bigoplus_{\ell<0} A_\ell$, respectively). If $g_1,\ldots, g_m$ is a minimal set of generators of $A$ as a $A_0$ algebra, then the images form a minimal set of generators of $I_+/I_+^2$ as a $(A/I_+)$-module.
\end{lemma}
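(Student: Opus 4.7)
The plan is to reduce to the case where the $g_i$ are homogeneous elements of $I_+$ and then separately verify that their images generate $I_+/I_+^2$ as an $A/I_+ = A_0$-module and form a minimal such generating set. I treat only the positively graded case; the negatively graded case follows by symmetry. For the reduction, any generator of the form $c_i + \tilde g_i$ with $c_i \in A_0$ can be replaced by $\tilde g_i \in I_+$ without affecting the algebra generated or minimality, and each $\tilde g_i$ can then be split into its homogeneous components and passed to a minimal homogeneous subset; I may therefore assume each $g_i$ is homogeneous of positive degree $d_i := \deg g_i$.

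For generation, any $f \in I_+$ is expressible as an $A_0$-polynomial $P(g_1, \ldots, g_m)$ with vanishing constant term (since $f$ and the $g_i$ all lie in $I_+$). Splitting $P$ into its part of total $g$-degree one and a remainder of $g$-degree at least two (which lies in $I_+^2$), one obtains $\bar f = \sum_i \bar a_i \bar g_i$ in the quotient, so the $\bar g_i$ indeed generate $I_+/I_+^2$ as an $A_0$-module.

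The crucial step is minimality. Assuming for contradiction that, after relabeling, $\bar g_1 = \sum_{i=2}^m \bar a_i \bar g_i$ in $I_+/I_+^2$, lifting gives $g_1 = \sum_{i=2}^m a_i g_i + h$ with $a_i \in A_0$ and $h \in I_+^2$. Extracting the degree-$d_1$ component of both sides leaves $g_1$ on the left, restricts the $a_i g_i$ sum to those indices with $d_i = d_1$, and reduces $h_{d_1}$ to a sum of products of positive-degree homogeneous factors whose degrees add to exactly $d_1$; in particular, every such factor has degree strictly less than $d_1$. Since any monomial $g_1^{b_1} \cdots g_m^{b_m}$ has degree exactly $\sum_i b_i d_i$, an element of $A$ of positive degree below $d_1$ forces $b_j = 0$ whenever $d_j \geq d_1$, so it lies in the subalgebra $A_0[g_i : d_i < d_1]$. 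Consequently $g_1 \in A_0[g_i : i \neq 1]$, contradicting minimality of the original generating set. The only subtle point, which I expect to be the main obstacle to verify carefully, is that $g_1$ itself cannot appear inside $h_{d_1}$: any factor involving $g_1$ would contribute degree at least $d_1$, but factors of a product in $I_+^2$ of total degree $d_1$ must have strictly smaller individual degree — this is precisely what makes the argument close.
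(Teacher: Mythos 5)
Your proof is correct and follows essentially the same route as the paper's: lift a hypothetical dependence to $g_1=\sum_{i\geq 2}a_ig_i+h$ with $h\in I_+^2$, extract the homogeneous component of degree $\deg(g_1)$, and use positivity of the generators' degrees to rule out any contribution of $g_1$ from the $I_+^2$-part, contradicting minimality of the algebra generators. You additionally write out the (easy) generation half and the reduction to homogeneous generators, both of which the paper's proof leaves implicit.
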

\begin{proof}
  Assume the set of the images of the $g_i$ is not minimal as a generating set. Then w.l.o.g. $g_1$ can be expressed as $g_1 = \sum_{j \geq 2} b^0_j g_j + f$, where $f \in I_+^2$ and $b^0_j \in A_0$.

  Since, the $g_1, \ldots, g_m$ generate $A$ as a $A_0$-algebra we may write $f$ as a polynomial in $g_1, \ldots, g_m$ with coefficients in $A_0$. Now for $\alpha \in \ZZ_{\geq 0}^m$ we set $g^\alpha=g_1^{\alpha_1} \cdots g_m^{\alpha_m}$ and $|\alpha|=\sum_j \alpha_j$ and we write
  \[
    f= \sum_\alpha b^0_\alpha g^{\alpha}.\]
  After restricting to the homogeneous part of degree $\deg(g_1)$ of the equation $g_1 = \sum_{j \geq 2}b^0_j g_j + \sum_\alpha b^0_\alpha g^\alpha$, we may assume that
$b_j^0 = 0$ for $\deg(g_j) \neq \deg(g_1)$ and $b^0_\alpha = 0$ for $\deg(g^\alpha) \neq \deg(g_1)$.   From the fact that $f \in I_+^2$ it follows that $|\alpha| > 1$ for $b^0_\alpha \neq 0$. Now, if $\alpha_1 \neq 0$ and $|\alpha| > 1$, then $\deg(g^{\alpha}) > \deg(g_1)$, since the degrees of the $g_i$ are all positive. Therefore, the coefficient $b^0_\alpha$ must be $0$ in this case. Hence,
  \[
    g_1 = \sum_{j\geq 2} b^0_jg_j + \sum_\alpha b^0_\alpha g^{\alpha}
  \]
  expresses $g_1$ as a polynomial in $g_2,\ldots,g_m$ with coefficients in $A_0$, but this contradicts the minimality of $\{g_1,\ldots, g_m\}$ as a set of $A_0$-algebra generators for $A$. 
\end{proof}

\begin{lemma}
  \label{lem:embedding-family}
  Given a cone $\TT$-singularity $X_0$, whereas for once we do not require normality, irreducibility or reducedness. Then there exists an affine space $\mathbb A^m$, such that every $\TT$-invariant (and $\CC^*$-equivariant) family $\X \to \Spec B$  over a (graded) local ring $(B,\m)$ with special fibre $X_0$ can be equivariantly embedded into $\mathbb A^m \times \Spec B$.

  Moreover, $\mathbb A^m$ can be equivariantly identified with the Zariski tangent space of $X_0$ at the vertex.
\end{lemma}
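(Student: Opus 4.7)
The plan is to construct an equivariant embedding of $X_0$ first, then lift it to the whole family by a graded Nakayama argument applied one $\TT$-weight at a time. Write $A = \Gamma(X_0, \CO_{X_0})$. Since $X_0$ is a cone singularity we have $A_0 = \CC$ and the maximal ideal of the vertex is the irrelevant ideal $\m_0 = \bigoplus_{u \neq 0} A_u$. I would pick a minimal set $g_1, \ldots, g_m$ of $M$-homogeneous $\CC$-algebra generators of $A$. (The positive grading hypothesis in Lemma~\ref{lem:graded-embedding-dim} can be arranged by choosing a cocharacter $\lambda \in \rint(\sigma) \cap N$ and refining its $\ZZ_{\geq 0}$-grading to the $M$-grading.) By that lemma the classes $[g_1], \ldots, [g_m]$ form a basis of $\m_0/\m_0^2$, so $m = \dim_\CC T_{X_0,0}$. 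The surjection $\CC[x_1,\ldots,x_m] \twoheadrightarrow A$ sending $x_i \mapsto g_i$ then realizes $X_0$ as an equivariant closed subscheme of $\mathbb A^m$, and the dual basis to the $[g_i]$ gives the equivariant identification of $\mathbb A^m$ with $T_{X_0,0}$.

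Next, let $\tilde A = \Gamma(\X, \CO_\X)$; it is a $B$-algebra graded by $M$ (and by $M \oplus \ZZ$ in the $\CC^*$-equivariant case) fitting into a graded surjection $\tilde A \twoheadrightarrow \tilde A/\m \tilde A = A$. I would pick bi-homogeneous lifts $\tilde g_i \in \tilde A$ of the $g_i$ of the same weights; this is possible because $\tilde A \to A$ is surjective on every weight piece. The resulting map $B[x_1,\ldots,x_m] \to \tilde A$, $x_i \mapsto \tilde g_i$, is automatically $(\TT \times \CC^*)$-equivariant, and the task reduces to showing that the $B$-subalgebra $\tilde A' \subseteq \tilde A$ generated by the $\tilde g_i$ equals $\tilde A$.

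I would argue weight by weight. For each $u \in \omega \cap M$, pointedness of $\omega$ together with $A_0 = \CC$ forces $A_u$ to be finite-dimensional, and flatness of $\X / \Spec B$ combined with $\tilde A_u / \m \tilde A_u = A_u$ then shows that $\tilde A_u$ is a finitely generated (indeed free) $B$-module of rank $\dim_\CC A_u$. Since $g_1, \ldots, g_m$ generate $A$ as a $\CC$-algebra, the reduction of $\tilde A'_u$ modulo $\m$ coincides with $A_u$, so
\[
\tilde A'_u + \m \cdot \tilde A_u = \tilde A_u.
\]
Nakayama applied to the finitely generated $B$-module $\tilde A_u$ over the local ring $(B,\m)$ yields $\tilde A'_u = \tilde A_u$; summing over $u$ gives $\tilde A' = \tilde A$ and hence the required equivariant closed embedding $\X \hookrightarrow \mathbb A^m \times \Spec B$.

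The main obstacle is the finite-generation step for the weight pieces: one has to ensure that each $\tilde A_u$ is a finitely generated $B$-module before Nakayama can be invoked. This is precisely where the cone hypothesis on $X_0$ is essential, even in the relaxed non-normal/non-reduced setting, since pointedness of $\omega$ and $A_0 = \CC$ together force each $A_u$ to be finite-dimensional. Once this is in hand, the remainder is routine graded linear algebra together with the observation that the $\TT$-weights of the coordinates $x_i$ on $\mathbb A^m$ are fixed once and for all by the choice of $g_1, \ldots, g_m$ on the central fibre.
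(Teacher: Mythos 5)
Your proof is correct in outline but is organized differently from the paper's. The paper works directly with the total coordinate ring $\widetilde A$ of the family: it chooses a minimal set of homogeneous $B$-algebra generators of $\widetilde A$ (so that $\widetilde A_0=B$ plays the role of $A_0$ in Lemma~\ref{lem:graded-embedding-dim}), concludes that these minimally generate $I_+/I_+^2$ as a $B$-module, and then a single application of Nakayama identifies their classes with a basis of $(I_+/I_+^2)\otimes_B(B/\m)$, the cotangent space of $X_0$ at the vertex; surjectivity of $B\otimes_\CC\operatorname{Sym}(T^*)\to\widetilde A$ is then automatic because the chosen elements generate $\widetilde A$ over $B$ by fiat. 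You instead embed the special fibre first and then lift, which makes the identification $m=\dim_\CC T_{X_0,0}$ visible from the start but costs you an extra step, namely showing that the lifted elements generate $\widetilde A$, which you handle by a weight-by-weight Nakayama argument. Both routes rest on the same two ingredients (Lemma~\ref{lem:graded-embedding-dim} plus Nakayama), and both are viable.

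The one step you should repair is the finite generation of the weight pieces $\widetilde A_u$ over $B$. Flatness together with $\dim_\CC A_u<\infty$ does \emph{not} imply that $\widetilde A_u$ is a finitely generated $B$-module: for instance $\CC((t))$ is flat over $\CC[t]_{(t)}$ with zero closed fibre, yet is not finitely generated. The correct reason is the pointedness argument you already use on the central fibre, applied to the total space: $\widetilde A$ is a finitely generated $B$-algebra with $\widetilde A_0=B$, so it is generated over $B$ by finitely many homogeneous elements of nonzero weights $u_1,\dots,u_k$ in the pointed weight cone $\omega$; choosing a linear functional $\ell\in\rint(\omega^\vee)$, any monomial in these generators of weight $u$ has exponent vector bounded entrywise by $\ell(u)/\min_j\ell(u_j)$, so only finitely many monomials have weight $u$ and $\widetilde A_u$ is a finitely generated $B$-module. (Flatness is then not needed at all, consistent with the fact that the paper's proof never invokes it.) With that replacement your Nakayama step, or its graded version when $B=\CC[t]$, goes through and the rest of the argument is fine.
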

\begin{proof}
  Assume $\X=\Spec A \to \Spec B$. Then for an appropriate homomorphism $M \to \ZZ$ the $B$-algebra $A$ is positively graded with $A_0=B$ and there is a minimal set of homogeneous generators $g_1 \ldots g_m$. By Lemma~\ref{lem:graded-embedding-dim} their images give a minimal generating set of $I_+/I^2_+$. Now, it follows from Nakayama's Lemma (or its graded version, respectively) that the images of the of generating set form a basis $\bar{g}_1, \ldots,\bar g_m$ of $T^*=(I_+/I^2_+) \otimes_B (B/\m)$, the Zariski cotangent space of $X_0$ at the vertex $0$. Hence,
  we have an equivariant embedding $\X \hookrightarrow T \times \Spec B$, induced by the $B$-algebra homomorphism
  \[B \otimes_\CC \operatorname{Sym}(T^*) \to A;\quad  \bar g_i \mapsto g_i\]
\end{proof}

\begin{lemma}
  \label{lem:embedding-deg}
  Given a cone $\TT$-singularity $X_0$. There exists an affine space $\mathbb A^m$, such that every torus invariant special degeneration $\X \to \mathbb A^1$ to $X_0$ is equivariantly embedded as $\X \subset \mathbb A^m \times \mathbb A^1$.
\end{lemma}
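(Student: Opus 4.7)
The plan is to deduce this directly from Lemma~\ref{lem:embedding-family}, applied with base ring $B = \CC[t]$. The key observation is that a torus invariant special degeneration $\X \to \mathbb{A}^1$ to $X_0$ is, by the very definition of such a degeneration, a $\TT$-invariant and $\CC^*$-equivariant flat family over $\mathbb{A}^1$. Endowing $\CC[t]$ with the $\ZZ$-grading $\deg t = 1$ induced by the standard $\CC^*$-action on $\mathbb{A}^1$, we view $\CC[t]$ as a graded local ring with unique graded maximal ideal $\mm = (t)$; graded Nakayama applies in this setting.

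With $B = \CC[t]$ interpreted in this way, all hypotheses of Lemma~\ref{lem:embedding-family} are fulfilled. Applying that lemma then produces a $(\TT \times \CC^*)$-equivariant closed embedding $\X \hookrightarrow T \times \mathbb{A}^1$, where $T$ is canonically identified with the Zariski tangent space of $X_0$ at the vertex. Since $\dim T$ depends only on $X_0$, and not on the particular special degeneration, setting $m = \dim T$ yields an embedding $\X \subset \mathbb{A}^m \times \mathbb{A}^1$ whose target is independent of the chosen degeneration, as required.

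There is no substantive obstacle beyond Lemma~\ref{lem:embedding-family} itself. The only genuine bookkeeping check is that $\CC[t]$ qualifies as the kind of graded local ring to which the graded Nakayama argument in that lemma's proof applies. This is immediate once one fixes a polarization $\xi$ in the interior of the Reeb cone of $X_0$: since $\TT$ acts trivially on the base $\mathbb{A}^1$, the function $t$ has trivial $\TT$-weight, and the coordinate ring $A = \Gamma(\X, \CO)$ becomes positively graded with respect to the combined grading induced by $\xi$ and the $\CC^*$-action, with degree-zero part equal to $\CC[t]$. Hence the positivity assumption needed for the proof of Lemma~\ref{lem:embedding-family} holds, and the conclusion follows verbatim.
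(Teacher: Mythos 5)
Your proposal is correct and matches the paper's own proof, which likewise reduces the statement to Lemma~\ref{lem:embedding-family} by taking $B$ to be the graded local ring $\CC[t]$. The additional verification you supply (that $\CC[t]$ with its $\CC^*$-induced grading satisfies the hypotheses of that lemma) is sound and merely makes explicit what the paper leaves implicit.
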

\begin{proof}
  This follows from Lemma~\ref{lem:embedding-family} when taking $B$ to be the graded local ring $\CC[t]$.
\end{proof}

Hence, for $X_0=\Spec A$ and $A=R/I$ every $\TT$-cone singularity $X$ with $\TT$-invariant special degeration to $X_0$ will be represented by an element of the universal family over $\Hilbs=\Hilb$. Moreover, the special degeneration $\X \to \mathbb A^1$ induces a $(\TT \times \CC^*)$-equivariant map $\psi_\X \colon (\mathbb A^1,0) \to (\Hilbs, X_0)$ of pointed schemes.

We are now interested in studying the $\hat \TT$-action in a neighbourhood of $X_0$ in $\Hilbs$. For this we consider the tangent space $T_{X_0}(\Hilbs)$. By \cite[Prop.~1.6]{zbMATH02148302} this tangent space is isomorphic to
\(
 \Hom_R(I,R/I)^{\TT}.
 \)
 In the following we set $T^1_{X_0}=\Hom_R(I,R/I)$ and  $T^1_{X_0}(u)=\Hom_R(I,R/I)_u$ for $u\in \hat M$. Indeed, since $R/I$ was even $\hat M$-graded we obtain an $\hat M$-grading on $T^1_{X_0}$ as well. With this notation we have
 \[T_{X_0}\Hilbs = \bigoplus_{u \in \ker(\hat M \to M)}T^1_{X_0}(u).\]
 Now, the $\CC^*$-action of the special degeneration $\X$ acts with weight $\langle u, \hat v \rangle$ on $T^1_{X_0}(u)$.
 We consider a $\hat \TT$-invariant affine chart $U$ of $\Hilbs$ containing $X_0$. Let $C$ be the scheme-theoretic image of $\psi$ in $U$. Then $\TX:=T_{X_0}C \subset T_{X_0}\Hilbs$ is a $\CC^*$-invariant linear subspace.
 \begin{definition}   
   Consider an element $x\in W$ of some $\hat \TT$-representation $W$. We have a decomposition $x=\sum_{u\in \hat M}x_u$
   into homogeneous components $x_u$ of weight $u$. Then
   \[\supp x = \{u \in \hat M \mid x_u \neq 0\}\]
   is called the \emph{support} of $x \in W$. More generally for
   a subset $W'\subset W$ we set \[\supp W'= \bigcup_{x \in V'} \supp(x).\]
   
   For a special degeneration $\X$ with special fibre $X_0$ we define the \emph{degeneration cone of $\X$} by
   \[
     \dC := \operatorname{pos}(\supp \TX)^\vee:=\left(\sum_{u\in \supp \TX} \RR_{\geq 0}u\right)^{\bigvee},
   \]
   i.e. $\dC$ is the dual cone of the cone generated by the support of $\TX$.
 \end{definition}

\begin{example}\label{ex:running}
 We consider the toric $3$-fold hypersurface singularity
 \[
 X_0:=\{ (x,y,z,w) \mid xy+z^2=0\}. 
\]
This is the toric variety  $A_1 \times \mathbb A^1$, which corresponds to the cone $\hat \sigma$ spanned by the elements $(0,0,1),(0,2,1),(1,0,0)$. The $3$-dimensional torus $\hat{\mathbb{T}}$ acts on $X_0$ via
\[
(\lambda_1,\lambda_2,\lambda_2)\cdot (x,y,z,w) = 
(\lambda_2^{-1}x,\lambda_2\lambda_3^{-2}y,\lambda_3^{-1}z,\lambda_1^{-1}w),
\]
i.e. with weight matrix $$
\left(\begin{matrix}
    0&0&0&-1\\
    -1&1&0&0\\
    0&-2&-1&0
\end{matrix}\right).
$$
 For each $n \geq 0$, we have a special degeneration to $X_0$ given by
 \begin{equation}
 \X_{n}:=\{ (x,y,z,w,t) \mid xy+z^2+tw^{2k}\}.\label{eq:ex-degen}
\end{equation}
 This degeneration is equivariant with respect to the 2-dimensional subtorus $\TT=\{(\lambda_1^2,\lambda_2,\lambda_1^n) \in \hat{\TT}\}$. This corresponds to the surjective linear map $\RR^3=\hat M_\RR \to M_\RR= \RR^2$ with kernel $\RR\cdot(n,0,-2)$.

 Set $R=\CC[x,y,z,w]$ and $f=xy+z^2$. Then $T_{X_0}\Hilb = \Hom_R((f),R/(f))^\TT$ is spanned by the homomorphisms
sending $f$ to the class of monomials of the same $\TT$-weight as $f=xy+z^2$. In this case these are the monomials $z^2$ and $w^n$. Hence, $T_{X_0}\Hilbs = T^1_{X_0}(0,0,0) \oplus T^1_{X_0}(n,0,-2)$ with $T^1_{X_0}(0,0,0)=\CC\cdot (f \mapsto z^2)$ and $T^1_{X_0}(n,0,-2)=\CC\cdot (f \mapsto w^{n})$. We claim that $T\X=T^1_{X_0}(n,0,-2)=\CC\cdot (f \mapsto w^{n})$ and therefore
 $\dC = \left(\pos{(n,0,-2)}\right)^\vee$.
\end{example}

The following lemma shows that the elements $\Sigma_\X \cap \hat N$ parametrise the $\CC^*$-actions on $X_0$ which are induced by $\TT$-invaraint special degenerations of the generic fibre of $\X$ to $X_0$.
 \begin{lemma}
   \label{lem:support-and-deg-cone}
   Assume we have a special degeneration $\X$ of a $\TT$-singularity $X$ to a $\hat \TT$-singularity $X_0$ as above. Let the induced $\CC^*$-action on $X_0$ be given by an element $\hat v \in \hat N$. Then the following holds.
   \begin{enumerate}
   \item The weights in $\supp \TX$ span $\ker (\hat M_\RR \to M_\RR)$ as a vector
     space, \label{item:weight-span}
   \item we have $\hat v \in \rint \dC$ and \label{item:deg-cone}
   \item for every lattice element $\hat w \in \hat \sigma \cap \dC$ there exists a special degeneration $\X'$ of $X$ to $X_0$ inducing the $\CC^*$-action $\hat w$ on $X_0$. \label{item:deg-cone-2}
 \end{enumerate}
 \end{lemma}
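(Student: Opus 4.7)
My plan is to exploit the linearization of $\Hilbs$ at the $\hat\TT$-fixed point $X_0$. On its formal neighborhood, the torus $\hat\TT$ acts through characters in $K := \ker(\hat M \to M)$, so $T_{X_0}\Hilbs = \bigoplus_{u \in K}T^1_{X_0}(u)$, and the $\CC^*$ induced by the degeneration acts on $T^1_{X_0}(u)$ with weight $\langle u,\hat v\rangle$. All three items will be read off from this decomposition applied to the $\CC^*$-invariant subspace $\TX \subset T_{X_0}\Hilbs$.

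For (i) I would argue by contradiction. If $\supp\TX$ failed to span $K_\RR$, I could choose a non-zero $\eta \in \hat N_\RR \setminus N_\RR$ annihilating $\supp\TX$; the $1$-parameter subgroup $\CC^*_\eta \subset \hat\TT$ then acts trivially on $\TX = T_{X_0}C$. A rigidity argument --- a reductive group acting on a pointed reduced curve with trivial action on the tangent line acts trivially on the curve --- shows that $\CC^*_\eta$ fixes $C$ pointwise. Since the universal family on $\Hilbs$ is $\hat\TT$-equivariant, $\CC^*_\eta$ preserves every fibre $\X_c \subset \mathbb A^m$ as a subscheme, producing a $1$-parameter subgroup of $\operatorname{Aut}(X)$ outside $\TT$ on a generic fibre. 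This contradicts the (harmless) assumption that $\TT$ is a maximal torus acting on $X$.

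Part (ii) follows quickly: the $\CC^*$-action on $C$ is attracting toward $X_0$ (being induced from the standard contracting $\CC^*$-action on $\mathbb A^1$), so every $\CC^*$-weight appearing in $\TX$ is strictly positive. Decomposing a weight vector into its components in the various $T^1_{X_0}(u)$ yields $\langle u,\hat v\rangle > 0$ for every $u \in \supp\TX$, and combined with (i) this places $\hat v$ in the relative interior of $\dC = \pos(\supp\TX)^\vee$.

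For (iii), given $\hat w \in \hat\sigma \cap \dC \cap \hat N$, my first attempt is to form the orbit closure $\overline{\CC^*_{\hat w}\cdot [X]} \subset \Hilbs$ via the $\hat\TT$-action on $\Hilbs$ and parametrize it by an equivariant morphism $\mathbb A^1 \to \Hilbs$; pulling back the universal family then produces a $\TT$-invariant flat family $\X'$ over $\mathbb A^1$ with generic fibre $X$ and $\CC^*$-action given by $\hat w$. The hypothesis $\hat w \in \dC$ forces the $\hat w$-weights on $\supp\TX$ to be non-negative, so the orbit is attracted toward the $\hat\TT$-fixed locus as $s \to 0$, while the Reeb-cone condition $\hat w \in \hat\sigma$ ensures the central fibre is an honest affine cone carrying the prescribed polarization; since $X_0$ is isolated among $\hat\TT$-fixed points in a sufficiently small $\hat\TT$-invariant neighborhood, the limit must be $X_0$. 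I expect the main obstacle to be the boundary case $\hat w \in \partial\dC$, where some $\langle u,\hat w\rangle$ vanish and the naive orbit closure need not be flat over $\mathbb A^1$ at $s=0$; I would handle this by approximating $\hat w$ by interior lattice points, using properness of the appropriate $\hat\TT$-equivariant Hilbert scheme to extend the family across $s=0$, or equivalently by constructing the degeneration via a quasi-monomial valuation on $A$ whose associated graded ring is $A_0$.
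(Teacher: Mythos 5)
Your item (ii) is essentially the paper's argument and is fine: the base $\mathbb A^1$ forces $B/\ker\psi^\#$ to be negatively graded with respect to $\hat v$, hence the tangent space $\TX$ carries only positive $\hat v$-weights. The gaps are in (i) and (iii), and in both cases what is missing is the paper's key device: Lemma~\ref{lem:graded-embedding-dim} is used to embed a suitable invariant closed subscheme of $\Hilbs$ \emph{equivariantly into its own Zariski tangent space at $X_0$}, sending $X_0$ to the origin, and only after this linearization can stabilizers and limits be read off from supports of weight vectors. In (i), your rigidity argument presupposes that $\CC^*_\eta$ acts on the curve $C$, and that is precisely what you do not have: $C=\overline{\CC^*_{\hat v}\cdot [X]}$ is invariant under $\TT\times\CC^*_{\hat v}$ but not under all of $\hat\TT$, and since the torus is abelian, $\gamma_\eta(s)\cdot C=\overline{\CC^*_{\hat v}\cdot(\gamma_\eta(s)[X])}$, so $\CC^*_\eta$ preserves $C$ exactly when it fixes $[X]$ --- which is the conclusion you are trying to reach. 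The paper instead embeds $C\hookrightarrow T_{X_0}C\subset T_{X_0}\Hilbs$ and notes that the point $\psi(1)=[X]$, viewed in the $\hat M$-graded space $T_{X_0}\Hilbs$, has support contained in $\supp\TX$ and is therefore literally fixed by the subtorus annihilating $\lspan_\RR(\supp\psi(1))$; effectivity of the $\TT$-action on $X$ then forces $\lspan_\RR(\supp\psi(1))=\ker(\hat M_\RR\to M_\RR)$.

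In (iii) the gap is more serious. The condition $\hat w\in\dC$ constrains the $\hat w$-weights only on $\TX=T_{X_0}C$, i.e.\ on the tangent directions of the \emph{old} degeneration at $X_0$; it says nothing a priori about where the new orbit $\CC^*_{\hat w}\cdot[X]$ limits. That limit is fixed only by the subtorus generated by $\TT$ and $\CC^*_{\hat w}$, not by all of $\hat\TT$, and the orbit has no reason to remain in a small neighbourhood of $X_0$, so "$X_0$ is isolated among $\hat\TT$-fixed points nearby" cannot identify the limit; Bia{\l}ynicki--Birula cells for different one-parameter subgroups through the same point can attract to different fixed points. The paper closes this by constructing the $\hat\TT$-invariant, negatively graded closed subscheme $S=\Spec(B/I)$, where $I$ is generated by $\bigl((\ker\psi^\#)\cap B_0\bigr)\oplus\bigoplus_{\langle u,\hat v\rangle>0}B_u$, which contains both $[X]$ and $X_0$, and then embedding $S\hookrightarrow T_{X_0}S$ equivariantly with $X_0\mapsto 0$; in that linear model every homogeneous component of $\psi(1)$ has positive $\hat w$-weight, so $\lim_{t\to 0}\gamma_{\hat w}(t)\cdot\psi(1)=0=X_0$ is forced. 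Your fallback via quasi-monomial valuations could in principle substitute for this, but it is only named, not carried out, so as written part (iii) is not proved.
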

 \begin{proof}
   Recall that $X_0$ was a $\hat\TT$-fixed point of $\Hilbs$. We may choose a  $\hat \TT$-invariant  open affine neighborhood $U=\Spec B$ of $X_0$ in $\Hilbs$.
   Then $\psi=\psi_\X$ corresponds to a $\ZZ$-graded homomorphism $\psi^\# \colon B \to \CC[t]$.
   The scheme-theoretic image of $\psi$ in $U$ is given by $C=\Spec(B/\ker \psi^\#)$.  By the definition of a special degeneration, $\CC^*$ acts with weight $1$ on the base of $\X$. It will therefore act with positive weight on $T_{X_0}C$. Indeed, on the level of coordinate rings we have $\deg(t)=-1$. Hence, $\CC[t]$ is negatively graded with respect to $\hat v$. Then $B/\ker \psi^\#$  will be negatively graded as well.  Hence, $\CC^*$ also acts via $\hat v$ with negative weights on the cotangent space and therefore with positive weights on the tangent space. From this we deduce that $\langle u, \hat v \rangle > 0$ for $u \in \supp T_{X_0}C$ and therefore $\hat v \in \rint \dC$ as claimed in (\ref{item:deg-cone}).
 
By Lemma~\ref{lem:graded-embedding-dim} we have  an equivariant embedding
\[C \hookrightarrow T_{X_0}C \subset T_{X_0}\Hilbs.\]
We have $\supp T_{X_0}C \subset \supp T_{X_0}\Hilbs \subset \ker (\hat M_\RR \to M_\RR)$.
Consider now the image $\psi(1) \in C \hookrightarrow  T_{X_0}C \subset  T_{X,0}\Hilbs$. Then we have
$\supp(\psi(1)) \subset \supp T_{X_0}C$. Now, $\psi(1)$ is fixed under the subtorus $\TT'$ corresponding
to the surjection $\hat M \to \hat M/\lspan_\RR(\supp\psi(1))$. Since $\psi(1)$ parametrizes $\X_1\cong X$ in the Hilbert scheme $\Hilbs$ it follows that $X$ admits an action of this subtorus. Because $X$ was assumed to be a $\TT$-variety in our strict sense, it follows that $\TT=\TT'$ and $\lspan_\RR(\supp\psi(1)) = \ker (\hat M_\RR \to M_\RR)$. But then also $\lspan_\RR(\supp T_{X_0}C) = \ker (\hat M_\RR \to M_\RR)$ holds. This shows (\ref{item:weight-span}).

To prove (\ref{item:deg-cone-2}) we need to refine the ideas from above a little bit. We consider the $\hat T$-invariant subspace
   \[W:=(\ker \psi^\#) \cap B_0 \oplus \bigoplus_{\langle u, \hat v \rangle >0} B_u \subset B.\]
   Then $W \subset \ker \psi^\#$ holds, since $\CC[t]$ was negatively graded. Let $I \subset B$ be the $\hat M$-graded ideal generated by $W$. Then $\psi \colon \mathbb A^1 \to \Spec B$ factors through a $\hat T$-equivariant closed embedding $S:=\Spec B/I \hookrightarrow \Spec B$. Note, that $B/I$ is negatively graded by construction and $X_0 \in S$ is the unique fixed point of the corresponding $\CC^*$-action. Hence, by Lemma~\ref{lem:graded-embedding-dim} we have  a $\hat T$-equivariant embedding
   \[S\hookrightarrow T_{X_0}S \subset T_{X_0}\Hilbs\]
   which maps $X_0 \in S$ to $0 \in T_{X_0}S$.  Now, we have \[\psi(1) \in S \subset T_{X_0}S \subset T_{X_0}\Hilbs\] and for every $\hat w \in \rint \dC$ the corresponding $\CC^*$-action has positive weights along the homogeneous components of $\psi(1)$ in $T_{X_0}S$. Hence, in $T_{X_0}S$ we have
   \[\lim_{t\to 0} \gamma_{\hat w}(t).\psi(1) = 0,\] which corresponded to $X_0 \in S \subset \Hilbs$. In this way we obtain a special degeneration of $X$ to $X_0$ and inducing the $\CC^*$-action given by $\hat w$.
\end{proof}
\begin{example}\label{ex:running2}
  We are coming back to Example~\ref{ex:running}. We choose now a $\CC^*$-action on the total space of the degeneration $\X_{n}$ given in (\ref{eq:ex-degen}) by acting via $\lambda.(x,y,z,w,t)=(x,\lambda^2y,\lambda z, w, \lambda^2 t)$. The corresponding one-parameter subgroup in $\hat N$ is given by $\hat v=(0,0,-1)$. Hence, we see indeed that $\hat v \in \dC$.
\end{example}

Assume now, we are given a $\hat \TT$-invariant family $\X \subset \mathbb A^m \times S \to S$ of cone singularities  of arbitrary complexity with common Reeb cone $\hat \sigma \subset \hat N_\RR$.

Note, that for rays in a real vector space we may talk about distances, accumulation and convergence by fixing an inner product and intersecting with a unit sphere.

\begin{lemma}
  \label{lem:accumulation}
  Given a $\hat \TT$-invariant family $\X \to S$ of cone singularities  of arbitrary complexity with common Reeb cone $\hat \sigma \subset \hat N_\RR$.
  We consider the rays in $\hat M_\RR$ spanned by degrees $u \in \hat M \setminus  \hat \sigma^\vee$, such that $T^1_{\X_s}(u) \neq 0$ for at least one $s \in S$. Then those rays accumulate only at  the boundary of $\hat \sigma^\vee$.  In particular, we obtain the following.
  \begin{enumerate}
  \item For fixed $\xi_0 \in \rint \hat \sigma$ we have
    $\langle u, \xi_0 \rangle \leq 0$ only for fninitely many of those
    rays.\label{item:accumulation-finite}
  \item Assume for every such ray  we have a $\xi_u \in \rint \hat \sigma$ with $\langle u, \xi_u \rangle \leq 0$. Then $\xi_u/|\xi_u|$ accumulate at faces of $\hat \sigma$ which are dual to a faces of $\hat \sigma^\vee$ that contain an accumulation point of the $u/|u|$ in their interior.\label{item:dual-accumulation}
\end{enumerate}

\end{lemma}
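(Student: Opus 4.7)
My plan is to bound the support of $T^1_{\X_s}$ in $\hat M$ by a finite union of shifted copies of the weight cone $\hat \sigma^\vee$, and then derive the main statement and both consequences by elementary convex geometry on $\hat \sigma^\vee$.

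First, I would extract a uniform set of generator degrees for the ideal sheaf $\mathcal I \subset R \otimes \CO_S$ cutting out $\X \subset \mathbb A^m \times S$, where $R = \CC[x_1,\ldots,x_m]$ is the $\hat M$-graded coordinate ring of $\mathbb A^m$ (so each $x_i$ has weight in $\hat\sigma^\vee$). Since $\mathbb A^m \times S$ is Noetherian, $\mathcal I$ admits finitely many global homogeneous generators $f_1, \ldots, f_k$ of weights $d_1, \ldots, d_k \in \hat\sigma^\vee$, whose specializations generate $I_s \subset R$ for every $s \in S$. Any nonzero $\varphi \in T^1_{\X_s}(u) = \Hom_R(I_s, R/I_s)_u$ is determined on generators by values $\varphi(f_i) \in (R/I_s)_{u+d_i}$, so nontriviality forces $(R/I_s)_{u+d_i} \neq 0$ for some $i$; since the weights of $R/I_s$ lie in $\hat \sigma^\vee$, this gives $u + d_i \in \hat\sigma^\vee$. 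Hence
\[
\bigl\{u \in \hat M \setminus \hat\sigma^\vee \;\bigm|\; T^1_{\X_s}(u) \neq 0 \text{ for some } s \in S\bigr\} \;\subset\; \bigcup_{i=1}^k (\hat \sigma^\vee - d_i).
\]

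The main statement and claim (\ref{item:accumulation-finite}) now follow from this inclusion. For a sequence of lattice points $u_n$ in the above set with $u_n/|u_n| \to r$ on the unit sphere, pass to a subsequence with $u_n + d_i \in \hat\sigma^\vee$ for a fixed $i$: if $|u_n|$ stays bounded only finitely many rays appear, while if $|u_n| \to \infty$ then $(u_n+d_i)/|u_n| \to r$ together with $u_n + d_i \in \hat\sigma^\vee$ forces $r \in \hat\sigma^\vee$, and combined with $u_n \not\in \hat\sigma^\vee$ this yields $r \in \partial \hat\sigma^\vee$. For (\ref{item:accumulation-finite}) I would use the standard fact that for $\xi_0 \in \rint \hat\sigma$ any cross-section $\hat\sigma^\vee \cap \{\langle \cdot,\xi_0\rangle \leq c\}$ is compact; hence $(\hat\sigma^\vee - d_i) \cap \{\langle \cdot, \xi_0\rangle \leq 0\}$ is a translate of a compact set, contains only finitely many lattice points, and therefore contributes only finitely many rays.

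For (\ref{item:dual-accumulation}), pass to subsequences so that $u_n/|u_n| \to r$ lies in the relative interior of a face $F \subset \hat\sigma^\vee$ and $\xi_{u_n}/|\xi_{u_n}| \to \eta \in \hat \sigma$. Dividing $\langle u_n, \xi_{u_n}\rangle \leq 0$ by $|u_n||\xi_{u_n}|$ and passing to the limit yields $\langle r,\eta\rangle \leq 0$; since $r \in \hat\sigma^\vee$ and $\eta \in \hat\sigma$ give the reverse inequality, we obtain $\langle r,\eta\rangle = 0$. As $r$ lies in the relative interior of $F$, any such $\eta$ must vanish on all of $F$, placing $\eta$ in the dual face $F^\ast := F^\perp \cap \hat\sigma$. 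The only genuine technical input is the uniform finite generation of $\mathcal I$, which follows from Noetherianity of the ambient scheme; once this is in place everything reduces to the observation that a bounded shift of a closed cone has the same asymptotic boundary.
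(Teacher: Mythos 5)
Your proof is correct and follows essentially the same route as the paper's: both bound the support of $T^1_{\X_s}$ by finitely many translates $\hat\sigma^\vee - d_i$ of the weight cone using a uniform finite homogeneous generating set for the ideal over an affine base, and then conclude by elementary convex geometry on $\hat\sigma^\vee$. The only cosmetic difference is that the paper deduces part (i) as a corollary of part (ii), whereas you give a direct compactness argument for the truncated translated cone; both are valid.
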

\begin{proof}
  We may assume wlog that $S=\Spec B$ is affine and
  \(\X=\Spec \mathcal A\)
  with $\A=(B \otimes R)/\I$ and the ideal $\I=(f_1,\ldots,f_r)$ being generated by homogeneous elements of weight $\deg f_i=: w_i \in \hat M$. The fibre over $s \in S$ is then given by
  $\X_s = \Spec \A_s$ with $\A_s = \A \otimes \CC(s) = R/\I_s$. Note, that $\I_s = \I \otimes \CC(s)$ is generated by elements in degrees $w_1, \ldots, w_r$, as well.

  We consider the non-trivial homogeneous components of  $T^1_{\X_s}=\Hom_R(\I_s, R/\I_s)$. Let $\varphi \in \Hom_R(\I_s, R/\I_s)$ be a non-trivial element. Then we have $0 \neq \varphi(f_i) \in (R/\I_s)_w$ for at least one $i \in \{1,\ldots,r\}$ and some $w \in \hat \sigma^\vee$. In this case we have $\deg \varphi=w-w_i$. Hence, we get $T^1_{\X_s}(u) \neq 0$ only for  $u \in \left(G + \hat \sigma^\vee\right)$, where $G=\{-w_1, \ldots,-w_r\}$.

  Consider a sequence of pairwise distinct elements $(u_i)_{i \in \NN}\in \hat M \setminus  \hat \sigma^\vee$ with $T^1_{\X_s}(u) \neq 0$ for some $s \in S$.   Then we have $\sfrac{u_i}{|u_i|} \in (\sfrac{G}{|u_i|} + \hat \sigma^\vee) \setminus \hat \sigma^\vee$. Since 
  the $u_i$ are lattice points we necessarily have  $|u_i| \to +\infty$. We see that the distance between $\sfrac{u_i}{|u_i|}$ and $\hat \sigma^\vee$ goes to $0$. Hence, every accumulation point has to lie on the boundary of $\hat \sigma^\vee$.

  Now, we show (\ref{item:dual-accumulation}). Consider a sequence $u_i \to u \in \rint \tau \prec \hat\sigma^\vee$.
  Assuming $\langle\xi_{u_i}, u_i \rangle \leq 0$ we obtain
  \(\lim_{i\to \infty} \langle\sfrac{\xi_{u_i}}{|\xi_{u_i}|} , u \rangle = \lim_{i\to \infty} \langle\sfrac{\xi_{u_i}}{|\xi_{u_i}|} , u_i \rangle \leq 0.\)
  On the other hand, it follows from $u \in \hat\sigma^\vee$ that 
  $\langle\sfrac{\xi_{u_i}}{|\xi_{u_i}|} , u \rangle \geq 0$ and therefore $\lim_{i\to \infty} \langle\sfrac{\xi_{u_i}}{|\xi_{u_i}|} , u \rangle = 0$. In other words, the $\xi_{u_i}$ accumulate at $\tau^*=u^\perp \cap \hat \sigma$.

  The statement (\ref{item:accumulation-finite}) follows from (\ref{item:dual-accumulation}), since the finite set $\{\xi_0\} \subset \rint \hat\sigma$ cannot have accumulation points at the boundary of $\hat\sigma$.
\end{proof}

\begin{example}
  Let us once again consider the degenerations from Example~\ref{ex:running}. We had seen that non-zero graded components $T^1_{X_0}(u)$ appearing here have degree $u=(n,0,-2)$. Here, $u/|u|$ converges to $(1,0,0)$, which is indeed a boundary point of the Reeb cone $\hat \sigma$.
\end{example}

\section{Torus invariant families}
\label{sec:torus-inv-fam}
In this section we study torus invariant families of cone singularities and special degenerations to elements of such a family, but first we show that we can always reduce to the case that a bounding family for a class
of $\TT$-singularities is $\TT$-invariant.

\begin{proposition}
  \label{prop:equivariantly-bounded}
Let $\{(Y_i,A_i)\}$ be log bounded family of pairs where $Y_i$ is a projective variety and $A_i$ is an ample $\qq$-divisor.
Let $\{(X_i;x_i)\}$ be the corresponding family of $\mathbb{T}$-singularities.
Then, we can find an affine space
\[
\mathbb{A}_{x_1,\dots,x_n}\times \mathbb{A}_{t_1,\dots,t_k}, 
\]
a family of ideals $I_{\mathbf t} =\langle f_1(\mathbf x,\mathbf t),\dots,f_s(\mathbf x,\mathbf t)\rangle$
depending on $\mathbf t\in Z\subset  \mathbb{A}_{t_1,\dots,t_k},$ and an embedding  $\TT \hookrightarrow \operatorname{Aut}(\mathbb{A}_{x_1,\dots,x_n})$, such that
for some $\mathbf{t}_i \in \mathbb{A}_{t_1,\dots,t_k}$ the vanishing locus $V(I_{\mathbf t_i})$ is $\TT$-invariant and $(X_i;x_i)$ is $\TT$-equivariantly isomorphic to $(V(I_{\mathbf t_i});0)$.
\end{proposition}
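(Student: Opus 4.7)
The plan is to leverage log boundedness to produce a single uniform projective embedding of all the $(Y_i,A_i)$, lift it to a uniform affine embedding of the cones $X_i$, and then stratify to make the $\TT$-action uniform. First, I would extract from the log boundedness hypothesis a flat projective morphism $(\mathcal{Y},\mathcal{A})\to B$ over a finite-type base $B$ with $\mathcal{A}$ relatively very ample (after replacing $\mathcal{A}$ by $m\mathcal{A}$ for $m\gg 0$), realising each $(Y_i,A_i)$ as a fibre. The relative affine cone $\mathcal{X}=\Spec_B \bigoplus_{k\geq 0}\pi_*\mathcal{A}^{\otimes k}$ provides a family realising each $X_i$, and a trivialisation of $\pi_*\mathcal{A}$ (locally free of constant rank $N+1$ after stratifying $B$) yields a $B$-embedding $\mathcal{X}\hookrightarrow \mathbb{A}^{N+1}\times B$ sending the cone points to the zero section.

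Next, I would transport the $\TT$-action. Each $\TT$-action on $X_i$ descends to a $\TT$-action on $(Y_i,A_i)$, hence gives a $\TT$-linearisation of $\mathcal{O}(A_i)$ and so a representation of $\TT$ on $H^0(Y_i,A_i)$. This representation determines a linear action of $\TT$ on the corresponding fibre $\mathbb{A}^{N+1}$ under which the embedding of $X_i$ is equivariant. The crucial observation is that, across the bounded family, only finitely many $\TT$-representation types can occur on $\mathbb{A}^{N+1}$: the rank is fixed by the Hilbert polynomial, and the weights lie in a bounded region of the character lattice $M$ controlled by $\mathcal{A}$ together with the Reeb cone.

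Granted this finiteness, I would stratify $B$ into locally closed pieces $B_1,\dots,B_r$ on each of which the weight decomposition of $\mathbb{A}^{N+1}$ is of a fixed type, producing on each stratum a $\TT$-equivariant embedding $\mathcal{X}|_{B_j}\hookrightarrow \mathbb{A}^{N+1}\times B_j$ with a common diagonal $\TT$-action throughout. To assemble a single uniform ambient space I would form the direct sum $\mathbb{A}^n_{\mathbf x}:=\bigoplus_{j=1}^r \mathbb{A}^{N+1}_j$ with $\TT$ acting diagonally by the weights of the $j$-th stratum on the $j$-th summand, and embed each stratum into its corresponding summand (setting all other coordinates to zero). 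Choosing affine embeddings of the quasi-projective pieces $B_j$, the parameter $\mathbf t$ then ranges over a subscheme $Z\subset \mathbb{A}^k_{\mathbf t}$, and the defining ideals of the fibres assemble into a family $I_{\mathbf t}=\langle f_1(\mathbf x,\mathbf t),\dots,f_s(\mathbf x,\mathbf t)\rangle$ as required.

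The main obstacle is the finiteness of $\TT$-representation types on $\mathbb{A}^{N+1}$. I would address this either by a direct boundedness argument on the weights (using relative ampleness of $\mathcal{A}$, constancy of the Hilbert polynomial on $B$, and the fact that the weights lie in the dual of the Reeb cone, which itself is controlled within the bounded family), or more structurally by invoking representability of the relative automorphism group scheme $\underline{\operatorname{Aut}}_B(\mathcal{Y},\mathcal{A})$ as a $B$-scheme of finite type: the subfunctor parameterising faithful $\TT$-actions has only finitely many linearisation types on each fibre, and the corresponding discrete invariant stratifies $B$ into the finitely many pieces $B_j$ needed above.
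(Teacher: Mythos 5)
Your proposal follows essentially the same route as the paper's proof: both uniformly embed the bounded family via a very ample multiple of the polarization, linearize the torus action to obtain a representation on the space of sections, reduce to finitely many weight/subtorus types (the paper phrases this as there being only finitely many subtori of a fixed maximal torus of $\mathrm{PGL}_N$ that can act on a non-degenerate degree-$d$ subvariety of $\mathbb{P}^N$), and then assemble the equivariant bounding family before passing to the affine cones. The remaining differences are cosmetic: the paper works projectively and collects the strata as the $\TT_i$-fixed loci of Hilbert schemes $\bigcup_i \mathrm{Hilb}_{d,N}^{\TT_i}$, whereas you work with the relative affine cone directly and glue the strata into a direct-sum ambient space.
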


\begin{proof}
Let $\mathcal{Y}\rightarrow T$ be the bounding family for the projective varieties $Y_i$ and $\mathcal{A}$ be the bounding divisor for $A_i$. 
We may assume that $\mathcal{A}$ is ample over $T$ (see, e.g.,~\cite[Theorem 1.2.13]{Laz04a}).
Then, $\mathcal{A}^{\otimes m}$ is $T$-very ample for some $m$ only depending on the pair $(\mathcal{Y},\mathcal{A})$.
We may further assume that if $\mathcal{Y}_t$ admits a $\mathbb{T}$-action, then $\mathcal{A}_t^{\otimes m}$ admits a $\mathbb{T}$-linearlization.
Hence, we may embedd all fibers of $\mathcal{Y}\rightarrow T$ to $\mathbb{P}^N$ as varieties of degree at most $d$.
We call this embedding $\phi_t$.
Here, both $N$ and $d$ only depend on the bounding family.
Furthermore, if $\mathcal{Y}_t$ admits a $\mathbb{T}$-action, then
this embedding is $\mathbb{T}$-equivariant.
Let $\mathbb{T}_0\leqslant {\rm PGL}_N(\kk)$ be a maximal torus.
For every $\mathcal{Y}_t$ that admits a torus action, 
we can find $g\in {\rm PGL}_N(\kk)$ such that $\mathcal{Y}_t$ is equivariantly
isomorphic to $g^{-1}\phi_t(\mathcal{Y}_t)g$ with the action endowed
by a subtorus of $\mathbb{T}_0$.
Since the degree of $g^{-1}\phi_t(\mathcal{Y}_t)g$ is $d$ and it is a non-degenerate variety, then 
there are only finitely many tori
$\mathbb{T}_1,\dots,\mathbb{T}_k \leqslant \mathbb{T}_0$ that can act on it.
Then, the disjoint union 
$\bigcup_{i=1}^k {\rm Hilb}_{d,N}^{\mathbb{T}_i}$
of $\mathbb{T}_i$-invariant points of the Hilbert scheme
gives an equivariant bounding family for the pairs $(Y_i,A_i)$.
By taking the cone of the $Y_i$ with respect to the $\qq$-polarization $A_i$ on this family, we obtain the desired result.
\end{proof}


The aim of the next technical lemma is to identify a convenient open subset $S'\subset S$ in the base of a $\TT$-invariant family $\X \to S$ of $\TT$-cone singularities, such that for every primitve lattice element $\xi$ in the interior of the Reeb cone we are able to extract a prime divisor $D_\xi$ over $\X$, which restricts to a prime divisor in every fibre over elements $s \in S'$. Moreover, both $D_\xi$ and $D_\xi|_s$ induce the valutation corresponding to $\xi$ on $\X$ and $\X_s$, respectively. 
\begin{lemma}
  \label{lem:extracted-reeb-divisor}
  Let $f \colon \X \to S$ be a $\TT$-invariant family of $\TT$-cone singularities over a normal base $S$.
  Then there exist an open subset $S' \subset S$, such that for every rational ray $\xi$ in the interior of the common Reeb cone $\sigma$ of $\X$ there exist
  \begin{itemize}
  \item a proper birational morphism $\psi \colon \X' \to \X$,
  \item a surjective morphism $U \to S'$ and
  \item a $\TT$-invariant open subset $V'\subset \X'$ equivariantly isomorphic to $X_\xi \times U$, where
    $X_\xi$ is the affine toric variety corresponding to the polyhedral cone $\xi$,
  \end{itemize}
  such that the following $\TT$-equivariant diagram commutes
    \[  \begin{tikzcd}
        V'\arrow[d,hook] & \arrow[l,"\cong"'] X_\xi \times U  \arrow[r,"p_2"] & U \arrow[d]\\
        \X' \arrow[r,"\psi"] & \X \arrow[r,"f"]& S'
      \end{tikzcd}\]
and $D_\xi \times U \subset X_\xi \times U$ is the only component of the exceptional locus of $\psi$ intersecting $V'$.
\end{lemma}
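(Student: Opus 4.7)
The plan is to apply the Altmann--Hausen description to $\X$ itself (viewed as a single affine $\TT$-variety) and then globalize, along the polyhedral base, the toric partial resolution that extracts the divisor corresponding to $\xi$ from $X_\sigma$.

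First, I would shrink $S$ to an affine open subset so that $\X$ is an affine $\TT$-variety and apply Theorem~\ref{thm:AH-main-result} to it, obtaining a proper polyhedral divisor $\D$ on a semiprojective variety $Y$ with $\tail\D = \sigma$ and a $\TT$-equivariant isomorphism $\X \cong \X(\D)$. The $\TT$-invariance of $f\colon \X \to S$ forces $f$ to factor through the rational quotient $q\colon \X \dashrightarrow Y$, so there is an induced morphism $Y \to S$. I would then take $S' \subset S$ to be a non-empty open subset contained in the image of $Y^\circ := Y \setminus \supp \D$ under $Y \to S$; this exists since $Y^\circ$ is dense in $Y$ (and after replacing $S'$ by a smaller open we can assume $Y^\circ \to S'$ is surjective).

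Next, for each rational ray $\xi \in \rint \sigma$ I would form the star subdivision $\Sigma_\xi$ of $\sigma$ along $\xi$. Toric geometry provides a proper birational morphism $X_{\Sigma_\xi} \to X_\sigma$ extracting precisely the prime divisor $D_\xi$ corresponding to the new ray $\xi$. Globalising this operation in the Altmann--Hausen framework --- i.e., subdividing the tail of each polyhedral coefficient of $\D$ according to $\Sigma_\xi$ --- produces a divisorial fan on $Y$ refining $\D$, whose associated $\TT$-variety $\X'$ carries a $\TT$-equivariant proper birational morphism $\psi\colon \X' \to \X$ extracting exactly the $\TT$-invariant prime divisor $D_\xi$ (any other toric divisor of $X_{\Sigma_\xi}$ was already present in $X_\sigma$). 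The local model over $V'$ then falls out as in Remark~\ref{rem:birational}: above $Y^\circ$ the refined divisorial fan trivialises, giving a $\TT$-equivariant isomorphism $(\pi')^{-1}(Y^\circ) \cong X_{\Sigma_\xi} \times Y^\circ$; restricting to the affine toric chart $X_\xi \subset X_{\Sigma_\xi}$ yields $V' := X_\xi \times U$ with $U := Y^\circ$, and the morphism $U \to S'$ is the composition $Y^\circ \hookrightarrow Y \to S$, which is surjective by the choice of $S'$. Commutativity of the diagram is functorial from the AH construction, and inside $V'$ the only exceptional component of $\psi$ is $D_\xi \times U$ since in the toric chart $X_\xi$ the morphism $X_\xi \to X_\sigma$ contracts only $D_\xi$.

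The main technical hurdle, in my view, is the relative/global nature of the extraction: one has to check that the combinatorial refinement of $\D$ really produces a divisorial fan whose total space is irreducible and whose structure morphism to $\X(\D)$ is proper birational globally (not just above $Y^\circ$), and that the exceptional divisor extracted by $\psi$ restricts on $V'$ to exactly $D_\xi \times U$ (rather than, for instance, being composed of several components arising from the support of $\D$). Both points should follow by combining the properness of $\D$ with the properness of the toric morphism $X_{\Sigma_\xi} \to X_\sigma$, but the bookkeeping with divisorial fans is where most of the work will go; the open subset $V'$ is precisely the locus where these complications do not occur.
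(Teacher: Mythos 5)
Your overall strategy is the same as the paper's: pass to an affine chart of $S$, apply the Altmann--Hausen description (Theorem~\ref{thm:AH-main-result}), use the trivialization $\pi^{-1}(Y^\circ)\cong X_\sigma\times Y^\circ$ over $Y^\circ=Y\setminus\supp\D$ from Remark~\ref{rem:birational}, and extract $D_\xi$ by the star subdivision of $\sigma$ along $\xi$. However, there is a genuine gap in your choice of $U$. You take $U=Y^\circ$ and assert that $V'=X_\xi\times U$ is ``precisely the locus where these complications do not occur,'' but removing $\supp\D$ only trivializes the relative spectrum $\widetilde{\X}$; it does \emph{not} control the contraction $r\colon\widetilde{\X}\to\X$. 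The exceptional locus of $r$ can have \emph{vertical} components (components whose image in $Y$ has positive codimension) sitting over points of $Y^\circ$: the divisors $\D(u)$ are only semiample and big, so $r$ may identify points in fibres over a proper closed subset of $Y$ that is disjoint from $\supp\D$. Any such component meets $X_\sigma\times Y^\circ$, its preimage in $X_\xi\times U$ is a closed $\TT$-invariant vertical subset, and it would be an additional component of $\operatorname{Exc}(\psi)\cap V'$, contradicting the last claim of the lemma. The paper's proof deals with this by shrinking further: it sets $U=\Y^\circ\setminus\overline{p(E^{\mathrm{vert}})}$, where $E^{\mathrm{vert}}$ is the union of the vertical components of $\operatorname{Exc}(r)$, and then argues that any component of $\operatorname{Exc}(\psi)\cap V'$ is closed, $\TT$-invariant and either vertical (excluded by the choice of $U$) or horizontal, in which case it must equal $D_\xi\times U$, the unique horizontal proper invariant subvariety of $X_\xi\times U$ dominating $U$. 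Note this shrinking is independent of $\xi$, so the required quantifier order (``there exists $S'$ such that for every $\xi$\dots'') is preserved.

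A secondary, more forgivable difference concerns the construction of $\X'$ and the properness of $\psi$. You appeal to the general divisorial-fan formalism (refine $\D$ by subdividing tail cones and invoke that the resulting $\TT$-variety maps properly and birationally to $X(\D)$), which the paper only sets up for complexity one over $\PP^1$; in general this requires importing the gluing and properness results for divisorial fans. The paper instead sidesteps this by taking a $\TT$-equivariant completion $\bar X_\Sigma$ of $X_\Sigma$ and defining $\X'$ as the closure of the graph of the birational map $X_\Sigma\times\Y\dashrightarrow\widetilde{\X}$ inside $(\bar X_\Sigma\times\Y)\times_S\widetilde{\X}$; properness of $\psi$ then follows from properness of $\bar X_\Sigma\times\Y\to S$ together with the fact that a closed embedding composed with a proper projection is proper. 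Your route can be made to work, but you have deferred exactly the step where the work lies, whereas the graph-closure trick makes it essentially formal.
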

\begin{proof} After passing to an open affine subset of $S$ we may assume that $S$ and $\X$ are affine.
  Since $\X \to S$ is a $\TT$-invariant family of $\TT$-cone singularities, we have $\CC[\X]^\TT = \CC[S]$ and the structure morphism of the family is induced by the inclusion of algebras.

  It follows from Theorem~\ref{thm:AH-main-result} that there exists a semi-projective variety $\Y$, a proper $\TT$-equivariant morphism $r \colon \widetilde{\X} \to \X$ and good quotient morphisms $\pi \colon \widetilde{\X} \to \Y$. We obtain $H^0(\CO_\Y)=H^0(\CO_{\widetilde{\X}})^\TT=H^0(\CO_\X)^\TT$. In particular,
  $\Spec(H^0(\CO_\Y))=S$ holds and $\Y$ is projective over $S$. Altogether we have a commutative diagram
  \[  \begin{tikzcd}
      \widetilde{\X} \arrow[d,"\pi"'] \arrow[r,"r"]&  \X\arrow[d,"f"]  =\Spec(H^0(\widetilde{\X}, \CO_{\widetilde{\X}}))\\
       \Y \arrow[r,"p"]& S = \Spec(H^0(\Y,\CO_\Y)).
     \end{tikzcd}\]
   Let $X_\sigma$ be the toric variety corresponding to the Reeb cone $\sigma$.   By Remark~\ref{rem:birational} there is an open subset $\Y^\circ \subset \Y$, such that   $X_\sigma \times \Y^\circ$ is $\TT$-equivariantly isomorphic to an open subset  $V \subset \widetilde{X}$. Let $U=\Y^\circ \setminus \overline{p(E^{\text{vert}})}$. Here, $E^{\text{vert}}$ denotes the union of the vertical components of the exceptional locus
  of $r \colon \widetilde{\X} \to \X$. By vertical components we mean those whose image under $p$ has positive codimension. We then set $S'=p(U)$.
  
  Let $X_\Sigma$ be the  toric variety corresponding the fan $\Sigma$ obtained by star subdividing $\sigma$ with respect to $\xi$. Let $\bar X_\Sigma$ be some $\TT$-equivariant completion of $X_\Sigma$.  There is a $\TT$-equvariant birational map
  $\varphi \colon X_\Sigma \times \Y \dashrightarrow \widetilde{\X}$ and we obtain $\X'$ as a resolution of indetermacy by taking the closure of the graph of $\varphi$ in $(X_\Sigma \times \Y) \times_S \widetilde{\X}$. This results in the following commutative diagram.
\[  \begin{tikzcd}
\X' \arrow[r,hook]  &   (\bar X_\Sigma \times \Y) \times_S \widetilde{\X} \arrow[dl,"p_1"'] \arrow[d,"p_2"]\\
 \bar X_\Sigma \times \Y   \arrow[r,dashed]  & \widetilde{\X}
\end{tikzcd}\]
Here, $p_1$ and $p_2$ are the projections to the first and second factor, respectively.  Notice that $p_2$ is proper because $\bar X_\Sigma \times \Y$ is proper over $S$ and properness is preserved under base extension. Therefore, the induced map $\X' \to \widetilde{\X}$ is proper because it is the composition of a closed embedding with $p_2$. Hence, the composition $\psi \colon \X' \to \widetilde{X} \to \X$ is proper as well.

The rational map $\bar X_\Sigma \times \Y \dashrightarrow \widetilde{\X}$ is defined on the open subset $X_\Sigma \times U$. Indeed, here, it is induced simply by the toric morphism $X_\Sigma \to X_\sigma$. Therefore, over $X_\Sigma \times U$ the resolution of indetermacy $\X' \to \bar X_\Sigma \times \Y$ is an isomorphism and we may identify $X_\Sigma \times U$ with a $\TT$-invariant open subset of $\X'$.  We now set
\[V' \;:=\; X_\xi \times U   \;\subset\; \X'.\]
Then $D_\xi  \times U$ is the only component of the exceptional locus of $\psi \colon \X' \to \X$ which intersects $V'$. Indeed, $D_\xi\times U$ is the only \emph{horizontal} closed $\TT$-invariant subset in  $X_\xi \times U$, i.e. the only one which projects onto $U$. On the other hand, we had chosen $U$, such that $X_\Sigma \times U$ cannot contain any vertical component.
\end{proof}

\begin{proposition}
  \label{prop:minimizer-constant}
  Consider $\hat \TT$-invariant family $(\mathcal X, \Delta) \to S$ of log Fano cone singularities of dimension $d$. We denote by $\hat \sigma$ the common Reeb cone of the family.

  Then there exists a stratification of $S$ into finitely many locally closed subsets, such that for $\xi \in \hat \sigma$ the maps $s \mapsto a_{(\mathcal X_s,\Delta_s)}(\xi)$ and  $s \mapsto \nvol_{(\mathcal X_s,\Delta_s)}(\xi)$ are constant on the strata. In particular, the minimizer of $\nvol_{(\mathcal X_s,\Delta_s)}$ in $\hat \sigma$ is the same for every element $s$ of a fixed stratum.

  This stratification depends only on $\X \to S$, but not on $\Delta$.
\end{proposition}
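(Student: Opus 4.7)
My plan is to exhibit a single stratification of $S$ depending only on $\X \to S$ and show that both $\vol_{\X_s}(\xi)$ and $a_{(\X_s,\Delta_s)}(\xi)$ are combinatorially determined on each stratum. Define $h_s\colon \hat M \to \ZZ_{\geq 0}$ by $h_s(u)=\dim_\CC (\CO_{\X_s})_u$. Since $\X \to S$ is a flat $\hat\TT$-invariant family of cone singularities with common Reeb cone $\hat\sigma$, each graded piece $(f_*\CO_\X)_u$ is a coherent locally free $\CO_S$-module, so $h_s$ is locally constant in $s$; since $S$ is Noetherian only finitely many values occur, giving a finite stratification into locally closed subsets.

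For the volume, observe that for rational $\xi\in\rint\hat\sigma$ the ideal $\mathfrak a_m(\xi)\subset \CO_{\X_s}$ is $\hat\TT$-homogeneous, so
\[
\ell(\CO_{\X_s}/\mathfrak a_m(\xi)) \;=\; \sum_{\substack{u\in\hat\sigma^\vee\cap\hat M\\ \langle u,\xi\rangle<m}}h_s(u).
\]
This depends only on $h_s$. Hence $\vol_{\X_s}(\xi)$ is constant on each stratum for rational $\xi$, and by continuity of $\vol$ on $\rint\hat\sigma$ it is constant for all $\xi$ there.

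For the log discrepancy, fix a stratum and a rational primitive $\xi\in\rint\hat\sigma$, and apply Lemma~\ref{lem:extracted-reeb-divisor} to obtain a $\hat\TT$-equivariant birational modification $\psi\colon \X'\to \X$ over an open subset of the stratum, extracting $D_\xi$, with a $\hat\TT$-invariant chart $V'\cong X_\xi\times U$ meeting only the component $D_\xi\times U$ of the exceptional locus. Inside $V'$, the modification is a product of the toric blow-down $X_\xi\to X_{\hat\sigma}$ with $\mathrm{id}_U$, so the coefficient of $K_{\X'_s/\X_s}$ along $D_\xi|_s$ is the toric log discrepancy attached to the ray $\xi$, independent of $s$. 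Writing $\Delta=\sum_i b_i\Delta_i$ with each $\Delta_i$ being $\hat\TT$-invariant with a local $\hat\TT$-homogeneous defining equation of weight $u_i\in\hat M$, the pullback contributes
\[
{\rm coeff}_{D_\xi|_s}(\psi_s^*\Delta_s) \;=\; \sum_i b_i\langle u_i,\xi\rangle,
\]
where the $u_i$ are fixed data of $\Delta$ on the total space and hence independent of $s$. Therefore $a_{(\X_s,\Delta_s)}(\xi)$ is constant on the open subset; Noetherian induction on the complement in the stratum terminates the process, and continuity in $\xi$ extends the conclusion to all $\xi\in\rint\hat\sigma$. Consequently $\nvol_{(\X_s,\Delta_s)}(\xi)=a_{(\X_s,\Delta_s)}(\xi)^d\cdot \vol_{\X_s}(\xi)$ is constant in $s$ on each stratum for every $\xi$, so the minimizer on $\hat\sigma$ is the same for all $s$ in a stratum.

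The main obstacle I anticipate is in the log discrepancy step: verifying that on special fibers within a stratum the coefficient of $\psi_s^*\Delta_s$ along $D_\xi|_s$ really is given by the above formula without jumps, i.e., that the product structure $V'\cong X_\xi\times U$ of Lemma~\ref{lem:extracted-reeb-divisor} together with the $\hat\TT$-invariance of $\Delta$ rules out components of $\Delta$ acquiring or losing contributions along $D_\xi|_s$ on special fibers. This should follow from $\hat\TT$-equivariance of $\psi$ and horizontality of the components of $\Delta$, but requires careful bookkeeping; any residual issues can be absorbed by shrinking each stratum further and reapplying Noetherian induction, preserving that the whole construction depends only on $\X \to S$.
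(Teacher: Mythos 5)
Your overall strategy coincides with the paper's: constancy of the volume comes from flatness of the graded pieces (your Hilbert-function computation just makes explicit what the paper dispatches with ``it follows from flatness''), and constancy of the log discrepancy comes from Lemma~\ref{lem:extracted-reeb-divisor} together with the product chart $V'\cong X_\xi\times U$. The reduction from general $\xi$ to rational primitive $\xi$ by continuity/linearity is also the same.

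There is, however, a genuine gap in your log discrepancy step, precisely at the point you flag as the anticipated obstacle. You split the pullback of $K_{\X_s}+\Delta_s$ into a ``canonical part'' (computed as a toric discrepancy of $X_\xi\to X_{\hat\sigma}$) and a ``boundary part'' ${\rm coeff}_{D_\xi|_s}(\psi_s^*\Delta_s)=\sum_i b_i\langle u_i,\xi\rangle$. This presupposes that $K_{\X_s}$ and each component $\Delta_i$ are individually $\QQ$-Cartier, and that each $\Delta_i$ admits a local $\hat\TT$-homogeneous defining equation of a single weight $u_i$. Neither is part of the hypotheses: only $K_{\X_s}+\Delta_s$ is $\RR$-Cartier, so $\psi_s^*\Delta_s$ and $K_{\X'_s/\X_s}$ are not separately defined in general, and on a $\TT$-variety of positive complexity an invariant prime divisor (e.g.\ a vertical divisor $D_{y,v}$) need not be cut out by a semi-invariant function of one weight. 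The paper sidesteps this entirely: it forms the single well-defined relative divisor $D:=K_{\X'/S'}+\psi_*^{-1}\Delta-\psi^*(K_{\X/S'}+\Delta)$, observes that $D$ is supported on the exceptional locus so that $D|_{V'}=a_{(\X,\Delta)}(\overline{D_\xi\times U})\cdot(D_\xi\times U)$, and then restricts to fibers and compares coefficients along $D_\xi\times U_s$ to get $a_{(\X_s,\Delta_s)}(\xi)=a_{(\X,\Delta)}(\overline{D_\xi\times U})$. Replacing your explicit weight computation by this restriction argument both closes the gap and makes the independence from $s$ (and the fact that the stratification itself is independent of $\Delta$) immediate, with no need for the extra ad hoc shrinking you invoke at the end.
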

\begin{proof}
  First, we may reduce to the case of a normal base $S$. This can be done by passing to the normalization of te underlying reduced scheme and considering the pullback of the family along the normalization. Then we may treat the resulting finitely many connected components separately. By induction on the dimension of $S$ it is sufficient to show that the maps are constant on a Zariski open subset of $S$. Hence, we may additionally assume that $S$ is affine, $\X \to S$ is flat and the total space is $\QQ$-Gorenstein.

  We may consider $\xi$ as a torus invariant valuation on $\X$. Moreover, using an approximation argument we may assume that $\xi$ is a primitive lattice element.
  
  We now consider the proper birational morphism $f \colon \X' \to \X$
  and the open subset $V' \cong X_\xi \times U$ from Lemma~\ref{lem:extracted-reeb-divisor}.
  We have  $V'_s= X_\xi \times U_s$ and $(D_\xi \times U)|_{V'_s} = D_\xi \times U_s$
  The product structure of $V'_s$ lets us conclude that, as in the toric setting, the divisorial valuation 
  corresponding to $D_\xi \times U_s$ coincides with the one induced by $\xi$.  We are now comparing the log discrepancy of $D_\xi \times U$ and $D_\xi \times U_s$ on  the open subsets $V'\subset \X'$ and $V'_s\subset \X'_s$, respectively.  We are interested in the coefficient of the divisor $D_s:=K_{X'_s}+\psi_*^{-1}\Delta_s - \psi^*(K_{\X_s}+\Delta_s)$ at $\overline{D_\xi \times U_s}$. Let us set $D:=K_{X'/S'}+\psi_*^{-1}\Delta - \psi^*(K_{\X/S'}+\Delta)$. Then we have   \(D_s =D|_{\X_s'}.\)
  Since $D$ is only supported in exceptional divisors we have $D|_{V'}=a_{(\X,\Delta)}(\overline{D_\xi \times U})\cdot(D_\xi \times U)$. It follows that
  \[ a_{(\X_s,\Delta_s)}(\overline{D_\xi \times U_s})\cdot(D_\xi \times U_s)=
    D|_{V'_s}=
    a_{(\X,\Delta)}(\overline{D_\xi \times U})\cdot(D_\xi \times U_s)\]
  Now, comparing coefficients gives
  \[
    a_{(\X_s,\Delta_s)}(\xi)=a_{(\X_s,\Delta_s)}(\overline{D_\xi \times U_s})=a_{(\X,\Delta)}(\overline{D_\xi \times U}).
  \]
  In particular, $a_{(\X_s,\Delta_s)}(\xi)$ does not depend on $s$.

  It now follows from flatness that $\vol_{\X_s}(\xi)$ and therefore also  $\nvol_{(\mathcal X_s,\Delta_s)}$ is constant along $S'$.
\end{proof}


\begin{proposition}
  \label{prop:ksstable-deg}
  Assume $(X,\Delta,\xi)$ is a K-semistable polarized log Fano cone $\TT$-singularity with $\TT$-invariant special degeneration $(\X,\mathbf{\Delta})$ to $(X_0,\Delta_0)$ with degeneration cone $\dC \subset \hat N_\RR$. Then we have
\[\nvol(X,\Delta) = \nvol_{(X_0,\Delta_0)}(\xi) \leq \nvol_{(X_0,\Delta_0)}(\xi')\]
for every element $\xi' \in \dC$.

Moreover, if $V \subset \hat N_\RR$ is any subspace with $N_\RR \subsetneq V$ and $\xi_0$ is the unique minimizer $\xi_0$ of the normalized volume among the valuation corresponding to elements of $\hat \sigma \cap V$, then $\xi_0$ is not contained in $\dC \setminus N_\RR$.
\end{proposition}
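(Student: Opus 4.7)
The plan for Part~1 splits into two steps. The first step establishes the equality $\nvol(X,\Delta) = \nvol_{(X_0,\Delta_0)}(\xi)$: Theorem~\ref{thm:ksstable-minimizer} gives $\nvol(X,\Delta) = \nvol_{(X,\Delta)}(\xi)$ by K-semistability, and the $\TT$-invariance of $\xi \in N_\RR$ together with flatness of the $\TT$-equivariant family $\X \to \mathbb{A}^1$ yields $\nvol_{(X,\Delta)}(\xi) = \nvol_{(X_0,\Delta_0)}(\xi)$. Indeed, the $M$-graded Hilbert function of the local ring at the vertex is preserved along the family, so $\vol_X(\xi) = \vol_{X_0}(\xi)$; and the log discrepancy at the $\TT$-invariant valuation $\xi$ is preserved by the same relative-canonical-divisor argument as in Proposition~\ref{prop:minimizer-constant}.

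For the inequality $\nvol_{(X_0,\Delta_0)}(\xi) \leq \nvol_{(X_0,\Delta_0)}(\xi')$ with $\xi' \in \dC$, by continuity of $\nvol_{(X_0,\Delta_0)}$ on $\rint\hat\sigma$ and density of rational rays it suffices to treat lattice elements $\xi' \in \rint\dC \cap \rint\hat\sigma$. For any such $\xi'$, Lemma~\ref{lem:support-and-deg-cone}(\ref{item:deg-cone-2}) produces a $\TT$-invariant special degeneration $\X'$ of $X$ to $X_0$ whose central-fibre $\CC^*$-action is $\xi'$. This degeneration arises via the Rees construction from a $\TT$-invariant quasi-monomial valuation $v'$ on $X$ with finitely generated associated graded, so Proposition~\ref{prop:nvol-special-fibre} gives $\nvol_{(X,\Delta)}(v') = \nvol_{(X_0,\Delta_0)}(\xi')$. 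K-semistability of $(X,\Delta,\xi)$ then forces $\nvol_{(X,\Delta)}(\xi) \leq \nvol_{(X,\Delta)}(v')$, and combining with the first step yields the desired inequality on lattice elements; continuity extends it to all of $\dC$.

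For Part~2, I argue by contradiction, assuming $\xi_0 \in \dC \setminus N_\RR$. Since $\xi \in N_\RR \subset V$ and $\xi$ lies in the Reeb cone $\hat\sigma$, the polarization $\xi$ belongs to $V \cap \hat\sigma$, and the assumed minimality of $\xi_0$ on $V \cap \hat\sigma$ gives $\nvol_{(X_0,\Delta_0)}(\xi_0) \leq \nvol_{(X_0,\Delta_0)}(\xi)$. On the other hand, Part~1 applied to $\xi' = \xi_0 \in \dC$ gives the reverse inequality, so equality holds. The assumed uniqueness of the minimizer on $V \cap \hat\sigma$, together with the scale-invariance of $\nvol$, forces $\xi$ and $\xi_0$ to span the same ray. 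But $\xi \in N_\RR$ then implies $\xi_0 \in N_\RR$, contradicting $\xi_0 \in \dC \setminus N_\RR$.

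The main technical hurdle is the first step of Part~1, namely the transfer of $\nvol_{(\cdot,\cdot)}(\xi)$ across the $\TT$-equivariant special degeneration; this rests on the preservation of the $M$-graded data (Hilbert function and log discrepancy of a $\TT$-invariant valuation) by flatness of $\X \to \mathbb{A}^1$. Once this is in hand, the remainder of Part~1 is essentially bookkeeping via Proposition~\ref{prop:nvol-special-fibre} and Lemma~\ref{lem:support-and-deg-cone}, and Part~2 is a short uniqueness argument.
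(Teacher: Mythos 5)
Your proposal is correct and follows essentially the same route as the paper's proof: K-semistability identifies $\nvol(X,\Delta)$ with $\nvol_{(X,\Delta)}(\xi)$, the value at $\xi$ transfers to the central fibre, each $\xi'\in\dC$ is realized by a degeneration of $X$ (hence by a valuation on $X$, forcing the inequality), and Part~2 is the same uniqueness-of-minimizer contradiction. The only cosmetic differences are that the paper cites \cite[Lem.~3.2]{li2017stability} for the transfer of $\nvol(\xi)$ to the central fibre where you argue via flatness of the graded data, and it handles irrational $\xi'$ via the approximation result \cite[Lemma~2.10]{li2017stability} where you use density of lattice rays plus continuity.
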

\begin{proof} By Lemma~\ref{lem:support-and-deg-cone} and using the approximation result of \cite[Lemma~2.10]{li2017stability} we can find a valuation $w$ on $X$, inducing a degeneration to $X_0$ and the polarization $\xi'$. We then have $\nvol_{(X_0,\Delta_0)}(\xi') = \nvol_{(X,\Delta)}(w) \geq \nvol(X,\Delta)$ by definition of the normalized volume. Moreover, we obtain $\nvol(X,\Delta)=\nvol_{(X,\Delta)}(\xi)$ by the K-semistability of $(X,\Delta)$ and
  $\nvol_{(X_0,\Delta_0)}(\xi) = \nvol_{(X,\Delta)}(\xi)$ by \cite[Lem.~3.2.]{li2017stability}. 

  Let now $\xi_0$ be the unique minimizer of $\nvol_{(X_0,\Delta_0)}$ on $\hat \sigma \cap V$ (existence and uniqueness follows from the convexity properties of the volume functional). Since  $\xi_0$ was a minimizer we have $\nvol_{(X_0,\Delta_0)}(\xi) \geq \nvol_{(X_0,\Delta_0)}(\xi_0)$. If we assume  $\xi_0 \in \dC\setminus N_\RR$, then
  as we have just seen, $\nvol_{(X_0,\Delta_0)}(\xi) \leq \nvol_{(X_0,\Delta_0)}(\xi_0)$ holds. Hence, we obtain
  $\nvol_{(X_0,\Delta_0)}(\xi) = \nvol_{(X_0,\Delta_0)}(\xi_0)$. By the uniqueness of the minimizer this implies $\xi_0=\xi$, but this is a contradiction to $\xi \in N_\RR$.
  
\end{proof}
\begin{example}
  \label{ex:running4}
  We continue with the toric variety $X_0$ and its deformation from Example~\ref{ex:running}. Since the cone $\hat \sigma$ is simplicial the minimizer for the normalized volume is the barycentric ray spanned by $\xi=(1,2,2)$. We see that starting from $k=5$ this ray is contained in the interior of $\Sigma_{\X_{2k}}=(\pos(n,0,-2))^\vee$. Hence, by Proposition~\ref{prop:ksstable-deg} those deformations are not longer K-semistable. This agrees with the findings from \cite{zbMATH06868031}.
\end{example}

Let $\X \to S$ be a $\hat \TT$-invariant flat family of cone singularities and $X_0$ any fibre. Given a fixed subtorus $\TT \subset \hat \TT$ the multigraded Hilbert scheme $\Hilb$ contains all $\TT$-varieties with $\TT$-invariant degenerations to an element of $\X$. In particular, the class of a all such cone singularities is bounded. However, there are, of course, infinitely many possible choices of a subtorus $\TT \subset \hat \TT$. Hence, to obtain a boundedness result, we need to limit these choices of subtori to a finite number. This is done in the following proposition for the case of an empty boundary. This will be superseded by Proposition~\ref{prop:finite-number-of-tori-pairs}, which covers the case of pairs. However, we decided to include the proposition and its proof anyway, since the proof for an empty boundary is much easier and it might therefore serve as a warmup and illustration of the general strategy.
\begin{proposition}
  \label{prop:finite-number-of-tori}
  Fix a $\hat \TT$-invariant family $\X \subset \mathbb A^N \times S \to S$ of Fano cone $\hat \TT$-singularities. Then there are only finitely many subtori $\TT \subset \hat \TT$, such that there exists a $\TT$-invariant special degenerations of K-semistable Fano cone $\TT$-singularities to an element $\X_s$ of $\X$.
\end{proposition}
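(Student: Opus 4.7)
The plan is to combine the stratification of Proposition~\ref{prop:minimizer-constant} with the accumulation result of Lemma~\ref{lem:accumulation} and the K-semistability constraint of Proposition~\ref{prop:ksstable-deg}. First, apply Proposition~\ref{prop:minimizer-constant} to $\X\to S$ to obtain a finite stratification of $S$ on which the function $\xi\mapsto\nvol_{\X_s}(\xi)$ on $\hat\sigma$ is the same for every $s$. It therefore suffices to bound the number of subtori arising on each stratum separately, so fix a stratum and denote by $\xi^*\in\rint\hat\sigma$ the common minimizer of $\nvol_{\X_s}$ on $\hat\sigma$ (which lies in the interior since $\nvol_{\X_s}$ blows up at the boundary of the Reeb cone).

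Fix a K-semistable polarized $\TT$-Fano cone $(X,\xi)$ with $\TT$-invariant special degeneration to $\X_s$ for some $s$ in the stratum. Let $N_\RR\subset\hat N_\RR$ and $\dC\subset\hat N_\RR$ be the induced subspace and degeneration cone; by part (i) of Lemma~\ref{lem:support-and-deg-cone}, $N_\RR^\perp=\lspan(\supp\TX)$. Applying the moreover part of Proposition~\ref{prop:ksstable-deg} with $V=\hat N_\RR$ gives the dichotomy: either \emph{Case I}, $\xi^*\in N_\RR$, or \emph{Case II}, $\xi^*\notin\dC$.

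In Case~I, $N_\RR^\perp\subset(\xi^*)^\perp$, so every $u\in\supp\TX$ satisfies $\langle u,\xi^*\rangle=0$; since $\xi^*\in\rint\hat\sigma$ pairs strictly positively with every nonzero element of $\hat\sigma^\vee$, any such $u\neq 0$ must lie outside $\hat\sigma^\vee$. Part (i) of Lemma~\ref{lem:accumulation} applied with $\xi_0=\xi^*$ then shows that only finitely many rays of such $u$ can occur across the entire family $\bigcup_{s'}\supp T^1_{\X_{s'}}$, so $N_\RR^\perp$ is the $\RR$-span of a subset of a fixed finite collection of rays---only finitely many options. In Case~II, pick $u\in\supp\TX$ with $\langle u,\xi^*\rangle<0$; then $u\notin\hat\sigma^\vee$ and, again by part (i) of Lemma~\ref{lem:accumulation}, $u$ belongs to a finite set $F$. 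For each $u\in F$ we have $N_\RR\subset u^\perp$, and I iterate the argument inside the hyperplane $u^\perp$: either $N_\RR=u^\perp$ (one option per $u\in F$), or $N_\RR\subsetneq u^\perp$ and the moreover part of Proposition~\ref{prop:ksstable-deg} applied with $V=u^\perp$ reproduces the dichotomy for the minimizer $\xi^*_{u^\perp}$ of $\nvol_{\X_s}$ on $u^\perp\cap\hat\sigma$. Since $\dim V$ strictly decreases at each recursion step, the process terminates after at most $\dim\hat N_\RR$ rounds, each introducing only finitely many new choices.

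I expect the main obstacle to be verifying that part (i) of Lemma~\ref{lem:accumulation} remains applicable throughout the recursion. Two observations make this work: the minimizer $\xi^*_V$ of $\nvol_{\X_s}$ on $V\cap\hat\sigma$ always lies in $V\cap\rint\hat\sigma\subset\rint\hat\sigma$ (since $\nvol_{\X_s}$ blows up at $\partial\hat\sigma$, while $V\cap\rint\hat\sigma\neq\emptyset$ because $\xi\in N_\RR\subset V$ already sits in $\rint\hat\sigma$); and in the sub-case where $\xi^*_V\in N_\RR$, the uniqueness of the convex minimizer on $V\cap\hat\sigma$ together with the K-semistability fact that $\xi$ minimizes $\nvol_{\X_s}$ on $\dC\supset N_\RR\cap\hat\sigma$ forces $\xi=\xi^*_V$, reducing the analysis to the original Case~I argument with $\xi^*$ replaced by $\xi^*_V$. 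Multiplying the finite counts across all recursion levels yields the asserted finiteness of the subtori $\TT\subset\hat\TT$.
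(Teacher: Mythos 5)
Your proof is correct and rests on exactly the same three pillars as the paper's argument: the stratification of Proposition~\ref{prop:minimizer-constant}, the dichotomy ``$\xi^*_V\in N_\RR$ or $\xi^*_V\notin\dC$'' coming from the moreover part of Proposition~\ref{prop:ksstable-deg}, and the finiteness statement of Lemma~\ref{lem:accumulation}~(i); what differs is how the finiteness is extracted. The paper argues by contradiction: it assumes infinitely many subtori, i.e.\ infinitely many kernels $K_i=\ker(\hat M_\RR\to M_\RR)$, passes to a subspace $K$ of maximal dimension contained in infinitely many of the $K_i$, and applies the dichotomy exactly once with $V=K^\perp$ --- maximality guarantees that each ray can meet $G_i'=\supp T\Y_i\setminus K$ for only finitely many $i$, so the weights $u_i$ produced with $\langle u_i,\xi_0\rangle\le 0$ span infinitely many distinct rays, contradicting Lemma~\ref{lem:accumulation}~(i). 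You instead run a direct recursion on the ambient subspace $V$, applying the dichotomy at each node of a tree of depth at most $\dim\hat N_\RR$ with finite branching. Both are sound; the paper's version is shorter because the maximality trick collapses your recursion into a single application of the dichotomy, while yours is arguably more transparent and makes explicit two points the paper leaves implicit: that the minimizer $\xi^*_V$ lies in $\rint\hat\sigma$ (properness of $\nvol$ on the Reeb cone), and that in the second branch the chosen $u'$ satisfies $u'|_V\neq 0$ (because $\langle u',\xi^*_V\rangle<0$ while $\xi^*_V\in V$), so the dimension genuinely drops at each step. One cosmetic remark: at the deeper levels of your Case~II recursion the new subspace should be $V\cap(u')^\perp$ rather than $(u')^\perp$ alone, which is clearly what you intend.
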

\begin{proof} After passing to one of the strata in Proposition~\ref{prop:minimizer-constant} we may assume that the minimizer $\xi_0 \in \hat\sigma$ of the normalized volume on $\X_s$ is the same for every $s \in S$. 
  
Recall that the subtori $\TT \subset \hat \TT$ are in one-to-one correspondence with surjective lattice homomorphisms $\hat M \twoheadrightarrow M$ or equivalently to rational subspaces $K=\ker(\hat M_\RR \twoheadrightarrow M_\RR)$.

  Assume there is an infinite sequence of subtori $\TT_i$ corresponding to rational subspaces $K_i\subset \hat M$, with non-product $\TT$-invariant special degenerations $\Y_i$ of a K-semistable cone $\TT_i$-singularity to an element $\X_{s_i}$ of our family and  with $G_i:=\supp T\Y_i$. According to Lemma~\ref{lem:support-and-deg-cone} we have $K_i = \lspan G_i$ and by definition we have $\Sigma_{\Y_i} = (\sum_{u\in G_i}\RR_{\geq 0}\cdot u)^\vee$.

  Let $K$ be a subspace of maximal dimension (possibly $0$), which is contained in infinitely many of the $K_i$. We may pass to an infinite subsequence such that always $K \subset K_i$ holds. We then consider $G_i' = G_i \setminus K$. By the maximality of $K$, multiples of an element $u \in \hat M$ can only appear in finitely many of the $G_i'$.
  By applying Proposition~\ref{prop:ksstable-deg} for $V=K^\perp$ we see that $\langle u, \xi_0 \rangle \leq 0$ holds for at least one $u$ in each $G_i'$, but this leads to a contradiction, since by Lemma~\ref{lem:accumulation} this inequality can only hold for finitely many such $u$.
\end{proof}

\subsection{Log boundedness}
\label{sec:log-boundedness}
To generalize Proposition~\ref{prop:finite-number-of-tori} to the case of pairs, we will frequently consider the following situation.

There is a sequence of $\hat \TT$-invariant families $(\X,\boldsymbol{\Delta}_i) \to S$ of log Fano cone $\TT$-singularities  with
the same underlying family $\X \to S$ and common Reeb cone $\hat \sigma$, such that $\supp \boldsymbol{\Delta}_i \subset \mathcal{B}$ for all $i$ and some reduced divisor $\mathcal{B} \subset \X$. We then consider elements $(X_i,\Delta_i)$ of those families, i.e. $X_i = \X_{s_i}$ and $\Delta_i=(\boldsymbol{\Delta}_i)_{s_i}$ for some element $s\in S$. By \cite[Lemma 2.18.]{li2017stability} and Proposition~\ref{prop:minimizer-constant} for the elements $\xi \in \hat \sigma$ the log discrepancy $a_{(X,\Delta_i)}(\xi)$ is given by an element $a_i \in \hat\sigma^\vee$ via \[a_{(X_i,\Delta_i)}(\xi) = a_i(\xi)=\langle a_i, \xi \rangle\] and $a_i$ does not depend on the choice of $s_i \in S$ (possibly after passing to a stratum of the stratification in Proposition~\ref{prop:minimizer-constant}).

\begin{proposition}
  \label{prop:finite-number-of-tori-pairs}
  
  Fix $\epsilon > 0$ and a $\hat \TT$-invariant family $\X \subset \mathbb A^N \times S \to S$ of $\TT$-cone singularities with a reduced divisor $\mathcal{B} \subset \X$.  Then there are only finitely many subtori $\TT \subset \hat \TT$, such that there exists a $\TT$-invariant special degenerations of K-semistable log Fano cone $\TT$-singularities with normalized volume at least $\epsilon$ to a pair $(\X_s,\Delta_s)$ with $\X_s$ being an element of $\X$ and $\supp \Delta_s \subset \mathcal{B}_s$.

\end{proposition}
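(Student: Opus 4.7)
I plan to adapt the proof of Proposition~\ref{prop:finite-number-of-tori} to handle the varying boundary. First, I refine the stratification given by Proposition~\ref{prop:minimizer-constant} so that on each stratum the volume $V_0(\xi):=\vol_{\X_s}(\xi)$ is independent of $s$ and, for every $\TT$-invariant irreducible component $\mathcal B^{(j)} \subset \mathcal B$, the linear form $b_j \in \hat\sigma^\vee$ computing the log-discrepancy contribution of $\mathcal B^{(j)}$ along a polarization is also independent of $s$. Restricting to one such stratum, the normalized volume of the degenerations takes the form
\[
  \nvol_{(X_0,\Delta_0)_i}(\xi) = \langle a_i, \xi\rangle^d V_0(\xi), \qquad a_i = a_{X_0}-\sum_j c^{(j)}_i b_j,
\]
with $c^{(j)}_i \in [0,1)$ the coefficients of $\boldsymbol\Delta_i$. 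Since coefficients are bounded, the $a_i$ lie in a bounded subset of $\rint\hat\sigma^\vee$ and, after passing to a subsequence, I may assume $a_i \to a^* \in \hat\sigma^\vee$.

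Now argue by contradiction: suppose there is an infinite sequence of distinct subtori $\TT_i$ with associated $K_i = \ker(\hat M \twoheadrightarrow M_i)$, $\TT_i$-invariant special degenerations $\Y_i$ of K-semistable $(X_i,\Delta_i)$ with $\nvol(X_i,\Delta_i) \geq \epsilon$. As in the proof of Proposition~\ref{prop:finite-number-of-tori}, pick a rational subspace $K\subset\hat M$ of maximal dimension contained in infinitely many $K_i$, and pass to a further subsequence with $K\subsetneq K_i$ for all $i$. Set $V = K^\perp \subset \hat N_\RR$. By the maximality of $K$, any ray $\RR_{\geq 0}u \subset \hat M$ lies in $G_i' := G_i\setminus K$ for only finitely many $i$, where $G_i = \supp T\Y_i$. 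Let $\xi^V_i$ be the unique minimizer of $\nvol_{(X_0,\Delta_0)_i}$ on $\hat\sigma\cap V$. Because $N_{\RR,i}\subset V$ meets $\rint\hat\sigma$, we have $\xi^V_i \in \rint(\hat\sigma\cap V) \subset \rint\hat\sigma$.

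By Proposition~\ref{prop:ksstable-deg}, $\xi^V_i \not\in \dC_i \setminus N_{\RR,i}$. In both possibilities I can select $u_i \in G_i'$ with $\langle u_i, \xi^V_i\rangle \leq 0$: if $\xi^V_i\not\in \dC_i$, the definition of $\dC_i$ supplies $u\in G_i$ with $\langle u,\xi^V_i\rangle<0$, and such $u$ cannot lie in $K$ since $V=K^\perp$ annihilates $K$; if $\xi^V_i \in N_{\RR,i}=K_i^\perp$, then $\xi^V_i$ annihilates $K_i\supset G_i$ and every element of the non-empty set $G_i'$ satisfies the inequality with equality. Since $\xi^V_i \in \rint\hat\sigma$ and $\hat\sigma$ is a pointed cone, any $u\ne 0$ with $\langle u,\xi^V_i\rangle\le 0$ must lie outside $\hat\sigma^\vee$. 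By maximality of $K$ the rays $\RR_{\geq 0}u_i$ are pairwise distinct along a subsequence, so I may assume $u_i/|u_i|\to v^*$ on the unit sphere, and Lemma~\ref{lem:accumulation} gives $v^*\in\partial\hat\sigma^\vee\subset\hat\sigma^\vee\setminus\{0\}$.

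It remains to pass to the limit. The convergence $a_i\to a^*$ implies $\nvol_{(X_0,\Delta_0)_i}\to\nvol^*:=\langle a^*,\cdot\rangle^d V_0$ uniformly on compact subsets of $\rint\hat\sigma$. Combined with the lower bound $\nvol(X_i,\Delta_i)\geq\epsilon$ and the standard blow-up of $V_0$ at $\partial\hat\sigma$, the minimizers $\xi^V_i$ should remain in a compact subset of $\rint(\hat\sigma\cap V)$, forcing a limit $\xi^{V*}\in\rint\hat\sigma$. Passing $\langle u_i/|u_i|,\xi^V_i\rangle\le 0$ to the limit then gives $\langle v^*,\xi^{V*}\rangle\le 0$, while $v^*\in\hat\sigma^\vee\setminus\{0\}$ and $\xi^{V*}\in\rint\hat\sigma$ force $\langle v^*,\xi^{V*}\rangle>0$, the desired contradiction. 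The main obstacle is exactly this last compactness step, namely guaranteeing that $\xi^V_i$ does not drift to $\partial\hat\sigma$ as $i\to\infty$; this will require carefully combining the lower bound $\nvol\geq\epsilon$ with the blow-up of $V_0$, perhaps via a further stratification (for instance, according to the face of $\hat\sigma^\vee$ containing $a^*$) or via a more delicate use of the convexity of the volume functional on the Reeb cone.
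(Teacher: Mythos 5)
Your setup — stratifying via Proposition~\ref{prop:minimizer-constant}, passing to a maximal common subspace $K$, extracting $u_i\in G_i'=G_i\setminus K$ with $\langle u_i,\xi^V_i\rangle\le 0$ from Proposition~\ref{prop:ksstable-deg}, and invoking Lemma~\ref{lem:accumulation} to force $u_i/|u_i|$ to accumulate on $\partial\hat\sigma^\vee$ — matches the paper's argument. But the step you flag as "the main obstacle" is a genuine gap, and it is exactly the point where the real work of the proof lies. The compactness you hope for is not available: the lower bound $\nvol(Y_i,D_i)\ge\epsilon$ controls $\nvol_{(X_i,\Delta_i)}$ only at points of the degeneration cone $\dC_i$ (via Proposition~\ref{prop:ksstable-deg}), and the minimizer $\xi^V_i$ is precisely a point that Proposition~\ref{prop:ksstable-deg} pushes \emph{out} of $\dC_i\setminus N_{\RR,i}$, so no lower bound on $\min_{\hat\sigma\cap V}\nvol_{(X_i,\Delta_i)}$ follows. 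Moreover, even with such a bound the minimizers can drift to $\partial\hat\sigma$: as $\xi_i$ approaches a face $\tau$, $\vol(\xi_i)\to+\infty$ while the log-discrepancy forms $a_i$ can simultaneously degenerate to the dual face $\tau^*$, keeping $\langle a_i,\xi_i\rangle^d\vol(\xi_i)$ bounded. This is why the varying boundary makes the pairs case genuinely harder than Proposition~\ref{prop:finite-number-of-tori}, where the minimizer is a single fixed $\xi_0$ after stratification.

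The paper resolves this not by excluding the drift but by following it. After arranging $u_i/|u_i|\to u_\infty\in\rint\tau$ for a face $\tau\prec\hat\sigma^\vee$, Lemma~\ref{lem:accumulation}(ii) forces $\xi_i\to\xi_\infty$ in the dual face $u_\infty^\perp\cap\hat\sigma$, and Lemma~\ref{lem:sequence-of-boundaries1} then shows $a_i\to a_\infty$ lying in the face dual to the one containing $\xi_\infty$. One then picks an interior point $w$ of $K^\perp\cap\hat\sigma$, which by Lemma~\ref{lem:sequence-of-cones} eventually lies in $\bigl(\sum_{u\in G_i'}\RR_{\ge 0}u\bigr)^\vee$, and takes $v_i$ on the segment $[\xi_\infty,w]$ lying on the boundary of $\Sigma_{\Y_i}$; Lemma~\ref{lem:sequence-of-boundaries2} (a dominated-convergence computation with the Okounkov-body integral formula for the volume) gives $\nvol_{(X_i,\Delta_i)}(v_i)\to 0$, contradicting $\epsilon<\nvol(Y_i,D_i)\le\nvol_{(X_i,\Delta_i)}(v_i)$ from Proposition~\ref{prop:ksstable-deg}. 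So your outline is sound up to the limit step, but closing it requires the machinery of Lemmas~\ref{lem:sequence-of-boundaries1}--\ref{lem:sequence-of-boundaries2} rather than a compactness or further-stratification argument.
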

\begin{proof} After passing to one of the finitely many strata in Proposition~\ref{prop:minimizer-constant} we may assume that the the log discrepancy of an element $\xi \in \hat \sigma$ is constant on $S$.
  
   Recall that the subtori $\TT \subset \hat \TT$ are in one-to-one correspondence with surjective lattice homomorphisms $\hat M \twoheadrightarrow M$ or equivalently to rational subspaces $K=\ker(\hat M_\RR \twoheadrightarrow M_\RR)$.   Assume there is an infinite sequence of subtori $\TT_i$ corresponding to rational subspaces $K_i\subset \hat M$, with non-product $\TT$-invariant special degenerations $(\Y_i, \mathcal{D}_i)$ of a K-semistable cone $\TT_i$-singularity $(Y_i,D_i)$ to an element $(X_i,\Delta_i)$ of our family and with $\nvol(Y_i,D_i) > \epsilon$.
   We set $G_i:=\supp T\Y_i$. According to Lemma~\ref{lem:support-and-deg-cone} we have $K_i = \lspan G_i$ and by definition we have $\Sigma_{\Y_i} = (\sum_{u\in G_i}\RR_{\geq 0}\cdot u)^\vee$. Let $\xi_i$ be the minimizer of $\nvol_{(X_i,\Delta_i)}$ on $\hat \sigma$.

   We denote by $K$ the subspace of maximal dimension (possibly $0$), which is contained in infinitely many of the $K_i$. We may pass to an infinite subsequence such that always $K \subset K_i$ holds. We then consider $G_i' = G_i \setminus K$. By the maximality of $K$, multiples of an element $u \in \hat M$ can only appear in finitely many of the $G_i'$. Now, by Lemma~\ref{lem:accumulation} the rays spanned by those $u_i$ accumulate only at the boundary $\partial \hat \sigma^\vee$ of $\hat \sigma^\vee$. Hence, after passing to a subsequence we may assume that

   \[\dist(G_i',\hat \sigma^\vee):=\max \{\dist(\sfrac{u}{|u|}, \hat\sigma^\vee) \mid u \in G_i'\} \to 0\]
 for $i \to \infty$.
   
  By applying Proposition~\ref{prop:ksstable-deg} for $V=K^\perp$ we see that $\langle u_i, \xi_i \rangle \leq 0$ holds for at least one $u_i$ in each $G_i'$.  By the above, after passing to a subsequence we may assume that $u_i/|u_i| \to  u_\infty \in \rint \tau$, where $\tau \prec \hat\sigma^\vee$ is some face.  Then by Lemma~\ref{lem:accumulation}~(\ref{item:dual-accumulation}) the $\xi_i$ accumulate at the face $\tau:=u^\perp_\infty \cap \hat \sigma$. After passing to a subsequence again we may assume that $\xi_i$ converges to some $\xi_\infty \in \tau$.  By applying Lemma~\ref{lem:sequence-of-boundaries1} below we may assume that the sequence $a_i \in \hat \sigma^\vee$ corresponding to the log discrepancies with respect to $\Delta_i$ converges to some element $a_\infty \in \tau^*$.

  Pick any interior element $w$ of $K^\perp \cap \hat  \sigma$. Since the distance of $G_i'$ to  $\hat\sigma^\vee$ converges to $0$, by Lemma~\ref{lem:sequence-of-cones} below, we
  have that  starting from some index the element $w$ will be contained in all cones $(\sum_{u \in G_i'} \RR_{\geq 0}u)^\vee \subset \Sigma_{\Y_i} \cap K$. Now, take $v_i=\lambda_i w + (1-\lambda_i)\xi_\infty \in \Sigma_{\Y_i}$ with $\lambda_i \in [0,1]$ being minimal with this property (i.e. $v_i$ lies on the boundary of $\Sigma_{\Y_i}$). By Lemma~\ref{lem:sequence-of-boundaries2} below we have $\nvol_{(X_i,\Delta_i)}(v_i) \to 0$. On the other hand, Proposition~\ref{prop:ksstable-deg} implies $\nvol(Y_i,D_i) \leq \nvol_{(X_i,\Delta_i)}(v_i)$, which contradicts our assumption 
  $\nvol(Y_i,D_i) > \epsilon$.
\end{proof}

\begin{lemma}
  \label{lem:sequence-of-boundaries1}
  In the situation described at the beginning of Section~\ref{sec:log-boundedness} assume the minimizers $\xi_i$ of $\nvol_{(X_i,\Delta_i)}$ converge to an element $\xi_\infty$ of a face $\tau \prec \hat \sigma$. Then, possibly after passing to a subsequence, $a_{i}$ converges to an element of the dual face $\tau^*$.
\end{lemma}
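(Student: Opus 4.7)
My plan is to split the argument into three stages: bound the sequence $\{a_i\}$, force $\langle a_i,\xi_i\rangle \to 0$ via the minimality of $\xi_i$, and translate this into the required $a_\infty \in \tau^*$. For the first stage I would write $\mathcal B = \sum_{j=1}^r D_j$ in its prime components. Each $\hat\TT$-invariant $D_j$ induces a linear form $\xi \mapsto \langle d_j,\xi\rangle$ with $d_j \in \hat\sigma^\vee$, and by Proposition~\ref{prop:minimizer-constant} (applied separately to $\boldsymbol{\Delta}=0$ and to $\boldsymbol{\Delta} = D_j$) both these forms and the underlying log discrepancy $a_{\X_s}(\xi) = \langle a_{\X},\xi\rangle$ are constant on each stratum. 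After pigeonholing into a single stratum and writing $\boldsymbol{\Delta}_i = \sum_j c_{ij} D_j$ with $c_{ij}\in [0,1]$, one has
\[
a_i \;=\; a_{\X} - \sum_{j=1}^r c_{ij}\, d_j \;\in\; a_{\X} - \sum_{j=1}^r [0,1]\cdot d_j,
\]
a bounded polytope inside the closed cone $\hat\sigma^\vee$. Extracting a convergent subsequence yields $a_i \to a_\infty \in \hat\sigma^\vee$.

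For the second stage I would fix an auxiliary $\xi^* \in \rint\hat\sigma$. The minimizing property of $\xi_i$ gives
\[
\langle a_i,\xi_i\rangle^d\, \vol_{\X}(\xi_i) \;=\; \nvol_{(X_i,\Delta_i)}(\xi_i) \;\leq\; \nvol_{(X_i,\Delta_i)}(\xi^*) \;=\; \langle a_i,\xi^*\rangle^d\, \vol_{\X}(\xi^*),
\]
and the right-hand side is uniformly bounded in $i$, because $\{a_i\}$ is bounded and $\vol_{\X}(\xi^*)$ is a fixed constant along the stratum (again by Proposition~\ref{prop:minimizer-constant}). Since $\xi_\infty$ lies on $\partial\hat\sigma$, the properness of the volume function on $\rint\hat\sigma$ forces $\vol_{\X}(\xi_i)\to +\infty$; combined with the bound above, this gives $\langle a_i,\xi_i\rangle \to 0$, and passing to the limit yields $\langle a_\infty,\xi_\infty\rangle = 0$.

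For the third stage I would take $\tau \prec \hat\sigma$ to be the minimal face containing $\xi_\infty$, so that $\xi_\infty \in \rint\tau$. Since $a_\infty \in \hat\sigma^\vee$, the linear form $\langle a_\infty,\cdot\rangle$ is non-negative on $\tau$, and its vanishing at an interior point $\xi_\infty$ forces it to vanish on all of $\lspan\tau$ (otherwise one could perturb $\xi_\infty$ slightly inside $\tau$ to obtain a point where the form is strictly negative). Hence $a_\infty \in \tau^\perp \cap \hat\sigma^\vee = \tau^*$, as desired. The main obstacle lies in the properness step above: for a possibly non-toric Fano cone $\X_s$, one needs $\vol_{\X}(\xi_i) \to +\infty$ whenever $\xi_i \to \xi_\infty \in \partial\hat\sigma$. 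In the toric case this is immediate from the polytope formula $\vol_\X(\xi) = d!\cdot\operatorname{vol}\{y \in \hat\sigma^\vee : \langle y,\xi\rangle \leq 1\}$, and in the general Fano cone case one reduces to the toric setting via an equivariant degeneration, or invokes the continuity and properness of $\vol$ established in the Li--Xu framework.
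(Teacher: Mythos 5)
Your proof is correct and follows essentially the same route as the paper: in both, the key step is that the minimality of $\xi_i$ combined with the divergence $\vol(\xi_i)\to+\infty$ forces $\langle a_i,\xi_i\rangle\to 0$, whence $a_\infty(\xi_\infty)=0$ and $a_\infty\in\tau^*$. The only difference is cosmetic --- you bound the $a_i$ via the explicit zonotope $a_{\X}-\sum_j[0,1]\cdot d_j$, whereas the paper bounds $a_i(\xi_0)$ by a maximum over the compact set of $\RR$-Gorenstein boundaries supported on $\mathcal{B}$ and then uses the compact truncation $\{a\in\hat\sigma^\vee \mid a(\xi_0)\le a_0\}$; and the properness of the volume that you flag as the main obstacle is simply asserted in the paper's proof (it is justified by the Newton--Okounkov body formula invoked in Lemma~\ref{lem:sequence-of-boundaries2}).
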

\begin{proof}
  Pick any element $\xi_0$ in the interior of $ \hat \sigma$. Set \[a_0=\max \{ a_{(\X_{s},\boldsymbol{\Delta}_{s})}(\xi_0) \mid \boldsymbol{\Delta} \text{ is a $\RR$-Gorenstein boundary with } \supp(\boldsymbol{\Delta}) \subset \mathcal{B}\}.\]
Note, that the maximum exists as the $\RR$-Gorenstein boundaries with support in $\mathcal{B}$ form a compact set and the log discrepancy varies continuously with the boundary. Moreover, it does not depend on $s$.
  Then we have
  \[
    a_i(\xi_i)^n\vol(\xi_i)=  \nvol_{(X_i,\Delta_i)}(\xi_i) \leq \nvol_{(X_i,\Delta_i)}(\xi_0)=a_i(\xi_0)^n\vol(\xi_0) \leq a^n_0\vol(\xi_0)\]
  Hence, $a_i(\xi_i)^n\vol(\xi_i)$ is bounded by the right-hand-side. On the other hand, $\vol(\xi_i)$ diverges to $+\infty$. Hence, $a_i(\xi_i)$ must converge to $0$. Moreover, $a_i(\xi_0) \leq a_0$.  Therefore, all $a_i$ are contained in a compact truncation of $\hat \sigma^\vee$.  Then (possibly after passing to a subsequence) $a_i$ converges to an element $a_\infty \in \hat \sigma^\vee$ with $a_\infty(\xi_\infty)=0$.   Hence, if $\tau$ is the face of $\hat \sigma$ which contains $\xi_\infty$ in its relative interior, then $a_\infty \in \tau^*$.
\end{proof}

\begin{lemma}
  \label{lem:sequence-of-cones}
  Let $G_i$ be a sequence, whose members are finite sets of rays in $M_\RR$ and $\sigma \subset N_\RR$ a cone.  If  \(\dist(G_i, \sigma^\vee):=\max\{\dist(\rho, \sigma^\vee)\} \) converges to $0$, then for every element $w \in \rint \sigma$ there is an index $i_0$ such that for $i > i_0$ the element $w$ is contained in the  cone dual to the one spanned by the rays of $G_i$. In symbols
  \[\exists_{i_0} \forall_{i > i_o} \colon w \in \left(\sum_{\rho \in G_i}\rho \right)^\vee.\]
\end{lemma}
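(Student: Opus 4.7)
My plan is to reduce to the full-dimensional case and then use a standard compactness argument to turn the strict positivity of $\langle\,\cdot\,,w\rangle$ on $\sigma^{\vee}\setminus\{0\}$ into a uniform positivity bound that survives small perturbations. I first observe that since $\rint$ denotes interior in $N_\RR$, the statement is vacuous unless $\sigma$ is full-dimensional, so I may assume this. In that case $\sigma^{\vee}$ is a pointed cone and $w\in\rint\sigma$ means exactly that $\langle u,w\rangle>0$ for every non-zero $u\in\sigma^{\vee}$.

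Next I would intersect $\sigma^{\vee}$ with the Euclidean unit sphere. This is a non-empty compact set on which $\langle\,\cdot\,,w\rangle$ is continuous and strictly positive, so there exists a constant $c>0$ with
\[
\langle u,w\rangle\;\geq\;c\qquad\text{for every }u\in\sigma^{\vee}\text{ with }|u|=1,
\]
and by homogeneity $\langle u,w\rangle\geq c|u|$ for every $u\in\sigma^{\vee}$. I then pick $\epsilon>0$ small, for instance $\epsilon<c/(c+|w|)$, and consider a unit vector $\hat{\rho}\in M_\RR$ with $\dist(\hat{\rho},\sigma^{\vee})<\epsilon$. Writing $\hat{\rho}=v+e$ for a nearest point $v\in\sigma^{\vee}$ and $|e|<\epsilon$, one has $|v|\geq 1-\epsilon$ and therefore
\[
\langle\hat{\rho},w\rangle\;=\;\langle v,w\rangle+\langle e,w\rangle\;\geq\;c(1-\epsilon)-\epsilon|w|\;>\;0.
\]
In particular $\langle\rho,w\rangle\geq 0$ for every ray $\rho$ whose unit representative sits within distance $\epsilon$ of $\sigma^{\vee}$.

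Finally, by hypothesis $\dist(G_i,\sigma^{\vee})\to 0$, so there is an index $i_0$ with $\dist(G_i,\sigma^{\vee})<\epsilon$ for every $i>i_0$. For such $i$ the previous step applies to each ray $\rho\in G_i$, yielding $\langle\rho,w\rangle\geq 0$, and hence
\[
w\;\in\;\Bigl(\sum_{\rho\in G_i}\rho\Bigr)^{\!\vee},
\]
as desired. There is no real obstacle here, since the whole argument is just compactness plus a perturbation estimate; the only point requiring a little attention is to confirm that the notion of distance from a ray to $\sigma^{\vee}$ used in Section~\ref{sec:torus-inv-fam} (via intersection with the unit sphere) is compatible with the Euclidean estimate above, which is immediate from the definition.
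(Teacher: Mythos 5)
Your proof is correct and follows essentially the same route as the paper's: both convert the strict positivity of $\langle\,\cdot\,,w\rangle$ on $\sigma^\vee\setminus\{0\}$ into a uniform lower bound by compactness on the unit sphere, and then control $\langle\hat\rho,w\rangle$ by decomposing $\hat\rho$ into its nearest point in $\sigma^\vee$ plus a small error term. If anything, your version is slightly more careful, since you account for the nearest point having norm at least $1-\epsilon$ rather than tacitly taking it to be a unit vector.
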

\begin{proof}
  After fixing an inner product we are going to identify $N_\RR$ and $M_\RR$ in the following.
  We set
  \[\epsilon=\dist( \sfrac{w}{|w|}, \partial \sigma)=\min\{\langle \sfrac{u}{|u|}, \sfrac{w}{|w|} \rangle \mid u \in  \sigma^\vee\}.\]
  Then starting from some index we have $\dist(\sfrac{u}{|u|}, \sigma^\vee) < \epsilon$ for any element $u$ of a ray in $G_i$. Now assume that $v \in \sigma^\vee$ with $|v|=1$ is the element of minimal distance to $\sfrac{u}{|u|}$. Then we have
  \begin{align*}
    \langle \sfrac{u}{|u|},   \sfrac{w}{|w|}\rangle &= \langle (\sfrac{u}{|u|}-v) + v,  \sfrac{w}{|w|} \rangle\\
                                                    &=  \langle \sfrac{u}{|u|}-v,  \sfrac{w}{|w|} \rangle + \langle v,  \sfrac{w}{|w|} \rangle\\
                                                    &=  \pm \dist( \sfrac{u}{|u|} - v ,w^\perp) + \langle v,  \sfrac{w}{|w|} \rangle\\
                                                    & > -\epsilon + \epsilon =0
  \end{align*}
  Here, the inequality in the last line follows from
  \[\dist( \sfrac{u}{|u|}- v ,w^\perp) \leq |\sfrac{u}{|u|}-v| < \epsilon.\]
  Hence, $w$ evaluates positively with every ray generator in $G_i$. Therefore, $w$ is contained in the cone dual the one spanned by the rays in $G_i$.
\end{proof}


\begin{lemma}
  \label{lem:sequence-of-boundaries2}
  In the situation described at the beginning of Section~\ref{sec:log-boundedness} let  $v$ be an element of a face $\tau \prec \hat \sigma$,
$w \in \operatorname{relint}(\hat \sigma)$ and $v_i \in [v,w]$ a sequence on the connecting line seqment with $v_i \to v$.
If the sequence $a_i$ corresponding to the log discrepancies with respect to $\Delta_i$ converges to an element $a_\infty$ in  the dual face $\tau^*$, then we have
\[\nvol_{(X_i,\Delta_i)}(v_i) \to 0.\] 
\end{lemma}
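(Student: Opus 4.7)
The plan is to expand
\[
\nvol_{(X_i,\Delta_i)}(v_i) \;=\; a_i(v_i)^n\,\vol(v_i)
\]
and estimate the two factors separately. Parametrize $v_i=(1-t_i)v+t_iw$ with $t_i\in(0,1]$; since $v_i\to v$, we have $t_i\to 0^+$.

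For the log-discrepancy factor I would use the linearity of $a_i$ on $\hat\sigma$ provided by the setup at the start of Section~\ref{sec:log-boundedness}, writing
\[
a_i(v_i) \;=\; (1-t_i)\,a_i(v) + t_i\,a_i(w) \;=\; t_i\,a_\infty(w) + O(|a_i-a_\infty|),
\]
where the second equality uses $a_\infty(v)=0$ (because $a_\infty\in\tau^*$ and $v\in\tau$) together with $a_i(w)\to a_\infty(w)$. In particular $a_i(v_i)\to 0$.

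For the volume factor the naive bound coming from concavity of $\vol^{-1/n}$ on $\hat\sigma$ and its vanishing on $\partial\hat\sigma$ is $\vol(v_i)\leq\vol(w)/t_i^n$, which combined with the previous step yields only $\nvol(v_i)\leq(a_i(v_i)/t_i)^n\vol(w)$; this is insufficient because $a_i(v_i)/t_i\to a_\infty(w)$ need not vanish. The main obstacle is therefore to sharpen this bound. The key observation is that $v$ lies in the relative interior of a positive-dimensional face $\tau\prec\hat\sigma$ with $k=\dim\tau\geq 1$ (note $v\neq 0$, since in the application $v=\xi_\infty$ is a normalized limit of minimizers). Exploiting that the volume function of $\mathcal{X}_{s_i}$ descends along the projection $\hat\sigma\twoheadrightarrow\hat\sigma/\lspan\tau$, one obtains the sharper asymptotic
\[
\vol(v_i)\;=\;\Theta\bigl(t_i^{-(n-k)}\bigr),
\]
which can be derived by integrating the $\hat\TT$-character volume formula along a cross-section of $\hat\sigma$ transverse to $\tau$, or by a toric degeneration argument reducing to the explicit toric computation.

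Combining this sharpened volume estimate with the expansion of $a_i(v_i)^n$ produces
\[
\nvol_{(X_i,\Delta_i)}(v_i) \;\leq\; C\,t_i^{k} \;+\; C\,|a_i-a_\infty|^n\,t_i^{-(n-k)}.
\]
The first summand tends to zero since $k\geq 1$. The remaining rate comparison between $|a_i-a_\infty|$ and $t_i$ needed to handle the second summand is the most delicate point: it is handled by passing to a subsequence satisfying $|a_i-a_\infty|=o(t_i^{(n-k)/n})$, or, in the concrete applications to Theorems~\ref{introthm:3-dim}--\ref{introthm:norm-vol-bound}, by the finiteness of the coefficient set of the boundaries, which forces $a_i=a_\infty$ eventually and kills the second summand outright.
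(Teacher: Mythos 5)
Your decomposition $\nvol_{(X_i,\Delta_i)}(v_i)=a_i(v_i)^n\vol(v_i)$ with the expansion $a_i(v_i)=t_i a_\infty(w)+O(|a_i-a_\infty|)$ is sound, and you have correctly isolated the real difficulty, namely the cross term $|a_i-a_\infty|^n t_i^{-(n-k)}$. The paper argues differently: it rewrites $\nvol$ via the Newton--Okounkov integral formula of \cite[3.2.2, (16)]{li2017stability}, substitutes $a_\infty$ for $a_i$, observes that the resulting integrand $\langle a_\infty,v_i\rangle^n/\langle P(\tilde u),v_i\rangle^n$ is \emph{uniformly} bounded (because $\langle a_\infty,v\rangle=0$ exactly, so the numerator is $\lambda_i\langle a_\infty,w\rangle$ while the denominator is $\geq\lambda_i\min_\Pi\langle\cdot,w\rangle$) and tends to $0$ pointwise off a null set, and concludes by dominated convergence. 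That integral representation is also the honest way to justify your volume asymptotic: the phrase ``the volume function descends along the projection $\hat\sigma\twoheadrightarrow\hat\sigma/\lspan\tau$'' is not a proof, though only the upper bound $\vol(v_i)=O(t_i^{-(n-k)})$ is actually needed and it does follow from the integral formula since the zero locus of $\langle P(\tilde u),v\rangle$ on the domain of integration has codimension at least $k$.

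The genuine gap is in your two proposed resolutions of the cross term. First, you cannot ``pass to a subsequence satisfying $|a_i-a_\infty|=o(t_i^{(n-k)/n})$'': both sequences are handed to you by the ambient argument (in Proposition~\ref{prop:finite-number-of-tori-pairs} the $t_i=\lambda_i$ are dictated by the degeneration cones $\Sigma_{\Y_i}$), and if, say, $|a_i-a_\infty|=\sqrt{t_i}$ then no subsequence has the required rate. This is not a removable technicality: take $X_i=\CC^2$ with $\Delta_i=(1-\sqrt{t_i})\{x=0\}$, so that $a_i=(\sqrt{t_i},1)\to a_\infty=(0,1)\in\tau^*$ for $\tau=\RR_{\geq0}(1,0)$, $v=(1,0)$, $w=(1,1)$, $v_i=(1,t_i)$; then $\nvol_{(X_i,\Delta_i)}(v_i)=(\sqrt{t_i}+t_i)^2/t_i\to1\neq0$, so the lemma genuinely requires some control beyond the stated hypotheses. (The paper's own proof passes over this at the step $\lim a_i(v_i)^n\vol(v_i)=\lim\langle a_\infty,v_i\rangle^n\vol(v_i)$, so your worry is well placed.) Your second fix --- finiteness of the possible $a_i$, forcing $a_i=a_\infty$ eventually --- does work in the intended applications, since there are finitely many strata, finitely many components of $\mathcal B$ and coefficients in a finite set $I$; but it is an additional hypothesis not present in the lemma as stated (its companion Lemma~\ref{lem:sequence-of-boundaries1} deliberately works with the compact family of \emph{all} $\RR$-Gorenstein boundaries supported on $\mathcal B$). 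So to turn your proposal into a proof you must either add such a hypothesis explicitly or supply a rate estimate on $\langle a_i-a_\infty,v\rangle$ coming from the geometry; as written, the argument does not close.
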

\begin{proof}
  Following \cite[3.2.2]{li2017stability} we may express $\nvol_{(X_i,\Delta_i)}$ via the theory of Newton-Okounkov bodies. Fix some element $\X_s$ of our family.  Assume $\dim(X_s)=n$ and $\dim \hat T = r$. Then by \cite[3.2.2]{li2017stability} there exists a convex cone
  $\Sigma \subset \hat M_\RR \times \RR^{n-r}$ such that $P(\Sigma)=\hat \sigma^\vee$ and
  \begin{equation}
  \vol_{X_i}(\xi)= \vol_{\X_s}(\xi)=\vol \{u \in \Sigma \mid \langle P(u), \xi \rangle \leq 1\}.\label{eq:vol-okounkov}
\end{equation}
  for every $\xi$ from the interior of $\hat \sigma$.  Here $P \colon \hat M_\RR \times \RR^{n-r} \to \hat M_\RR$ is the projection to the first factor.
  On the other hand, by  \cite[(16)]{li2017stability} the right-hand-side of (\ref{eq:vol-okounkov}) can be calculated as
  \begin{equation}
    \frac{1}{n}\int_{\tilde \Pi}\frac{\det(\tilde u, \partial_{y_1} \tilde u, \ldots, \partial_{y_{n-1}} \tilde u) }{\langle P(\tilde u(y)),\xi \rangle^n}dy,\label{eq:vol-integral}
  \end{equation}
  where $\Pi=\{ u \in \hat\sigma^\vee \mid \langle \xi',u \rangle=1\}$ for some fixed element $\xi'$ from the interior of $\hat\sigma$ and $\tilde u \colon \tilde \Pi \to
  P^{-1}(\Pi) \subset \Sigma^\vee$ with $\tilde \Pi \subset \RR^n$ being some parametrization.
  
  Now putting together (\ref{eq:vol-okounkov}) and (\ref{eq:vol-integral}) we obtain
  \[
    \lim_{i\to \infty}\nvol_{(X_i,\Delta_i)}(v_i)= \lim_{i\to \infty} \langle a_\infty , v_i \rangle^n \vol_{X_i}(v_i)= \lim_{i\to \infty} \int_{\tilde \Pi}\frac{\det(\tilde u, \ldots)}{n} \frac{\langle a_\infty , v_i \rangle^n}{\langle P(\tilde u(y)),v_i \rangle^n}dy
  \]
  For $P(\tilde u(y)) \notin \tau^*$ the expression under the integral converges pointwise to $0$. Now we write $v_i = \lambda_iw +(1-\lambda_i)v$ and obtain
  \[\frac{\langle a_\infty , v_i \rangle}{\langle P(\tilde u(y)),v_i\rangle}
    =\frac{\lambda_i \langle a_\infty , w \rangle}{\lambda_i \langle P(\tilde u) , w \rangle + (1-\lambda_i)\langle P(\tilde u) , v \rangle}
    = \frac{\langle a_\infty , w \rangle}{\langle P(\tilde u) , w \rangle + \frac{1-\lambda_i}{\lambda_i}\langle P(\tilde u) , v \rangle}.
\]
We see that the right-hand-side is bounded from above by
\[C:=\frac{\langle a_\infty , w \rangle}{\min \{\langle u,w \rangle \mid u \in \Pi\}}.\]
Note, that the denominator does not vanish, because $w$ was chosen from the interior of $\hat \sigma$ and $u\in \Pi \subset \hat\sigma^\vee$.

Now Lebesgue's Dominated Convergence Theorem tells us that the limit of integrals equals the integral of the pointwise limit. Since the limiting function vanishes outside a measure-zero set we get the desired result.
\end{proof}

\begin{proposition}
  \label{prop:log-bounded-deformation+k-semistable}
  Assume a class $\mathcal{P}$ of K-semistable log Fano cone $\TT^\ell$-singularities with normalized volume bounded away from $0$ admit $\TT$-invariant log bounded special degenerations, such that the bounding family of cone singularities arises from a $\QQ$-polarized family of projective varieties. Then $\mathcal{P}$ is also log bounded. 
\end{proposition}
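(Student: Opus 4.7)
The plan is to combine the equivariant upgrade of the bounding family (Proposition~\ref{prop:equivariantly-bounded}), the finiteness of admissible subtori (Proposition~\ref{prop:finite-number-of-tori-pairs}), and the multigraded Hilbert scheme framework of Section~\ref{sec:multi}.

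First, I would apply Proposition~\ref{prop:equivariantly-bounded} (adapted to the pair setting by taking the bounding projective pair to include a reduced divisor containing all the bounding boundaries) to produce a $\hat\TT$-invariant embedded bounding family $(\X,\mathcal B) \subset \mathbb A^N \times S$, where $\hat\TT \subset \mathrm{Aut}(\mathbb A^N)$ is a fixed maximal torus and $\mathcal B \subset \X$ is a reduced divisor. Each $(X,\Delta) \in \mathcal P$ then admits a $\TT$-invariant special degeneration to some $(\X_s,\boldsymbol\Delta_s)$ with $\supp\boldsymbol\Delta_s \subset \mathcal B_s$, and the acting torus $\TT \cong \TT^\ell$ is realized as a subtorus of $\hat\TT$ acting on $\mathbb A^N$.

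Second, by Proposition~\ref{prop:finite-number-of-tori-pairs} applied to the family $(\X,\mathcal B)\to S$ with the fixed $\epsilon > 0$, only finitely many subtori $\TT_1,\ldots,\TT_k \subset \hat\TT$ can occur as the acting torus of an element of $\mathcal P$. It therefore suffices to bound, for each fixed $j$, the subclass $\mathcal P_j \subset \mathcal P$ whose acting torus is $\TT_j$. For each such $j$ I would form the $S$-relative multigraded Hilbert scheme $\Hilbp \to S$ parametrizing pairs of $\TT_j$-invariant subschemes of $\mathbb A^N$ with the same $\TT_j$-graded Hilbert data as the fibers $(\X_s, \mathcal B_s)$. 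By Lemma~\ref{lem:embedding-family}, every $\TT_j$-invariant special degeneration to an element of the bounding family embeds equivariantly in $\mathbb A^N \times \mathbb A^1$, and flatness of the degeneration preserves the graded Hilbert function; hence every $(X,\Delta) \in \mathcal P_j$, as the generic fiber of such a degeneration, is represented by a point of the universal family over $\Hilbp$.

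Since $\Hilbp$ is of finite type over the bounded base $S$, its universal family supplies a log bounded family for $\mathcal P_j$, after restricting to the locally closed subset where the boundary coefficients take values in the prescribed finite set. Taking the union over the finitely many $\TT_j$ then produces the log bounded family for $\mathcal P$ itself. The main obstacle in this strategy is precisely the finiteness of admissible subtori: a priori, within a fixed $\hat\TT$ there is a continuous family of subtori, and without an a priori control via K-semistability and the volume lower bound one cannot collapse to finitely many Hilbert scheme components. This obstacle is resolved by Proposition~\ref{prop:finite-number-of-tori-pairs}, which is invoked as a black box; once granted, the passage from ``bounded up to special degeneration'' to genuine boundedness is a formal consequence of the representability of $\TT$-invariant flat deformations with prescribed graded Hilbert function by the multigraded Hilbert scheme.
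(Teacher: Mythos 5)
Your proposal follows essentially the same route as the paper's proof: upgrade the bounding family to a torus-equivariant embedded one via Proposition~\ref{prop:equivariantly-bounded}, reduce to finitely many admissible acting subtori via Proposition~\ref{prop:finite-number-of-tori-pairs}, and then capture every member of $\mathcal{P}$ degenerating to the family by finitely many multigraded Hilbert schemes. The only organizational difference is that the paper first partitions the central fibers into classes $\mathcal{Q}_r$ according to the dimension $r$ of the maximal torus in their automorphism groups and runs the subtorus-finiteness argument once for each $r=\ell,\dots,n$, whereas you fix a single ambient maximal torus at the outset; this does not change the substance of the argument.
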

\begin{proof}
  By assumption there exists a log bounded family, such that every element from $\mathcal{P}$ admits a $\TT$-invariant special degeneration to an element of the that family.  Now,
  for $r=\ell,\ldots, n$  let $\mathcal{Q}_r$ be the set of elements of that family which have a maximal torus of dimension $r$ in their automorphism group. Then by Proposition~\ref{prop:equivariantly-bounded} and Theorem~\ref{thm:zhuan-main}, there exists a $\TT^r$-invariant log bounding family $(\X, \mathcal{B})$ for $\mathcal{Q}_r$.

  Now, fix $r$, set $\hat\TT =\TT^r$ and consider the $\hat \TT$-invariant family  $(\X, \mathcal{B})$.
  For each of the finitely many subtori $\TT \subset \hat \TT$ with $\TT \cong \TT^\ell$ from Proposition~\ref{prop:finite-number-of-tori-pairs} all pairs $(X,B)$ of cone $\TT$-singularities having a $\TT$-invariant special degeneration to an element of $(\X, \mathcal{B})$  are parametrised by the multigraded Hilbert scheme $\Hilbp$, where $(X_0,B_0)$ is any member of $(\X, \mathcal{B})$.  Since $r$ takes finitely many values and for each $r$ we have finitely many subtori $\TT \subset \TT^r$ to consider we end up with a finite number of Hilbert schemes which form our log bounding family for  $\mathcal{P}$.
\end{proof}

\begin{corollary}
  \label{cor:semi-stable-bounded}
 Let $\epsilon, A > 0$, $n \in \mathbb{N}$ and let $I \subset [0, 1] \cap \QQ$ be a finite set. Then the class
 of K-semistable $n$-dimensional Fano cone $\TT$-singularities $x \in (X, \Delta)$ with $\nvol(x, X, \Delta) \geq \epsilon$, $\mldK^{\TT}_x(X, \Delta) \leq A$ and coefficients of $\Delta$ being from $I$ is log bounded.

 In particular, if there exists an $A$ such that $\mldK^{\TT}_x(X, \Delta) \leq A$ holds for every log Fano cone singularity of dimension $n$. Then the class of $n$-dimensional K-semistable log Fano cone singularities $(X,\Delta)$ with $\nvol(X,\Delta) > \epsilon$ and coefficients of $\Delta$ in $I$ is log bounded. 
\end{corollary}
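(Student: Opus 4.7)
The proof is essentially a direct combination of the two preceding results, so my plan is to simply assemble them.

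First, I would apply Theorem~\ref{thm:zhuan-main} (Zhuang's theorem) to the class $\mathcal{P}$ described in the statement: the $n$-dimensional K-semistable log Fano cone $\TT$-singularities $(X,\Delta;x)$ with $\nvol(X,\Delta;x) \geq \epsilon$, $\mldK^{\TT}_x(X,\Delta) \leq A$, and coefficients of $\Delta$ in $I$. Zhuang's theorem tells us that this class is log bounded up to $\TT$-invariant special degeneration, and moreover that the bounding family of log Fano cone singularities arises from a $\QQ$-polarized family of projective varieties. This is precisely the hypothesis of Proposition~\ref{prop:log-bounded-deformation+k-semistable}.

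Next, I would invoke Proposition~\ref{prop:log-bounded-deformation+k-semistable} to upgrade this from boundedness up to special degeneration to honest log boundedness. That proposition takes care of the key issue: among all the possible $\TT$-invariant special degenerations into a fixed $\hat{\TT}$-invariant bounding family, only finitely many subtori $\TT \subset \hat{\TT}$ can actually occur (by Proposition~\ref{prop:finite-number-of-tori-pairs}), and for each such subtorus the corresponding pairs are parametrized by a multigraded Hilbert scheme $\Hilbp$. Combining all these Hilbert schemes over the finitely many choices of $\hat{\TT}$ and $\TT$ yields the desired log bounding family for $\mathcal{P}$.

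For the second statement (``In particular''), the assumption that $\mldK^{\TT}_x(X,\Delta) \leq A$ holds uniformly for every $n$-dimensional log Fano cone singularity means the hypothesis on the mld in the first part is automatically satisfied. Hence the first part applies to the larger class of all K-semistable $n$-dimensional log Fano cone singularities with normalized volume $>\epsilon$ and coefficients in $I$, giving log boundedness of this class. No serious obstacle is expected here since both inputs have already been established; the corollary is essentially a packaging statement.
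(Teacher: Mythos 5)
Your proposal matches the paper's proof exactly: the paper cites Theorem~\ref{thm:zhuan-main} for log boundedness up to $\TT$-invariant special degeneration and then concludes via Proposition~\ref{prop:log-bounded-deformation+k-semistable}, which is precisely your argument, including the correct handling of the ``In particular'' clause.
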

\begin{proof}
  By Theorem~\ref{thm:zhuan-main} the class is log bounded up to $\TT$-invariant special degenerations. Now, the claim follows from Proposition~\ref{prop:log-bounded-deformation+k-semistable}.
\end{proof}

\section{Boundedness for K-semistable \texorpdfstring{$\mathbb{T}$-singularities of complexity one}{T-singularities of complexity one}}
\label{sec:t-varieties}
Our aim here is to bound the minimal log discrepancies of log Fano cone $\TT$-singularities of complexity $1$ and then apply Corollary~\ref{cor:semi-stable-bounded} in order to prove Theorem~\ref{thm:n-dim-K-ss-comp-1}.

\medskip
First,we recall some more details about $\mathbb{T}$-varieties
of complexity $1$. Our main reference will be the survey \cite{AIPSV12}. In this section we may specialize the construction introduced in Section~\ref{def:tvars} to the case of $Y=\PP^1$, i.e. the residual quotient of our $\TT$-variety is the projective line. In this case the condition for properness simplifies drastically. Indeed,  a polyhedral divisor $\mathcal{D}$ over $\PP^1$  is proper if the \emph{degree} $\deg \D:= \sum_{y\in \PP^1} \D_y$ is a proper subset of the tail cone, in symbols $\deg \D \subsetneq \tail \D$.

The following theorem is a particular case Theorem~\ref{thm:AH-main-result} in combination with \cite[Cororllary 5.8.]{LS13}.

 \begin{theorem}
 Let $X$ be a $\TT$-Fano cone singularity of complexity one. Then $X \cong X(\D)$ for some proper polyhedral divisor $\D$ on $(\pp^1,N_\qq)$ with complete locus. \end{theorem}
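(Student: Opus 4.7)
The plan is to combine the Altmann-Hausen correspondence (Theorem~\ref{thm:AH-main-result}) with the structural classification in \cite[Corollary 5.8]{LS13}.

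First, by Theorem~\ref{thm:AH-main-result}, since $X$ has complexity one, there is a proper polyhedral divisor $\D$ on a semi-projective curve $Y$ with $X \cong X(\D)$. It remains to upgrade $Y$ to $\pp^1$ and to verify that $\Loc(\D) = Y$.

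Next, I would show that the locus is complete. From the construction of $X(\D)$ as $\Spec H^0(\mathcal{A}(\D))$ together with the decomposition
\[
\mathcal{A}(\D) = \bigoplus_{u \in \sigma^\vee \cap M}\CO_{\Loc(\D)}(\D(u)),
\]
the weight-zero summand yields the identification
\[
\CC[X]^{\TT} = H^0(\Loc(\D), \CO_{\Loc(\D)}).
\]
The cone singularity hypothesis gives $\CC[X]^{\TT} = \CC$, so $H^0(\Loc(\D), \CO_{\Loc(\D)}) = \CC$. Since $\Loc(\D)$ is an open subset of a semi-projective curve, this forces $\Loc(\D)$ to be a smooth complete curve; in particular $Y = \Loc(\D)$ is projective.

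Finally, I would invoke \cite[Corollary 5.8]{LS13}, which classifies log terminal complexity-one $\TT$-singularities and shows that the underlying residual curve must be rational whenever $X$ is klt. Combined with the previous step, this forces $Y \cong \pp^1$, so $\D$ is a proper polyhedral divisor on $(\pp^1, N_\qq)$ with complete locus, as desired. The main technical input is the rationality of $Y$, which rests on the structural analysis of \cite{LS13}; the completeness of the locus, by contrast, is a direct consequence of the cone singularity condition $\CC[X]^{\TT} = \CC$.
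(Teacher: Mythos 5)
Your proposal is correct and follows essentially the same route as the paper, which simply records this theorem as a particular case of Theorem~\ref{thm:AH-main-result} combined with \cite[Corollary~5.8]{LS13}. Your explicit derivation of the completeness of the locus from the weight-zero piece $\CC[X]^{\TT}=H^0(\Loc(\D),\CO_{\Loc(\D)})=\CC$, and the appeal to \cite{LS13} for the rationality of the residual curve, are exactly the two ingredients the paper has in mind.
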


\medskip

In order to study discrepancies over klt singularities also need a global description for non-affine $\TT$-varieties. Here, we follow the notation from \cite[Sec.~5]{AIPSV12}. We fix a fan $\Sigma$ over the lattice $N$ and for every $y \in \PP^1$ a polyhedral complex $\CS_y$, such that $\Sigma=\tail \CS_y:=\{\tail \Delta \mid \Delta \in \CS_y
\}$ and for all, but finitely many we actually have $\CS_y=\Sigma$. Moreover, let $\mathfrak{d} \subset N_\RR$ be a subset, such that for every maximal cone $\sigma \in \Sigma$ with $\sigma \cap \mathfrak{d} \neq \emptyset$  and every $x \in \PP^1$ there is a unique polyhedron $\Delta^\sigma_y \in \CS_y$ with $\sigma=\tail(\Delta^\sigma_y)$ and 
\begin{equation}
\sigma \cap \mathfrak{d} = \sum_{y \in \PP^1} \Delta^\sigma_y  \subsetneq \sigma.\label{eq:f-divisor}
\end{equation}
Then we denote this data by $\CS=(\sum_y \CS_y \{y\},  \mathfrak{d})$ and call it an \emph{f-divisor} (or fansy divisor). The fan $\Sigma$ is called the \emph{tail fan} of $\CS$ and the set $\{y \in \PP^1 \mid \CS_y \neq \Sigma\} \subset \PP^1$ the \emph{support} of $\CS$. Moreover, the $\mathfrak{d}$ is called the degree of $\CS$, which we also denote by $\deg(\CS)$. The condition~(\ref{eq:f-divisor}) ensures that $\D^\sigma:=\sum_y \Delta_y^\sigma \{y\}$ defines a proper polyhedral divisor whenever $\sigma \cap \deg(\CS) \neq \emptyset$. On the other hand, when
$\sigma \cap \deg(\CS) = \emptyset$ and $\Delta \in \CS_y$ for some $y \in \supp(\CS)$ then
\[\D^{y,\Delta} \quad := \quad  \Delta \cdot \{y\} \quad + \sum_{z \in \supp(\CS) \setminus \{y\}} \emptyset \cdot \{z\}.\]
defines a proper polyhedral divisor with locus $\PP^1 \setminus \supp(\CS) \cup \{y\}$. The set of all p-divisors
$\D^\sigma$ and $\D^{y,\Delta}$ is called a \emph{divisorial fan} in the literature. By \cite[Prop.~20]{AIPSV12}
the affine varieties $X(\D^\sigma)$ and $Y(\D^{y,\Delta})$ alltogether from an open affine cover of a $\TT$-variety of complexity $1$, which we denote by $X(\CS)$. Note, in the literature the cones $\sigma$ of the tail fan which fulfill $\sigma \cap \mathfrak{d} \neq \emptyset$ are sometimes called \emph{marked}.

We can use f-divisors to study birational modifications of an affine $\TT$-variety $X(\D)$. By \cite[11.3]{AIPSV12} an f-divisor $\CS$ with $\D_y = |\CS_y|:=\bigcup_{\Delta \in \CS_y} \Delta$ defines a proper birational morphism $X(\CS) \to X(\D)$. In particular, we are able to calculate discrepancies of exceptional divisors on these modifications.

\begin{notation}
Given a polyhedron $P\subset N_\qq$, we denote by $P^{(k)}$ the set of $k$-dimensional faces of $P$.
Given an element $v\in N_\qq$, we denote by
$\mu(v)$ the smallest positive integer for which
$\mu(v)v\in N$.
\end{notation}

Let $\CS$ be an f-divisor on $\pp^1$ 
and let $\rho \in (\tail \CS)(1)$ such that $\deg(\CS) \cap \rho =\emptyset$. Then we have an associated $\mathbb{T}$-invariant prime divisor $D_\rho$ on $X(\D)$ (see, e.g.,~\cite[Proposition 2.23]{BGLM21}). The torus invariant prime divisors of this kind are called {\em horizontal divisors}.  

On the other hand, the choice of 
$z\in \pp^1$ and $v\in \D^{(0)}_z$ induces a so-called \emph{vertical} $\mathbb{T}$-invariant prime divisor $D_{z,v}$
(see, e.g,~\cite[Proposition 2.23]{BGLM21}). 

As a conseqence we can write ever torus invariant divisor on $X(D)$ as a sum
\[
  \sum_{\rho} c_\rho D_\rho + \sum_{y,v} c_{y,v} D_{y,v},
\]
where $\rho$ is running over all rays of $\tail(\D)$ with $\rho \cap \tail(\D) = \emptyset$ and
$(y,v)$ over all pairs with $y \in \PP^1$ and $v \in \D_y^{(0)}$.

$\TT$-invariant principal divisors correspond to semi-invariant functions. More precisely by \cite[Prop-~3.14]{PS11}  we have
\begin{equation}
  \label{eq:prinicpal-div}
  \operatorname{div}(f\cdot \chi^u) = \sum_{\rho} \langle n_\rho, u \rangle D_\rho + \sum_{y,v}  \mu(v) \left( \ord_y(f) + \langle v , u \rangle \right) \cdot D_{y,v}.
\end{equation}
Moreover, by  \cite{PS11} a canonical divisor on $X=X(\CS)$ is given by
\begin{equation}
  \label{eq:canonical-div}
  K_X =-\sum_{\rho}  D_\rho + \sum_{y,v} \mu(v)\left(k_y + \frac{\mu(v) - 1}{\mu(v)}\right) \cdot D_{y,v},
\end{equation}
where $\sum_y k_y \{y\}$ is any canonical divisor on $\PP^1$ (i.e. any divisor of degree $2$).
Note, that the coefficients of a principal divisor at $D_{y,v}$ are given by affine linear function $\langle \cdot , u \rangle +\ord_y(f)$. where the translation depends on $y$, but the linear part $\langle \cdot , u \rangle$ does not. This leads to the following characterisation of locally principal, i.e. Cartier, divisors. The proposition below summarizes results which appeared in slightly different formulations in  \cite[Prop~3.10, Cor.~3.28]{PS11}, \cite[Sec~2]{proj-tvar} and \cite[7.4, 8.2]{AIPSV12}.
\begin{proposition}
  \label{prop:cartier+ample}
  A Weil divisor 
  \[
    D=\sum_{\rho} c_\rho D_\rho + \sum_{y,v} c_{y,v} D_{y,v},
  \]
  on a complexity-$1$ $\TT$-variety $X(\dfan)$ is $\QQ$-Cartier if and only if there exists a piecewise linear function $h$ on $\tail \dfan$ and piecewise affine linear functions $h_y$ on each $\dfan_y$ with
  \begin{enumerate}
  \item The linear part of $h_y$ coincides with $h$. By this we mean that for every $\Delta \in \dfan_y$ the affine linear restriction $h_y|_{\Delta} = \langle \cdot , u \rangle +a_y$ fulfills $h|_{\tail \Delta}=\langle \cdot , u \rangle$.
  \item If $\sigma \in \tail \dfan$ is a maximal cone with $\sigma \cap \deg(\dfan)\neq \emptyset$. Then
    $\sum_y a_y^\sigma=0$, where $a_y^\sigma$ is given by the assumption $h|_{\Delta^\sigma_y} = \langle \cdot , u \rangle +a_y^\sigma$ with $\Delta^\sigma_y$ being the (unique) polyhedron with tail cone $\sigma$ in $\dfan_y$.\label{item:marked-principal}
  \item $c_\rho=-h(n_\rho)$ and $c_{y,v}=-\mu(v)h_y(v)$ for all invariant prime divisors $D_\rho$ and $D_{y,v}$.
  \end{enumerate}

  Moreover, the divisor $D$ is ample if and only if all functions $h_y$ from above are strictly concave and
for all maximal cones $\sigma \in \tail \dfan$ we have $\sum_y a_y^\sigma < 0$, where 
the $a^\sigma_y$ are as in Item~(\ref{item:marked-principal}) above.
\end{proposition}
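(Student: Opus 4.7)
The plan is to reduce the statement to a local computation on each of the standard affine charts of $X(\dfan)$ and then glue. Recall that the covering by the pieces $X(\D^\sigma)$ (for maximal marked cones $\sigma$) together with the $Y(\D^{y,\Delta})$ (for non-marked cones) realises $X(\dfan)$ as a union of affine $\TT$-varieties of complexity one. A $\TT$-invariant Weil divisor is $\QQ$-Cartier iff its restriction to each such chart is, and on an affine $\TT$-variety an invariant $\QQ$-Weil divisor is $\QQ$-Cartier precisely when a positive integral multiple is the divisor of a semi-invariant rational function. Using formula (\ref{eq:prinicpal-div}), on a chart $X(\D^\sigma)$ this forces the coefficient function of $D$ on the horizontal divisors $D_\rho$ (for $\rho\prec\sigma$) to be given by a linear form $u_\sigma\in M$ via $c_\rho=\langle n_\rho,u_\sigma\rangle$, and the coefficients at the vertical divisors over $y$ to be of the form $c_{y,v}=\mu(v)(\langle v,u_\sigma\rangle+a^\sigma_y)$ with constants $a^\sigma_y$ satisfying $\sum_y a^\sigma_y=0$, since those $a^\sigma_y$ must be the orders $\ord_y(f)$ of some rational function on $\PP^1$.

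Next I would assemble these pieces. Setting $h|_\sigma:=-\langle\cdot,u_\sigma\rangle$ on each maximal cone yields a well-defined piecewise linear function $h$ on $\tail\dfan$: compatibility on shared faces is forced by the overlap of the charts, in which the horizontal divisors $D_\rho$ of a common ray impose the same value $-c_\rho=h(n_\rho)$. Setting $h_y|_\Delta:=-\langle\cdot,u_\sigma\rangle-a^\sigma_y$ on each polyhedron $\Delta=\Delta^\sigma_y\in\dfan_y$ with $\tail\Delta=\sigma$ then gives piecewise affine functions on each $\dfan_y$ whose linear parts recover $h$, and the computation above yields $c_{y,v}=-\mu(v)h_y(v)$. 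The treatment of the non-marked charts $Y(\D^{y,\Delta})$ is analogous but easier: here there is only one fibre being modified, no constraint $\sum a^\sigma_y=0$ appears because the corresponding $f$ can be chosen freely, and the data is absorbed into the same $h_y$. Conversely, given $(h,(h_y))$ as described, equation (\ref{eq:prinicpal-div}) together with the constraint $\sum_y a^\sigma_y=0$ exhibits, on each chart, a semi-invariant rational function whose divisor agrees with $D$, proving local principality.

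For the ampleness part, I would argue chart by chart. Strict concavity of $h_y$ on $\dfan_y$ is exactly the complexity-one analogue of the toric ampleness criterion and ensures that the linearisation of $D$ separates points and tangent vectors within each fibre of the rational quotient $q\colon X(\dfan)\dashrightarrow\PP^1$; this is the local ampleness statement proved in \cite{PS11, proj-tvar}. To upgrade local ampleness to global ampleness on $X(\dfan)$, which is projective over $\Spec H^0(\CO_{X(\dfan)})$ with one-dimensional factor $\PP^1$, one needs positive degree on the $\PP^1$-direction. Computing this degree via (\ref{eq:prinicpal-div}) shows it equals $-\sum_y a^\sigma_y$ on each maximal marked cone $\sigma$, so the condition $\sum_y a^\sigma_y<0$ is the missing fibrewise-to-global upgrade. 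Combining these two conditions by a Nakai--Moishezon style argument on $X(\dfan)$ yields the ampleness characterisation.

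The main obstacle I expect is the bookkeeping on the non-marked cones and the gluing of the local functions $h_y$ across the union $\dfan_y$: one has to verify that the affine pieces coming from different $\sigma$-charts and from the $Y(\D^{y,\Delta})$-charts assemble into a single continuous piecewise affine function on $\dfan_y$, and that the resulting collection $(h,(h_y))$ is independent of the choices of $u_\sigma$ and of the rational functions $f$ on $\PP^1$ up to global characters and principal divisors. Once this is done, the equivalences follow directly from (\ref{eq:prinicpal-div}) and (\ref{eq:canonical-div}).
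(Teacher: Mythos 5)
The paper does not actually prove this proposition: it is explicitly presented as a summary of results from \cite[Prop.~3.10, Cor.~3.28]{PS11}, \cite[Sec.~2]{proj-tvar} and \cite[7.4, 8.2]{AIPSV12}, so there is no internal proof to compare against. Measured against the standard arguments in those references, your strategy for the $\QQ$-Cartier part is the right one: restrict to the affine charts $X(\D^\sigma)$ and $Y(\D^{y,\Delta})$, use that an invariant Cartier divisor on such a chart is the divisor of a semi-invariant function (cf.\ \cite[Prop.~3.1]{PS11}), read off $u_\sigma$ and the constants $a^\sigma_y=\ord_y(f)$ from (\ref{eq:prinicpal-div}), and note that $\sum_y\ord_y(f)=0$ exactly on the charts with complete locus, i.e.\ the marked cones. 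One point in your gluing step deserves more care than you give it: a ray $\rho$ carries a horizontal divisor $D_\rho$ only when $\deg(\dfan)\cap\rho=\emptyset$, so on marked rays the compatibility of $u_\sigma$ and $u_{\sigma'}$ across a common face cannot be read off from the coefficients $c_\rho$ as you claim; it has to be extracted from the vertical divisors $D_{y,v}$ with $v$ in the shared faces of the $\dfan_y$-polyhedra. This is fixable but it is precisely the bookkeeping you defer.

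The ampleness part is where your sketch goes wrong concretely. You assert that the degree of $D$ in the $\PP^1$-direction equals $-\sum_y a^\sigma_y$ \emph{on each maximal marked cone}, and that $\sum_y a^\sigma_y<0$ is the needed positivity there. But for marked cones item~(\ref{item:marked-principal}) already forces $\sum_y a^\sigma_y=0$ (the chart $X(\D^\sigma)$ has complete locus and contracts the $\PP^1$-direction, so there is no complete curve over $\PP^1$ there to test). The strict inequality is the condition on the \emph{non-marked} maximal cones, over which the quotient map to $\PP^1$ is not contracted and complete curves dominating $\PP^1$ survive in $X(\dfan)$; this is also how the proposition is used later in Proposition~\ref{prop:kollar-horizontall}, where $\sum_y a^\tau_y=0$ is obtained for $\tau\cap\deg\D\neq\emptyset$ and $\sum_y a^\tau_y<0$ for $\tau\cap\deg\D=\emptyset$. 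In addition, invoking a Nakai--Moishezon argument on $X(\dfan)$, which is only semiprojective rather than complete, is not justified as stated; the references instead verify ampleness via the associated divisorial polytope and global generation. So the Cartier half of your proposal is essentially sound modulo the gluing details, while the ampleness half misidentifies which cones the inequality lives on and leaves the global positivity argument unsupported.
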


\begin{definition}
Let $\mathcal{D}$ be a p-divisor
on $\pp^1$ with complete locus
and let $y\in \pp^1$.
We consider the positive integer:
\[
\mu_\mathcal{D}(y):=\max \{ \mu(v)\mid v\in \mathcal{D}_y^{(0)}\}. 
\] 
We call this integer the {\em multiplicity of $\mathcal{D}$ over $y$}.
Then, we can consider the following effective divisor on $\pp^1$:
\[
B:=\sum_y b_y \cdot y = 
\sum_y \frac{\mu_\mathcal{D}(y)-1}{\mu_\mathcal{D}(y)} \cdot \{y\}. 
\]
The pair $(\pp^1,B)$ is the quotient log pair
induced by the p-divisor $\mathcal{D}$.

For a boundary divisor $\Delta= \sum_{\rho} c_\rho D_\rho + \sum_{y,v} c_{y,v} D_{y,v}$ on $X=X(\D)$ we set
\[B_\Delta :=\sum_y b_{\Delta,y} \cdot y  :=
  \sum \max \left\{\frac{\mu(v)-1+c_{y,v}}{\mu(v)}  \;\middle|\; v \in \D_y^{(0)}\right\}\cdot y.
\]
The pair $(\PP^1,B_\Delta)$ is the quotient log pair pair induced by $(X,\Delta)$.
\end{definition}

\begin{proposition}
  \label{prop:discrepancies}
  Let $\D$ be a proper polyhedral divisor on $\PP^1$ with complete locus and $\Delta= \sum_{\rho} c_\rho D_\rho + \sum_{y,v} c_{y,v} D_{y,v}$ a boundary divisor on $X=X(\D)$.  Assume that $(X,\Delta)$ is $\QQ$-Gorenstein and  $\widetilde{X}=X(\CS) \to X(\D)$ is a birational modification corresponding to an f-divisor $\CS$. For every canonical divisor $K=\sum_y k_y \cdot \{y\}$ on $\PP^1$ there exists $u \in M_\QQ$ and a $\QQ$-divisors $\sum_{y \in \mathbb{P}^1} a_y \cdot \{y\}$ of degree zero on $\pp^1$, such that 
\begin{equation}
    \langle u, v_y \rangle + a_y = k_y + \frac{\mu(v_y)-1+c_{y,v}}{\mu(v_y)}= k_y + b_{\Delta,y},\label{eq:vertical-q-gorenstein}
  \end{equation}
    holds for every vertex $v_y$ of any polyhedral coefficient $\D_y$ and
    \begin{equation}
      \langle u, n_\rho \rangle  = -1 + c_\rho\label{eq:horizontal-q-gorenstein}
    \end{equation}
    holds for every primitive generator $n_\rho$ of a ray $\rho$ of $\tail \D$ with $\rho \cap \deg \D = \emptyset$.
    Moreover, for every vertical exceptional divisor $D_{z,w}$ and for every horizontal exceptional divisor $D_\tau$ the log discrepancies are given by
    \begin{equation}
      \label{eq:discr-vertical}
      \aXD(D_{z,w})=\mu(w)\langle -u, w \rangle + \mu(w)a_z
    \end{equation}
    and
    \begin{equation}
      \label{eq:discr-horizontal}
      \aXD(D_\tau)=\langle -u, n_\tau \rangle,
    \end{equation}
    respectively.
  \end{proposition}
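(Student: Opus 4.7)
The plan is to leverage the description of $\QQ$-Cartier divisors from Proposition~\ref{prop:cartier+ample} together with the explicit canonical and principal divisor formulas~(\ref{eq:canonical-div}) and~(\ref{eq:prinicpal-div}), and then to compute discrepancies by direct coefficient comparison on the refined combinatorial data of $\widetilde X = X(\CS)$.

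Since $X=X(\D)$ is affine with complete locus $\PP^1$, the tail fan of $\D$ consists of the single maximal cone $\sigma=\tail\D$; consequently Proposition~\ref{prop:cartier+ample} applied to $K_X+\Delta$ reduces to the existence of a single global linear form $\langle u',\cdot\rangle$ on $\sigma$ and rational constants $\{a'_y\}$, with the affine-linear function $h_y(v)=\langle u',v\rangle+a'_y$ recovering the coefficients of the divisor, together with $\sum_y a'_y=0$ coming from condition~(ii) for the unique marked cone. Matching the coefficients $\mu(v)k_y+\mu(v)-1+c_{y,v}$ of $K_X+\Delta$ at $D_{y,v}$ — read off from~(\ref{eq:canonical-div}) — against the Cartier normalisation $-\mu(v)h_y(v)$, and analogously matching $-1+c_\rho$ at $D_\rho$ against $-\langle u',n_\rho\rangle$, produces exactly equations~(\ref{eq:vertical-q-gorenstein}) and~(\ref{eq:horizontal-q-gorenstein}) once we absorb the sign by setting $u=-u'$ and $a_y=-a'_y$. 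The condition $\sum_y a_y=0$ then lets us choose $f\in\CC(\PP^1)$ with $\mathrm{div}(f)=\sum_y a_y\cdot\{y\}$, so that~(\ref{eq:prinicpal-div}) realises $K_X+\Delta$ as the $\QQ$-principal divisor $\mathrm{div}(f\cdot\chi^u)$ on $X$.

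To compute the discrepancies, the pullback $\pi^*(K_X+\Delta)$ along the modification $\pi\colon\widetilde X=X(\CS)\to X(\D)$ coincides with $\mathrm{div}_{\widetilde X}(f\cdot\chi^u)$, whose coefficients are given by~(\ref{eq:prinicpal-div}) applied to the invariant divisors of the refined combinatorial data. For a horizontal exceptional ray $\tau$, the pullback contributes $\langle n_\tau,u\rangle$ and the canonical divisor on $\widetilde X$ via~(\ref{eq:canonical-div}) contributes $-1$, so $a_{(X,\Delta)}(D_\tau)=1+(-1)-\langle n_\tau,u\rangle=-\langle u,n_\tau\rangle$, yielding~(\ref{eq:discr-horizontal}). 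For a new vertical vertex $w\in\CS_z^{(0)}\setminus\D_z^{(0)}$ the pullback contributes $\mu(w)(a_z+\langle w,u\rangle)$, while the canonical divisor on $\widetilde X$ contributes $\mu(w)k_z+\mu(w)-1$; substituting into $a_{(X,\Delta)}(E)=1+\mathrm{coeff}_E(K_{\widetilde X}-\pi^*(K_X+\Delta))$ and simplifying (using the relation for $a_z$ forced by~(\ref{eq:vertical-q-gorenstein})) produces~(\ref{eq:discr-vertical}).

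The main obstacle is bookkeeping. The Cartier characterisation in Proposition~\ref{prop:cartier+ample} encodes coefficients via $-\mu(v)h_y(v)$, whereas the principal divisor formula~(\ref{eq:prinicpal-div}) encodes them as $+\mu(v)(\mathrm{ord}_y(f)+\langle v,u\rangle)$, which forces the sign reversal $u=-u'$, $a_y=-a'_y$ noted above. One also needs to verify carefully that the $k_z$-dependent contributions from the canonical divisor on $\widetilde X$ combine correctly with the $a_z$ terms in the pullback, so that the discrepancy at an exceptional vertical divisor ultimately depends only on $(u,a_z)$ and on $(\mu(w),\langle u,w\rangle)$, and not on the auxiliary choice of canonical divisor $\sum k_y\{y\}$ on $\PP^1$. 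Once these conventions are aligned, formulas~(\ref{eq:discr-vertical}) and~(\ref{eq:discr-horizontal}) follow by direct comparison.
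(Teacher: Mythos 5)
Your overall strategy is the same as the paper's: realize $K_X+\Delta$ as a $\QQ$-principal divisor $\operatorname{div}(f\cdot\chi^u)$ and read off both the $\QQ$-Gorenstein equations and the log discrepancies by comparing coefficients in \eqref{eq:prinicpal-div} and \eqref{eq:canonical-div}, first on $X$ and then on $\widetilde X=X(\CS)$. The only real difference is how principality is obtained: the paper quotes \cite[Prop.~3.1]{PS11} directly, while you deduce it from Proposition~\ref{prop:cartier+ample}, using that the tail fan of the affine $X(\D)$ has a single marked maximal cone and that a degree-zero $\QQ$-divisor on $\PP^1$ is (after clearing denominators) principal. That detour is correct and self-contained, and your derivations of \eqref{eq:vertical-q-gorenstein}, \eqref{eq:horizontal-q-gorenstein} and \eqref{eq:discr-horizontal} check out.

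The gap is in your last step. Substituting the two coefficients you computed into $\aXD(E)=1+\operatorname{coeff}_E\bigl(K_{\widetilde X}-\pi^*(K_X+\Delta)\bigr)$ gives
\begin{equation*}
\aXD(D_{z,w})=1+\bigl(\mu(w)k_z+\mu(w)-1\bigr)-\mu(w)\bigl(a_z+\langle u,w\rangle\bigr)=\mu(w)\bigl(k_z+1-a_z-\langle u,w\rangle\bigr),
\end{equation*}
and no relation ``forced by \eqref{eq:vertical-q-gorenstein}'' converts this into $\mu(w)\langle -u,w\rangle+\mu(w)a_z$: equation \eqref{eq:vertical-q-gorenstein} determines $a_z$ as $k_z+b_{\Delta,z}-\langle u,v_z\rangle$, whereas the simplification you assert would require $a_z=(k_z+1)/2$. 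Note that $k_z-a_z$ is unchanged when the canonical divisor on $\PP^1$ is varied (both terms shift by the same amount), so $\mu(w)(k_z+1-a_z-\langle u,w\rangle)$ is an intrinsic expression, while $\mu(w)(a_z-\langle u,w\rangle)$ is not and therefore cannot agree with the left-hand side for every choice of $K$. Your closing caveat about the $k_z$-dependent contributions is exactly the unresolved point: the computation you set up is correct, but it yields $\mu(w)\langle-u,w\rangle+\mu(w)(k_z+1-a_z)$, which for the normalization $k_z=-1$ used later in Proposition~\ref{prop:kollar-degeneration} reads $\mu(w)\langle-u,w\rangle-\mu(w)a_z$ and matches the toric expression $\langle-\hat u,\mu(w)(w,1)\rangle$ with $\hat u=(u,a_z)$ appearing there; it does not produce \eqref{eq:discr-vertical} with the sign of the $a_z$-term as printed. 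You should either carry the $k_z+1-a_z$ term explicitly or correct that sign; asserting that the simplification goes through as stated is the one step of your argument that fails.
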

  \begin{proof}
    By \cite[Prop. 3.1]{PS11} every Cartier divisor on $X=X(\D)$ is in fact principal. Hence, it has the form as in (\ref{eq:prinicpal-div}). Assume now $\operatorname{div}(f\cdot \chi^u)=m(K_X+\Delta)$ with $K_X$ as in (\ref{eq:canonical-div}).
    If we set $\sum_y a_y \cdot \{y\} = \sfrac{1}{m}\cdot \operatorname{div}(f)$, then comparing the coefficients of $\sfrac{1}{m}\operatorname{div}(f\cdot \chi^u)$ in front of $D_\rho$ and $D_{y,v}$, respectively, in (\ref{eq:prinicpal-div}) with those of $K+\Delta$ leads to the equations (\ref{eq:vertical-q-gorenstein}) and (\ref{eq:horizontal-q-gorenstein}), respectively.

    Now, comparing the coefficient of $D_{z,w} \subset X(\CS)$ and $D_\tau\subset X(\CS)$, resepctively, in equation (\ref{eq:prinicpal-div}) for $\operatorname{div}(f \cdot \chi^u)$ with that of $-K_{\widetilde{X}}$ as in (\ref{eq:canonical-div}) leads to the log discrepancies (\ref{eq:discr-vertical}) and (\ref{eq:discr-horizontal}), respectively.
  \end{proof}
\medskip
The following proposition allows us to 
check whether a log pair of complexity one
induced by a p-divisor with complete locus
is 
purely log terminal.

  \begin{proposition}
    \label{prop:log-terminal-pair}
    Let $\D$ be a p-divisor on $\pp^1$ with complete locus
    and let $X=X(\D)$ be the associated complexity one $\mathbb{T}$-variety and $\Delta$ a torus invariant boundary divisor.
    Then, the following statements hold.
   \begin{enumerate}
   \item The pair $(X,\Delta)$ is klt if and only if $(\PP^1,B_\Delta)$ is a klt log Fano pair;
     \label{item:klt-pair}
   \item the pair $(X,\Delta)$ is plt if and only if $(\PP^1,B_{\Delta})$ is a plt log Fano pair.
     \label{item:plt-pair}
 \end{enumerate}
\end{proposition}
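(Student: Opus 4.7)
The proof reduces the klt and plt properties of $(X,\Delta)$ to combinatorial conditions on the polyhedral data and then translates them into klt (or plt) plus log Fano conditions on the quotient pair $(\PP^1, B_\Delta)$. Since $\Delta$ is $\TT$-invariant, it suffices to test log discrepancies on $\TT$-invariant divisorial valuations. By the Altmann--Hausen description from Section~\ref{subsec:pdiv}, these come in two types: horizontal valuations, associated to primitive lattice rays $n_\tau \in \tail \D \setminus \deg \D$, and vertical valuations, associated to pairs $(y, w)$ with $y \in \PP^1$ and $w$ a rational point of $\D_y$. Proposition~\ref{prop:discrepancies} provides closed-form expressions for the corresponding log discrepancies in terms of the data $(u, a_y, k_y)$.

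The key identity comes from summing equation (\ref{eq:vertical-q-gorenstein}) at the vertices of $\D_y$ achieving $b_{\Delta,y}$, using $\sum_y a_y = 0$, $\sum_y k_y = -2$, and the distributivity of the maximum of a linear functional over Minkowski sums:
\begin{equation*}
\sum_{y \in \PP^1} b_{\Delta,y} \;=\; 2 \,+\, \max_{v \in \deg \D}\langle u, v\rangle.
\end{equation*}
Hence $-(K_{\PP^1} + B_\Delta)$ is ample if and only if $\langle u, \cdot\rangle < 0$ on $\deg \D$. Similarly, $(\PP^1, B_\Delta)$ is klt (respectively plt) if and only if every $b_{\Delta, y} < 1$ (respectively $\leq 1$, with at most one equality), which by (\ref{eq:vertical-q-gorenstein}) is equivalent to $c_{y,v} < 1$ (respectively $\leq 1$) at every vertex $v$ of every $\D_y$.

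For part (i), in the forward direction, if $(X,\Delta)$ is klt, the coefficient bounds $c_{y,v} < 1$ give klt on the quotient; positivity at vertical exceptional valuations with $w$ running to infinity along tail directions forces $\langle u, \cdot\rangle \leq 0$ on $\tail \D$; and positivity at horizontal exceptional valuations yields $\langle u, n_\tau\rangle < 0$ on every lattice ray of $\tail \D \setminus \deg \D$. Together these imply $\langle u, \cdot\rangle < 0$ on $\deg \D$, which is the Fano condition. Conversely, klt plus Fano on $(\PP^1, B_\Delta)$ translates back via the same formulas: positivity at every vertical exceptional valuation reduces, by the affineness of $\langle u, \cdot\rangle$ on $\D_y$ and the maximum principle, to the vertex bound $b_{\Delta, y} < 1$, while positivity at every horizontal exceptional valuation follows from $\langle u, \cdot\rangle < 0$ on $\deg \D$ together with the properness $\deg \D \subsetneq \tail \D$. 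Part (ii) is proved in parallel, with one strict inequality $b_{\Delta, y_0} < 1$ relaxed to $b_{\Delta, y_0} = 1$ at a single $y_0$ with a unique vertex $v_0 \in \D_{y_0}^{(0)}$ satisfying $\mu(v_0) = 1$ and $c_{y_0, v_0} = 1$; this matches the unique plt place on each side.

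The main technical obstacle is in the converse direction, specifically showing that the global inequality $\sum_y b_{\Delta, y} < 2$, together with the pointwise bounds $b_{\Delta, y} < 1$, really implies strict negativity $\langle u, n_\tau\rangle < 0$ on every lattice ray of $\tail \D$ disjoint from $\deg \D$. This is a convex-geometric step that depends essentially on the properness condition $\deg \D \subsetneq \tail \D$, which ensures that a linear functional strictly negative on $\deg \D$ also takes strictly negative values on these ``outside'' lattice rays of $\tail \D$.
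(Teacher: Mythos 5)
Your overall strategy --- reduce to torus-invariant divisorial valuations, feed them into the discrepancy formulas of Proposition~\ref{prop:discrepancies}, and convert the resulting inequalities into degree and coefficient conditions on $(\PP^1,B_\Delta)$ --- is exactly the route the paper takes (its proof is a one-line reference to \cite[Theorem~4.1]{LS13} with $B$ replaced by $B_\Delta$), and your identity $\sum_y b_{\Delta,y}=2+\max_{v\in\deg\D}\langle u,v\rangle$ is the correct core computation. However, the step you yourself single out as ``the main technical obstacle'' is closed by an assertion that is false. Properness $\deg\D\subsetneq\tail\D$ does \emph{not} guarantee that a linear functional which is strictly negative on $\deg\D$ is strictly negative on the rays of $\tail\D$ disjoint from $\deg\D$: take $\tail\D=\cone(e_1,e_2)\subset\RR^2$, $\deg\D=(e_1+e_2)+\tail\D$ and $u=(0,-1)$; then $\langle u,\cdot\rangle\le -1$ on $\deg\D$, yet $\langle u,e_1\rangle=0$ although $\pos e_1$ is disjoint from $\deg\D$. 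Strict negativity on $\deg\D$ only yields $\langle u,\cdot\rangle\le 0$ on $\tail\D$ (hence $<0$ on its interior, which does handle the exceptional horizontal divisors of Proposition~\ref{prop:kollar-horizontall}), but it does not give $\langle u,n_\rho\rangle<0$, i.e.\ $c_\rho<1$, at the boundary rays $\rho$ with $\rho\cap\deg\D=\emptyset$. Since $B_\Delta$ records no information whatsoever about the horizontal coefficients $c_\rho$, this cannot come for free from the quotient pair: it must be extracted from the full $\QQ$-Gorenstein data, namely \eqref{eq:vertical-q-gorenstein} and \eqref{eq:horizontal-q-gorenstein} together with the degree-zero condition on $\sum_y a_y\cdot\{y\}$, which pin down $u$ (and hence the $c_\rho$) in terms of the vertical data --- or else the inequality $c_\rho<1$ has to be built into the hypotheses on the boundary $\Delta$. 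As written, the converse implications in both (i) and (ii) are not established.

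A smaller issue of the same kind occurs in the forward direction: you derive anti-ampleness of $K_{\PP^1}+B_\Delta$ from positivity of log discrepancies at horizontal divisors, but the maximizing point $\sum_y v_y\in\deg\D$ may lie on a face of $\tail\D$ that meets $\deg\D$, where no horizontal divisor is available. There one should instead use the vertical exceptional divisors $D_{z,w}$ with $w\in\rint\D_z$, whose log discrepancies $\mu(w)\bigl(\langle -u,w\rangle+a_z\bigr)$ are positive, and sum the resulting inequalities over $z$ (taking care to obtain a strict inequality rather than $\sum_y b_{\Delta,y}\le 2$). Neither point is fatal to the proposition, which is essentially \cite[Theorem~4.1]{LS13}, but both require an actual argument rather than an appeal to properness.
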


\begin{proof}
  This  follows from Proposition~\ref{prop:discrepancies} as in the proof of~\cite[Theorem~4.1]{LS13} 
  replacing $B$ by $B_\Delta$ in the argument of the proof.
\end{proof}

We proceed to introduce the \emph{type} of a p-divisor on 
$\pp^1$.
The classification by types will allow us to prove
statements in a case-by-case analysis.

\begin{definition} 
Let $\mathcal{D}$ be a p-divisor on $\pp^1$.
We say that $\mathcal{D}$ is of {\em type} $(p,q,r)\in\nn^3$ if there exists 
three points $\{y_1,y_2,y_3\}\subset \pp^1$ satisfying the following:
\begin{itemize}
    \item the p-divisor $\mathcal{D}$ has multiplicity one at all points of 
    $\pp^1 \setminus \{y_1,y_2,y_3\}$ and 
    \item the set of multiplicities 
    $(\mu_\D(y_1),\mu_\D(y_2),\mu_\D(y_3))$ equals 
    $(p,q,r)$.
\end{itemize}
\end{definition} 

The following corollary follows from Proposition~\ref{prop:log-terminal-pair}.
\begin{corollary}
  Let $\mathcal{D}$ be a p-divisor on $\pp^1$. Assume that $(X(\mathcal{D}),\Delta)$ is klt for some torus invariant boundary divisor $\Delta$. 
  Then, the type of $\mathcal{D}$ is one of the platonic triples:
      \begin{equation}\tag{*}
      \label{eq:platonic-triple}
        (1,q,r),\; (2,2,r),\; (2,3,3),\; (2,3,4),\; (2,3,5).
    \end{equation}
\end{corollary}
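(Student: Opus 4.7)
The plan is to apply Proposition~\ref{prop:log-terminal-pair}(\ref{item:klt-pair}): the klt hypothesis on $(X(\mathcal{D}),\Delta)$ translates into the statement that the quotient pair $(\mathbb{P}^1,B_\Delta)$ is klt \emph{and} log Fano. The log Fano part forces $-(K_{\mathbb{P}^1}+B_\Delta)$ to be ample, i.e.\ $\deg B_\Delta<2$. The proof will then consist of exploiting this single numerical inequality together with a lower bound on each coefficient of $B_\Delta$ in terms of $\mu_\mathcal{D}(y)$.

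The key input is the pointwise estimate
\[
b_{\Delta,y}\;\geq\;\frac{\mu_\mathcal{D}(y)-1}{\mu_\mathcal{D}(y)}
\qquad\text{for every } y\in\mathbb{P}^1.
\]
To obtain it, I will pick a vertex $v^{\ast}\in\mathcal{D}_y^{(0)}$ with $\mu(v^{\ast})=\mu_\mathcal{D}(y)$. Since $\Delta$ is a boundary, $c_{y,v^{\ast}}\geq 0$, and by the definition of $b_{\Delta,y}$ as a maximum,
\[
b_{\Delta,y}\;\geq\;\frac{\mu(v^{\ast})-1+c_{y,v^{\ast}}}{\mu(v^{\ast})}\;\geq\;\frac{\mu_\mathcal{D}(y)-1}{\mu_\mathcal{D}(y)}.
\]
Summing over all $y$ and combining with $\deg B_\Delta<2$ yields
\[
\sum_{y\in\mathbb{P}^1}\frac{\mu_\mathcal{D}(y)-1}{\mu_\mathcal{D}(y)}\;<\;2.
\]

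From here the conclusion is a finite combinatorial check. First, since each point with $\mu_\mathcal{D}(y)\geq 2$ contributes at least $\tfrac{1}{2}$, there can be at most three such points. If there are zero, one, or two, we pad the triple by points of multiplicity one and land in type $(1,q,r)$. If there are exactly three such points with multiplicities $p\leq q\leq r$, the inequality above becomes
\[
\frac{1}{p}+\frac{1}{q}+\frac{1}{r}\;>\;1,
\]
whose integer solutions with $p,q,r\geq 2$ are exactly $(2,2,r)$ with $r\geq 2$, together with $(2,3,3)$, $(2,3,4)$ and $(2,3,5)$, matching the list \eqref{eq:platonic-triple}.

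The one genuinely non-formal step is the lower bound on $b_{\Delta,y}$; the remainder is purely numerical. I do not expect obstacles beyond bookkeeping: the argument goes through as soon as one reads off Proposition~\ref{prop:log-terminal-pair}(\ref{item:klt-pair}), uses $c_{y,v^{\ast}}\geq 0$ from $\Delta$ being effective, and invokes the classical enumeration of platonic triples.
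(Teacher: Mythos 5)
Your proposal is correct and follows exactly the route the paper intends: the paper gives no written proof beyond the remark that the corollary follows from Proposition~\ref{prop:log-terminal-pair}, and your argument (klt log Fano on $\pp^1$ forces $\deg B_\Delta<2$, the bound $b_{\Delta,y}\geq\frac{\mu_\mathcal{D}(y)-1}{\mu_\mathcal{D}(y)}$ from effectivity of $\Delta$, then the classical enumeration of platonic triples) is precisely the standard filling-in of that remark.
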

    
We can extend the type of a p-divisor
to a set of vertices $(v_y)_{y\in \pp^1}$, with $v_y\in \mathcal{D}_y^{(0)}$, as follows.

\begin{definition}
Let $\mathcal{D}$ be a p-divisor on $\pp^1$.
A {\em family of vertices} of the p-divisor is a set 
$(v_y)_{y\in \pp^1}$ with 
$v_y\in \mathcal{D}_y^{(0)}$
for each $y\in \pp^1$.
Note that $v_y=0\in N_\qq$ holds for all but finitely many $y\in \pp^1$.
We say that the set $(v_y)_{y\in \pp^1}$
is of {\em type} $(p,q,r)\in \nn^3$ if the following conditions hold:
\begin{itemize}
    \item there are three points $\{y_1,y_2,y_3\}\subset \pp^1$ so that
    $\mu(v_y)=1$ for every $y\in \pp^1 \setminus \{y_1,y_2,y_3\}$, and 
    \item we have that 
    \[
    (\mu(v_{y_1}),\mu(v_{y_2}),\mu(v_{y_3}))=(p,q,r).
    \]
\end{itemize}
Note that, if $X(\mathcal{D})$ is log terminal, 
then $(p,q,r)$ must be one of the triples in~\eqref{eq:platonic-triple}.
\end{definition}

The following proposition allows us to 
check whether a log pair of complexity one is 
purely log terminal.

 \begin{corollary}
    \label{cor:log-terminal-pair}
    Let $X$ be an affine $\mathbb{T}$-variety
    of complexity one and $\Delta$ a torus invariant boundary divisor.
    Assume that $(X,\Delta)$ is klt.
    Let $D_\rho$ be a horizontal prime divisor on $X$
    and let $D_{z,v}$ be a vertical prime divisor on $X$, such that $\D_\rho,D_{z,v} \not \subset \supp \Delta$.
    Then, the following statements hold.
    \begin{enumerate}
    \item \label{cor:case1} $(X,\Delta+ D_\rho)$ is plt.
    \item \label{cor:case2} the pair  $(X,\Delta + D_{z,v})$ is plt if and only if $\sum_{y \neq z} b_{\Delta,y}< 1$.
    \end{enumerate}   
  \end{corollary}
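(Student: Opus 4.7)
The plan is to reduce both statements to Proposition~\ref{prop:log-terminal-pair} by computing the quotient boundary $B_{\Delta'}$ when $\Delta'=\Delta+D_\rho$ or $\Delta'=\Delta+D_{z,v}$. The key observation is that the definition of $B_{\Delta'}$ depends only on the vertical coefficients $c_{y,v'}$, so adding a horizontal divisor leaves the quotient boundary unchanged, while adding a vertical divisor modifies a single coefficient at one point.

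For part (\ref{cor:case1}), since $D_\rho$ is horizontal and $D_\rho \not\subset \supp\Delta$, writing $\Delta+D_\rho$ in the form $\sum c'_\rho D_\rho + \sum c'_{y,v'} D_{y,v'}$ only changes the horizontal coefficient $c'_\rho=1$ while all vertical coefficients remain the same. Hence $B_{\Delta+D_\rho}=B_\Delta$. Since $(X,\Delta)$ is klt, Proposition~\ref{prop:log-terminal-pair}(\ref{item:klt-pair}) tells us that $(\PP^1,B_\Delta)$ is a klt log Fano pair, and in particular it is also plt log Fano. Applying Proposition~\ref{prop:log-terminal-pair}(\ref{item:plt-pair}) then yields that $(X,\Delta+D_\rho)$ is plt.

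For part (\ref{cor:case2}), the hypothesis $D_{z,v}\not\subset \supp\Delta$ means the original coefficient $c_{z,v}=0$, so in $\Delta+D_{z,v}$ it becomes $1$. Using the formula $b_{\Delta,y}=\max\{(\mu(v')-1+c_{y,v'})/\mu(v')\mid v'\in \D_y^{(0)}\}$, one computes that
\[
b_{\Delta+D_{z,v},\,y}=\begin{cases} \max\{b_{\Delta,z},\,1\}=1 & \text{if } y=z, \\ b_{\Delta,y} & \text{if }y\neq z.\end{cases}
\]
Thus $B_{\Delta+D_{z,v}}=1\cdot\{z\}+\sum_{y\neq z}b_{\Delta,y}\cdot\{y\}$. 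By Proposition~\ref{prop:log-terminal-pair}(\ref{item:plt-pair}), the pair $(X,\Delta+D_{z,v})$ is plt if and only if $(\PP^1,B_{\Delta+D_{z,v}})$ is plt log Fano. The plt part holds automatically on the smooth curve $\PP^1$ since the only coefficient equal to $1$ is at $z$ and the remaining coefficients $b_{\Delta,y}<1$ are inherited from the klt assumption on $(X,\Delta)$. The log Fano condition $\deg(-(K_{\PP^1}+B_{\Delta+D_{z,v}}))>0$ becomes $1+\sum_{y\neq z}b_{\Delta,y}<2$, i.e.\ precisely $\sum_{y\neq z}b_{\Delta,y}<1$, as claimed.

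No step is really an obstacle here; the only bookkeeping to watch is the precise effect of adding a vertical divisor on the multiplicity–weighted maximum defining $b_{\Delta,y}$, which we have spelled out above.
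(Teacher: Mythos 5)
Your argument for the complete--locus case is essentially the paper's own: both claims are reduced to Proposition~\ref{prop:log-terminal-pair} via the observation that $B_{\Delta+D_\rho}=B_\Delta$ for a horizontal divisor, and that adding $D_{z,v}$ forces $b_{\Delta+D_{z,v},z}=1$, turning the plt/log Fano condition on $(\PP^1,B_{\Delta+D_{z,v}})$ into $1+\sum_{y\neq z}b_{\Delta,y}<2$. That bookkeeping is correct.

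However, there is a case you have silently skipped. The corollary is stated for an arbitrary affine $\mathbb{T}$-variety of complexity one, and by Theorem~\ref{thm:AH-main-result} such an $X$ is $X(\D)$ for a p-divisor $\D$ on a one-dimensional \emph{semi-projective} base, whose locus need not be all of $\PP^1$: it may be affine. Proposition~\ref{prop:log-terminal-pair} --- and indeed the very definition of the quotient boundary $B_\Delta$ and of the coefficients $b_{\Delta,y}$ --- is only formulated for p-divisors on $\PP^1$ with complete locus, so your reduction does not apply verbatim when $\Loc(\D)$ is affine. The paper disposes of this case first, by a different mechanism: when the locus is affine, $(X,D)$ is locally toric near $D$ (citing \cite[Example 2.5]{LS13}) for $D=D_\rho$ or $D=D_{z,v}$, and a toric pair with an irreducible reduced toric boundary divisor is automatically plt; only after that does it assume the locus is $\PP^1$ and run the argument you gave. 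To make your proof complete you should either add this preliminary reduction or explain why, in the setting where the corollary is invoked, the locus can be assumed complete.
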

  
  \begin{proof}
    If $X=X(\D)$ is given by a p-divisor $\mathcal{D}$ with affine locus, then $(X,D)$ is locally toric by~\cite[Example 2.5]{LS13} for $D=D_\rho$ or $D=D_{z,v}$. Then, both claims follow from the fact that a toric pair $(X,D)$ is plt if $D$ is an irreducible and reduced toric divisor. Hence, we may assume that  $\D$ is a p-divisor with complete locus $\PP^1$. The first claim follows from
    Proposition~\ref{prop:log-terminal-pair}, since $B_{\Delta}=B_{\Delta + D_\rho}$ by definition. For the second claim note that $(\PP^1, B_{\Delta + D_{z,v}})$ is plt if any only if
    \[\deg B_{\Delta + D_{z,v}} =  b_{\Delta+D_{z,v},z} + \sum_{y \neq z} b_{\Delta,y} = 1 + \sum_{y \neq z} b_{\Delta,y} < 2.\]

  \end{proof}

\subsection{\texorpdfstring{Koll\'ar components on $\mathbb{T}$-varieties}{Kollar components on T-varieties}}
\label{sec:kollar-components-T-varieties}

In this section, we prove statements
regarding the existence of torus invariant Koll\'ar components on $\mathbb{T}$-singularities of complexity one. For this we need to decribe an equivariant  proper birational morphism $X' \to X=X(\D)$. We will describe $X'$ by an f-divisor $\mathcal S = \sum_y \mathcal S_y$ as discussed at the beginning of Section~\ref{sec:t-varieties}. The extracted prime divisor is either vertical, i.e. coressponding to a vertex $v \in \mathcal S_z^{(0)}$, or horizontal, i.e. corresponding to a ray $\rho \in \tail \mathcal S$.

We will first describe two constructions for such an $X'$, which extracts a vertical or a horizontal exceptional divisor respectively.

\begin{construction}[The f-divisor $\CS(z,v)$]
  \label{con:vertical-extraction}
  Consider a $\TT$-cone singularity $X=X(\D)$ of complexity $1$. Fix an interior point $v \in \D_z$.
  We consider the polyhedron  $\mathfrak d=v+(\sum_{y\neq z}\D_y)$. For a proper face  $F \prec \mathfrak d$ the cone over this face is given as $\sigma_F = \overline {\pos{F}}$. The \emph{face fan} $\Sigma$ of $\mathfrak d$ consist of all cones of the form  $\sigma_F$ and their faces.    Since $v \in \rint \D_z$, we have $\mathfrak d \subsetneq \deg \D \subsetneq \tail \D$ and $\Sigma$ is a polyhedral subdivision of $\tail \D$. 

     Every face $F \prec \mathfrak d$ will be a sum $F=\sum_{y\neq z} F_y$, where $F_y \prec \D_y$. We will say that $F_y$ is a Minkowski summand of $F$. For every $y\neq z$ we consider the polyhedral subdivisions $\dfan_y$ consisting of polyhedra of the form $\Delta_y^F :=F_y + \sigma_F$, where $F_y \prec \D_y$, 
     $F \prec \mathfrak d$ and
     $F_y$ is a Minkowski summand of $F$. For $y=z$, we consider the subdivision $\dfan_z$ of $\D_z$ consisting of the following polyhedra:
     \begin{itemize}
     \item a translated cone $\Delta_z^\sigma:=v + \sigma$ for every $\sigma \in \Sigma$, and
     \item $\overline{\conv(P \cup \{v\})}$ for every proper face $P \prec \D_z$.
     \end{itemize}
     Then, the f-divisor $\CS(z,v)=\left(\sum_y \CS_y, \mathfrak d\right)$ gives rise to proper birational morphism
     $X(\CS) \to X(\D)$ and  the only exceptional torus invariant prime divisor is $D_{z,v}$, since $v \in \CS_z^{(0)}$
     is the only new vertex of the $\CS_y$ (which are subdivisions of the $\D_y$).
   \end{construction}

   \begin{construction}[The f-divisor $\CS(\rho)$]
  \label{con:horizontal-extraction}
    Given a $\TT$-cone singularity $X=X(\D)$ of complexity $1$. We consider the fan $\Sigma$ obtained by star subdividing $\tail \D$ with respect to $\rho$. That is the fan $\Sigma = \{\tau + \rho \mid \tau \prec \tail \D\}$. Similar for every $y \in \pp^1$ we obtain a polyhedral subdivision $\dfan_y = \{F + \rho \mid F \prec \D_y\}$. Note, that for every $\tau \prec \tail \D$ with $\tau \cap \deg \D \neq \emptyset$ there is a unique face $F^\tau_y$ having tail $\tau$ in every $\D_y$. We set
    \[
      \mathfrak{d} = \bigcup_{\substack{\tau \prec \tail \D \\ \tau \cap \deg \D \neq \emptyset}} \sum_y(F_y^\tau + \rho). 
    \]
    Then the f-divisor $\CS(\rho)=(\sum_y \CS_y, \mathfrak d)$ gives rise to proper birational morphism $X(\CS(\rho)) \to X(\D)$ and  the only exceptional torus invariant prime divisor is $D_\rho$. Indeed, we have $\Sigma(1)=(\tail \D)(1) \cup \{\rho\}$ and $\dfan_y^{(0)}=\D_y^{(0)}$ for every $y\in \PP^1$.   
   \end{construction}
   
  \begin{proposition}
    \label{prop:akollar-vertical}
    Let $X=X(\D)$ be a log terminal singularity of complexity one.
    Then every rational point $v\in \rint \D_z$
    corresponds to a weak component
    $D_{z,v}$ over $X$.
    Furthermore, $D_{z,v}$ is a Koll\'ar component if and only if  $\sum_{y \neq z} b_{\Delta,y}< 1$.
    In this case, $X$ is of type $(1,p,q)$.
   \end{proposition}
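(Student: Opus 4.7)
\textbf{The plan} is to handle the three claims in order, using Construction~\ref{con:vertical-extraction} and the plt criterion of Corollary~\ref{cor:log-terminal-pair}. First I would apply Construction~\ref{con:vertical-extraction} to the interior rational point $v\in \rint \D_z$, producing the f-divisor $\CS(z,v)$ and the associated proper birational morphism $\pi\colon \widetilde X:=X(\CS(z,v)) \to X$. By construction the subdivisions $\CS_y$ for $y\neq z$ introduce no new vertices---every polyhedron in $\CS_y$ has the form $F_y+\sigma_F$ with $F_y\prec \D_y$, so $\CS_y^{(0)}=\D_y^{(0)}$---while $v$ is the unique new vertex in $\CS_z$. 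Hence $D_{z,v}$ is the only $\TT$-invariant prime divisor contracted by $\pi$, and because $v$ lies in $\rint\D_z$ the map is an isomorphism over $X\setminus\{x\}$.

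To complete claim (i), I must show that $-D_{z,v}$ is $\pi$-ample so that the fibre over $x$ is the irreducible divisor $D_{z,v}$. I would verify this via Proposition~\ref{prop:cartier+ample} by exhibiting the support data $h\equiv 0$ on $\tail \CS$, $h_y\equiv 0$ for $y\neq z$, and $h_z$ the piecewise-affine function on $\CS_z$ taking the value $1/\mu(v)$ at $v$, the value $0$ at every vertex of $\D_z$, and extended by $h_z\equiv 1/\mu(v)$ on each translated cone $v+\sigma$. The subdivision of $\D_z$---bounded pieces $\overline{\conv(P\cup\{v\})}$ and unbounded pieces $v+\sigma$---is precisely the tent with apex $v$, which makes $h_z$ strictly concave across every wall incident to $v$, and the condition $\sum_y a_y^\sigma=a_z^\sigma<0$ on maximal marked cones is immediate. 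For claim (ii), $D_{z,v}$ is a Koll\'ar component exactly when $(\widetilde X,D_{z,v})$ is plt. I would then apply Corollary~\ref{cor:log-terminal-pair}(ii) on $\widetilde X$: since $\CS_y^{(0)}=\D_y^{(0)}$ for $y\neq z$, the coefficients $b_{\Delta,y}$ computed on $\widetilde X$ agree with those on $X$, and the plt condition reads precisely $\sum_{y\neq z}b_{\Delta,y}<1$.

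Finally, for claim (iii), from $\sum_{y\neq z}b_{\Delta,y}<1$ and the effectivity of $\Delta$ one obtains $\sum_{y\neq z}(\mu_\D(y)-1)/\mu_\D(y)<1$. Since $X$ is log terminal, $\D$ is of one of the Platonic types in~\eqref{eq:platonic-triple}, and a brief case check eliminates every non-$(1,p,q)$ type by forming the two-out-of-three sum of defects $(\mu-1)/\mu$: for $(2,2,r)$ the sum is either $1/2+1/2=1$ or $1/2+(r-1)/r\geq 1$, and for $(2,3,3)$, $(2,3,4)$, $(2,3,5)$ every such sum strictly exceeds $1$. Hence $X$ must be of type $(1,p,q)$. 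The main technical obstacle is the relative ampleness check in step (i); while a routine application of Proposition~\ref{prop:cartier+ample}, it requires carefully verifying strict concavity of $h_z$ across every wall through $v$ in the mixed subdivision of $\CS_z$.
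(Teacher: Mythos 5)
Your overall architecture (Construction~\ref{con:vertical-extraction} for the extraction, Proposition~\ref{prop:cartier+ample} for the relative ampleness, Corollary~\ref{cor:log-terminal-pair}~(ii) for the plt criterion, and the arithmetic of the defects $\frac{\mu-1}{\mu}$ for the type) is the same as the paper's, and your treatment of claims (ii) and (iii) is fine. The gap is precisely at the step you yourself flag as the main technical obstacle: the support data you propose for $-D_{z,v}$ is not admissible. You take $h\equiv 0$ on $\tail \CS$, $h_y\equiv 0$ for $y\neq z$, and $h_z$ equal to the ``tent'' with value $1/\mu(v)$ at $v$ and constant value $1/\mu(v)$ on each translated cone $v+\sigma$. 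But every maximal cone $\sigma_F=\overline{\pos{F}}$ of the face fan $\Sigma$ meets $\deg \CS(z,v)=\mathfrak d$ (it contains the facet $F$), so it is marked, and Proposition~\ref{prop:cartier+ample} then requires $\sum_y a_y^{\sigma_F}=0$ for $\QQ$-Cartierness; your data gives $a_y^{\sigma_F}=0$ for $y\neq z$ and $a_z^{\sigma_F}=+1/\mu(v)$, so the sum equals $1/\mu(v)\neq 0$ (and in particular is not $<0$, as you assert). Your data therefore does not describe a $\QQ$-Cartier divisor at all. Moreover, the zero function is not strictly concave on the nontrivial subdivisions $\Sigma$ and $\CS_y$ for $y\neq z$, so the ampleness half of Proposition~\ref{prop:cartier+ample} could not be invoked for this data in any case.

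The underlying point is that the linear part $h$ is forced by the divisor and is \emph{not} zero on the interior of $\tail\D$. One must take $h$ to be the strictly concave piecewise linear function on the face fan of $\mathfrak d = v+\sum_{y\neq z}\D_y$ determined by $h_F|_F\equiv 1$ for each facet $F\prec\mathfrak d$, and then set $h_y=h_F+a_y^F$ on each cell $F_y+\sigma_F$, normalized so that $h_y$ vanishes on the old vertices and $h_z(v)=1$. The substantive computation your proposal omits is that these normalizations automatically satisfy $\sum_y a_y^F=0$ on every marked maximal cone; this uses the Minkowski decomposition $F=\sum_{y\neq z}F_y$ and the fact that $v+\sum_{y\neq z}v_y\in F$ for $v_y\in F_y$, so that $h$ takes the value $1$ there, matching $h_z(v)+\sum_{y\neq z}h_y(v_y)=1$. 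With that in place, Proposition~\ref{prop:cartier+ample} gives that $-\mu(v)D_{z,v}$ is $\QQ$-Cartier and relatively ample, and the rest of your argument (the plt criterion and the case analysis for the type, where your two-out-of-three sum of defects is equivalent to the paper's observation that $b_{\Delta,y}\geq \sfrac{1}{2}$ whenever $\mu_\D(y)>1$) goes through.
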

   
   \begin{proof} We consider the f-divisor $\CS(z,v)=\left(\sum_y \CS_y, \mathfrak d\right)$ from Construction~\ref{con:vertical-extraction}. 
     Given a facet $F \prec \mathfrak d$ there is a unique linear map $h_F\colon N_\QQ \to \QQ$ with $h_F|_{F} \equiv 1$.  This will define a continuous piecewise linear 
     function $h$ on $\Sigma$ being constant along the boundary of $\mathfrak d$. By construction this is a strictly concave function. 
     
     Consider $y\neq z$, then for each $F_y$ being a Minkowski summand of some facet $F \prec \mathfrak d$, we have that $h_F|_{F_y}$ is constant. Indeed, it is constant on $F$. Let $a^F_y$ be given by $h_F|_{F_y}\equiv -a^F_y$. Then $h_{F,y}:=h_F +  a^F_y$ defines an affine linear function on $F_y + \sigma_F$ with $h_{F,y}|_{F_y} \equiv 0$. This defines a piecewise linear function $h_y$ on $\dfan_y$ which is constant $0$ along the boundary of $\D_y$.

     Now, consider $y= z$. For every cone $\sigma_F \in \Sigma$ as above we have a unique affine linear function $h_{F,z}=h_F + a^F_z$ with $h_{F,z}(v)=1$. Now, for every facet $P \prec \D_z$ there is a unique affine linear function $h_{P,v}$ with $h_{P,v}|_{P}\equiv 0$ and $h_{P,v}(v)=1$. Again this defines a concave piecewise affine function $h_z$ on $\mathcal{S}_y$ which is constant $0$ along the boundary of $\D_z$ and equal to $1$ at the interior point $v$. 
     By construction, 
     we have that
     $h_z(v)=1$
     and 
     $h_y(w)=0$ 
     for every $w\in \mathcal{D}_y^{(0)}$ with $y\neq z$ or $w \neq v$.
     Furthermore, we have that
     $h= 0$ along the boundary of $\tail \D$.
     Let us fix a maximal cone $\sigma_F \in \Sigma$ and a point $v_y \in F_y$ for $y \neq z$. Moreover, we are setting $v_z=v$ for $y=z$. Then we have
     $\sum_{y} v_y \in F$.   Since by construction $h_y|_{F_y} \equiv 0$, $h_z(v)=1$ and $h|_{F}\equiv 1$, we obtain
     \begin{align*}
       1+0 = h_z(v) + \sum_{y \neq z} h_y(v_y) &=\sum_y h_y(v_y)\\
                                              &=\sum_y \big(h(v_y) + a^F_y\big)\\
                                              &= h\Big(\sum_y v_y\Big) + \sum_y a^F_y\\
                                                &=1+\sum_y a^F_y .
     \end{align*}    Hence, we conclude that $\sum_y a^F_y  = 0$.   By Proposition~\ref{prop:cartier+ample} the divisor $-\mu(v)D_{z,v}$ is an ample $\qq$-Cartier divisor on $X(\dfan)$.
    
    The second statement of the proposition follows from Corollary~\ref{cor:log-terminal-pair} item~\eqref{cor:case2}.
    For the last statement recall that $b_{\Delta,y}=\max \left\{\frac{\mu(v)-1+c_y}{\mu(v)} \mid v \in \D_y^{(0)}\right\}$.
    Hence, $b_{\Delta,y} > \sfrac{1}{2}$ whenever $\mu(y) > 1$. Therefore, in order to have
    $\sum_{y \neq z} b_{\Delta,y}< 1$ there can be at most one $y \neq z$ with $\mu(y) > 1$. In other words we have $\mu(y')=1$ for $y' \notin \{z,y\}$. Hence, $X$ is of type $(1,p,q)=(1,\mu(z),\mu(y))$.
   \end{proof}
   \begin{proposition}
    \label{prop:kollar-horizontall}
    Let $X=X(\D)$ be a log terminal singularity of complexity one. Then  every primitive lattice point  $n_\rho \in \rint (\tail \D)$ with associated ray $\rho=\RR_{\geq 0}\cdot n_\rho$ corresponds to a Koll\'ar component $D_\rho$ over $X$.
  \end{proposition}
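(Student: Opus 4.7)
The plan is to adapt the proof of Proposition~\ref{prop:akollar-vertical} to the horizontal setting, using Construction~\ref{con:horizontal-extraction} in place of Construction~\ref{con:vertical-extraction}. That construction yields a proper birational morphism $\pi\colon X(\CS(\rho)) \to X(\D)$ whose only torus-invariant exceptional prime divisor is the horizontal divisor $D_\rho$ attached to the new ray. To conclude that $D_\rho$ is a Koll\'ar component over $X$ it suffices to verify that $-D_\rho$ is $\pi$-ample and that $(X(\CS(\rho)), D_\rho)$ is plt.

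For the ampleness I would apply Proposition~\ref{prop:cartier+ample} with the following choice. Define the piecewise linear function $h$ on $\tail \CS(\rho)$ by $h(n_\rho)=1$ and $h(n_\eta)=0$ for every primitive generator $n_\eta$ of a ray of $\tail \D$; since $\rho$ lies in the relative interior of $\tail \D$, this prescription uniquely determines $h$ on each maximal cone $\tau+\rho$ of $\tail \CS(\rho)$, and the resulting function is strictly concave across every new wall. For each $y\in \PP^1$ define $h_y\colon \D_y \to \QQ$ by
\[h_y(p) \;=\; \max\{\, t\geq 0 \mid p - t\, n_\rho \in \D_y\,\}.\]
Because $n_\rho$ is interior to $\tail \D$ it does not lie in $\lspan F$ for any proper face $F\prec \D_y$, so each maximal polyhedron $F+\rho$ of $\CS_y$ is uniquely parametrized as $v + t n_\rho$ with $v\in F$ and $t\geq 0$; on that piece $h_y$ is the affine map $(v + t n_\rho)\mapsto t$. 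This $h_y$ vanishes at every vertex $v\in \D_y^{(0)}=\CS_y^{(0)}$, has linear part equal to $h|_{\tail F+\rho}$ on each piece, glues to a strictly concave function on $\CS_y$, and introduces no nonzero translations, so the marked-cone summation condition in Proposition~\ref{prop:cartier+ample} holds automatically. Proposition~\ref{prop:cartier+ample} then yields that $-D_\rho$ is $\QQ$-Cartier and $\pi$-ample, so $D_\rho$ is a weak component.

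For the plt assertion, pass to any $\TT$-invariant affine chart of $X(\CS(\rho))$ corresponding to a maximal cone of $\tail \CS(\rho)$ containing the ray $\rho$; on this chart $D_\rho$ is a horizontal prime divisor in the sense of Corollary~\ref{cor:log-terminal-pair}. This chart is klt by Proposition~\ref{prop:log-terminal-pair}~(\ref{item:klt-pair}), because its quotient log Fano pair $(\PP^1, B)$ is the one already attached to $X$, which is klt by hypothesis. Applying Corollary~\ref{cor:log-terminal-pair}~(\ref{cor:case1}) with $\Delta = 0$ then gives that the pair is plt on this chart; since plt is a local property and these charts cover $D_\rho$, the global pair $(X(\CS(\rho)), D_\rho)$ is plt, finishing the argument. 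The main technical point is the consistency and strict concavity of the $h_y$; thanks to the uniform parametrization $(v + t n_\rho) \mapsto t$ these are clean once one observes $n_\rho \notin \lspan F$, and the verification is in fact simpler than in the vertical case because no nontrivial vertex translations appear.
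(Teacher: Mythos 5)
Your overall strategy is the same as the paper's: Construction~\ref{con:horizontal-extraction}, the same support functions $h$ and $h_y$ (your $h_y(p)=\max\{t\geq 0\mid p-t\,n_\rho\in\D_y\}$ agrees piece by piece with the paper's $h_F(v+\lambda n_\rho)=h_F(v)+\lambda$), Proposition~\ref{prop:cartier+ample} for the ampleness of $-D_\rho$, and Corollary~\ref{cor:log-terminal-pair} for the plt statement. The plt half of your argument is essentially the paper's and is fine.

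There is, however, a genuine error in the ampleness step, namely the claim that $h_y$ ``introduces no nonzero translations, so the marked-cone summation condition in Proposition~\ref{prop:cartier+ample} holds automatically.'' On a maximal piece $F^\tau_y+\rho$ of $\dfan_y$ the affine map $v+t\,n_\rho\mapsto t$ has linear part $h_\tau$ (the linear function vanishing on $\lspan\tau$ with $h_\tau(n_\rho)=1$), so writing $h_y|_{F^\tau_y+\rho}=h_\tau+a^\tau_y$ gives $a^\tau_y=-h_\tau(v)$ for any $v\in F^\tau_y$. This constant vanishes only when $F^\tau_y\subset\lspan\tau$, which fails for essentially every non-toric $\D$ (already for $\D_y=v_y+\tail\D$ with $v_y\notin\lspan\tau$). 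What is actually true, and what must be proved, is that the \emph{sum} $\sum_y a^\tau_y$ vanishes for the marked cones; this is exactly where the structure of $\deg\D$ enters. Choosing $v_y\in F^\tau_y$ for each $y$, one has $\sum_y a^\tau_y=-h_\tau\bigl(\sum_y v_y\bigr)$, and $\sum_y v_y$ lies in the face of $\deg\D$ with tail $\tau$, which is contained in $\tau$ precisely when $\tau\cap\deg\D\neq\emptyset$; only then does the sum vanish. Worse, your claim would be self-defeating if it were true: the ampleness criterion in Proposition~\ref{prop:cartier+ample} also demands the strict inequality $\sum_y a^\tau_y<0$ on the unmarked maximal cones, which the paper derives from the fact that for $\tau\cap\deg\D=\emptyset$ the point $\sum_y v_y$ lies in the interior of $\tau+\rho$, where $h$ is strictly positive. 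If all translations were zero, this strict inequality would fail and $-D_\rho$ would not be ample. Your proposal never addresses this part of the criterion at all, so the weak-component assertion is not established as written.
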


  \begin{proof} We consider the f-divisor from Construction~\ref{con:horizontal-extraction}. For every facet $\tau \prec \tail \D$  there is a unique linear map $h_\tau:N_\QQ \to \QQ$ with $h|_\tau \equiv 0$ and $h(\lambda n_\rho)=\lambda$. This gives rise to a strictly concave piecewise linear function $h$ on $\Sigma$.  Moreover,  for every facet $F\prec \D_y$ with tail cone $\tau$  there is a unique affine linear function $h_F \colon (F +\rho) \to \RR$ with $h_F|_F \equiv 0$. Furthermore, for every $v + \lambda n_\rho \in F + \rho$, we have $h_F(v+\lambda n_\rho)=h_F(v) +\lambda$.  Hence, we obtain a strictly concave piecewise affine linear function  $h_y$ on $S_y$. 
    Consider a maximal cone $\sigma = (\tau + \rho) \in \Sigma$. Then there is a unique polyhedron $\Delta_y=F_y+\rho$ in $\dfan_y$ with $\tail F_y=\tau$ and therefore $\tail \Delta_y=\sigma$.
    For every $y\in \pp^1$, we may pick a point $v_y \in F_y$. Then $h_y(v_y) = h(v_y) +a^\tau_y = 0$ and 
    \[
\sum_y h_y(v_y) = h\left(\sum_y v_y\right) + \sum_y a^\tau_y.
    \]
    Now, if $\tau \cap \deg \D \neq \emptyset$, then $\sum_y v_y \in \tau$. Since $h|_\tau\equiv 0$, we conclude that $\sum_y a^\tau_y =0$ and therefore $\sum_y a^\tau_y \cdot y$ defines a principal divisor on $Y$. Similarly, we see that in the case that $\tau \cap \deg \D =\emptyset$ we must have   $0 > \sum_y a^\tau_y$, since now $\sum_y v_y$ lies in the interior of $\sigma=\tau+\rho$, where $h$ is strictly positive.

    By construction, $h_y(v)=0$ for every vertex $v \in \D_y^{(0)}$, $h(n_{\rho})=1$ and $h(n_{\rho'})=0$ for every $\rho' \in (\tail \D) (1)$. By Proposition~\ref{prop:cartier+ample} the divisor $-D_\rho$ is $\QQ$-Cartier and ample.

    Hence, $D_\rho$ is a weak component. We may invoke Corollary~\ref{cor:log-terminal-pair} item~\eqref{cor:case1} to see that it is actually a Koll\'ar component.
  \end{proof}
  
  

  \subsection{Discrepancies of vertical Koll\'ar components}
  \label{subsec:bounding-mld-deg}
  
  In this subsection, we give a sufficient criterion for a $d$-dimensional log Fano $\mathbb{T}$-singularity 
  $(X,\Delta)$ of complexity $1$ and type $(1,p,q)$  to  admit a Koll\'ar component of log discrepancy $2d-1$.
  
  Let $\mathcal{D}$ be the p-divisor defining a log terminal singularity of complexity one and type $(1,p,q)$.
  We can fix two points $z,z'\in \pp^1$ for which
  $\mu_\mathcal{D}(z)=p$, 
  $\mu_\mathcal{D}(z')=q$,
  and $\mu_\D(y)=1$ for every
  $y\in \pp^1\setminus \{z,z'\}$.
  Every vertical Koll\'ar component of the form $D_{z,w}$ (or $D_{z',w'}$) induces a degeneration to a toric singularity (see, e.g.,~\cite{IV12}). The log discrepancy at $D_{z,w}$ (resp. $D_{z',w'})$ can then be calculated on the toric special fibre of the degeneration. More precisely, we have the following statement.

  \begin{proposition}
    \label{prop:kollar-degeneration}
  The degeneration induced by $E=D_{z,w}$ is the toric variety $X(\sigma_z)$ associated to the cone
  \[\sigma_z := \pos{\left(\D_z \times \{1\} \;\cup\; (\tail \D) \times \{0\} \;\cup\; \left(\sum\nolimits_{y\neq z}\D_y \right) \times \{-1\}\right)} \subset \hat N_\RR:=N_\RR \times \RR.\]
  Moreover,  the corresponding Koll\'ar component $E_0$ in $X(\sigma_z)$ is given by the ray \[
  \pos{\left(w,1\right)} \in \hat N^+_\RR:=N_\RR \times \RR^+.
\]
 Let $\Delta=\sum_{\rho} c_\rho D_\rho + \sum_{y,v}c_{y,v} D_{y,v}$ be a torus invariant boundary divisor on $X$. Then we have $a_{(X,\Delta)}(E)=a_{(X(\sigma_z), \Delta_z)}(E_0)$.
  Here, the toric divisor $\Delta_z$ on the toric special fibre is defined by
\[\Delta_z = \sum_{\hat\rho \in \sigma_z(1)} c_{\hat \rho} D_{\hat \rho},\]
   where
   
   \begin{equation}
     c_{\hat \rho} =
     \begin{cases}
     c_\rho & \text{ if }\hat\rho = \rho \times \{0\}, \\
     c_{z,v_z}& \text{ if }\hat\rho = \RR_{\geq 0} \cdot (v_z,1) \text{ for some } v_z \in \D_z^{(0)}, \\
     \sum_{y\neq z} c_{y,v_y} & \text { if } \hat \rho = \RR_{\geq 0} \cdot (\sum_{y\neq z} v_y,1) \text{ with } \forall_{y \neq z} \colon v_y \in \D_y^{(0)}.
     \end{cases}\label{eq:boundary-ray-cases}
   \end{equation}
\end{proposition}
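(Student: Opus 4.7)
I would prove the proposition in three steps: construct the degeneration explicitly via the Rees construction, identify the resulting toric special fibre combinatorially with $X(\sigma_z)$, and then transport the log discrepancy across the family using Proposition~\ref{prop:minimizer-constant}.

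For the first step, recall from the discussion preceding Proposition~\ref{prop:nvol-special-fibre} that the $\TT$-equivariant special degeneration induced by the valuation $v := \mu(w)\cdot \ord_E$ is given by the spectrum of the Rees algebra for the associated filtration. By the principal-divisor formula~\eqref{eq:prinicpal-div}, this valuation acts on a semi-invariant element $f\cdot \chi^u$ with $f \in H^0(\PP^1,\D(u))$ as
\[
v(f\cdot\chi^u) \;=\; \mu(w)\bigl(\ord_z(f)+\langle u,w\rangle\bigr).
\]
Forming $\mathcal F^{\geq m}A := \{a\in A \mid v(a)\geq m\}$ and the Rees algebra $\bigoplus_m \mathcal F^{\geq m}A\cdot t^{-m}$ produces the flat $\TT$-equivariant $\CC[t]$-family whose generic fibre is $X$ and whose central fibre is $X_0 := \Spec \gr_v A$.

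For the second step, the associated graded $\gr_v A$ inherits a canonical $\hat M = M\oplus\ZZ$-grading in which the $\ZZ$-component is the $v$-value. Since $\dim \hat M = \dim X$, the variety $X_0$ is toric, and its cone is obtained by dualising the weight monoid. Combining the standard description $H^0(\PP^1,\D(u))= \{f \mid \ord_y f \geq -\min_{v_y\in \D_y}\langle u, v_y\rangle \text{ for every } y\}$ with the formula above for $v(f\chi^u)$ yields that $\sigma_z^\vee$ consists of those $(u,k)\in \hat M_\RR$ satisfying $k \leq \mu(w)\langle u,w\rangle$ at the "new" vertex together with the pre-existing constraints coming from $z$ and from the points $y \neq z$. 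Dualising gives exactly the three-slice cone in the statement: $\D_z\times\{1\}$ comes from the constraints at $z$, $\tail\D \times \{0\}$ from the tail-cone (torus) direction, and $(\sum_{y\neq z}\D_y)\times\{-1\}$ from the pole constraints at the remaining points, as in~\cite{IV12}. Under this identification the extremal ray $\pos(w,1)$ corresponds to the toric divisor which is the specialisation of $E$, that is, $E_0$. Boundary components track similarly via~\eqref{eq:boundary-ray-cases}: horizontal divisors $D_\rho$ land on the height-$0$ rays $\rho\times\{0\}$, a vertical divisor $D_{z,v_z}$ lands on $\pos(v_z,1)$, and each collection of vertical divisors $\{D_{y,v_y}\}_{y \neq z}$ sitting above a maximal cone of the tail fan merges into a single ray through $(\sum_{y\neq z} v_y, -1)$, producing $\Delta_z$.

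For the third step, since $(\mathcal X,\boldsymbol\Delta)\to \AA^1$ is a $\TT$-invariant flat family and the Weil divisor on $\mathcal X$ whose generic fibre is $E$ restricts on $X_0$ to $E_0$, Proposition~\ref{prop:minimizer-constant} forces the log discrepancy of this $\TT$-invariant divisorial valuation to be constant on the base, giving $a_{(X,\Delta)}(E)=a_{(X(\sigma_z),\Delta_z)}(E_0)$.

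\textbf{Main obstacle.} The delicate part is the combinatorial identification of $\sigma_z$ in the second step, which requires carefully matching the two types of inequalities cutting out $H^0(\PP^1,\D(u))$ (those at $z$ versus those at $y\neq z$) with the two asymmetric slices at heights $+1$ and $-1$ in $\sigma_z$; this is precisely where the distinguished role of the chosen point $z$ manifests in the cone structure.
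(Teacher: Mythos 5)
Your steps 1 and 2 (constructing the degeneration via the Rees algebra of $v=\mu(w)\ord_E$ and identifying the weight monoid of $\gr_v A$ to recover $\sigma_z$) are fine and in fact more explicit than the paper, which simply cites \cite[Theorem 2.8]{IV12} for the existence of the flat toric degeneration and observes that the rays of $\sigma_z$ fall into the three listed types. The problem is step 3, which is where the actual content of the proposition lies. You invoke Proposition~\ref{prop:minimizer-constant}, but that proposition concerns a $\hat\TT$-invariant family over $S$ in which $\hat\TT$ acts \emph{fibrewise}, and it only controls log discrepancies of valuations $\xi$ lying in the common Reeb cone, i.e.\ toric valuations of the fibre-preserving torus. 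Here the enlarged torus $\TT\times\CC^*$ does not act fibrewise over $\AA^1$ (the $\CC^*$ moves the base), and on the generic fibre the valuation $\ord_E=\ord_{D_{z,w}}$ is a \emph{vertical} divisorial valuation of the complexity-one action, not an element of $N_\RR$; it only becomes toric on the special fibre where the torus jumps. So the hypotheses of Proposition~\ref{prop:minimizer-constant} are not met. Moreover, even if one extended that proposition, its conclusion is constancy on a stratification whose strata are only generically open in the base, so the special point $t=0$ --- precisely the fibre you need --- could sit in a smaller stratum and the equality would not follow.

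The paper closes this gap by a direct computation rather than a family argument: using Proposition~\ref{prop:discrepancies} it produces $u\in M_\QQ$ and a degree-zero divisor $\sum_y a_y\{y\}$ encoding the $\QQ$-Gorenstein condition for $(X,\Delta)$, sets $\hat u=(u,a_z)\in\hat M_\QQ$, and verifies on each of the three types of rays of $\sigma_z$ that $\hat u$ is the toric log-discrepancy functional for $(X(\sigma_z),\Delta_z)$; both $a_{(X,\Delta)}(E)=\mu(w)\langle -u,w\rangle+\mu(w)a_z$ and $a_{(X(\sigma_z),\Delta_z)}(E_0)$ then equal $\langle -\hat u,\mu(w)(w,1)\rangle$. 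To repair your argument you would either need to reproduce this matching of Gorenstein data, or carry out an extraction argument on the total space of the degeneration (in the spirit of Lemma~\ref{lem:extracted-reeb-divisor}, but for the vertical divisor $E$ rather than a Reeb ray) showing that a single exceptional divisor restricts to $E$ on the generic fibre and to $E_0$ on the central fibre with compatible relative canonical classes; neither is supplied by citing Proposition~\ref{prop:minimizer-constant}.
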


\begin{proof}
 The existence of this flat degeneration follows from~\cite[Theorem 2.8]{IV12}.

It is easy to see, that the rays of $\sigma_z$ are indeed of one of the  three types listed in (\ref{eq:boundary-ray-cases}).

Let $\sum_y k_y \cdot \{y\} := -\{z\}-\{y'\}$ be a canonical divisor on $\PP^1$ with  $y'\in \PP^1 \setminus \{z\}$. By Proposition~\ref{prop:discrepancies} there exists $u \in M_\QQ$ and a $\QQ$-divisors $\sum_{y \in \mathbb{P}^1} a_y \cdot \{y\}$ of degree zero on $\pp^1$, such that 
\begin{equation}
    \langle u, v_y \rangle + a_y = k_y + \frac{\mu(v_y)-1+c_{y,v}}{\mu(v_y)},\label{eq:vertical-q-gorenstein1}
  \end{equation}
    holds for every vertex $v_y$ of any polyhedral coefficient $\D_y$ and
    \begin{equation}
      \langle u, n_\rho \rangle  = -1 + c_\rho \label{eq:horizontal-q-gorenstein1}
    \end{equation}
    holds for every primitive generator $n_\rho$ of a ray $\rho$ of $\tail \D$ with $\rho \cap \deg \D = \emptyset$.
    We set $\hat u= (u,a_z)$. Then
    \begin{enumerate}
    \item for $\hat \rho =\rho \times \{0\}$ Equation~(\ref{eq:horizontal-q-gorenstein1}) is equivalent 
      to $\langle\hat u,  n_{\hat \rho} \rangle=-1+c_\rho$,
    \item for $\hat\rho = \RR_{\geq 0} \cdot (v_z,1)$ with $v_z \in \D_z{(0)}$ the Equation~(\ref{eq:vertical-q-gorenstein1}) is equivalent to $\langle\hat u, n_{\hat \rho}\rangle= \langle\hat u, \mu(v_z)(v_z,1) \rangle=-1+c_{z,v_z}$ 
    \item and finally for $\hat \rho = \RR_{\geq 0} \cdot (\sum_{y\neq z} v_y,1)$ applying (\ref{eq:vertical-q-gorenstein1}) to each $v_y$ leads to  $\langle\hat u, n_{\hat \rho}\rangle= \langle\hat u, \mu(\sum v_y)(\sum_y v_y,-1) \rangle=-1+ \sum_{y \neq z} c_{y,v_y}$.
    \end{enumerate}
    Therefore, we obtain $a_{(X(\sigma_z), \Delta_z)}(E_0) = \langle - \hat u , \mu(w)(w,1) \rangle$. On the other hand, by Proposition~\ref{prop:discrepancies}
    the log discrepancy $a_{(X,\Delta)}(E)$ is given by \(\mu(w)\langle -u, w \rangle + \mu(w)a_z
=  \langle - \hat u , \mu(w)(w,1) \rangle,
    \)
   as well.
\end{proof}

\begin{remark}
  \label{rem:kollar-degeneration}
  From now on, given a p-divisor $\D$ on $(\pp^1,N)$ of type $(1,p,q)$, we 
  denote by $\sigma_z$ the cone in $N_\rr\times \rr$ provided by Proposition~\ref{prop:kollar-degeneration},
  i.e., the cone defining the central fibre of the toric degeneration.
  
  The special fibre of the degeneration only depends on $z$ and not on the choice of $w \in \D_z$. Having fixed the $z \in \pp^1$, the choice of a Koll\'ar component $D_{z,w}$ is then equivalent to the choice of a ray of the form $\rho=\pos{(w,1)}$ in the interior of $\sigma_z$. In other words, this means picking an interior ray of $\sigma_y$ which intersects $\hat N^+_\RR$.
\end{remark}

\begin{lemma}
  \label{lem:rays-of-degeneration}
  Let $\mathcal{D}$ be a p-divisor on $\pp^1$ of type $(1,p,q)$.
  Let $z,z' \in \pp^1$ for which 
  $\mu_\D(z')=p$ and $\mu_\D(z)=q$.
  Assume that $p\leq q$.
  The primitive lattice generators $\hat n_\rho$ of the rays $\rho \in \sigma_z(1)$ have the form $\hat n_\rho=(n_\rho,m_\rho) \in N \times \ZZ$ with $m_\rho \geq -p$. Moreover, there is at least one primitive ray generator with $m_\rho = q$.
\end{lemma}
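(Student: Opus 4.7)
The plan is to analyze the three kinds of ray generators of $\sigma_z$ separately, using the decomposition suggested by its defining generating set.

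First, recall from the definition of $\sigma_z$ that every ray $\hat\rho \in \sigma_z(1)$ is generated by a non-zero element in the convex hull of $(\D_z \times \{1\}) \cup ((\tail \D) \times \{0\}) \cup ((\sum_{y \neq z} \D_y) \times \{-1\})$, hence must be proportional to a vector with last coordinate in $\{1, 0, -1\}$. Accordingly, I would distinguish three cases for the primitive lattice generator $\hat n_\rho = (n_\rho, m_\rho)$:
\begin{itemize}
\item If $\hat\rho \subset N_\RR \times \{0\}$, then $m_\rho = 0 \geq -p$, trivially.
\item If $\hat\rho$ is generated by a vector with positive third coordinate, then $\hat\rho = \pos{(v_z,1)}$ for some $v_z \in \D_z^{(0)}$, so the primitive lattice generator is $\mu(v_z)(v_z, 1)$ and $m_\rho = \mu(v_z) > 0 > -p$.
\item If $\hat\rho$ is generated by a vector with negative third coordinate, then $\hat\rho = \pos{(w, -1)}$ for some $w = \sum_{y\neq z} v_y$ with $v_y \in \D_y^{(0)}$, giving primitive generator $\mu(w)(w, -1)$ and $m_\rho = -\mu(w)$.
\end{itemize}

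The key step is bounding $\mu(w) \leq p$ in the third case. Since $\mathcal{D}$ is of type $(1,p,q)$ and $\mu_\D(z') = p$, $\mu_\D(z) = q$, we have $\mu(v_y) = 1$ for every $y \in \pp^1 \setminus \{z,z'\}$, so each such $v_y$ lies in $N$. Consequently $w - v_{z'} \in N$, which forces $\mu(w) = \mu(v_{z'}) \leq \mu_\D(z') = p$. This gives $m_\rho = -\mu(w) \geq -p$, completing the first assertion.

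For the second assertion, by definition of $\mu_\D(z) = q$ there exists a vertex $v_z \in \D_z^{(0)}$ with $\mu(v_z) = q$. The ray $\pos{(v_z, 1)}$ then lies in $\sigma_z(1)$ (it is generated by an extreme element of $\D_z \times \{1\}$), and its primitive lattice generator is $q(v_z, 1)$, producing a ray with $m_\rho = q$, as required. The main subtlety is the multiplicity computation for the sum $w$; everything else is a direct inspection of the generators of $\sigma_z$.
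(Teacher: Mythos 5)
Your proof is correct and follows essentially the same route as the paper's: the same three-way classification of the rays of $\sigma_z$ (last coordinate $1$, $0$, or $-1$), the same multiplicity computation $\mu\bigl(\sum_{y\neq z} v_y\bigr)=\mu(v_{z'})\leq p$ using that the type $(1,p,q)$ forces $v_y\in N$ for $y\notin\{z,z'\}$, and the same choice of a vertex $v_z\in\D_z^{(0)}$ with $\mu(v_z)=q$ for the second assertion. Your justification of $\mu(w)=\mu(v_{z'})$ via $w-v_{z'}\in N$ is in fact spelled out a bit more explicitly than in the paper.
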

\begin{proof}
  Recall that the rays of $\sigma_z$ are exactly the rays of the following three types:
  \begin{enumerate}
  \item rays of the form $\pos{(v,1)}$, where $v$ is a vertex of $\D_z$,\label{item:upper-ray}
  \item rays of the form $\pos{(\sum_{y\neq z} v_y,-1)}$, where for each $y \in \mathbb{P}^1 \setminus \{z\}$ the summand $v_y$ is a vertex of $\D_y$, and\label{item:lower-ray}
  \item rays of the form $\rho \times \{0\}$, where $\rho$ is an extremal ray of $\tail \D$ with $\rho \cap \deg \D = \emptyset$.\label{item:central-ray}
  \end{enumerate}

  Now,  there is at least one vertex $v \in \D_z$ with $\mu(v)=\mu_\D(z)=q$. Hence, the corresponding primitive lattice generator of $\pos{(v,1)}$ will be $(qv,q)$.
  In Case~(\ref{item:lower-ray}), we note that all the $\mu(v_y)$ are equal to one with the potential exception of $\mu(v_{z'}) \leq \mu_\D(z') = p$. Hence, $\mu(\sum_{y\neq z} v_y)=\mu(v_{z'}) \leq p$. For the corresponding ray generator we obtain
  $(\mu(v_{z'})\sum_{y\neq z} v_y,-\mu(v_{z'}))$ with $-\mu(v_{z'}) \geq -p$. Finally, in Case~(\ref{item:central-ray}), we have $\rho \subset N_\RR \times \{0\} \subset N_\RR \times [-p,+\infty)$.
  This finishes the proof of the lemma.
\end{proof}

\begin{proposition}
  \label{prop:1-p-q-vertical}
   Every $d$-dimensional
   log Fano cone singularity $(X(\D),\Delta)$ of type $(1,p,q)=(1,\mu(z'),\mu(z))$  with $mu(z')\leq \mu(z)$ admits a Koll\'ar component of log discrepancy $2d-1$ if  $\sum_{y \neq z} b_{\Delta,y}< 1$.
 \end{proposition}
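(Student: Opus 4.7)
The plan is to reduce to a toric combinatorial problem via Proposition~\ref{prop:kollar-degeneration}. Fix $z \in \mathbb{P}^1$ with $\mu_\D(z) = q$ and let $(X(\sigma_z), \Delta_z)$ be the associated toric degeneration in the lattice $\hat N = N \times \ZZ$. Vertical Koll\'ar components $D_{z,w}$ of $(X(\D), \Delta)$---all of which are Koll\'ar components by Proposition~\ref{prop:akollar-vertical} under our hypothesis $\sum_{y \neq z} b_{\Delta,y} < 1$---correspond to primitive interior rays of $\sigma_z$ intersecting $\hat N \times \ZZ_{>0}$, with matching log discrepancies.

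Since $(X(\sigma_z), \Delta_z)$ is $\QQ$-Gorenstein and klt, the log-discrepancy function is the restriction to $\sigma_z \cap \hat N$ of a linear functional $\psi \colon \hat N_\RR \to \RR$, strictly positive on $\sigma_z \setminus \{0\}$ and satisfying $\psi(\hat n_\rho) \in (0,1]$ on every primitive ray generator (boundary coefficients lie in $[0,1)$). By Lemma~\ref{lem:rays-of-degeneration}, $\sigma_z$ has a ray with primitive generator $\hat n_* = (n_*, q)$, while every primitive ray generator $\hat n_\rho = (n_\rho, m_\rho)$ of $\sigma_z$ satisfies $m_\rho \geq -p$.

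Since $\sigma_z$ is $d$-dimensional, I may extend $\hat n_* = \hat n_1$ to a linearly independent set $\hat n_1, \ldots, \hat n_d$ of primitive ray generators of $\sigma_z$, spanning a simplicial $d$-dimensional subcone $\tau \subseteq \sigma_z$. Writing $\hat n_i = (n_i, m_i)$, set
\[
c := \left\lceil \max\!\left(1,\; \tfrac{1 - m_2 - \cdots - m_d}{q}\right)\right\rceil, \qquad v := c\,\hat n_1 + \hat n_2 + \cdots + \hat n_d \in \hat N.
\]
By construction $v \in \rint\tau \subseteq \rint\sigma_z$, its last coordinate equals $cq + m_2 + \cdots + m_d \geq 1$, and $\psi(v) \leq c + (d-1)$. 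The crucial arithmetic step is bounding $c \leq d$, which uses $m_i \geq -p$, the hypothesis $p \leq q$, and $q \geq 1$:
\[
c \;\leq\; \left\lceil \tfrac{1 + (d-1)p}{q} \right\rceil \;\leq\; \left\lceil \tfrac{1}{q} + (d-1) \right\rceil \;\leq\; d.
\]
Letting $v'$ be the primitive lattice generator of $\RR_{\geq 0}\cdot v$ and writing $v' = (\mu w, \mu)$ with $w \in N_\QQ$, we have $(w, 1) \in \rint\sigma_z$, whence $w \in \rint\D_z$ using the slice identity $\sigma_z \cap \{m=1\} = \D_z \times \{1\}$ (a short computation from the defining description of $\sigma_z$ combined with the degree condition $\deg\D \subsetneq \tail\D$). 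Therefore $D_{z,w}$ is a Koll\'ar component with log discrepancy $\psi(v') \leq \psi(v) \leq 2d - 1$.

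The main obstacle is the arithmetic bound $c \leq d$, which is where the hypothesis $p \leq q$ becomes indispensable: the bound $m_i \geq -p$ from Lemma~\ref{lem:rays-of-degeneration} controls the denominator and $p \leq q$ ensures the threshold stays at most $d$. A secondary technical point is the slice identity needed to guarantee that the constructed Koll\'ar component is genuinely vertical rather than merely a toric divisor of the degeneration.
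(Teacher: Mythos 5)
Your proof is correct and follows essentially the same route as the paper: pass to the toric degeneration $X(\sigma_z)$ via Proposition~\ref{prop:kollar-degeneration}, pick $d$ linearly independent primitive ray generators including the one with last coordinate $q$ from Lemma~\ref{lem:rays-of-degeneration}, form a positive integer combination weighted so the last coordinate is positive, and use linearity of the toric log-discrepancy functional together with the bound $\leq 1$ on each ray generator. The only cosmetic difference is that you choose the coefficient $c$ of $\hat n_1$ minimally via a ceiling and verify $c\leq d$, whereas the paper simply takes $c=d$; both yield the same bound $2d-1$.
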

 
 \begin{proof}
   By Proposition~\ref{prop:kollar-degeneration} and Remark~\ref{rem:kollar-degeneration} it is sufficient to find a ray $\rho$ in the interior of $\hat N_\RR^+ \cap \sigma_z$ such that for $X_0=X(\sigma_z)$ the log discrepancy of the corresponding Koll\'ar component $D_\rho$ is given by $a_{(X_0,\Delta_z)}(D_\rho) = 2d-1$. 

   Now, we may pick $d$ rays $\rho_1, \ldots, \rho_d \in \sigma_z(1)$, such that the corresponding primitive lattice points
   $n_{\rho_1},\ldots, n_{\rho_d}$ are linearly independent. By Lemma~\ref{lem:rays-of-degeneration}, we may assume that $n_{\rho_1} \in N \times \{q\}$ and $n_{\rho_i} \in N \times [-q,-1]$ for all $i=1,\ldots,d$. Then
   $n_\rho:=dn_{\rho_1} + \sum_{i=2}^d n_{\rho_d}$ spans a ray $\rho$ in $\hat N_\RR^+ \cap \sigma_z$. Moreover, there exists a unique element $-u \in \hat M_\QQ$ which calculates the log discrepancy of every divisor $D_\tau$ over $X_0=X(\sigma_z)$ as $\langle -u, n_\tau\rangle$. Then for the log discrepancy of $D_\rho$, we obtain
   
   \begin{align*}
     a_{(X_0,\Delta_z)}(D_\rho) = \langle -u, n_\rho \rangle &= d\langle -u, n_{\rho_1} \rangle  + \sum_{i=2}^d \langle -u, n_{\rho_d} \rangle\\
     &=  d\cdot a_{(X_0,\Delta_z)}(D_{\rho_1})   + \sum_{i=2}^d a_{(X_0,\Delta_z)}(D_{\rho_i})\\
     &\leq  d + (d-1) =2d-1.
   \end{align*}
   Here, the last inequality follows from the fact, that the $D_{\rho_i}$ are prime divisors \emph{on} $X_0$ and, therefore, have discrepancy $1-c_{\rho}\leq 1$, where $c_\rho$ is the coefficient of $\Delta_z$ at $D_\rho$.
 \end{proof}
 
\subsection{Discrepancies of horizontal Koll\'ar components}\label{subsec:discrepancy-horizonta-kollar}

In this subsection, we study the minimal log discrepancies
of complexity one $\mathbb{T}$-singularities 
of type $(2,2,r)$,
$(2,3,3)$, $(2,3,4)$ and $(2,3,5)$.

\begin{lemma}
  \label{lem:sum-of-horizontal-discrepancies}
  Let $X=X(\D)$ be a $\TT$-variety of complexity $1$ and $\Delta$ a $\TT$-invariant boundary divisor on $X$.
  Assume that $D_{\rho_1}$ and $D_{\rho_2}$ are two horizontal divisors over $X$. If $\rho$ is the ray spanned by by a primitive lattice point $\lambda_1 n_{\rho_1}+ \lambda_2 n_{\rho_2}$, then $D_{\rho}$ has log discrepancy
    \[\aXD(D_{\rho}) = \lambda_1 \cdot \aXD(D_{\rho_1}) +\lambda_2 \cdot \aXD(D_{\rho_2}).\]
  \end{lemma}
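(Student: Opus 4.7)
The plan is to apply Proposition~\ref{prop:discrepancies} directly, since it already expresses the log discrepancy of any horizontal divisor as a linear functional evaluated at its primitive ray generator. The linearity in the statement of the lemma is then just the linearity of that functional.

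More concretely, I would first note that since $(X,\Delta)$ is $\QQ$-Gorenstein, Proposition~\ref{prop:discrepancies} provides a single element $u \in M_\QQ$ (depending on $(X,\Delta)$ but not on the divisor $D_\tau$ we choose to extract) such that for every horizontal exceptional divisor $D_\tau$ one has
\[
a_{(X,\Delta)}(D_\tau) \;=\; \langle -u, n_\tau \rangle,
\]
where $n_\tau$ is the primitive lattice generator of the ray $\tau$. In particular, applying this to $\tau = \rho_1$ and $\tau = \rho_2$ yields $a_{(X,\Delta)}(D_{\rho_i}) = \langle -u, n_{\rho_i} \rangle$ for $i=1,2$.

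Next, since $\rho$ is spanned by the primitive lattice point $n_\rho := \lambda_1 n_{\rho_1} + \lambda_2 n_{\rho_2}$ (this is the hypothesis, so no further primitivity argument is required), the same formula gives
\[
a_{(X,\Delta)}(D_\rho) \;=\; \langle -u, \lambda_1 n_{\rho_1} + \lambda_2 n_{\rho_2} \rangle
\;=\; \lambda_1 \langle -u, n_{\rho_1}\rangle + \lambda_2 \langle -u, n_{\rho_2}\rangle,
\]
which is the claimed identity.

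There is no real obstacle here; the only subtlety is ensuring that Proposition~\ref{prop:discrepancies} applies to $D_\rho$, $D_{\rho_1}$, $D_{\rho_2}$ simultaneously. This is fine because the proposition's formula for horizontal divisors is derived by extracting them on a single common birational modification $X(\CS) \to X(\D)$ (e.g.\ via the construction $\CS(\rho)$ of Construction~\ref{con:horizontal-extraction}, applied simultaneously to the three rays, after refining the tail fan accordingly), and the element $u \in M_\QQ$ depends only on $(X,\Delta)$, not on the choice of modification.
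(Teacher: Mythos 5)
Your proposal is correct and follows essentially the same route as the paper: the paper also writes $m(K_X+\Delta)=\operatorname{div}(\chi^u f)$ once and observes that the log discrepancy of every horizontal divisor is the fixed linear functional $\langle u/m,\cdot\rangle$ evaluated at the primitive ray generator, so the identity is just linearity of that pairing. The only cosmetic difference is that you invoke Proposition~\ref{prop:discrepancies} where the paper cites the proof of~\cite[Proposition 3.4]{BGLM21}; the content is the same, and your remark that $u$ depends only on $(X,\Delta)$ and not on the chosen modification is exactly the point that makes the argument work.
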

  \begin{proof}
  Since $K_X$ is $\qq$-Cartier, we can write $mK_{(X,\Delta)}={\rm div}(\chi^uf)$ for some $u\in M$ and $f\in \kk(\pp^1)$.
  Then, by the proof of~\cite[Proposition 3.4]{BGLM21}, we can write 
  \begin{align*}
       \aXD(D_\rho) & = \langle u/m, \lambda_1 n_{\rho_1}+\lambda_2n_{\rho_2} \rangle = \\
       \lambda_1 \langle u/m , n_{\rho_1}\rangle + \lambda_2 \langle u/m, n_{\rho_2}\rangle & =
  \lambda_1 \cdot \aXD(D_{\rho_1})+\lambda_2 \cdot \aXD(D_{\rho_2}).
  \end{align*}
  We conclude that the log discrepancy $\aX$ is linear on the horizontal 
  $\mathbb{T}$-invariant prime divisors.
  \end{proof}
  
  The following statement allows us to control the log discrepancy of horizontal divisors over $X(\D)$.
  
  \begin{lemma}
    Consider a p-divisor $\D$ on $\pp^1$ and a $\TT$-ivariant boundary divisor
    \[\Delta =\sum_{\rho}c_\rho D_\rho + \sum_{y,v} c_{y,v}D_{y,v} \] on $X(\D)$.
    Let $(v_y)_{y\in \pp^1}$ be a family of vertices of $\D$.
    Let $n_\rho$ be the primitive lattice generator of the ray $\rho$ generated by
    $\sum_y v_y \in \tail\D$.
    Then, the following statements hold:
    \label{lem:horizontal-log-discrepancies}
    \begin{enumerate}
    \item
     $\aXD(D_\rho)=\lambda_\rho \left(2-\sum_y \frac{\mu(v_y)-1+c_{y,v_y}}{\mu(v_y)}\right)$, where $n_\rho = \lambda_\rho \cdot \sum_y v_y$.
      \label{item:horizontal-log-discrepancies}     
    \item If $(v_y)_y$ is of type $(2,p,q)$ with $p,q \geq 2$, then
      $\aXD(D_\rho) \leq 2$.\label{item:log-discrepancy-2pq}
    \item If $(v_y)_y$ is of type $(1,p,q)$ then  $\aXD(D_\rho) \leq \frac{p+q}{\gcd(p,q)}$.
      \label{item:log-discrepancy-1pq}
\end{enumerate}
  \end{lemma}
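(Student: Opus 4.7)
For part~(\ref{item:horizontal-log-discrepancies}), my strategy is a direct application of Proposition~\ref{prop:discrepancies}. I pick any canonical divisor $K_{\pp^1}=\sum_y k_y\cdot\{y\}$ together with the associated data $u\in M_\qq$ and degree-zero divisor $\sum_y a_y\cdot\{y\}$ furnished by that proposition. Since $D_\rho$ is horizontal, its log discrepancy is $\aXD(D_\rho)=\langle -u,n_\rho\rangle$, and each vertex satisfies $\langle u,v_y\rangle + a_y = k_y + \frac{\mu(v_y)-1+c_{y,v_y}}{\mu(v_y)}$. Summing over $y$ and using $\sum_y k_y=\deg K_{\pp^1}=-2$ and $\sum_y a_y=0$, I obtain
\[
\sum_y\langle u,v_y\rangle \;=\; -2 + \sum_y \frac{\mu(v_y)-1+c_{y,v_y}}{\mu(v_y)}.
\]
Multiplying by $-\lambda_\rho$ and using $n_\rho=\lambda_\rho\sum_y v_y$ gives the stated formula.

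For parts~(\ref{item:log-discrepancy-2pq}) and~(\ref{item:log-discrepancy-1pq}), the plan is first to drop the nonnegative $c_{y,v_y}$ contributions in~(\ref{item:horizontal-log-discrepancies}) to obtain the estimate
\[
\aXD(D_\rho)\;\leq\; \lambda_\rho\Big(2-\sum_y \tfrac{\mu(v_y)-1}{\mu(v_y)}\Big),
\]
and then to bound $\lambda_\rho$ from above. The key observation is $\lambda_\rho\leq\mu(\sum_y v_y)$: writing $\sum_y v_y=(k/m)\,n_\rho$ in lowest terms gives $\mu(\sum_y v_y)=m$ while $\lambda_\rho=m/k\leq m$. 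Since $\mu(v_y)=1$ for all but the three distinguished $y$, the denominator $\mu(\sum_y v_y)$ divides the $\lcm$ of the nontrivial multiplicities.

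For~(\ref{item:log-discrepancy-1pq}) in type $(1,p,q)$ this directly yields $\lambda_\rho\leq\lcm(p,q)=pq/\gcd(p,q)$, while the bracket equals $1/p+1/q=(p+q)/pq$, producing the desired bound $(p+q)/\gcd(p,q)$. For~(\ref{item:log-discrepancy-2pq}) in type $(2,p,q)$ with $p,q\geq 2$ the bracket is $1/p+1/q-1/2$, which is nonpositive unless $\min(p,q)=2$ or $(p,q)\in\{(3,3),(3,4),(4,3),(3,5),(5,3)\}$. I would finish by a short case check: if $\min(p,q)=2$ and $r=\max(p,q)$, then $\lcm(2,p,q)\cdot(1/p+1/q-1/2)=\lcm(2,r)/r\leq 2$, and each of the sporadic cases yields exactly $1$. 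The only mildly nontrivial step is this final case check, but the platonic constraint reduces it to a short finite verification.
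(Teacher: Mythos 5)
Your proposal is correct and follows essentially the same route as the paper: part~(\ref{item:horizontal-log-discrepancies}) comes from summing the vertex equations of Proposition~\ref{prop:discrepancies} against $\deg K_{\pp^1}=-2$ and $\sum_y a_y=0$, and parts~(\ref{item:log-discrepancy-2pq})--(\ref{item:log-discrepancy-1pq}) from dropping the nonnegative $c_{y,v_y}$ and bounding $\lambda_\rho$ by $\mu\bigl(\sum_y v_y\bigr)\leq\lcm$ of the nontrivial multiplicities. The only (cosmetic) difference is in part~(\ref{item:log-discrepancy-2pq}), where the paper computes the bracket only for the platonic triples $(2,2,r)$ and $(2,3,q)$ with $q\in\{3,4,5\}$, while your exhaustive case check over all $p,q\geq 2$ (noting the bracket is nonpositive outside a finite list) covers the statement as literally written.
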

  \begin{proof}
     Consider a $\qq$-divisor $\sum_{y\in \pp^1} k_y \{y\}$ on $\pp^1$ of degree $-2$.
    By Proposition~\ref{prop:discrepancies}, it follows from the $\QQ$-Gorenstein property that 
    there exists $u \in M_\QQ$ and a $\QQ$-divisors $\sum_{y \in \mathbb{P}^1} a_y \{y\}$ of degree zero on $\pp^1$, such that
    \[
    \langle u, v_y \rangle + a_y = k_y + \frac{\mu(v_y)-1-c_{y,v_y}}{\mu(v_y)},
    \]
    for every $y \in \mathbb{P}^1$. 
    Summing over $y \in \mathbb{P}^1$ then gives
    \begin{equation}
      \left\langle u, \;\sum_y v_y \right\rangle =  \sum_y\frac{\mu(v_y)-1+ c_{y,v_y}}{\mu(v_y)} -2 = \sum_y\frac{\mu(v_y)-1}{\mu(v_y)} -2+\sum_y\frac{c_{y,v_y}}{\mu(v_y)} .\label{eq:qcartier}    
    \end{equation}
    Now, by Proposition~\ref{prop:discrepancies} the log discrepancy  at $D_\rho$ is given by $-\langle u, n_\rho \rangle$. Hence, claim~(\ref{item:horizontal-log-discrepancies}) follows from the first identity in \eqref{eq:qcartier}.
    
    In the case of type $(2,p,q)$, then $\sum_y\frac{\mu(v_y)-1}{\mu(v_y)} -2$ is $-\frac{1}{q}$ if $p=2$ and
    $-\frac{1}{\lcm(2,p,q)}$ if $p=3$. Now, the log discrepancy at $D_\rho$ is calculated as $-\langle u, n_\rho \rangle$. On the other hand, $\rho = \pos {(\sum_y v_y)}$ and $\sum_y v_y \in \frac{1}{\lcm(2,p,q)}N$.
  Hence, $-\langle u, n_\rho\rangle \leq \lcm(2,p,q) \cdot \left\langle u\,, \;\sum_y v_y \right\rangle$ and the statement~(\ref{item:log-discrepancy-2pq}) follows.

  Similarly, for the type $(1,p,q)$ the quantity $\sum_y\frac{\mu(v_y)-1}{\mu(v_y)} -2$ in (\ref{eq:qcartier}) becomes $-\frac{p+q}{pq}$ and  $\sum_y v_y \in \frac{1}{\lcm(p,q)}N$. Therefore,
  we have 
  \[-\langle u, n_\rho\rangle \leq \lcm(p,q)\left(\frac{p+q}{pq} - \sum_y\frac{c_{y,v_y}}{\mu(v_y)}\right) \leq \frac{p+q}{\gcd(p,q)}.
  \]
  This shows statement (\ref{item:log-discrepancy-1pq}).
  \end{proof}
  
  \begin{notation} 
  We consider a polyhedral cone $\sigma \subset N_\QQ$. Let $\facets(\sigma)$ be the set of all facets of the cone $\sigma$.  If $\tail \Delta=\sigma$, then every facet $\tau \in \facets(\sigma)$ gives rise to a unique facet $F_\tau \prec \Delta$ such that $\tail F_\tau=\tau$. Indeed, let $n_\tau$ be the primitive inner normal to $\tau$, then $F_\tau$ is the face of $\Delta$ where $\langle n_\tau, \cdot \rangle$ takes its minimum.  For an element $v \in \Delta$, we now consider the set of facets $\tau \in \facets(\sigma)$ such that $v \in F_\tau$. We set:
  \[
  \facets(\Delta,v):=\{\tau \prec \sigma \mid v \in F_\tau\}.
  \]
  The element $v \in \sigma$ lies in the interior of $\sigma$ if and only if $\facets(\sigma, v)=\emptyset$. For two elements $v,v' \in \sigma$, we have
  \begin{equation}
    \facets(\sigma,v+v') = \facets(\sigma,v) \cap \facets(\sigma,v').
    \label{eq:facets-of-sums}
  \end{equation}
  Let $\D$ be a p-divisor on $\pp^1$.
  Then, we define the set: 
  \[
  \facets(\D) := \{\tau \in \facets(\tail \D) \mid \tau \cap \deg \D \neq \emptyset\}.
  \]
  \end{notation}

The following lemma follows from the introduced notation.

  \begin{lemma}
\label{lem:vertex-families-and-facets}
    Let $\D$ be a p-divisor on $\pp^1$.
    Let $(v_y)_{y\in \mathbb{P}^1}$ be a family of vertices of $\D$.  Then, the following statements hold.
  \begin{enumerate} 
  \item We have an identity 
    \[\facets\left(\deg \D, \sum_{y \in \mathbb{P}^1} v_y\right) = \bigcap_{y\in \mathbb{P}^1} \facets(\D_y,v_y).\]
  \item We have that 
    \[\facets\left(\tail \D, \sum_{y \in \mathbb{P}^1} v_y \right) = \facets(\D) \cap \bigcap_{y\in \mathbb{P}^1} \facets(\D_y,v_y)\subset  \bigcap_{y\in \mathbb{P}^1} \facets(\D_y,v_y) \]
    \label{lem:vertex-families-and-facets-tail}
  \end{enumerate}
\end{lemma}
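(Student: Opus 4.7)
The plan is to reduce both items to the Minkowski-sum identity for faces already recorded in Equation~\eqref{eq:facets-of-sums}: for two $\sigma$-polyhedra $P,Q$ and elements $v\in P$, $v'\in Q$, one has $F_\tau(P+Q)=F_\tau(P)+F_\tau(Q)$, and consequently $v+v'\in F_\tau(P+Q)$ if and only if $v\in F_\tau(P)$ and $v'\in F_\tau(Q)$.

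For item~(1), I would iterate this identity across the (finite) Minkowski sum $\deg\D=\sum_y \D_y$: the facet of $\deg\D$ with tail $\tau$ equals $\sum_y F_\tau(\D_y)$. The nontrivial direction uses the chain
\[
\left\langle n_\tau,\sum_y v_y\right\rangle=\sum_y \langle n_\tau,v_y\rangle\geq \sum_y\min_{x\in\D_y}\langle n_\tau,x\rangle=\min_{x\in\deg\D}\langle n_\tau,x\rangle,
\]
so whenever $\sum_y v_y\in F_\tau(\deg\D)$ all these inequalities must be equalities, forcing each $v_y$ into $F_\tau(\D_y)$.

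For item~(2), the key observation is that the face of the cone $\tail\D$, viewed as its own $\sigma$-polyhedron, indexed by a facet $\tau$ is $\tau$ itself, so $\tau\in\facets(\tail\D,\sum_y v_y)$ is equivalent to $\sum_y v_y\in\tau$. I would then show that this in turn is equivalent to $\tau\in\facets(\D)$ together with $\tau\in\facets(\deg\D,\sum_y v_y)$. The forward direction is immediate: $\sum_y v_y\in\tau\cap\deg\D$ certifies $\tau\in\facets(\D)$, and $\langle n_\tau,\sum_y v_y\rangle=0$ attains the minimum of $\langle n_\tau,\cdot\rangle$ on $\deg\D\subseteq\tail\D$. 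For the reverse, as soon as $\tau\cap\deg\D\neq\emptyset$, the non-negativity of $\langle n_\tau,\cdot\rangle$ on $\tail\D\supseteq\deg\D$ forces this minimum to be $0$, and hence $F_\tau(\deg\D)=\tau\cap\deg\D$, so any element of $F_\tau(\deg\D)$ already lies in $\tau$. Combining with item~(1) yields the claimed equality, and the stated inclusion in item~(2) is trivial.

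The argument is essentially straightforward convex-geometric bookkeeping, so there is no real obstacle; the one point requiring care is the dual use of the notation $\facets(-,v)$, where the first slot is sometimes a genuine $\sigma$-polyhedron (in which case $F_\tau$ is the proper face cut out by $n_\tau$) and sometimes the cone $\tail\D$ itself (in which case $F_\tau=\tau$).
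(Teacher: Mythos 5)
Your proposal is correct and follows essentially the same route as the paper: for item~(1) both arguments rest on the fact that the minimum of $\langle n_\tau,\cdot\rangle$ over the Minkowski sum $\deg\D=\sum_y\D_y$ is the sum of the minima over the $\D_y$ (the paper phrases the hard direction as a contradiction, you phrase it as equality forced in a chain of inequalities, which is the same reasoning), and for item~(2) both reduce to the observation that the minima of $\langle n_\tau,\cdot\rangle$ on $\deg\D$ and on $\tail\D$ agree exactly when $\tau\cap\deg\D\neq\emptyset$, i.e.\ when $\tau\in\facets(\D)$.
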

\begin{proof}
We set $v = \sum_{y \in \mathbb{P}^1} v_y$. We have that $\tau \in \facets\left(\deg \D, v\right)$ if and only if
$\langle n_\tau ,w \rangle \geq \langle n_\tau ,v \rangle$ for all $w \in \deg \D$, i.e., for all $w=\sum_y w_y$ with $w_y \in \D_y$.  Clearly if $v \in  \bigcap_{y\in \mathbb{P}^1} \facets(\D_y,v_y)$, meaning that $\langle n_\tau, w_y \rangle \geq \langle n_\tau, v_y \rangle$ holds for $y$ and all $w_y \in \D_y$, then also
$\langle n_\tau ,\sum_y w_y \rangle \geq \langle n_\tau ,\sum_y v_y \rangle$. For the other direction assume there is a $z \in \PP^1$ and a $w_z \in \D_z$ such that $\langle n_\tau, w_z \rangle < \langle n_\tau, v_z \rangle$ holds. Then for $w := w_z + \sum_{y \neq z} v_y \in \deg D$ we have $\langle n_\tau ,w \rangle < \langle n_\tau ,v \rangle$. This proves the first claim.

For the second claim observe that $\facets\left(\tail \D, \sum_{y \in \mathbb{P}^1} v_y \right)  \subset \facets\left(\deg \D, \sum_{y \in \mathbb{P}^1} v_y\right)$, since $\deg \D \subset \tail \D$. Moreover, since by definition of $n_\tau$ the minimum of $\langle n_\tau , \cdot \rangle$ on $\tail \D$ is attained on $\tau$, we see that $\min \langle n_\tau , \deg \D \rangle$ and $\min \langle n_\tau , \tail \D \rangle$ differ if and only if $\deg \D \cap \tau = \emptyset$. Hence, a facet $\tau \in \facets\left(\deg \D, \sum_{y \in \mathbb{P}^1} v_y\right)$, is also contained in $\facets\left(\tail \D, \sum_{y \in \mathbb{P}^1} v_y \right)$ if only if $\tau \in \facets(\D)$.
\end{proof}

We turn to prove an arithmetic lemma that 
will be used in the proof of our main theorem.

\begin{lemma}
  \label{lem:technical-lemma}
  Consider three points $w, w'_1, w'_2 \in N_\QQ$ with $(w+w'_1)$ and $(w+w'_2)$ being linearly independent.
  Let $v_1$ and $v_2$ be the primitive lattice point in $\pos{(w + w'_1)}$ and  $\pos{(w +w_1')}$, respectively.
  Then there exist $0<\lambda_1 < \ell$ and $0<\lambda_2 < \frac{\ell^2}{\mu(w)}$, such that
  $\lambda_1v_1 + \lambda_2v_2$ is a lattice point, where $\ell:=\lcm(\mu(w_1'), \mu(w_2'))$.
\end{lemma}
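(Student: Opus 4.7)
The plan is to construct the pair $(\lambda_1,\lambda_2)$ inside the rational sublattice
\[
\Lambda \;:=\; \{(\lambda_1,\lambda_2)\in\QQ^2 \mid \lambda_1 v_1+\lambda_2 v_2 \in N\}.
\]
Writing $v_i=\mu_i(w+w_i')$ with $\mu_i:=\mu(w+w_i')$ (so $\mu_i\mid\lcm(\mu(w),\mu(w_i'))$), and using $\mu(w_i')\mid\ell$ to conclude $\ell w_i'\in N$, the key identity
\[
(\ell/\mu_1)\,v_1-(\ell/\mu_2)\,v_2\;=\;\ell(w_1'-w_2')\;\in\;N
\]
shows that $(\ell/\mu_1,-\ell/\mu_2)\in\Lambda$. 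Combined with the trivial inclusion $\ZZ^2\subset\Lambda$, this means that for every choice of $k,n,m\in\ZZ$ the vector $(k\ell/\mu_1-n,\,m-k\ell/\mu_2)$ lies in $\Lambda$, giving a rich supply of candidates.

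Next I would locate a point of $\Lambda$ in the open rectangle $(0,\ell)\times(0,\ell^2/\mu(w))$. Choosing $k$ so that $-k\ell\bmod\mu_2$ realizes its minimum positive value $\gcd(\ell,\mu_2)$ and setting $m=\lceil k\ell/\mu_2\rceil$ pins $\lambda_2=\gcd(\ell,\mu_2)/\mu_2$; independently, $n$ is chosen to place $\lambda_1=k\ell/\mu_1-n$ in $(0,\ell)$, which is possible since $\ell\geq 1$ and at least one integer shift of $k\ell/\mu_1$ falls into this interval. The strict bound $\lambda_1<\ell$ is then immediate from the construction. The more delicate bound $\lambda_2<\ell^2/\mu(w)$ reduces to the numerical inequality $\mu(w)\gcd(\ell,\mu_2)<\ell^2\mu_2$, which I would deduce from the chain $\mu(w)/\mu_2\leq \mu(w_2')\leq \ell$: the first inequality comes from the identity $\mu(w)/\gcd(\mu_2,\mu(w))=\mu(w_2')/\gcd(\mu_2,\mu(w_2'))$ (both sides being the order of $\mu_2 w$ modulo $N$), while the second is just $\mu(w_2')\mid\ell$.

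The main obstacle is the strictness of the $\lambda_2$-bound. The chain above yields only $\lambda_2\leq\ell^2/\mu(w)$, with equality possible in the edge case where $\mu(w_2')=\ell$ and $\ell^2\mid\mu(w)$ simultaneously. I would handle this degenerate configuration by enlarging $\Lambda$ with further lattice vectors: for instance from $\mu(w)w\in N$ when $w$, $w_1'$, $w_2'$ are linearly dependent, or from the symmetric relation obtained by exchanging the roles of $w_1'$ and $w_2'$, one of which must produce a strictly smaller value of $\lambda_2$.
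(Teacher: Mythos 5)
Your route is genuinely different from the paper's. The paper first reduces to the case $\mu(w_1')=\mu(w_2')=1$ by refining the lattice to $\bar N=\frac{1}{\ell}N$, and in that case puts the rank-two sublattice spanned by $v_1,v_2$ into the normal form $v_1=(1,0)$, $v_2=(m,q)$ with $q=\mu(w)$, reading off $(1,1)=\frac{q-m}{q}v_1+\frac{1}{q}v_2$. You instead work directly with the relation lattice $\Lambda$ and the single relation coming from $\ell(w_1'-w_2')\in N$, which avoids the lattice refinement at the cost of more gcd bookkeeping. Two points need attention. First, the identification $v_i=\mu_i(w+w_i')$ is false in general: $\mu_i(w+w_i')$ is a lattice point on the ray but need not be primitive, and one only has $\mu_i(w+w_i')=p_iv_i$ for some positive integer $p_i$ with $\gcd(p_i,\mu_i)=1$. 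Your key relation therefore reads $(\ell p_1/\mu_1)v_1-(\ell p_2/\mu_2)v_2=\ell(w_1'-w_2')\in N$. The damage is repairable, since $\gcd(p_2,\mu_2)=1$ gives $\gcd(\ell p_2,\mu_2)=\gcd(\ell,\mu_2)$ and your value of $\lambda_2$ survives, but as written the central computation rests on a wrong formula. You should also treat the case $\mu_2\mid\ell$ separately: there $-k\ell p_2\equiv 0 \pmod{\mu_2}$ for every $k$, the best available value is $\lambda_2=1$, and one checks $1\le \ell^2/\mu(w)$ from $\mu(w)\le\ell\mu_2\le\ell^2$.

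Second, the strictness issue you flag at the end is not an edge case you can repair: the lemma with strict inequalities is simply false. Take $w=(1,1)$, $w_1'=(0,-1)$, $w_2'=(-1,0)$ in $N=\ZZ^2$; then $\ell=\mu(w)=1$, $v_1=(1,0)$, $v_2=(0,1)$, and no lattice point $\lambda_1v_1+\lambda_2v_2$ has $0<\lambda_1,\lambda_2<1$. The paper's own proof likewise only produces $\lambda_1\le\ell$ and $\lambda_2\le\ell^2/\mu(w)$ (its base case yields $\lambda_1=(q-m)/q$ and $\lambda_2=1/q$, both of which can attain the bound), and the application in Theorem~\ref{theorem:mld-bound} only invokes the non-strict inequalities. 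So the correct response to your ``degenerate configuration'' is to weaken the conclusion to $\le$, not to hunt for extra relations: your proposed fix via additional lattice vectors cannot produce a strictly smaller $\lambda_2$ in the example above. With the formula for $v_i$ corrected and the inequalities relaxed to non-strict ones, your argument does prove the statement that the paper actually uses.
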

\begin{proof}
  We set $w_1=w+w_1'$ and $w_2=w+w_2'$. We first show the claim for $\mu(w_1')=\mu(w_2')=1$. 
  In this case, we have $\mu(w_1)=\mu(w_2)=\mu(w)=:q$. Furthermore,  $qw_1 = p_1v_1$  and
  $qw_2 = p_2v_2$ with $\gcd(p_1,q)=\gcd(p_2,q)=1$.  Note, that \[p_1v_1-p_2v_2=qw_1 - qw_2 = q(w_1'-w_2')\] is divisible by $q$, since we assumed that  $\mu(w_1')=\mu(w_2')=1$.

  After restricting to the subspace spanned by $w_1$ and $w_2$ and to the corresponding sublattice, we may assume that $N \cong \ZZ^2$. By the above, $v_1$ and $v_2$ span a sublattice of index $q$ in $N$. Hence, after a suitable choice of coordinates, we may assume that $v_1=(1,0)$ and $v_2=(m,q)$ for some $0 \leq m < q$. Then, we have that 
  \[
  (1,1)=\frac{q-m}{q}v_1+\frac{1}{q}v_2.
  \]
Giving our claim by choosing $\lambda_1=\frac{q-m}{q}$ and $\lambda_2=\frac{1}{q}$.

For the general case, we first replace $N$ by $\bar N := \frac{1}{\ell}N$. With respect to this lattice, we are in the previous case. If we consider the multiplicity $\bar\mu (w)$ of $w$ with respect to the refined lattice $\bar{N}$, then we have $\bar \mu(w) \geq \mu(w)/\ell$. Hence, we know that there are $0<\lambda_1 < 1$ and $0<\lambda_2 < \frac{\ell}{\mu(w)}$ such that $\lambda_1v_1 + \lambda_2v_2 \in \bar N$. Then, we also have
$(\ell\lambda_1)v_1 + (\ell\lambda_2)v_2 \in N$. This finishes the proof of the lemma.
\end{proof}

\begin{proposition}
  \label{prop:1-p-q-horizontal}
  Assume $X=X(\D)$ is of type $(1,p,q)=(1,\mu(z'),\mu(z))$ with $\mu(z') \leq \mu(z)$ and $\Delta=\sum_{\rho}c_\rho D_\rho + \sum_{y,v} c_{y,v}D_{y,v}$ is a boundary divisor on $X$, such that all coefficients are either $0$ or larger than $\epsilon >0$.  Assume furthermore, that  the assumption of Proposition~\ref{prop:1-p-q-vertical} are \emph{not} fulfilled, i.e. $\sum_{y\neq z} \geq 1$. Then $X$ admits a horizontal Koll\'ar component $D_\rho$ with  $\aXD(D_\rho)< \sfrac{2}{\epsilon}$.
\end{proposition}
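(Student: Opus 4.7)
The plan is to produce the horizontal Koll\'ar component $D_\rho$ as the primitive lattice generator of a ray in the interior of $\tail \D$ arising from a family of vertices $(v_y)_y$ of $\D$, and to control its log discrepancy using Lemma~\ref{lem:horizontal-log-discrepancies}(i). Because $(X,\Delta)$ is klt, Proposition~\ref{prop:log-terminal-pair} implies that $(\PP^1,B_\Delta)$ is klt log Fano, so $\deg B_\Delta<2$; combined with the hypothesis $\sum_{y\neq z}b_{\Delta,y}\geq 1$ this yields the slack bound $2-\deg B_\Delta\leq 1$. For each $y$ I would pick $v_y\in \D_y^{(0)}$ realising $b_{\Delta,y}$ and set $w:=\sum_y v_y$; in type $(1,p,q)$ only $v_z$ and $v_{z'}$ can be non-lattice, with $\mu(v_z)\mid q$ and $\mu(v_{z'})\mid p$. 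Lemma~\ref{lem:horizontal-log-discrepancies}(i) then reads the log discrepancy of the ray through $w$ as $\lambda_\rho(2-\deg B_\Delta)\leq\lambda_\rho$ with $\lambda_\rho=1/\mu(w)$.

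If $w$ is already a primitive lattice vector lying in the interior of $\tail \D$, Proposition~\ref{prop:kollar-horizontall} directly yields a Koll\'ar component of log discrepancy at most $1<2/\epsilon$. To handle the general case I would take a second vertex family $(v_y')_y$ coinciding with $(v_y)_y$ outside a single index $y_0\in\{z,z'\}$ where the multiplicity defect is concentrated, and apply Lemma~\ref{lem:technical-lemma} with $w_1',w_2'$ the two differing vertices and $w$ equal to the common part of the sums. This yields $0<\lambda_1<\ell$ and $0<\lambda_2<\ell^2/\mu(w)$ with $\ell\leq\max(p,q)$ such that $\lambda_1 v_1+\lambda_2 v_2\in N$, where $v_i$ is the primitive generator of the ray through $w+w_i'$. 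By Lemma~\ref{lem:sum-of-horizontal-discrepancies} the corresponding primitive lattice point in the interior of $\tail \D$ gives a horizontal Koll\'ar component with log discrepancy at most $\lambda_1 a(D_{\rho_1})+\lambda_2 a(D_{\rho_2})$, and for the maximising choices one verifies that $\mu(w+w_i')=\lcm(p,q)$, collapsing the products $\lambda_i/\mu(w+w_i')$ to universal constants independent of $p,q$.

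The interiority condition requires additional care: by Lemma~\ref{lem:vertex-families-and-facets}(ii) the ray through $w$ fails to be interior precisely when every $v_y$ sits on the face $F_\tau\prec\D_y$ for some common $\tau\in\facets(\D)$. I would resolve this by replacing a single maximising $v_y$ by an adjacent vertex of $\D_y$; the hypothesis that all nonzero coefficients are at least $\epsilon$ ensures that this perturbation decreases $\sum_y b_y^{(v_y)}$ by at most $1-\epsilon$, so that the slack $2-\sum_y b_y^{(v_y)}$ remains controlled by a quantity of order $2/\epsilon$ after propagation through Lemma~\ref{lem:technical-lemma}. The main obstacle is the combinatorial case analysis required when $\D_z$ admits only a single vertex (so that no second family differing at $z$ is available) and when several facets must be avoided simultaneously; balancing these constraints against the multiplicity denominators $\mu(w+w_i')$ in the combining step is what ultimately produces the stated bound $a_{(X,\Delta)}(D_\rho)<2/\epsilon$ rather than a cleaner absolute constant.
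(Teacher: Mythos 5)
Your first paragraph already contains the decisive error, and the rest of the argument does not recover from it. The paper's proof is far more direct than your second and third paragraphs suggest: take $v_y\in\D_y^{(0)}$ realising $b_{\Delta,y}$, set $v=\sum_y v_y$ and $\rho=\RR_{\geq 0}v$, and apply Lemma~\ref{lem:horizontal-log-discrepancies}(\ref{item:horizontal-log-discrepancies}). Two points where your version goes wrong. First, the slack is not merely $2-\deg B_\Delta\leq 1$: the hypothesis $\sum_{y\neq z}b_{\Delta,y}\geq 1$ gives the sharper bound $2-\deg B_\Delta\leq 1-b_{\Delta,z}\leq \sfrac{1}{\mu(v_z)}$, and this extra factor $\sfrac{1}{\mu(v_z)}$ is essential. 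Second, $\lambda_\rho$ in that lemma is defined by $n_\rho=\lambda_\rho v$ with $n_\rho$ the primitive generator, so $\lambda_\rho\leq \mu(v_z)\mu(v_{z'})$ (because $\mu(v_z)\mu(v_{z'})v\in N$), not $\lambda_\rho=1/\mu(w)$ as you write; with your value the discrepancy would be at most $1$ and the proposition would be trivial, which it is not. Multiplying the two correct bounds gives $\aXD(D_\rho)\leq \mu(v_{z'})\leq p$ --- a bound by $p$, not by an absolute constant, which is why a further step is needed.

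That further step is the second genuine gap: you never convert the bound into the stated $\sfrac{2}{\epsilon}$, and this is the only place the coefficient hypothesis enters. The paper's closing argument is: since $b_{\Delta,z'}<1$ while $\sum_{y\neq z}b_{\Delta,y}\geq 1$, there is a third point $w\notin\{z,z'\}$ with $b_{\Delta,w}=c_{w,v_w}>\epsilon$ (here $\mu_\D(w)=1$, so a nonzero $b_{\Delta,w}$ is an actual coefficient of $\Delta$); the klt condition $\deg B_\Delta<2$ from Proposition~\ref{prop:log-terminal-pair} then forces $2>\frac{p-1}{p}+\frac{p-1}{p}+\epsilon$, i.e.\ $p<\sfrac{2}{\epsilon}$. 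Your proposed use of $\epsilon$ (controlling a perturbation of the maximising vertex ``by at most $1-\epsilon$'' and ``propagating through Lemma~\ref{lem:technical-lemma}'') does not establish any quantitative bound, and your claim that the combining step yields constants independent of $p,q$ cannot be right, since the true bound genuinely depends on $\epsilon$ through $p$. The detour through Lemma~\ref{lem:technical-lemma} and Lemma~\ref{lem:sum-of-horizontal-discrepancies} belongs to the $(2,p,q)$ cases treated in Theorem~\ref{theorem:mld-bound} and is not needed here; as written, your argument is incomplete at exactly the point where the bound $\sfrac{2}{\epsilon}$ has to be produced.
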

\begin{proof}
  Let $v_y$ the vertex of $\D_y$ achieving the maximum in the definition \[b_{\Delta,y}=\max \left\{\frac{\mu(v)-1+c_{y,v}}{\mu(v)} \;\middle|\; v \in \D_y^{(0)}\right\}.\]
  Since $\sum_{y\neq z} b_{\Delta,y} \geq 1$ holds, we must have
  \begin{equation}
  2- \sum_y \frac{\mu(v_y)-1+c_{y,v_y}}{\mu(v_y)}
= 2 - \sum_{y\neq z} b_{\Delta,y} - b_{\Delta,z} \leq 1 - \frac{\mu(v_z)-1}{\mu(v_z)}=\frac{1}{\mu(v_z)}.\label{eq:discrepancy-bound}
\end{equation}
Set $v=\sum_y v_y$. Since $v_y$ is a lattice point for $y\notin \{z,z'\}$, we conclude that $\mu(v_{z})\mu(v_{z'})v$ is a lattice point. Therefore, the lattice generator of the ray $\rho = \RR_{\geq 0}v$ has the form $n_\rho=\lambda_\rho v$ with $\lambda_\rho \leq \mu(v_{z})\mu(v_{z'})$. Now, from Lemma~\ref{lem:horizontal-log-discrepancies} we conclude that
\begin{equation}
\aXD(D_\rho) \leq \mu(v_{z'}) \leq \mu(z')=p.\label{eq:discrepancy-bound-by-p}
\end{equation}
Now, we have
\[\frac{p-1}{p} \leq b_{\Delta,z'} \qquad \text{and} \qquad \frac{p-1}{p} \leq \frac{q-1}{q} \leq b_{\Delta,z}.\]
Since $\sum_{y\neq z} b_{\Delta,y} \geq 1$ holds by assumption, there is at least one $w \notin \{z,z'\}$ such that
$b_{\Delta,w} = c_{w,v_{w}} > \epsilon$. Now, from the klt property of $(X,\Delta)$ we obtain by Proposition~\ref{prop:log-terminal-pair} the inequality
\[
2 > \sum_y b_{\Delta,y} = b_{\Delta,z} + b_{\Delta,z'} + b_{\Delta,w} \geq \frac{p-1}{p} + \frac{p-1}{p} + \epsilon.
\]
This implies $p < \sfrac{2}{\epsilon}$ and now by (\ref{eq:discrepancy-bound-by-p}) the claim $\aXD(D_\rho) \leq \sfrac{2}{\epsilon}$ follows.
\end{proof}

\begin{theorem}\label{theorem:mld-bound}
  There is a constant $A(d,\epsilon)$, dependening only on the dimension $d$ and a real number $\epsilon > 0$, satisfying the following. For a $d$-dimensional complexity one log Fano cone singularity $(X,\Delta)$ with coefficients of $\Delta$ being larger than $\epsilon$  there is a Koll\'ar component $E$ with log discrepancy $\aXD(E)<A(d,\epsilon)$.
\end{theorem}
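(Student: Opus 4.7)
The plan is to use the combinatorial description of complexity-one $\TT$-singularities via polyhedral divisors on $\PP^1$. By Theorem~\ref{thm:AH-main-result} one may write $X \cong X(\D)$ for some p-divisor $\D$ on $\PP^1$ with complete locus. The klt assumption on $(X,\Delta)$ together with Proposition~\ref{prop:log-terminal-pair} forces the type of $\D$ to lie in the finite list \eqref{eq:platonic-triple} of platonic triples. I would treat the type $(1,p,q)$ and the types $(2,p,q)$ with $p,q\geq 2$ separately.

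For type $(1,p,q)$ the work is already done. Let $z \in \PP^1$ be the point with the larger multiplicity. If $\sum_{y \neq z} b_{\Delta,y} < 1$, Proposition~\ref{prop:1-p-q-vertical} supplies a vertical Koll\'ar component of log discrepancy at most $2d-1$; otherwise, the hypothesis on the coefficients of $\Delta$ together with Proposition~\ref{prop:1-p-q-horizontal} supplies a horizontal Koll\'ar component of log discrepancy less than $2/\epsilon$. In particular, a Koll\'ar component of log discrepancy at most $\max\{2d-1,\,2/\epsilon\}$ always exists in this case.

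For types $(2,p,q)$ with $p,q \geq 2$ I would build a horizontal Koll\'ar component directly via Proposition~\ref{prop:kollar-horizontall}. Lemma~\ref{lem:horizontal-log-discrepancies}(ii) already bounds the log discrepancy of the horizontal divisor attached to any vertex family of type $(2,p,q)$ by $2$, independently of all other data. To upgrade such a divisor to a Koll\'ar component one needs the associated ray to lie in the \emph{interior} of $\tail\D$. I would achieve this by combining two vertex families that differ only at a point of multiplicity $2$, so that the parameter $\ell$ appearing in Lemma~\ref{lem:technical-lemma} is at most $2$; the lemma then furnishes an interior primitive lattice generator of the form $n_\rho = \lambda_1 n_{\rho_1} + \lambda_2 n_{\rho_2}$ with $\lambda_1, \lambda_2$ bounded by an absolute constant. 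Linearity of the log discrepancy (Lemma~\ref{lem:sum-of-horizontal-discrepancies}) then yields $\aXD(D_\rho) \leq 2(\lambda_1 + \lambda_2)$, a bound independent of $d$ and $\epsilon$. Taking $A(d,\epsilon)$ to be the maximum of $\max\{2d-1,\,2/\epsilon\}$ and this absolute constant completes the argument.

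The main difficulty is in the type $(2,2,r)$ subcase with $r$ unbounded: one has to choose two vertex families that differ at a \emph{bounded-multiplicity} point of $\PP^1$ (necessarily one of the two multiplicity-$2$ points) and yet whose associated rays span distinct facets of $\tail\D$, so that the combined ray lands in the relative interior rather than on a boundary facet. Verifying that such a choice is always possible, and that the degenerate low-dimensional configurations (where $\tail\D$ is not full-dimensional or where only one facet of $\tail\D$ meets $\deg\D$) can be handled by a small modification, using Lemma~\ref{lem:vertex-families-and-facets} to track facet-incidences and Proposition~\ref{prop:akollar-vertical} as a fallback to extract a vertical component when no suitable horizontal pair exists, constitutes the technically most delicate step of the proof.
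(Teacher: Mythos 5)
Your treatment of the type $(1,p,q)$ case is exactly the paper's: split on whether $\sum_{y\neq z}b_{\Delta,y}<1$ and invoke Propositions~\ref{prop:1-p-q-vertical} and~\ref{prop:1-p-q-horizontal}. The gap is in the remaining types $(2,p,q)$ with $p,q\geq 2$, and it is twofold. First, combining \emph{two} vertex families cannot in general produce a ray in the interior of $\tail\D$: the tail cone is a full-dimensional pointed cone in $N_\RR\cong\RR^{d-1}$, a sum $n_{\rho_1}+n_{\rho_2}$ of boundary rays is interior only when $\facets(\tail\D,n_{\rho_1})\cap\facets(\tail\D,n_{\rho_2})=\emptyset$, and the finitely many rays arising from vertex families of type $(2,p',q')$ are dictated by the combinatorics of $\D$ and may all be confined to low-dimensional faces sharing common facets. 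This is why the paper instead starts from $d-1$ rays of $\tail\D$ spanning a full-dimensional subcone (so that the intersection of their facet sets is automatically empty) and then replaces each of them by a discrepancy-controlled substitute while tracking facet incidences via Lemma~\ref{lem:vertex-families-and-facets}. Second, once you are forced to use roughly $d-1$ rays in general position, you cannot restrict to vertex families of type $(2,p',q')$: some of the required rays only arise from families of type $(1,r_1,r_2)$ whose vertex at the high-multiplicity point $z$ has $\mu>1$, and for these Lemma~\ref{lem:horizontal-log-discrepancies} gives only the bound $(q+r_2)/\gcd(q,r_2)\leq q+3$, which is unbounded in $q$ for type $(2,2,q)$. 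Taming these is the actual role of Lemma~\ref{lem:technical-lemma} in the paper: one mixes such a family with the distinguished type-$(2,p,q)$ family at the point $z$ to produce two rays, and uses the lemma to recombine them with coefficients bounded by $\lcm(p,q)\leq 6$ and $6^2/q$, yielding the absolute bound $78$ per ray and $2(d-1)\cdot 78$ overall. Your proposal uses Lemma~\ref{lem:technical-lemma} for a different purpose (gluing two bounded-discrepancy families) and never confronts the unbounded families at all.

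Finally, the proposed fallback to Proposition~\ref{prop:akollar-vertical} is unavailable in these cases: a vertical component $D_{z,v}$ is a Koll\'ar component only when $\sum_{y\neq z}b_{\Delta,y}<1$, but in type $(2,p,q)$ with $p,q\geq 2$ any choice of $z$ leaves two points of multiplicity $\geq 2$, each contributing at least $\tfrac12$ to that sum, so the criterion always fails. This is precisely why the paper reserves vertical Koll\'ar components for type $(1,p,q)$ and works exclusively with horizontal ones in the platonic cases $(2,2,r)$, $(2,3,3)$, $(2,3,4)$, $(2,3,5)$.
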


\begin{proof}
  For $(X,\Delta)$ being of type $(1,p,q)$ this follows from Propositions~\ref{prop:1-p-q-vertical} and \ref{prop:1-p-q-horizontal}. 

  Now, we consider the remaining cases at once. Let $X=X(\D)$ be a complexity one $\mathbb{T}$-singularity of one of the remaining cases, i.e., of type $(2,p,q)$ with either $p=2$ and $q \geq 2$ arbitrary or $p=3$ and $q\in \{3,4,5\}$. Moreover, we have  $z \in \mathbb{P}^1$ with $\mu_\D(z)=q$.

  Our aim is now to construct a set $M$ of rays in $\tail \D$ such that: 
  \begin{itemize}
      \item we have that $\bigcap_{\rho} \facets(\tail \D, n_\rho)=\emptyset$,
      \item the cardinality $\# M$ is bounded by some constant $A(d)$, and
      \item for the log discrepancies we have
      $\aX(D_\rho)\leq A(d)$.
  \end{itemize}
 We set $\sigma = \tail \D$ and fix $n=d-1$ rays $R_1:=\{\rho_1, \ldots, \rho_{n}\} \subset \sigma(1)$ spanning a full-dimensional subcone of $\sigma$. Then every positive linear combination of the primitive generators $n_{\rho_1},\ldots,n_{\rho_n}$ lies in the interior of $\sigma$. It follows that $\bigcap_{\rho \in R} \facets(\sigma, n_{\rho})=\emptyset$.

 First, we consider the set 
 \[
 M_1=\{\rho \mid \rho \in R \text{ and } \deg \D \cap \rho = \emptyset \}. 
 \] 
 Those correspond to horizontal prime divisors $D_\rho$ in $X$. In particular, we have $\aXD(D_\rho) \leq 1$.

 For every ray $\rho \in R_2=R_1 \setminus M_1$, we have a family of vertices $(v^\rho_y)_{y\in \pp^1}$ with $\sum_{y\in \pp^1} v^\rho_y \in \rho$. Assume now such a family is of type $(2,p',q')$ with $p',q' \geq 2$.
 By Lemma~\ref{lem:horizontal-log-discrepancies}, we have $\aXD(D_\rho) \leq 2$. Let $M_2 \subset R_2$ be the set of those rays. Let  $R_3 = R_2 \setminus M_2$ then be the set of the remaining rays.

 Now, for every ray $\rho \in R_3$ there is a family $(v_y^\rho)_y$ of type $(1,r,s)$ with $\sum_{y\in \pp^1} v_y^\rho \in \rho$. If $\mu(v_z^\rho)=1$, then $r,s \leq 3$.
 Indeed, for every $y \in \mathbb{P}^1 \setminus \{z\}$, we have that $\mu_\D(y)$ is at most $p$ with $p \leq 3$. Hence, by Lemma~\ref{lem:horizontal-log-discrepancies}, we must have $\aXD(D_\rho) \leq 5$.
We set $M_3$ to be the set consisting of all such rays from $R_3$ and set $R_4=R_3 \setminus M_3$.

We consider a family $(t_y)_{y \in \mathbb{P}^1}$ of vertices $t_y \in \D_y$ of type $(2,p,q)$, which must exists as $X$ is of type $(2,p,q)$. This leads to a \emph{distinguished ray} $\tau = \pos{(\sum_y t_y)}$. By Lemma~\ref{lem:horizontal-log-discrepancies}, we have $\aXD(D_\tau) \leq 1$. Now, the remaing rays $\rho \in M_2$ correspond to families  $(v_y^\rho)_{y\in \pp^1}$  of type $(1,r_1,r_2)$ such that  $\mu(v_z^\rho)=r_i > 1$, with $i\in \{1,2\}$.
Without loss of generality, we assume $\mu(v_z^\rho)=r_1$. Then, we consider the two associated families $({v_y^{\rho_1}})_{y\in \pp^1}$ and $({v^{\rho_2}_y})_{y\in \pp^1}$ with
\[v_y^{\rho_1} =
  \begin{cases}
    v^\rho_z & y = z\\
    t_y & y\neq z
  \end{cases}, \hspace{5em}
  v_y^{\rho_2}=
  \begin{cases}
    t^\rho_z & y = z\\
    v_y & y\neq z
  \end{cases}
\]
and the associated rays $\rho_1=\pos{\left(\sum_y v_y^{\rho_1} \right)}$ and  $\rho_2=\pos{\left(\sum_y v_y^{\rho_2} \right)}$, respectively. Now, by Lemma~\ref{lem:vertex-families-and-facets}~(\ref{lem:vertex-families-and-facets-tail}) we have
\begin{align*}
  \facets(\sigma,n_{\rho_1})  \;\cap\;  \facets(\sigma,n_{\rho_2}) &= \facets(\D) \cap \bigcap_y \facets(\D_y,v_y^{\rho_1})  \;\cap\; \bigcap_y \facets(\D_y,v_y^{\rho_2})\\
  &= \facets(\D) \cap 
\bigcap_y \facets(\D_y,v_y^{\rho})  \;\cap\; \bigcap_y \facets(\D_y,t_y)\\
  &= \facets(\sigma, n_\rho)  \;\cap\; \facets(\sigma, n_\tau) \\
  &\subset  \facets(\sigma, n_\rho).
\end{align*}
By Lemma~\ref{lem:horizontal-log-discrepancies}, we have $\aXD(D_{\rho_1}) \leq 1$ and
$\aXD(D_{\rho_2}) \leq \frac{q+r_2}{\gcd(q,r_2)} \leq q+3$. The latter cannot be bounded by a constant dependending only on the dimension. However, note that
\begin{align*}
\sum_y v^{\rho_2}_y &= t_z + \sum_{y\neq z} v^{\rho_2}_y \qquad \text{ and }\\
\sum_y t_y &= t_z + \sum_{y\neq z} t_y.
\end{align*}
Hence, by applying Lemma~\ref{lem:technical-lemma}, we see that there is a lattice point $n_{\rho_2'}$ of the form
$n_{\rho_2'}=\lambda_1 n_{\tau} + \lambda_2 n_{\rho_2}$
with $0 < \lambda_1 \leq \lcm(p,q) \leq 6$ and $0 < \lambda_2 \leq  \frac{6^2}{q}$. Let $\rho_2'=\pos{n_{\rho_2'}}$. By Lemma~\ref{lem:sum-of-horizontal-discrepancies}, we have that 
\[
\aXD(D_{\rho_2'}) \leq 6+2\cdot 6^2 = 78.
\]
We now set $M_4=\{ \rho_1 \mid \rho \in R_4 \} \cup \{ \rho_2' \mid \rho \in R_4 \}$ and
$M = \bigcup_{i=1}^4 M_i$. Note, that
\[\facets(\sigma, n_{\rho_2'})
  \;=\; \facets(\sigma, n_{\rho_2}) \cap \facets(\sigma, n_{\tau}) \;\subset\; \facets(\D, n_{\rho_2})\]
holds. Hence, we have
\[
  \bigcap_{\rho \in M} \facets(\sigma, n_\rho)
  = \bigcap_{i=1}^4 \bigcap_{\rho \in M_i} \facets(\sigma, n_\rho) 
  \subset \bigcap_{\rho \in R} \facets(\sigma, n_\rho)  = \emptyset.
\]
Thus, $\eta := \pos \left(\sum_{\rho \in M} n_\rho\right)$ is a ray in the interior of $\sigma$. Therefore,
by Proposition~\ref{prop:kollar-horizontall} the prime divisor $E:=D_\eta$ over $X$ corresponds to a Koll\'ar component. Finally, Lemma~\ref{lem:sum-of-horizontal-discrepancies} implies that \[\aXD(E) \leq \# M \cdot 78 \leq 2(d-1)\cdot 78.\]
Indeed, we have $\# M \leq 2\cdot \# R_1 = 2(d-1)$, since $M=(R \setminus R_4) \cup M_4$ and $\# M_4 = 2\cdot \#R_4$. Moreover, for every $\rho \in M$ we have $\aXD(D_\rho) \leq 78$.
\end{proof}

\begin{proof}[Proof of Theorem~\ref{introthm:upper-bound-mld}]
This follows from Theorem~\ref{theorem:mld-bound}.
\end{proof}

\begin{proof}[Proof of Theorem~\ref{introthm:norm-vol-bound}]
  This follows from  Theorem~\ref{thm:zhuan-main} and Theorem~\ref{theorem:mld-bound}.
\end{proof}

We give an example that shows that
complexity one $\mathbb{T}$-singularities
with bounded normalized volume do not form a bounded family.
First, we need a lemma that allows us to determine
whether a family of complexity one $\mathbb{T}$-singularities is bounded.

\begin{lemma}\label{ex:bound-or-not}
Let $\{(X_i;x_i)\}$ be a bounded family of
complexity one $\mathbb{T}$-singularities. 
Let $\mathcal{D}_i$ be a p-divisor on $(\pp^1,N)$ defining
$(X_i;x_i)$, 
Then, up to isomorphism of p-divisors,
there are only finitely many possible polyhedra
$\mathcal{D}_{i,p}\subset N_\qq$ for $p\in \pp^1$. In paarticular, there occur only finitely many types $(p,q,r)$
\end{lemma}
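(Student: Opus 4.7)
The plan is to apply Proposition~\ref{prop:equivariantly-bounded} to equivariantly embed the family into a fixed ambient space, and then bound the combinatorics of the associated Altmann--Hausen data. Concretely, I would first invoke Proposition~\ref{prop:equivariantly-bounded} to obtain a common equivariant embedding $X_i \hookrightarrow \mathbb{A}^m$ with the $\mathbb{T}$-action on each $X_i$ factoring through one of finitely many subtori $\mathbb{T}_1,\ldots,\mathbb{T}_k$ of a fixed maximal torus $\mathbb{T}_0 \subset \operatorname{Aut}(\mathbb{A}^m)$. After restricting to a subfamily, we may assume a single subtorus $\mathbb{T}$ acts throughout, with fixed character lattice $M$ and cocharacter lattice $N$. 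The weight cone $\omega \subset M_\mathbb{R}$ of $H^0(X_i,\mathcal{O}_{X_i})$ is then a face of the cone generated by the fixed finite set of weights of the coordinates on $\mathbb{A}^m$, so only finitely many tail cones $\sigma=\omega^\vee$ can occur; I fix one such $\sigma$.

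Second, by standard constructibility of Hilbert functions on the parametrising finite-type scheme $S$, the $M$-graded Hilbert function $u\mapsto \dim_\mathbb{C} \mathbb{C}[X_i]_u$ takes only finitely many values on each fixed weight $u\in M$. This bounds the number of $\mathbb{T}$-invariant prime divisors on each $X_i$ (horizontal divisors correspond to rays of $\sigma$ disjoint from $\deg \mathcal{D}_i$, and vertical ones to vertices of the polyhedral coefficients); consequently the total number of vertices appearing across all polyhedra $\mathcal{D}_{i,p}$ is bounded, and so is the number of $p\in\mathbb{P}^1$ at which $\mathcal{D}_{i,p}\neq \sigma$.

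The main point is then to bound the denominators. The multiplicity $\mu(v)$ of a vertex $v\in \mathcal{D}_{i,p}^{(0)}$ equals the ramification index of the associated vertical invariant divisor $D_{p,v}$ over $p$ under the rational quotient map $X_i \dashrightarrow \mathbb{P}^1$ of Theorem~\ref{thm:AH-main-result}. Stratifying $S$ according to the multiset of these ramification indices, I may assume this datum is constant on each of the finitely many strata; hence all $\mu(v)$ take only finitely many values, and every vertex of every $\mathcal{D}_{i,p}$ lies in $\frac{1}{D}N$ for a fixed positive integer $D$.

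Combining, any $\sigma$-polyhedron with at most $r$ vertices in $\frac{1}{D}N$ is determined, up to translation by $N$ and lattice automorphism of $N$, by a choice among finitely many coset representatives inside $\frac{1}{D}N/N$. These two equivalences are precisely those induced by isomorphism of p-divisors (translations coming from adding principal polyhedral divisors, lattice automorphisms from reparametrising $\mathbb{T}$), so the finiteness of polyhedra up to isomorphism of p-divisors follows. The \emph{in particular} statement is then immediate, since each type is determined by the bounded multiset $\{\mu_{\mathcal{D}_i}(y)\}_{y\in\mathbb{P}^1}$, which takes only finitely many values. The main obstacle is the bound on the ramification indices $\mu(v)$: this relies on constructibility of discrete invariants in finite-type families, which is standard but must be formalised carefully in the equivariant setting.
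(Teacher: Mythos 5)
Your first step (producing a common equivariant embedding $X_i\hookrightarrow \mathbb{A}^m$ with the torus acting through one of finitely many subtori of a fixed maximal torus) matches the paper in spirit, although the paper gets it from \cite[Theorem 1]{HM89} applied to the cotangent space at the origin rather than from Proposition~\ref{prop:equivariantly-bounded}, whose hypotheses (a log bounded family of polarized projective pairs) are not literally what the lemma assumes. After that, however, your argument has a genuine gap exactly at the heart of the lemma. You need two finiteness statements: that the total number of vertices of the $\mathcal{D}_{i,p}$ is bounded, and that the multiplicities $\mu(v)$ take finitely many values. For the first, knowing that $\dim_\CC \CC[X_i]_u$ takes finitely many values for each fixed $u\in M$ does not by itself bound the number of $\TT$-invariant prime divisors; no argument is given connecting these. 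For the second, the sentence ``stratifying $S$ according to the multiset of these ramification indices, I may assume this datum is constant on each of the finitely many strata'' is circular: to know the stratification is finite you must already know the invariant takes finitely many values (equivalently, that it is a constructible function on $S$), and that is precisely what has to be proved. You acknowledge this yourself as ``the main obstacle,'' but the proposal never resolves it; establishing constructibility of the ramification data would require putting the rational quotient maps $X_i\dashrightarrow \PP^1$ into a family over $S$, which is the actual work.

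The paper takes a different and more structural route that avoids this issue entirely: having linearized the torus action on the ambient $\mathbb{A}^n$, it considers the single fixed p-divisor $\mathcal{D}$ describing $\mathbb{A}^n$ as a $\TT$-variety over its Chow quotient $Y$, observes that the Chow quotient of $X_i$ is a rational curve $Y_i\subset Y$, and that $\mathcal{D}_i$ is the pullback of $\mathcal{D}$ to $Y_i$. All polyhedral coefficients of all the $\mathcal{D}_i$ are therefore drawn from the finitely many polyhedra of the one fixed divisor $\mathcal{D}$, which gives the finiteness of polyhedra (and hence of multiplicities and types) in one stroke, with no stratification or constructibility input. If you want to salvage your approach, the missing ingredient is precisely this comparison with a fixed ambient p-divisor; without it, the bound on the $\mu(v)$ remains an assertion rather than a proof.
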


\begin{proof}
Since $\{(X_i;x_i)\}$ belongs to a bounded family, 
we can find an affine space
\[
\mathbb{A}_{x_1,\dots,x_n}\times \mathbb{A}_{t_1,\dots,t_k}, 
\]
ideals $I_t=\langle f_1(x,t),\dots,f_s(x,t)\rangle$
depending on $t\in Z\subset  \mathbb{A}_{t_1,\dots,t_k}$
such that $(X_i;x_i)\simeq (V(I_{t_i});0)$ for some $t_i$.
Note that $(V(I_{t_i});0)$ admits the action of an algebraic torus $\mathbb{T}_i$. 
By~\cite[Theorem 1]{HM89}, it follows that we can find $\mathbb{T}_{i,0} \leqslant {\rm Aut}(\mathbb{A}_{x_1,\dots,x_n})$
so that $\mathbb{T}_{i,0}$ fixes $V(I_{t_i})$
and the induced action on $(V(I_t);0)$ equals the action of $\mathbb{T}_i$.
Indeed, the torus $\mathbb{T}_{i,0}$ 
is faithfully represented in the first cotangent
space of $V(I_{t_i})$ at the origin.
Passing to an open subset $Z_U \subset Z \subset \mathbb{A}_{t_1,\dots,t_k}$, we may assume that each 
$I_{t_i}$ is defined by the same number of polynomials
$f_j$'s and the same monomials on $x$
appear on all the $f_j$'s.
Hence, we can find a torus $\mathbb{T}\leqslant {\rm Aut}(\mathbb{A}_{x_1,\dots,x_n})$ for which
$\mathbb{T}|_{V(I_{t_i})} = \mathbb{T}_i$ for each $i$.
Now, let $\mathcal{D}$ be the 
p-divisor on $(Y,N_0)$ defining 
the $\mathbb{T}$-variety $\mathbb{A}_{x_1,\dots,x_n}$.
The normalized Chow quotient
of $(V(I_{t_i});0)$ by $\mathbb{T}_i$
gives a rational curve $Y_i \simeq \pp^1 \subset Y$.
Furthermore, the p-divisor $\mathcal{D}_i$
is isomorphic to the pull-back of
$\mathcal{D}$ to $Y_i$.
Since $\mathcal{D}$ has only finitely many different polyhedra, it follows that each pull-back
has only finitely many different polyhedra.
This finishes the proof.
\end{proof}

\begin{example}\label{ex:not-bounded}
 Once again we consider the toric variety $X_0$ from Example~\ref{ex:running} and the deformations $\X_n$.
 Note that the generic fibre $X_{n,1}:=(\X_{n})_{t=1}$ is a complexity one $\mathbb{T}$-singularity.  The p-divisors $\D^n$ for these Fano cone singularities are given by
 \[
    \D^n=(\left(\begin{smallmatrix}
       1/2\\0
     \end{smallmatrix}\right) + \sigma)
   [0] +
    (\left(\begin{smallmatrix}
       -k/(2k+1)\\0
     \end{smallmatrix}\right) + \sigma)[1] + (\overline{
     \left(\begin{smallmatrix}
       0\\0
     \end{smallmatrix}\right)
        \left(\begin{smallmatrix}
       0\\1
     \end{smallmatrix}\right)}+\sigma)[\infty]
 \]
 with tailcone $\sigma$ being spanned by $e_1$ and $e_1+(4k+2)e_2$ if $n=2k+1$ and
  \[
    \D^n=
 (\overline{
     \left(\begin{smallmatrix}
       0\\0
     \end{smallmatrix}\right)
        \left(\begin{smallmatrix}
       0\\1
     \end{smallmatrix}\right)}+\sigma)[0]
 + (\overline{
     \left(\begin{smallmatrix}
       0\\0
     \end{smallmatrix}\right)
        \left(\begin{smallmatrix}
       0\\1
     \end{smallmatrix}\right)}+\sigma)[1]
 +     (\left(\begin{smallmatrix}
       1/k\\0
     \end{smallmatrix}\right) + \sigma)[\infty]
 \]
 with tailcone $\sigma$ being spanned by $e_1$ and $e_1+2k\cdot e_2$ if $n=2k$, see \cite[8.1]{zbMATH06868031}.
 These are of type $(1,2,n)$ and $(1,1,\sfrac{n}{2})$, respectively.  Lemma~\ref{ex:bound-or-not} implies that those singularieties do not belong to a bounded family.
 By lower-semicontinuity of the normalized volume~\cite[Theorem 2.11]{LLX20}, they all satisfy that 
\[
\widehat{\vol}(X_{n,1}) \geq 
\widehat{\vol}(X_0)=: v  >0.
\] 
Hence, we conclude that the family of $3$-dimensional
complexity one 
log terminal 
singularities 
$\{X_{n,1}\}_{n\in \nn}$
with normalized volume at least $v$
is bounded up to deformation,
but it is not bounded.

On the other hand, we had seen in Example~\ref{ex:running4} that $X_{n,1}$ is not K-semistable for $k\geq 5$.
 \end{example}

\begin{proof}[Proof of Theorem~\ref{thm:n-dim-K-ss-comp-1}]
  In Theorem~\ref{theorem:mld-bound} we showed that there is an upper bound for the minimal log discrepancies of torus invariant Koll\'ar components. Hence, the claim follows from Corollary~\ref{cor:semi-stable-bounded}.
\end{proof}

\begin{corollary}
  The set of $(2n-1)$-dimensional Sasaki-Einstein manifolds with an effective action of a torus of dimension at least $n-1$ and volume at least $v$ consists of a finite number of finite-dimensional families.
\end{corollary}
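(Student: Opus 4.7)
The plan is to translate the Sasaki--Einstein geometric problem into the algebraic language of K-semistable log Fano cone singularities, and then apply Theorem~\ref{thm:n-dim-K-ss-comp-1}.

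First, I would invoke the standard correspondence between $(2n-1)$-dimensional Sasaki--Einstein manifolds $M$ and K-polystable Fano cone singularities $(X;x)$ of complex dimension $n$ (with empty boundary), where $M$ arises as the link of $X$ and the Reeb vector field of $M$ corresponds to the polarization $\xi$ of the cone. A real torus $T$ of dimension $\geq n-1$ acting effectively on $M$ preserving the Sasakian structure complexifies, together with the natural $\mathbb{C}^*$-dilation of the cone, to an algebraic torus $\mathbb{T}$ of complex dimension $\geq n-1$ acting on $X$. Hence $X$ has complexity at most $1$. Moreover, the Riemannian volume of $(M,g)$ differs from $\widehat{\mathrm{vol}}(X;x)$ only by a dimensional constant, so the condition $\mathrm{vol}(M)\geq v$ translates into $\widehat{\mathrm{vol}}(X;x)\geq \epsilon$ for some $\epsilon>0$ depending only on $n$ and $v$.

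Next, I would split the analysis into two cases according to the complexity of $X$. In the toric case (complexity $0$), boundedness of the underlying singularity is classical and can be obtained, for example, via the Blaschke--Santal\'o inequality as in \cite{MS21}. In the complexity-one case, applying Theorem~\ref{thm:n-dim-K-ss-comp-1} with $I=\{0\}$ directly yields that the class of such K-semistable singularities is log bounded; by definition, this means that they all occur as fibres of finitely many flat algebraic families $\mathcal{X}_i\to S_i$ with each base $S_i$ of finite type over $\mathbb{C}$.

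Finally, by Proposition~\ref{prop:minimizer-constant}, on each stratum of a suitable finite stratification of the base $S_i$ the minimizer of the normalized volume inside the common Reeb cone is constant. Hence the K-polystable polarization $\xi$ varies algebraically within the family, and the Stable Degeneration Theorem~\ref{thm:sdt} ensures this polarization is uniquely determined (up to rescaling) by the underlying K-polystable singularity. Each algebraic family $\mathcal{X}_i\to S_i$ therefore parametrises a finite-dimensional family of K-polystable Fano cone singularities and, via the link construction, a finite-dimensional family of Sasaki--Einstein manifolds. Since there are only finitely many such families, the corollary follows. The one step that requires care is checking that the complexity bound persists after complexification of the real torus action, but this reduces to observing that either the Reeb field already lies in the Lie algebra of $T_{\mathbb{C}}$ (in which case complexity is $1$) or the Reeb flow contributes an independent $\mathbb{C}^*$-factor (in which case $X$ is toric).
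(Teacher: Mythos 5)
Your proposal is correct and follows the route the paper intends: the paper states this corollary without proof, immediately after Theorem~\ref{thm:n-dim-K-ss-comp-1}, precisely because it is the standard translation via the Collins--Sz\'ekelyhidi correspondence between Sasaki--Einstein links and K-polystable Fano cone singularities, combined with the observation that a torus of dimension at least $n-1$ forces complexity at most one and that the Riemannian volume matches the normalized volume up to a dimensional constant. Your additional care about the toric case and about the polarization varying algebraically over the bounding family is consistent with, and slightly more detailed than, what the paper leaves implicit.
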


\section{Boundedness for K-semistable threefolds singularities}

The goal of this section is to prove Theorem \ref{introthm:3-dim}. We start from  some preparation.

 

\begin{lem}\label{lem:deg-alpha}
 Let $(V,B)$ be an $n$-dimensional klt log Fano pair where $B$ is a $\bQ$-divisor. Suppose $\alpha(V,B)<1$. Then there exists a non-trivial special degeneration $(V_0, B_0)$ of $(V, B)$ such that 
 \[
 \alpha(V_0, B_0)\geq \frac{1}{n}\min\{\alpha(V,B), 1-\alpha(V,B)\}.
 \]
\end{lem}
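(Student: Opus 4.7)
The plan is to extract a Koll\'ar component from an almost-alpha-computing divisor on $(V,B)$, use it to build a non-trivial special degeneration, and then bound the alpha invariant of the central fibre by an equivariant case analysis. Because $\alpha(V,B)<1$, I may choose an effective $\QQ$-divisor $D \sim_\QQ -(K_V+B)$ with $c := \lct(V,B;D)$ as close as desired to $\alpha(V,B)$ (and in particular with $c<1$). The pair $(V,B+cD)$ is then strictly log canonical, and the standard Koll\'ar-component extraction---take a dlt modification of $(V,B+cD)$, then run an appropriate MMP contracting all components of log discrepancy zero except one---produces a Koll\'ar component $E$ over $V$ satisfying $A_{(V,B)}(E) = c\cdot \ord_E(D)$.

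Next, I build the degeneration. The valuation $\ord_E$ induces a filtration
\[\mathcal{F}^p R_m \;=\; \{\,s \in H^0(V,-m(K_V+B)) \mid \ord_E(s) \geq p\,\}\]
on the anticanonical ring $R=\bigoplus_m R_m$. By the finite generation theorems for Koll\'ar-component filtrations on log Fano pairs established in the recent K-stability literature (Blum--Liu--Xu, Liu--Xu--Zhuang), the graded algebra $\gr_{\mathcal{F}} R$ is finitely generated, so the filtration produces a non-trivial $\CC^*$-equivariant special test configuration $(\mathcal{V}, \mathcal{B}) \to \mathbb{A}^1$ of $(V,B)$ with klt log Fano central fibre $(V_0, B_0)$. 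The limit of $E$ is a $\CC^*$-invariant prime divisor $E_0 \subset V_0$ satisfying $A_{(V_0,B_0)}(E_0)=A_{(V,B)}(E)$ and generating the Reeb grading on $V_0$.

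To bound $\alpha(V_0,B_0)$ from below, take an arbitrary effective $\QQ$-divisor $D_0 \sim_\QQ -(K_{V_0}+B_0)$. After averaging over the induced $\CC^*$-action I may assume $D_0$ is equivariant and write $D_0 = a E_0 + D_0'$ with $E_0 \not\subset \Supp(D_0')$. Two regimes must be balanced. If $a$ is large compared with $\ord_E(D)$, then $\lct(V_0,B_0;D_0)$ is controlled from below by the coefficient along $E_0$ together with the log discrepancy $A_{(V_0,B_0)}(E_0)=c\cdot\ord_E(D)$, which yields a bound of order $c$. If $a$ is small, inversion of adjunction along the plt divisor $E_0$ passes $D_0'$ to an effective divisor on the Koll\'ar component, and lifting this restriction back to $(V,B)$ via the test-configuration family---where the bound $\alpha(V,B)\ge\alpha$ is available---produces a bound of order $1-c$. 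The dimensional factor $1/n$ arises in the adjunction step from the interplay between the Reeb weights on $V_0$ and the anticanonical scaling on $E_0$: an equivariant anticanonical divisor on the $n$-dimensional $V_0$ distributes its Reeb weight across a range of size proportional to $n$, and this spread converts into the factor $\tfrac{1}{n}$ in the sharp constant.

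The hardest step will be this last one. Securing the existence of the degeneration via finite generation is by now standard for Koll\'ar components on log Fano pairs, but promoting the qualitative statement ``$\alpha(V_0,B_0)>0$'' into the sharp constant $\tfrac{1}{n}\min\{\alpha(V,B),1-\alpha(V,B)\}$ requires a careful quantitative accounting of Reeb weights and adjunction scalings, and it is here that the main technical effort must be concentrated.
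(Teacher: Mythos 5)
Your proposal does not close the key quantitative step, and the mechanism you sketch for it is not the one that actually produces the constant. The construction of a degeneration from a divisor $E$ with $A_{(V,B)}(E)=c\cdot\ord_E(D)$ is plausible (such an $E$ is an lc place of a $\bQ$-complement extending $B+cD$, hence special in the sense of \cite{BLZ19, LXZ21}), but everything after that is heuristic: the ``two regimes'' analysis in $a$, the inversion-of-adjunction step, and in particular the claim that the factor $\tfrac1n$ comes from ``Reeb weights distributing across a range of size proportional to $n$'' are not arguments, and I see no way to extract the sharp bound $\tfrac1n\min\{\alpha,1-\alpha\}$ from them. There is also no reason the central fibre of the degeneration induced by an $\alpha$-computing lc place should have controlled $\alpha$-invariant at all --- a priori $\alpha(V_0,B_0)$ could degenerate badly, since your $E$ is not a $\delta$-minimizer and nothing is preserved along your test configuration.

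The paper's proof runs on an entirely different engine, and it is worth seeing why, because each ingredient is doing quantitative work that your sketch has no substitute for. First, the K-unstable case is handled separately: the optimal destabilization of \cite{LXZ21} preserves $\delta$ \emph{exactly}, and combining $\alpha \geq \tfrac{1}{n+1}\delta$ with $\delta \geq \tfrac{n+1}{n}\alpha$ gives $\alpha(V_0,B_0)\geq\tfrac1n\alpha(V,B)$ immediately. In the K-semistable case, one does not degenerate $(V,B)$ along $E$; instead one perturbs the boundary to $(V,B+cD)$ with $c$ just above the K-semistable threshold $b$ of the segment $t\mapsto (V,B+tD)$, applies the optimal destabilization to \emph{that} K-unstable pair, and then strips $cD_0$ off the central fibre. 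The constant is produced by two inequalities: the Fujita-type bound $T_{V,B}(F)\leq (n+1)S_{V,B}(F)$ applied to the lct-computing divisor $F$, which forces $b\leq\frac{(n+1)a-1}{n}$, and the concavity of $t\mapsto(1-t)\delta(V,B+tD)$ (Lemma~\ref{lem:delta-concave}), which transfers the bound $\delta(V,B+bD)\geq 1$ across the interval to give $\delta(V_0,B_0)\geq(1-\epsilon)(1-b)$. Dividing by $n+1$ and substituting the bound on $b$ yields exactly $\tfrac{1-\epsilon}{n}(1-a)$; discreteness of $\alpha$-invariants then lets $\epsilon\to 0$. None of this involves adjunction to the Koll\'ar component or equivariant averaging on the central fibre, and without these two inequalities your outline has no source for the factor $\tfrac1n$ or for the term $1-\alpha(V,B)$.
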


 \begin{proof}
 We separate into two cases. If $(V,B)$ is K-unstable, then by \cite{LXZ21} there exists a non-trivial special degeneration (called an optimal destabilization in \cite{BLZ19}) $(V_0,B_0)$ of $(V,B)$ such that $\delta(V_0,B_0)= \delta(V,B)$. Thus by \cite[Theorem C]{Blu18b} (see also \cite[Theorem A]{BJ17})  we have 
 \[
 \alpha(V_0, B_0) \geq \frac{1}{n+1}\delta(V_0, B_0) = \frac{1}{n+1}\delta(V, B) \geq \frac{1}{n+1}\cdot \frac{n+1}{n}\alpha(V,B) = \frac{1}{n}\alpha(V,B).
 \]
 
 From now on, we assume that $(V,B)$ is K-semistable. By \cite{Bir21}, there exists $D\sim_{\bQ} -K_V-B$ such that $\lct(V,B;D) = \alpha(V,B)=:a <1$. Let 
 \[
 b:=\sup\{t\geq 0\mid (V, B+tD)\textrm{ is a K-semistable log Fano pair}\}.
 \]

  We first estimate $b$. Let $F$ be a prime divisor over $V$ computing $\lct(V,B;D)$, i.e. $ a^{-1} A_{V,B}(F) = T_{V,B}(F) = \ord_F(D)$. By the proof of Lemma \ref{lem:delta-concave}, we know that 
 \begin{align*}
 \beta_{(V, B+tD)}(F)& = A_{V,B}(F) - t\ord_F(D) - (1-t) S_{V,B}(F)\\ & = (a-t) T_{V,B}(F) - (1-t) S_{V,B}(F)\\ & \leq  ((a-t)(n+1)-(1-t)) S_{V,B}(F).
 \end{align*}
 Thus if $1-t > (a-t)(n+1)$, then $(V, B+ tD)$ is K-unstable. This implies that 
 \begin{equation}\label{eq:a&b}
 b \leq \frac{(n+1)a - 1}{n}.
 \end{equation}
 In particular, we have $b<a<1$.
 
 Let $c\in (b,a)$ be a rational number. We express $c= (1-\epsilon) b + \epsilon a$. Then $(V, B+ c D)$ is K-unstable. Thus it admits a non-trivial special degeneration to $(V_0, B_0+cD_0)$ such that $\delta(V_0, B_0 + cD_0) = \delta(V, B+ cD)$. By Lemma \ref{lem:delta-concave} we know that
 \[
 \delta(V_0,B_0) \geq (1-c)\delta(V_0, B_0+cD_0)= (1-c)\delta(V, B+cD). 
 \]
 On the other hand, since $t\mapsto (1-t)\delta(V, B+tD)$ is concave by Lemma \ref{lem:delta-concave},  we have
 \[
 (1-c) \delta(V, B+cD) \geq (1-\epsilon) (1-b) \delta(V, B + b D)  + \epsilon (1-a) \delta(V, B+ aD) \geq (1-\epsilon) (1-b).
 \]
 Combining the two inequalities, we get
 \[
 \delta(V_0, B_0) \geq (1-\epsilon)(1-b). 
 \]
 Combining with the previous estimate \eqref{eq:a&b}, we get
 \[
 \alpha(V_0, B_0) \geq \frac{1}{n+1}\delta(V_0,B_0) \geq \frac{1}{n+1}(1-\epsilon) \left(1- \frac{(n+1)a - 1}{n}\right)=\frac{1-\epsilon}{n}(1-a).
 \]
 By boundedness \cite{Jia17, Che18, LLX20, XZ21} and discreteness of $\alpha$-invariants \cite{BLX19}, letting $\epsilon\to 0$ finishes the proof.
 \end{proof}

 \begin{lem}\label{lem:delta-concave}
Let $(V,B)$ be an $n$-dimensional klt log Fano pair where $B$ is a $\bQ$-divisor. Let $0\leq D\sim_{\bQ} -K_V-B$ be an effective $\bQ$-divisor. Then the function $t\mapsto (1-t) \delta(V, B+ tD)$ is non-increasing and concave for $t\in [0, \min \{1, \lct(V,B;D)\})$. 
 \end{lem}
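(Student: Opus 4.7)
The plan is to reduce the claim to a statement about an infimum of affine functions by using the $\mathbb{Q}$-linear equivalence $D\sim_{\mathbb{Q}}-K_V-B$ to rescale the anticanonical divisor. The key identity I aim to establish is
\[
(1-t)\,\delta(V,B+tD)\;=\;\inf_{F}\frac{A_{V,B}(F)-t\,\ord_F(D)}{S_{V,B}(F)},
\]
where the infimum ranges over prime divisors $F$ over $V$. Once this identity is proved, both claims are immediate: for each fixed $F$ the right-hand expression is an affine function of $t$ with slope $-\ord_F(D)/S_{V,B}(F)\leq 0$, hence non-increasing and concave; and the infimum of a family of non-increasing concave functions is itself non-increasing and concave.

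To prove the identity, first note that $-K_V-B-tD\sim_{\mathbb{Q}}(1-t)(-K_V-B)$. Then the log discrepancy transforms as
\[
A_{V,B+tD}(F)=A_{V,B}(F)-t\,\ord_F(D),
\]
which is positive for all $F$ precisely when $t<\lct(V,B;D)$, guaranteeing that $(V,B+tD)$ remains klt on $[0,\min\{1,\lct(V,B;D)\})$. For the expected vanishing order, after the substitution $s=(1-t)u$ I compute
\[
S_{V,B+tD}(F)=\frac{1}{(-K_V-B-tD)^{n}}\int_{0}^{\infty}\vol\bigl(-K_V-B-tD-sF\bigr)\,ds=(1-t)\,S_{V,B}(F),
\]
using $(-K_V-B-tD)^{n}=(1-t)^{n}(-K_V-B)^{n}$ and $\vol(-K_V-B-tD-sF)=(1-t)^{n}\vol(-K_V-B-\tfrac{s}{1-t}F)$. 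Dividing $A_{V,B+tD}(F)$ by $S_{V,B+tD}(F)$, taking the infimum over $F$, and multiplying by $(1-t)$ yields the desired identity.

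Conceptually there is no obstacle here; this is an elementary rescaling argument of the type that is standard in the theory of stability thresholds. The only care needed is to keep track of the range of $t$ so that $(V,B+tD)$ is klt (ensuring $A_{V,B+tD}(F)>0$ for every $F$) and so that the anticanonical is big (ensuring $S_{V,B+tD}(F)>0$), both of which are guaranteed by $t\in[0,\min\{1,\lct(V,B;D)\})$.
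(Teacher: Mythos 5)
Your proposal is correct and follows essentially the same route as the paper: both establish the identity $(1-t)\,\delta(V,B+tD)=\inf_F\bigl(A_{V,B}(F)-t\,\ord_F(D)\bigr)/S_{V,B}(F)$ via $A_{V,B+tD}(F)=A_{V,B}(F)-t\,\ord_F(D)$ and $S_{V,B+tD}(F)=(1-t)S_{V,B}(F)$, and then conclude by observing that an infimum of non-increasing affine functions is non-increasing and concave. Your version merely spells out the rescaling computation for $S$ in more detail than the paper does.
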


 \begin{proof}
     Since $-K_V - B -tD \sim_{\bR}(1-t) (-K_V-B)$, we have $S_{(V, B+tD)}(E)=(1-t)S_{(V,B)}(E)$ for a prime divisor $E$ over $V$. Thus
     \[
     (1-t)\delta(V, B+tD) = (1-t) \inf_{E} \frac{A_{(V, B+tD)}(E)}{S_{(V, B+tD)}(E)} = \inf_E \frac{A_{(V, B)}(E)-t\ord_E(D)}{S_{(V, B)}(E)},
     \]
     where the infima run over all prime divisors $E$ over $V$. Since for each fixed $E$, the function $t\mapsto \frac{A_{(V, B)}(E)-t\ord_E(B)}{S_{(V, B)}(E)}$ is non-increasing and linear, we know that $t\mapsto (1-t) \delta(V, B+ tD)$ is non-increasing and concave.
 \end{proof}

 \begin{rem}
Under the same assumptions as Lemma \ref{lem:deg-alpha}, it was known that 
\[
\alpha(V_0,B_0)\leq \min\{\alpha(V,B), 1-\alpha(V,B)\}
\]
for every non-trivial special degeneration $(V_0, B_0)$ of $(V,B)$.
Indeed, the first inequality follows from the lower semicontinuity of $\alpha$-invariants \cite[Theorem B]{BL18}, while the second inequality follows from a logarithmic version of \cite[Proposition 3.3]{LZ19}.
 \end{rem}

\begin{proof}[Proof of Theorem~\ref{introthm:3-dim}]
If $(X,\Delta;\xi)$ has complexity at most $1$, this follows from Theorem \ref{thm:n-dim-K-ss-comp-1}. From now on we assume that $(X,\Delta;\xi)$ has complexity $2$, in particular, $\xi$ is rational. Then a rescaling of $\xi$ induces an effective $\bG_m$-action on $(X,\Delta)$. Let $E$ be the Koll\'ar component corresponding to $\xi$ . 
By \cite[Theorem 1.2]{Zhu21}, there exists a $\delta$-plt blow-up of $(X, \Delta;x)$ extracting $S$ where $\delta$ only depends on $\epsilon$ and $I$. Denote the different divisors of the Koll\'ar components $E$ and $S$ with respect to $(X,\Delta;x)$ by $\Delta_E$ and $\Delta_S$ respectively.
Since $(X,\Delta;\xi)$ is K-semistable, we know that $(E, \Delta_E)$ is also K-semistable, which implies that 
$\alpha(E,\Delta_E)\geq \frac{1}{3}$. 
If $S=E$, then the log boundedness of $(X,\Delta;\xi)$ follows from \cite{HLM20}. If $S\neq E$, then by the logarithmic version of \cite[Theorem 11]{CPS18} we have 
\[
\alpha(E,\Delta_E)+\alpha(S, \Delta_S)<1.
\]
Since $(S, \Delta_S)$ is $\delta$-klt, by \cite[Theorem 1.6]{Bir21} there exists $\alpha_0>0$ depending only on  $\delta$ (hence depending only on $\epsilon$ and $I$) such that $\alpha(S,\Delta_S)\geq \alpha_0$. For simplicity, we may assume that $\alpha_0\leq \frac{1}{3}$. By Lemma \ref{lem:deg-alpha}, there exists a non-trivial special degeneration $(E_0,\Delta_{E_0})$ of $(E, \Delta_E)$ such that 
\[
\alpha(E_0, \Delta_{E_0}) \geq \frac{1}{2}\min\{\alpha(E, \Delta_E), 1-\alpha(E, \Delta_E)\} \geq \frac{\alpha_0}{2}.
\]
This implies that $\delta(E_0, \Delta_{E_0}) \geq \frac{\alpha_0}{2}$. Since $(X,\Delta)$ is the orbifold cone over $(E,\Delta_E)$, the special degeneration $(E,\Delta_E)\rightsquigarrow (E_0,\Delta_{E_0})$ induces a non-trivial $\bG_m$-equivariant special degeneration $(X,\Delta;\xi)\rightsquigarrow (X_0,\Delta_0;\xi_0)$. Thus $(X_0, \Delta_0;\xi_0)$ carries an effective $\bT=\bG_m^2$-action where $\bG_m<\bT$ is a subtorus. Let $x_0$ be the cone point of $(X_0,\Delta_0;\xi_0)$. By  \cite[Proposition 7.5]{han2020acc} we have
\[
\hvol(X_0, \Delta_0; x_0) \geq \hvol_{(X_0,\Delta_0)}(\ord_{E_0})\cdot \left(\frac{\alpha_0}{2}\right)^3= \hvol_{(X,\Delta)}(\ord_{E})\cdot \left(\frac{\alpha_0}{2}\right)^3 > \frac{\epsilon \alpha_0^3}{8}.
\]
Since $(X_0, \Delta_0;\xi_0)$ has complexity at most $1$, by Theorem \ref{introthm:norm-vol-bound} we know that it admits a $\bT$-equivariant log bounded special degeneration $(X',\Delta';\xi')$. Composing these two special degenerations as \cite[Proof of Proposition 3.1]{LWX21}, we have that $(X,\Delta;\xi)$ admits a $\bG_m$-equivariant log bounded special degeneration. Thus we get log boundedness of $(X, \Delta;\xi)$ by Proposition \ref{prop:log-bounded-deformation+k-semistable} as it is K-semistable.
\end{proof}

\begin{proof}[Proof of Theorem \ref{introthm:vol-discrete}]
    It is known that $0$ is an accumulation point of the set of normalized volumes in any dimension, for instance, considering $n$-dimensional quotient singularities $\bA^n/\mu_m$ of type $\frac{1}{m}(1,1,\cdots,1)$, whose normalized volume equals $\frac{n^n}{m}$ by \cite[Example 7.1]{li2016stability} and hence converges to $0$ as $m\to \infty$. Thus it suffices to show that for any fixed $\epsilon>0$, there are only finitely many possible values of $\hvol(X,\Delta;x)$ that are larger than $\epsilon$. By Theorem \ref{thm:sdt}, the normalized volume minimizer $v$ induces a degeneration of $(X,\Delta;x)$ to a K-semistable singularity $(X_0, \Delta_0; \xi_v)$ where $x_0\in X_0$ denotes the cone point. Moreover, by Proposition \ref{prop:nvol-special-fibre} we know that 
    \[
    \epsilon< \hvol(X,\Delta;x) = \hvol_{(X,\Delta)}(v) = \hvol_{(X_0, \Delta_0)}(\xi_v) = \hvol(X_0, \Delta_0;x_0). 
    \]
    By Theorem \ref{introthm:3-dim}, all such K-semistable singularities $(X_0,\Delta_0;\xi_v)$ belong to a log bounded family, which implies that their normalized volumes belong to a finite set by \cite[Theorem 1.3]{Xu19}.
\end{proof}

\section{Boundedness for K-semistable hypersurface singularities}
\label{sec:hyper}
We complement our previous boundedness results  by one for K-semistable Fano cone hypersurface singularities.

Here, we consider a positive $\ZZ$-grading on the polynomial ring
\(
\CC[x_0,\ldots,x_n]
\)
with weights $0 < w_0 \leq w_1 \leq \ldots \leq w_n$. The hypersurface singularity $X_d$ is given by a weighted homogeneous equation of degree $d$. We may assume that each variable occurs in a non-linear monomial of the equation. Otherwise, the singularity would be either smooth or a product of a lower dimensional singularity and an affine space. The singularity $X_d$ is log terminal only if $\sum_i w_i-d >0$ and K-semistability is obstructed by the Lichnerowicz condition of Theorem~\ref{thm:lichnerowicz}, which demands that the inequality $\sum_iw_i-d \leq nw_0$ holds. In the case that $X_d$ is K-semistable with respect to the polarization given by the grading, the normalized volume of $X_d$ is calculated by the valutation corresponding to the $\ZZ$-grading. Here, we have
\[
\nvol(X_d;x)=\frac{d(\sum_iw_i -d)^n}{w_0\cdots w_n}.
\] 

In order, to prove boundedness for K-semistable Fano cone hypersurface singularities with volume bounded by $v > 0$ we consider the tuples $(w_0,\ldots,w_n,d) \in \NN^{n+2}$, up to taking multiples, which fulfil the following set of conditions.
\begin{align}
\sum_i w_i -d \geq 0& & \text{(log terminality)}\label{eq:log-terminal}\\
\sum_i w_i -d \leq nw_0& & \text{(Lichnerowicz obstruction)}\label{eq:lichnerowicz}\\
w_0 + w_n \leq d  && \text{(every variable appears in a non-linear monomial)}\label{eq:dim=n} \\
\frac{d(\sum_iw_i -d)^n}{w_0\cdots w_n} \geq v& & \text{(bound on the volume)}\label{eq:volume-bound}
\end{align}
As discussed in Example~\ref{exp:hypersurface2} of the introduction, we may extend this setting to gradings by some semigroup in $\RR_{\geq 0}$, when we also allow positive real values for the weights and the degree. Equivalently, this means that the equation is actually homogeneous with respect to a (multi-)grading by some $M=\ZZ^r$ and our $\RR$-weights are obtained by evaluating a (potentially irrational) element $\xi \in M_\RR^*=(\RR^r)^*$ in the $M$-weights of the variables. By approximating $\xi$ by rational elements, we see that all conditions still have to hold in this generalized setting.

\begin{proof}[Proof of Theorem~\ref{introthm:hypersurfaces}]
  Since we consider the tuples $(w_0,\ldots,w_n,d)$ only up to scaling, we may normalize them such that $w_0=1$ holds.  Hence, we from now on assume that $w_0=1$ and allow real values $\geq 1$ for the other weights. We will call a such a tuple \emph{weight data}. 
  
  Consider the set of weight data corresponding to $n$-dimensional K-semistable polarized Fano cone singularities. In particular, the weight data has to fulfil all our conditions above.  Our aim is to show that $d$ is bounded by some constant. Then we would be done, as there are only finitely many monomials having degree less or equal to $d$ (since the weights always fulfil $w_i \geq 1$). If $d$ is not bounded, then there would exist an infinite sequence of weight data, such that $d\to \infty$.  Assume there is such a sequence. In order to keep the notation simple, we refrain from adding a new index to the $w_i$ and $d$ just to indicate the membership to that sequence.  

  Recall that the normalized volume \[\nvol(X_d)=\frac{d(\sum_iw_i -d)^n}{w_0\cdots w_n} =\frac{\left(\sum_iw_i-d \right)^n}{\frac{w_0\cdots w_n}{d}}\geq v\] associated to our weight data is bounded from below by (\ref{eq:volume-bound}). 
  Since $\sum_iw_i-d \leq n$ by (\ref{eq:lichnerowicz}),
  then we have that $\frac{w_0\cdots w_n}{d}\leq n^n/v$. Hence, after passing to a subsequence we may assume $\frac{w_0\cdots w_n}{d} \to c$ for some $c \geq 0$. Similarly, after passing to a subsequence,  we may assume that each $w_i$ is either bounded or diverges to $+\infty$. 
  By inequality (\ref{eq:log-terminal}), at least one of the $w_i$ must diverge to $+\infty$.
  
\begin{mycases}

\litem{$d \to \infty$, $w_n \to \infty$, and $w_{n-1} \to \infty$}{
Because of the convergence $\frac{w_0\cdots w_n}{d} \to c$, we eventually have  $d > \frac{w_0\cdots w_n}{1+c}$ in our sequence of weight data.
On the other hand, 
the fact that $w_n\to \infty$
and $w_{n-1} \to \infty$ implies that eventually we have that
\[
\sum_i w_i < \frac{1}{1+c}\prod_i w_i. 
\]
Hence, at some point, we obtain 
\begin{equation*}
  \sum_i w_i - d  < \sum_i w_i - \frac{1}{1+c} \prod_i w_i < 0,
\end{equation*}
but this contradicts the log terminality condition (\ref{eq:log-terminal}).}\\

Note that $1\leq w_2\leq\dots\leq w_{n-1}\leq w_n$ for any sequence data.
By the first case, we can assume that $w_{n-1}\to 1$.
Hence, we may assume that 
$w_i \to 1$ for every $i\in \{0,\dots,n-1\}$.
  
\litem{$d \to \infty$, $w_n \to \infty$  and $w_0,\ldots, w_{n-1}$ are bounded}{
We have $\sfrac{w_n}{d} < 1$ by (\ref{eq:dim=n}).  By assumption $\sum_{i=0}^{n-1} w_i$ is bounded from above by some constant $C$ and we obtain
  \[\sum_{i=0}^n w_i-d = \sum_{i=0}^{n-1} w_i +\frac{w_n}{d}\cdot d-d < C - \left(1-\frac{w_n}{d}\right)d.\] Since $d \to +\infty$ and  $\sum_{i=0}^n w_i -d > 0$ by (\ref{eq:log-terminal}) it follows that $\sfrac{w_n}{d}\to 1$.

  Now, for $w_n > \sfrac{d}{2}$ an equation of degree $d$ will look like
  \begin{equation*}
    \label{eq:equation-wdata}
    x_ng_{d-w_n}(x_0,\ldots, x_{n-1}) + g_{d}(x_0,\ldots, x_{n-1}),
  \end{equation*}
  where $g_{d-w_n}$ and $g_{d}$ are quasi-homogeneous polynomials of the corresponding degrees. We will now construct a special degeneration for such a singularity $X_d$. For this we embed the singularity as intersection of two hypersurfaces into $\mathbb A^{n+2}$:
  \[
    x_nx_{n+1} + g_{d}(x_0,\ldots, x_{n-1}) = x_{n+1} - g_{d-w_n}(x_0,\ldots, x_{n-1})=0.
  \]
  These equations becomes quasi-homoegenous of degree $d$ and $d-w_n$, respectively, if we choose the weight $w_{n+1}=d-w_n< w_n$ for $x_{n+1}$. Now, consider the special degeneration given by
  \[
    x_nx_{n+1} + g_{d}(x_0,\ldots, x_{n-1}) = t^a x_{n+1} - g_{d-w_n}(x_0,\ldots, x_{n-1})=0.
  \]
  Here, we act on the coordinates by the weights $(w_0,\ldots,w_{n-1},w_n-a,w_{n+1}+a)$. The central fibre of the degenration is cut out by
  \[
    x_nx_{n+1} + g_{d}(x_0,\ldots, x_{n-1}) = g_{d-w_n}(x_0,\ldots, x_{n-1})=0.
  \]
  Let $v$ be the valuation on $X_d$ corresponding to the special degeneration. Then we obtain
  \[
    \nvol_X(v) = \frac{d(d-w)\left(\sum_{i=0}^{n} w_i -d \right)^n}{w_0\cdots (w_n-a) \cdot (w_{n+1}+a)}  \leq \frac{d(d-w)n^n}{w_0\cdots (w_n-a) \cdot (w_{n+1}+a)}.
  \]
  Here, the inequality on the right follows from~(\ref{eq:lichnerowicz}). Now, by choosing $a=\sfrac{w_n}{2}$ and observing that $\nvol(X_d;x) \leq \nvol_{X_d}(v)$ we obtain
  \begin{equation*}
    \nvol(X_d;x) \leq  \frac{d(d-w)n^n}{w_0\cdots w_{n-1} (\sfrac{w_n}{2}) \cdot (d-\sfrac{w_n}{2})}
    =\frac{\sfrac{d}{w_n}(\sfrac{d}{w_n}-1)n^n}{w_0\cdots w_{n-1} \cdot \sfrac{1}{2} \cdot (\sfrac{d}{w_n}-\sfrac{1}{2})}.\label{eq:limit-volumes}    
  \end{equation*}
  From the fact that $w_0, \ldots, w_{n-1} \geq 1$ and $\sfrac{d}{w_n} \to 1$ we see that $\nvol(X_d;x) \to 0$, but this is in contradiction to (\ref{eq:volume-bound}). Alternatively, we could also argue that
  $\nvol(v) < \nvol(\xi)=\frac{d(\sum_i w_i -d)^n}{x_0 \cdots x_n}$ and therefore $(X_d, \xi)$ is not K-semistable, which is in contradiction to our other assumption.
}

\end{mycases}

\end{proof}

\bibliographystyle{habbrv}
\bibliography{volbound}

\end{document}